\numberwithin{equation}{section}
\newtheorem{theorem}{Theorem}[section]
\newtheorem{claim}[theorem]{Claim}
\newtheorem{lemma}[theorem]{Lemma}
\newtheorem{corollary}[theorem]{Corollary}
\newtheorem*{claim*}{Claim}
\newtheorem{mainthm}{Theorem}
\theoremstyle{definition}
\newtheorem{definition}[theorem]{Definition}
\newtheorem{remark}[theorem]{Remark}
\newtheorem{example}[theorem]{Example}
\newtheorem{question}[theorem]{Question}
\newtheorem*{notation*}{Notation}
\newcommand{\bbN}{\mathbb{N}}
\newcommand{\calB}{\mathcal{B}}
\newcommand{\B}{\mathcal{B}}
\newcommand{\calM}{\mathcal{M}}
\newcommand{\calO}{\mathcal{O}}
\newcommand{\calP}{\mathcal{P}}
\newcommand{\calQ}{\mathcal{Q}}
\newcommand{\calY}{\mathcal{Y}}
\newcommand{\calZ}{\mathcal{Z}}
\DeclareMathOperator{\supp}{supp}
\DeclareMathOperator{\Lab}{Lab}
\DeclareMathOperator{\ord}{ord}
\DeclareMathOperator{\cln}{cl}
\DeclareMathOperator{\code}{code}
\DeclareMathOperator{\pcode}{pcode}
\DeclareMathOperator{\modd}{mod}
\begin{document}

\title{Asymptotic Burnside laws}

\author{Gil Goffer} \address{Department of Mathematics, University of California, San Diego, La Jolla, CA 92093, USA} \email{ggoffer@ucsd.edu}

 \author{Be'eri Greenfeld} \address{Department of Mathematics, University of Washington, Seattle, WA 98195-4350, USA} \email{grnfld@uw.edu}
 
 \author{Alexander Yu. Olshanskii} \address{Department of Mathematics, 1326 Stevenson Center, Vanderbilt University, Nashville, TN 37240, USA} \email{alexander.olshanskiy@Vanderbilt.edu}

\maketitle

\begin{abstract}
We construct novel examples of finitely generated groups that exhibit seemingly-contradicting probabilistic behaviors with respect to Burnside laws. 
We construct a finitely generated group that satisfies a Burnside law, namely a law of the form $x^n=1$, with limit probability 1 with respect to uniform measures on balls in its Cayley graph and under every lazy non-degenerate random walk, while containing a free subgroup. 
We show that the limit probability of satisfying a Burnside law is highly sensitive to the choice of generating set, by providing a group for which this probability is $0$ for one generating set and $1$ for another. 
Furthermore, we construct groups that satisfy Burnside laws of two co-prime exponents with probability 1. Finally, we present a finitely generated group for which every real number in the interval $[0,1]$ appears as a partial limit of the probability sequence of Burnside law satisfaction, both for uniform measures on Cayley balls and for random walks.  

Our results resolve several open questions posed by Amir, Blachar, Gerasimova, and Kozma. The techniques employed in this work draw upon geometric analysis of relations in groups, information-theoretic coding theory on groups, and combinatorial and probabilistic methods.
\end{abstract}

\setcounter{tocdepth}{1}

\section{Introduction}

In group theory, a law is a non-trivial element $w\in F(x_1,x_2,\dots)$ in the free group. Given a group $G$, we say that $G$ satisfies the law $w$, if $w$ vanishes under every substitution of elements from $G$. For instance, the commutator $[x_1,x_2]$ is a group law of any abelian group, the iterated commutator $[x_1,[x_2,[\cdots[x_l,x_{l+1}]]$ is a group law of any $l$-step nilpotent group, and by Lagrange's theorem, every finite group of order $n$ satisfies the law $x^n$.

Suppose that a group $G$ is equipped with a probability measure $\mu$, such as the uniform measure on a finite group, or more generally, the Haar measure on a compact group. 
Given a law $w=w(x_1,\dots,x_d)$, one can now compute the probability that $w$ is satisfied in $G$ under a $\mu$-random substitution $x_1\mapsto g_1,\dots,x_d\mapsto g_d$. 
Namely:
$$\Pr_{G,\mu}(w=1) := \mu \big(\{(g_1,\dots,g_d)\ : \ w(g_1,\dots,g_d)=1\}\big).$$ 
If $G$ or $\mu$ are clear from the context, we omit them from the notation, writing $\Pr_\mu(w=1)$, $\Pr(w=1)$, etc.

In many cases, laws satisfied in $G$ with sufficiently high probability must actually hold in full in $G$, or imply that a similar law holds in $G$. For instance, a classical theorem of Gustafson \cite{Gus} states that if $\Pr([x_1,x_2]=1)>\frac{5}{8}$ in a finite group $G$, then $G$ must be abelian; results in a similar vein have been proven for iterated commutators laws \cite{Eber_Shum}, metabelian laws and Engel laws \cite{DJMN}, and power laws \cite{Laf1,Laf2,Laf3,Mann,Mann_Mart}.

The current article focuses on infinite discrete groups. In the case where $G$ is infinite, 
there is no longer a natural probability measure on $G$ to consider. A common approach \cite{AMV,Tointon,ABGK} is to consider a sequence of probability measures $\vec{\mu}=\{\mu_i\}_{i=1}^{\infty}$ on $G$, whose supports exhaust $G$. 
We say that $G$ satisfies a law $w$ with probability $1$ with respect to $\vec\mu$ if:
$$\lim_{i\rightarrow \infty}
\Pr_{G,\mu_i}(w=1) = 1.$$
Natural sequences $\vec\mu$ to consider (as in \cite{AMV,Tointon,ABGK}) on 
any finitely generated infinite discrete groups include:

\begin{itemize}
    \item \textbf{Uniform measures on balls}. Fix a finite symmetric generating set $S$ of $G$. 
    Denote by $U_S(R)$ the uniform measure on the ball of radius $R$ centered at the identity element in the Cayley graph of $G$ with respect to $S$.

    \item \textbf{Random walks}. Fix a non-degenerate measure $\nu$ on $G$, that is, such that its support $\supp(\nu)$  generates $G$ as a semigroup. Consider a random walk $\{X_i\}_{i=1}^{\infty}$ on $G$ whose step distribution is $\nu$; thus, the distribution of $X_i$ is given by $i$-fold self-convolution of $\nu$, denoted $X_i\sim \nu^{*i}$. In many cases, one also adds the assumption that $X_i$ is \emph{lazy}, that is, $\nu(e)>0$, or \emph{symmetric}, that is, $\nu(g)=\nu(g^{-1})$ for all $g\in G$. 
\end{itemize}
 
It is interesting to notice that while the measure sequences $\{U_S(R)\}_{R=1}^\infty$ and $\{X_i\}_{i=1}^\infty$ do share some similarities, they can behave surprisingly differently from each other. For instance, symmetric, lazy random walks always visit every finite-index subgroup with limit probability reciprocal to its index, while uniform measures on Cayley balls need not (see \cite[Theorem 1.11]{Tointon} and subsequent remark). 
Another interesting notion of probabilistic laws, applicable to residually finite groups, is given by the uniform measures on their finite quotients (for instance, see \cite{LarsenShalev,Shalev} and references therein). See \cite{ABGK} for a thorough discussion of the various notions of probabilistic group laws.

As in the case of finite group, the following broad question naturally arises:
\begin{question}
    Given a group satisfying a law with high probability (or: with probability 1), how close is it to satisfying a genuine group law?
\end{question}
Amir, Blachar, Gerasimova and Kozma showed in \cite[Theorem 6.3]{ABGK} that the wreath product $F_2\wr \mathbb{Z}^5$ admits simple\footnote{A random walk is simple if its step distribution is uniform on some finite set.} non-degenerate random walks for which the metabelian law $[[x,y],[z,w]]$ holds with probability arbitrarily close to $1$, although it evidently contains a free subgroup; however, the constructed random walks change as the probability approaches $1$, which led the authors to ask if a law holding in probability $1$ with respect to a fixed non-degenerate random walk on a given group must be a genuine group law for this group \cite[Question 13.3]{ABGK}. 
Our first main result resolves this question in the negative:

\begin{mainthm}[Asymptotic Burnside group with a free subgroup]\label{thm:Theorem A} 
Let $n\in \mathbb{N}$ be a large enough odd integer. Then there exists a finitely generated group $G=\left<S\right>$ such that: 
$$ \lim_{R\rightarrow \infty} \Pr_{U_S(R)}(x^n=1) = 1 $$
and for \textit{every} non-degenerate random walk 
$\{X_i\}_{i=1}^{\infty}$,  
$$ \lim_{i\rightarrow \infty} \Pr\left((X_i)^n=1\right) = 1; $$ 
yet, $G$ contains a non-abelian free subgroup.
\end{mainthm}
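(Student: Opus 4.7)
The plan is to realize $G$ as an Olshanskii-style graded small cancellation quotient of a free group of large rank, in the same spirit in which the classical free Burnside groups $B(m,n)$ are constructed for large odd $n$. Start with $F=F(S)$ on a sufficiently large alphabet, and fix two designated test words $u,v\in F$ that will serve as the generators of the required free subgroup. One then inductively imposes relations $R_i^n=1$, where each $R_i$ is selected so as to satisfy the graded small cancellation condition $\scc$ over the group $G_{i-1}$ already constructed, and in addition is selected to be ``transverse'' to every power of $u$ and $v$, in the sense that any common subword with a word in $\{u^{\pm 1},v^{\pm 1}\}$ is of length at most the small cancellation piece threshold. The group $G$ is the inductive limit.

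For the Cayley-ball statement I would choose the relators $R_i$ so that the family $\{R_i\}$ also forms a ``covering code'' in $F$: up to small-cancellation corrections, a density-one subset of cyclically reduced words in the $R$-sphere of $F$ is absorbed, for every sufficiently large $R$, by conjugates of powers of some $R_i$, and hence maps in $G$ to an element conjugate to a power of $R_i$, whose $n$-th power is trivial. Standard van Kampen diagram analysis over graded small cancellation groups converts this absorption into the concrete statement that such an element has order dividing $n$ in $G$, and a counting argument on the spheres of the Cayley graph yields $\lim_R \Pr_{U_S(R)}(x^n=1)=1$.

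For arbitrary non-degenerate random walks one has to upgrade from uniform-on-balls to general step distributions. The main input is a concentration estimate: after $i$ steps, $X_i$ concentrates on a set of endpoints whose Olshanskii normal form has length comparable to the $\nu$-entropy times $i$, and one then invokes the covering code on this typical set. The coding argument must be uniform in the (arbitrary) non-degenerate $\nu$, and this is presumably where the information-theoretic coding theory on groups referenced in the abstract enters. Concurrently, the transversality condition on $\{R_i\}$, together with the standard small cancellation principle that no reduced word in $u$ and $v$ contains a long enough piece of any relator $R_i^n$, shows that the images of $u,v$ freely generate a non-abelian free subgroup $F_2\le G$.

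The principal difficulty is the simultaneous realization of the two competing constraints on $\{R_i\}$: pushing the covering density close to $1$ requires many, long relators at each stage, while the transversality condition spends the small-cancellation margin needed to keep $\langle u,v\rangle$ free. The crucial quantitative estimate should show that the covering density can be driven to $1$ while keeping the graded small cancellation parameters bounded away from their critical values, and that this balance is simultaneously robust against the worst-case choice of random-walk step distribution $\nu$.
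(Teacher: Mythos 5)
Your overall architecture (a graded Burnside-type quotient in which two designated elements survive as free generators, plus a density argument showing almost every element is torsion) matches the paper's strategy, but there is a genuine gap in the random-walk half, and the mechanism you propose there would not work. You suggest proving $\Pr((X_i)^n=1)\to 1$ via a concentration estimate saying that $X_i$ concentrates on elements of normal-form length about (entropy)$\times i$, and then invoking the same ``covering code'' used for Cayley balls. This fails for two reasons. First, the theorem is claimed for \emph{every} non-degenerate random walk, including ones with infinite support, infinite entropy, or no symmetry, for which no such concentration is available. Second, and more fundamentally, even for nice walks the distribution of $X_i$ is very far from uniform on the sphere of the corresponding radius, so a density-one statement for uniform measures on balls cannot be transferred to the random-walk measure; the paper explicitly emphasizes (citing Tointon) that these two measure sequences can behave very differently. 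The paper avoids this entirely with a quotient trick: the construction is arranged so that every non-torsion element of $G$ is conjugate into the free subgroup $H=\langle a,b\rangle$, hence lies in the normal closure $N=\langle\langle H\rangle\rangle$, and $G/N\cong \calB(m,n)$ is a non-amenable group by Adian. Therefore $\Pr((X_i)^n\neq 1)\le \Pr(X_i\in N)=\Pr(\overline{X}_i=1)$, the return probability of the pushed-forward non-degenerate walk on $\calB(m,n)$, which decays to zero by Kesten-type results. This single observation handles all non-degenerate step distributions uniformly, which is exactly the robustness your sketch identifies as the ``principal difficulty'' but does not resolve.

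Two smaller points. For the uniform-measure half, your ``covering code'' of density one is both vaguer and weaker than what is needed: the paper takes the period sets \emph{maximal} among words not conjugate into $\langle a,b\rangle$, so that every element is either torsion or conjugate into $H$; the remaining work is then a concrete count showing that $\bigcup_{x}xHx^{-1}$ meets $B_G(R)$ in at most roughly $3^R(2m-1)^{R/2}$ elements (using a diagram lemma giving $\|xhx^{-1}\|=2\|x\|+\|h\|_H$ for the minimal-length representatives), which is exponentially negligible against $\#B_G(R)\ge (2m-1)^{0.999R}$. You would need to supply an analogous quantitative bound on the exceptional set rather than assert that a density-one covering can be arranged. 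For the freeness of the subgroup, your transversality/piece condition is a plausible substitute for the paper's argument (which runs a $\gamma$-cell analysis on an annular diagram to show any conjugation into $\langle a,b\rangle$ already occurs in rank $0$), so that part is acceptable in spirit, though it too needs the diagram-level details to be carried out.
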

The group $G$ in Theorem \ref{thm:Theorem A} seems to satisfy the law $x^n$ when tested in any reasonable probabilistic sense. However, it does not actually satisfy $x^n$, nor any other group law, as it clearly contains a free subgroup. 
Theorem \ref{thm:Theorem A} provides a negative answer also to \cite[Question 13.1]{ABGK}, showing that large enough Burnside laws do not satisfy `gap phenomena' as established for low powers in \cite{ABGK}. The first statement of Theorem \ref{thm:Theorem A} was independently obtained in \cite{AB,ABM}.

The authors of \cite{ABGK} have further posed the question of whether it is possible for a finitely generated group to satisfy a given law with probability $0$ with respect to one generating set but with positive limit probability with respect to another generating set (\cite[Question 13.4]{ABGK}). It is worth noting that for many significant invariants of random walks, such as entropy and expected speed, it is unknown whether these depend only on the ambient group or can vary across different lazy, symmetric, non-degenerate, finitely supported random walks \cite{ZhengICM}.

The next results answer \cite[Question 13.4]{ABGK} in the positive, in the strongest possible way. For both uniform measures and symmetric random walks, we construct groups which are simultaneously of finite exponent and torsion-free with probability $1$, with respect to different generating sets (resp., different lazy, symmetric, non-degenerate, finitely supported random walks).

\begin{mainthm}[{Contrasting generating sets}] \label{thm:Theorem B} There exists a group $G$, an odd integer $n\in \mathbb{N}$, and two finite symmetric  generating sets $X$ and $Y$ of $G$, such that: $$\lim_{R\to \infty} \Pr_{U_X(R)}(x\ \text{has finite order})=0\ \ \ \text{and}\ \ \ \lim_{R\to \infty} \Pr_{U_Y(R)}(x^n=1)=1.$$
Moreover, every finite order element $x\in G$, satisfies $x^n=1$.
\end{mainthm}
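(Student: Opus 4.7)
The plan is to apply Theorem~\ref{thm:Theorem A} and then leverage the free subgroup it guarantees to produce the contrasting generating set on the same group. I would begin by invoking Theorem~\ref{thm:Theorem A} to obtain a finitely generated group $G$ with symmetric generating set $Y := S$ satisfying $\lim_R \Pr_{U_Y(R)}(x^n=1) = 1$, and containing a non-abelian free subgroup $F = \langle a, b\rangle$. This immediately supplies one half of the theorem.

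The heart of the argument is constructing a second generating set $X$ of the \emph{same} group $G$ for which $\lim_R \Pr_{U_X(R)}(x^n=1) = 0$. The idea is to choose $X$ so that the Cayley ball $B_X(R)$ is asymptotically dominated by elements of a free subgroup of $G$ (every non-trivial element of such a subgroup has infinite order and therefore fails $x^n = 1$). The naive attempt---taking $X$ to be a free basis of $F$ together with a minimal set of extra elements to generate $G$---does not work out of the box, since the exponential growth rate $\lambda_X$ of $\mathrm{Cay}(G,X)$ may strictly exceed the growth rate $2\cdot\mathrm{rank}(F)-1$ of $F$, so $F$-elements constitute only an exponentially vanishing fraction of $B_X(R)$.

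To fix this, I would perform the Theorem~\ref{thm:Theorem A} construction with additional control built in from the start, rather than invoking it as a black box: arrange that $G$ contains a prescribed free subgroup $\tilde F$ of \emph{sufficiently large rank}, embedded so that $\tilde F$ is undistorted and so that $G$ admits a generating set $X$ whose exponential growth rate matches that of $\tilde F$. The Olshanskii-style small cancellation/graded diagram techniques employed in Theorem~\ref{thm:Theorem A} provide enough flexibility to impose such additional constraints: one simultaneously controls which short relations are added (to enforce $x^n=1$ on a density-$1$ set of $Y$-ball elements) and the rank/embedding of the free subgroup (so that $X$-ball elements are overwhelmingly free). The verification for $Y$ then goes through essentially as in Theorem~\ref{thm:Theorem A}, while for $X$ one shows that $|B_X(R)\cap \tilde F|/|B_X(R)|\to 1$, from which $\Pr_{U_X(R)}(x^n=1)\to 0$ follows immediately.

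The main obstacle will be the \emph{sharp} asymptotics: ensuring that the proportion of torsion-of-order-dividing-$n$ elements in $B_X(R)$ tends to $0$ (not merely to some positive constant $<1$), and dually that in $B_Y(R)$ it tends to $1$ (not merely stays bounded below). Achieving this requires precise growth-rate estimates for $(G,X)$, comparison with the growth of $\tilde F$, and a distortion bound showing that elements outside $\tilde F$ contribute negligibly to $B_X(R)$. Carrying out this coordinated construction---producing a single group together with two generating sets whose Cayley geometries are engineered to expose opposite features---is the technical crux of the theorem, and all of the probability-$0$ vs. probability-$1$ dichotomy is ultimately reduced to this geometric/combinatorial growth comparison.
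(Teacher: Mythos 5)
Your plan for the generating set $X$ hinges on arranging that $\#(B_X(R)\cap \tilde F)/\#B_X(R)\to 1$ for a proper free subgroup $\tilde F\le G$, and this step is not merely hard to engineer --- it is impossible for any proper subgroup of any finitely generated group, no matter how the construction is tuned. Indeed, pick $g\in G\setminus\tilde F$ and set $c=\|g\|_X$. Since $g\tilde F\cap\tilde F=\emptyset$ and left translation by $g$ maps $B_X(R-c)\cap\tilde F$ injectively into $B_X(R)\cap g\tilde F$, one gets $\#B_X(R)\ge a_R+a_{R-c}$, where $a_R=\#(B_X(R)\cap\tilde F)$. If the density of $\tilde F$ tended to $1$, then $a_{R-c}/a_R\to 0$, which forces $a_R$ to grow faster than any exponential, contradicting $a_R\le (2|X|+1)^R$. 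Hence $\liminf_R a_R/\#B_X(R)<1$. Only the set of elements \emph{conjugate into} $\tilde F$ --- which is not a subgroup, so the coset argument does not apply --- could conceivably have density $1$, but your proposal explicitly argues with the subgroup itself (``elements outside $\tilde F$ contribute negligibly to $B_X(R)$'') and offers no mechanism for controlling the union of conjugates. Beyond this, the proposal defers the entire construction to unspecified ``additional control'' in the graded-diagram machinery, which is exactly where the content of the theorem would have to live.

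The paper's proof takes a different route that does not involve free subgroups at all. One builds a single partial Burnside group on $X=\{x_1,\dots,x_m\}$ in which the periods of each rank are taken maximal \emph{among $\theta$-words}, i.e.\ reduced words containing at least $\theta|W|$ disjoint squares $x_i^{\pm2}$ (with $\theta=0.03$ and $m>2^{100}$). Since $\theta$-words of length $r$ number at most $(2m-1)^{(1-\theta/2)r}$, a counting argument --- using the code-length machinery to bound the number of conjugators needed to reach a power of a period --- shows that torsion elements are exponentially negligible in $B_X(R)$, giving limit probability $0$. The second generating set is $Y=\{x_1,x_1^2,\dots,x_m,x_m^2\}$ of the \emph{same} group: a generic $Y$-geodesic word, rewritten in the letters of $X$, contains a positive density of squares, and one shows that its cyclically minimal conjugate is generically a $0.03$-word and generically not a proper power, hence (by maximality of the period sets) a period up to conjugacy and inversion, hence torsion; this gives limit probability $1$ for $Y$. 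The contrast is produced by how the two word metrics sample the set of designated periods, not by the presence of a free subgroup.
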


We prove a random walk version of Theorem \ref{thm:Theorem B} in Theorem \ref{thm:Theorem B'}. 
We also prove a similar theorem for two different exponents, constructing a group which simultaneously admits two different, co-prime exponents, each with limit probability $1$:
\begin{mainthm}[Different asymptotic  exponents]\label{thm:two different exponents}
There exists a group $G$, two finite symmetric generating sets $X$ and $Y$ of $G$, and two co-prime odd integers $n_1,n_2\in \mathbb{N}$ such that: $$\lim_{R\to \infty} \Pr_{U_X(R)}(x^{n_1}=1)=1\ \ \ \text{and}\ \ \ \lim_{R\to \infty} \Pr_{U_Y(R)}(x^{n_2}=1)=1.$$
\end{mainthm}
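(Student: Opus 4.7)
The plan is to combine the asymptotic Burnside construction of Theorem~\ref{thm:Theorem A}, applied separately to the two exponents $n_1$ and $n_2$, with the contrasting–generating-set strategy behind Theorem~\ref{thm:Theorem B}. For $i\in\{1,2\}$, invoke (the proof of) Theorem~\ref{thm:Theorem A} to produce a finitely generated group $G_i=\langle S_i\rangle$ of asymptotic exponent $n_i$ with respect to uniform measures on its $S_i$-Cayley balls. The goal is then to build a single ambient group $G$ containing both $G_1$ and $G_2$, equipped with two finite symmetric generating sets $X$ and $Y$, such that an $X$-ball in $G$ is, with overwhelming probability, geometrically dominated by $S_1$-letters (so the generic $X$-ball element looks like a $G_1$-element and hence has order dividing $n_1$), while a $Y$-ball is dominated by $S_2$-letters.

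Concretely, I would take $G$ to be a suitable amalgam or quotient of $G_1*G_2$, chosen so as not to disturb the asymptotic Burnside property of either factor. As generating sets I would take $X=S_1\cup T_1$, where $T_1$ is a small auxiliary set, say of bounded size, sufficient together with $S_1$ to generate $G$; and $Y$ symmetrically. By making $|S_1|$ much larger than $|T_1|$, a uniformly random $X$-word of length $R$ uses letters from $T_1$ with frequency concentrating near $|T_1|/|X|$, which can be made arbitrarily small. A normal-form/syllable analysis in the amalgam should then show that such a word reduces, with probability tending to $1$, to an element lying in $G_1$, whereupon the asymptotic Burnside property of $G_1$ supplied by Theorem~\ref{thm:Theorem A} delivers $\Pr_{U_X(R)}(x^{n_1}=1)\to 1$. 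The symmetric argument handles $U_Y(R)$ and $n_2$, and co-primality of $n_1,n_2$ guarantees the two asymptotic laws are genuinely distinct (and thus cannot be unified into a single genuine group law on $G$).

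The principal obstacle is constructing the ambient group $G$ so that \emph{mixed} elements — elements whose normal form alternates between $G_1$- and $G_2$-syllables — do not contaminate the statistics of either Cayley ball. In a plain free product, such mixed elements generically have infinite order and satisfy neither $x^{n_1}=1$ nor $x^{n_2}=1$; any ball contribution they make must therefore be shown to be negligible. This will likely require a graded-relation quotient of $G_1*G_2$, built in the spirit of the Adian–Novikov–Olshanskii construction of free Burnside groups, in which mixed elements are tamed (e.g.\ forced into $G_1$ or $G_2$, or killed) without destroying the individual asymptotic Burnside behavior of the two factors. This is the step where the geometric analysis of relations and coding-theoretic techniques advertised in the abstract should enter, as a refinement of the machinery already used for Theorem~\ref{thm:Theorem A}; once such a controlled $G$ is available, the remaining probability estimate reduces to an elementary concentration argument on the distribution of syllable types in random $X$- and $Y$-words.
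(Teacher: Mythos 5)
There is a genuine gap, and the approach as outlined would not work. The central difficulty is that in any amalgam or graded quotient of $G_1*G_2$, the set of elements conjugate into a factor $G_1$ occupies an exponentially negligible fraction of every Cayley ball --- this is exactly the phenomenon the paper exploits in Lemma~\ref{lem:uniform} to prove Theorem~\ref{thm:Theorem A}. Your concentration argument actually points the wrong way: if $T_1$-letters appear with frequency near $|T_1|/|X|>0$, then a random $X$-word of length $R$ contains on the order of $R|T_1|/|X|\to\infty$ such letters, and generically these do not cancel, so the word represents a \emph{mixed} element rather than an element of $G_1$. Thus $\Pr_{U_X(R)}(x\in G_1^{G})\to 0$, not $1$. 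The step you defer --- ``taming'' mixed elements by a graded quotient so that they are ``forced into $G_1$ or $G_2$, or killed'' --- is not a refinement of known machinery but the entire content of the theorem, and no coherent mechanism for it is proposed (killing all mixed elements collapses the group; ``forcing'' them into a factor is not a well-defined operation).

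The paper's actual construction (Section~\ref{sec:co-prime}) avoids subgroups altogether. It builds a single Burnside-type group on one alphabet $X=\{x_1,\dots,x_m\}$ and splits the \emph{periods}, not the generators, into two classes by a combinatorial criterion: periods that are $\theta$-words (containing many squares $x_j^{\pm2}$) receive the relation $A^{n_1}=1$, the rest receive $A^{n_2}=1$. Every element is then conjugate to a power of some period (Lemma~\ref{lem:every word is conj to a period of some type}), so every element has order dividing $n_1n_2$ and there are no infinite-order or ``mixed'' elements to control. With respect to $X$, a generic element is conjugate to a power of a non-$\theta$-word period and hence satisfies $x^{n_2}=1$; with respect to $Y=\{x_1,x_1^2,\dots,x_m,x_m^2\}$, the coding-length and genericity analysis of Section~\ref{sec:contrasting uniform} shows the cyclically minimal conjugate of a generic element is a $0.03$-word, hence an $n_1$-period, giving $x^{n_1}=1$. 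The idea of distinguishing the two exponents by a generic combinatorial property of words, rather than by membership in a subgroup, is the missing ingredient in your proposal.
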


Another natural question is how stable the probability sequence $\{\Pr_{\mu_i}(w=1)\}_{i=1}^{\infty}$ is; in particular, whether its limit always exists, see \cite[Question 13.5]{ABGK}. The following theorem, with which we conclude the paper, answers this question in the negative in the strongest possible sense.
\begin{mainthm}[Highly oscillating probabilities]\label{thm:Theorem D}
There exists a finitely generated group $G$ and an odd number $n$, with a finite generating set $S$ (respectively, with a symmetric, non-degenerate, finitely supported, lazy random walk $\{X_i\}_{i=1}^{\infty}$), such that every real number in $[0,1]$ occurs as a partial limit of the sequence: 
$$\left\{\Pr_{U_S(R)}(x^n=1)\right\}_{R=1}^{\infty}\ \ \ \left(\text{resp.}\ \ \ \Big\{\Pr((X_i)^n=1)\Big\}_{i=1}^{\infty}\right).$$
\end{mainthm}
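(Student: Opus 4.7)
The plan is to construct $G$ as a direct limit of an iterated small cancellation construction that alternates between two opposing mechanisms, tuned so that along a chosen sequence of radii the probability sequence densely samples $[0,1]$. Fix a countable dense set $\{\alpha_k\}_{k\ge 1} \subset [0,1]$ and a rapidly increasing sequence of radii $R_1 < R_2 < \cdots$. The goal is to arrange $|\Pr_{U_S(R_k)}(x^n = 1) - \alpha_k| < 1/k$ for each $k$ in the limit group; density of $\{\alpha_k\}$ then yields every value in $[0,1]$ as a partial limit of the probability sequence.

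The two mechanisms are a \emph{saturation phase}, in which, following the technique underlying Theorem \ref{thm:Theorem A}, we impose Burnside relations $w^n = 1$ on a generic family of words in a prescribed length window, driving the proportion of order-$n$ elements in balls of radii in that window toward $1$; and a \emph{dilution phase}, in which we enlarge the group by a new generator compatible with the graded small cancellation framework (via a free or HNN extension) which satisfies no relation of exponent dividing $n$ in the relevant window, so that in a slightly larger ball most new elements have infinite order, dragging the proportion toward $0$. Iterating these two phases with carefully chosen widths makes the probability sequence oscillate between values arbitrarily close to $0$ and values arbitrarily close to $1$. To target an intermediate value $\alpha_k$, we halt the relevant phase at the moment the running proportion crosses $\alpha_k$; since on our radius scale the proportion changes in small monotone increments of size $o(1)$ as $R$ varies in the window, this calibration is feasible to within $1/k$.

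The principal obstacle is reconciling the two phases inside a single consistent construction. We must maintain a strict hierarchy of small cancellation parameters so that generators and relations added at stage $k+1$ do not perturb the Cayley ball of radius $R_k$ already fixed at stage $k$; we must arrange compatibility of the newly added free letters with the previously imposed Burnside relations via Olshanskii's graded presentation framework; and we need quantitative control of ball sizes and torsion counts, combining the coding-theoretic growth estimates with the probabilistic methods used in the proofs of Theorems \ref{thm:Theorem A}--\ref{thm:two different exponents}. Once these parameters are set, the density of $\{\alpha_k\}$ makes the partial limits statement essentially automatic.

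The random walk version is handled analogously. Laziness together with finite support ensures that for suitable times $i_k$ the distribution of $X_{i_k}$ concentrates on a computable radius window, so the same saturation/dilution alternation produces the oscillation $\Pr\!\big((X_{i_k})^n = 1\big) \approx \alpha_k$, with uniform ball counts replaced by convolution tail estimates as in the random walk half of Theorem \ref{thm:Theorem A}.
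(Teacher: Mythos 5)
Your high-level plan for the uniform-measure case (a staged Burnside-type construction alternating phases that push the torsion density up and down, calibrated against a dense sequence of targets, with ball-stability guaranteed by the rank hierarchy) matches the paper's strategy in outline, but two of its load-bearing steps have genuine gaps. First, your dilution mechanism adds a new generator (free or HNN letter) at each dilution phase; with infinitely many oscillations this yields a group that is not finitely generated over a fixed $S$, contradicting the statement. The paper dilutes without touching the generating set: it simply declares $L_i=\emptyset$ over a long range of ranks, and the exponential growth of aperiodic words (Lemma \ref{lem:growth of BT groups}) together with the bound on conjugates of powers of the finitely many existing periods (Lemma \ref{lem:bound on conjugates of powers of single period}) forces the density back toward $0$ (Lemma \ref{cla:finite rank group has asymptotic torsion density 0}). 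Second, your calibration rests on the claim that the density changes in monotone increments of size $o(1)$ as $R$ increases within a window. That is unjustified and false in general: the sphere of radius $R$ occupies a fraction of $B(R)$ bounded away from zero, so the density can jump by a constant from $R$ to $R+1$ (this is exactly the obstruction recorded in the paper's Remark \ref{rem:conn not for cayley}). The paper achieves fine calibration differently: after overshooting the target $q_k$, it drops periods one at a time, each removal changing the density by at most $(2m-1)^{-0.3r'}$, which is genuinely small; this is the step your proposal is missing.

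For the random-walk case your proposal is also not workable as stated: even if the walk concentrated on a radius window (which is itself nontrivial for these groups), its distribution on that window is far from uniform, so density statements for Cayley balls do not transfer. The paper instead uses a soft argument that bypasses all of this: for a lazy walk, $d_{\mathrm{TV}}(\mu^{*(i+1)},\mu^{*i})\to 0$ (Lemma \ref{lem:TV0}), hence the set of partial limits of $\Pr((X_i)^n=1)$ is connected (Lemma \ref{lem:connected}); combining this with the group of Section \ref{sec:different generating sets}, whose walk has $\limsup=1$ and $\liminf=0$ of the torsion probability (Lemma \ref{lem:torsion density RW S oscillation}), immediately gives all of $[0,1]$ as partial limits with no calibration needed. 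You would do well to separate the two cases as the paper does, since the mechanisms that make each one work are different.
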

In fact, we prove a stronger version in Theorem \ref{thm:spectrum strong version}: 
we construct a single group in which, for all odd $N\geq n$, every number in $[0,1]$ is a partial limit of the sequence $\{\Pr_{U_S(R)} (x^N=1)\}_{R=1}^{\infty}$.

\bigskip

The groups we construct, which are finitely generated but not finitely presented, are (Gromov-Hausdorff) limits of hyperbolic groups. Although our
groups are not hyperbolic, and not even lacunary hyperbolic in the sense of \cite{OOS}, this framework allows us to leverage hyperbolic geometry for their local analysis. To achieve this, we use techniques reminiscent of small cancellation theory. However, our groups are not small cancellation bona fide, namely, they do not satisfy effective metric small cancellation conditions.

Our sharp and seemingly paradoxical results (particularly Theorems \ref{thm:two different exponents} and \ref{thm:Theorem D}) depend on highly precise estimates of torsion occurrence. Traditional small cancellation theory, which relies on measuring word length, is inherently limited in handling relations dominated by periodic components. To overcome this limitation, we develop a novel information-theoretic compression method, which we term ``code length." This approach efficiently encodes paths in the Van Kampen diagrams of the constructed groups. 
For instance, we utilize code length to demonstrate that triangles in the resulting (limit) Cayley graphs are generically almost-tripods; that is, statistically, our Cayley graphs locally resemble trees.

By combining this machinery with analyses of random walks, algebraic arguments, and the asymptotic combinatorics of words in free groups, we ultimately achieve the desired results.

\section*{Acknowledgements}
We thank Gidi Amir, Guy Blachar, R\'emi Coulon, Gady Kozma, Tom Meyerovich,  Denis Osin, and Tianyi Zheng for insightful conversations and helpful comments. We thank the referee for a careful reading of the paper and for many corrections and suggestions that improved the readability of the paper. We further thank Anton Klyachko and Igor Lysenok, who brought to our attention an error in the proof of Theorem \ref{thm:Theorem A} in an earlier version of this manuscript.

\section{Burnside-type groups}\label{sec:partial burnside groups}
\subsection{Burnside groups}
The (bounded) Burnside Problem \cite{Burnside1902} asks whether a finitely generated group of finite exponent\footnote{The unbounded version, where every element is required to be torsion, was solved using Golod--Shafarevich algebras, see \cite{Golod}.} must be finite. Equivalently, whether the free Burnside group:
$$ \calB(m,n) = \left<x_1,\dots,x_m\ |\ X^n=1\ \text{for every word}\ X\ \text{in the alphabet}\ x_1,\dots,x_m\right> $$
is finite for every positive integers $n,m\geq 2$. 
This problem has been settled in 1968 by Novikov and Adian \cite{NA} who proved that $\calB(m,n)$ is infinite for sufficiently large odd $n$ and $m$, and for even exponents in \cite{Iv94}; see also \cite{Ad}.
Ol'shanskii \cite{Ol82,OL-book} introduced a geometric approach to this problem; see also the recent improvement by Atkarskaya, Rips and Tent \cite{ART}. For the rest of the paper, we focus on odd exponents.

\subsection{Burnside-type presentations}
The groups constructed in this article belong to a large family of groups called \emph{Burnside-type groups}.

\begin{definition}[Burnside-type presentations and Burnside-type groups]\label{def:minimal partial Burnside presentation}
A \emph{Burnside-type presentation} is any presentation constructed as follows.

Let $S$ be an alphabet and $n$ be a large integer. Let $L_0=R_0 =E_0= \emptyset$ and let $G(0) = F(S)$ be the free group on $S$. Proceed by induction to define for all $i\in \mathbb{N}$ sets of words $L_i,R_i$, set of odd integers $E_i$, and a group $G(i)$. For $i\in \mathbb{N}$, suppose that a group $G(i-1)$ and a set of words $R_{i-1}$ have already been defined. Let $L_i$ be a set of words over $S$ satisfying the following three conditions:
\begin{enumerate}[label=(L\arabic*)] 
    \item Each $A\in L_i$ has length $i$.
    \item Each $A \in L_i$ is not conjugate in $G(i-1)$ to any power of a word shorter than $A$. In particular, $A$ is cyclically reduced and is not a proper power.
    \item If $A,B \in L_i$ with $A \neq B$, then $A$ is not conjugate in $G(i-1)$ to either $B$ or $B^{-1}$.
\end{enumerate}

Let $E_i$ be a set of odd integers greater or equal to $n$, and let $A\mapsto n_A$ be a mapping $L_i\to E_i$.
Let $R_i= \{A^{n_A} \ : \ A \in L_i\}\cup R_{i-1}$ and let
\begin{equation}\label{eq:G(i)}
    G(i)=\langle S \mid W=1 \ :\ W\in R_i \rangle,
\end{equation}
This completes the construction of $G(i),L_i,R_i,E_i$ for every $i\geq 0$. Finally, let $R=\bigcup_{i=0}^{\infty} R_i$ and consider the group $G$ given by the 
presentation
\begin{equation}
     G=G(\infty) = \langle S \mid W=1\ :\ W\in R=\bigcup_{i=0}^{\infty}R_i\rangle.\label{eq:G(infty)}
\end{equation}

A \emph{Burnside-type group} is any group admitting a Burnside-type presentation, namely, a presentation as in (\ref{eq:G(infty)}). 
The group $G(i)$ is called \emph{the rank $i$ group}, and elements of $L_i$ are called \emph{periods of rank $i$}. Note that by construction, for each period $A$, its rank is equal to its length $|A|$. The \emph{rank of $G$} is defined to be the maximal $i\in \mathbb{N}\cup \{0\}$ for which $L_i\neq \emptyset$, or $\infty$ if $L_i\neq \emptyset$ for infinitely many $i$'s.
\end{definition}

Note that every $i\in \mathbb{N}\cup \{0\}$, the identity map $S\to S$ extends to an epimorphism $\alpha_i:G(i)\twoheadrightarrow G(i+1)$, and the group $G$ is the direct limit of the sequence: 
$$G(0)\xrightarrow{\alpha_0}G(1)\xrightarrow{\alpha_1}\cdots.$$

\begin{remark}[Free Burnside groups]
Definition \ref{def:minimal partial Burnside presentation} is similar to the presentation of free Burnside groups given in \cite[Section §18]{OL-book}, but with two relaxations:
\begin{enumerate}
    \item The set of periods (namely, elements in the free group that are made torsion by a relation) chosen at the $i^{\text{th}}$ step is not necessarily maximal.
    \item The imposed relations are of the form $A^{n_A}=1$, where the exponent $n_A\ge n$ may vary depending on the period $A$.
\end{enumerate}
Indeed, if $L_i$ are taken to be maximal sets satisfying (L1)--(L3), and $n_A=n$ for all periods $A$, the resulting group $G$ is the free Burnside group $\calB(m,n)$, $m=|S|$.
\end{remark}

\begin{remark}[Partial-Burnside groups.]
A Burnside-type presentation in which $n_A=n$ for all periods $A$ is called a \emph{partial-Burnside presentation}. A group admitting a partial-Burnside
presentation is called a \emph{partial-Burnside group}, see \cite{boatman2012partial}. Boatman proved that the above definition is equivalent to a non-inductive definition of partial-Burnside groups: a group $G$ is partial-Burnside (of partial-exponent $n$) if and only if every element of finite order $g\in G$ satisfies $g^n=1$, and $G$ admits a presentation where every relation has the form $W^n=1$ for some word $W$ in the generators of $G$.
\end{remark}

More details on Burnside-type Presentations are found in \cite[Chapters 7,8]{OL-book}.

\section{Geometric preliminaries}

\subsection{Words} Let $S=\{x_1,\dots,x_m\}$ be a finite set. We say that $X$ is \emph{a word over $S$} if it is a concatenation of letter from $S^{\pm 1}=\{x_1^{\pm 1},\dots,x_m^{\pm 1}\}$. Let $X$ and $Y$ be words over a set $S$. 
We write $X\equiv Y$ if $X$ and $Y$ consist of the same letters in the same order. 
A word $Y$ is a \emph{cyclic shift} of $X$ if there exist words $U,V$ such that $Y \equiv UV$ and $X \equiv VU$. A word over $S$ is called \emph{reduced} if it contains no subword of the form $x x^{-1}$ or $x^{-1} x$ with $x \in S$, and \emph{cyclically reduced} if all of its cyclic shifts are reduced. 

\subsection{Maps} 
A \emph{spherical map} is a finite cellular partition of a $2$-sphere. Let $\Pi_1,\dots,\Pi_k$ be the cells ($2$-cells) of this partition. Removing the interior of a cell $\Pi_i$, we obtain a \emph{circular map} (geometrically, a disk), whose boundary is the boundary of $\Pi_i$. Removing the interiors of two cells (if $k\ge 2$) we obtain an \emph{annular map} whose boundary consists of two connected components.

For an edge $e$ in a map $\Delta$, we denote by $e_-$ and $e_+$ the initial and terminal vertices of $e$, respectively. By a \emph{path} in $\Delta$, we mean a concatenation $p = e_1e_2 \cdots e_k$ of edges $e_i$ with $(e_i)_+ = (e_{i+1})_{-}$ for $i = 1,\dots,k - 1$. The initial endpoint of $p$ is $p_{-} = (e_1)_{-}$ and its terminal endpoint is $p_+ = (e_k)_+$. 
For a path $p = e_1 \cdots e_k$, any path $e_ie_{i+1} \cdots e_j$ with $i \leq j$ is called a \emph{subpath} of $p$. If a path $p$ has that $p_{-} = p_+$, we can regard $p$ as a so-called \emph{cyclic path} by identifying all of its cyclic shifts, where indices are taken modulo $k$. A \emph{subpath of a cyclic path} is a subpath of any of its cyclic shifts.
We define the \emph{length} $|p|$ of $p$ to be the number of edges in $p$. 

Let $p$ be a path on a map $\Delta$, and suppose that $p_-=p_+$ and that the edges of $p$ form the boundary of a subspace $\Delta' \subseteq \Delta$  that is connected and simply connected. Then the restriction of a cell decomposition $\Delta$ to $\Delta'$ is a decomposition of $\Delta'$ , called \emph{a submap} of the map $\Delta$.
A \emph{contour} of $\Delta$ is a cyclic 
path in the boundary $\partial \Delta$. 
Circular maps have
one contour and annular maps have two; one is called the outer and the other is
the inner. The contour of a circular map (submap) $\Delta$ will be denoted by $\partial \Delta$. 
By convention we agree that the contour $\partial \Pi$ of a cell $\Pi$, and the inner contour $p$ of an annular map, travel counter-clockwise; while the boundary $\partial \Delta$ of a circular map and the outer component $q$ of the boundary of an annular map, go clockwise, see Figure \ref{fig:orientation}. A subpath of a contour will be called a \emph{section}.
\begin{figure}    \centering    \includegraphics[width=0.5\linewidth]{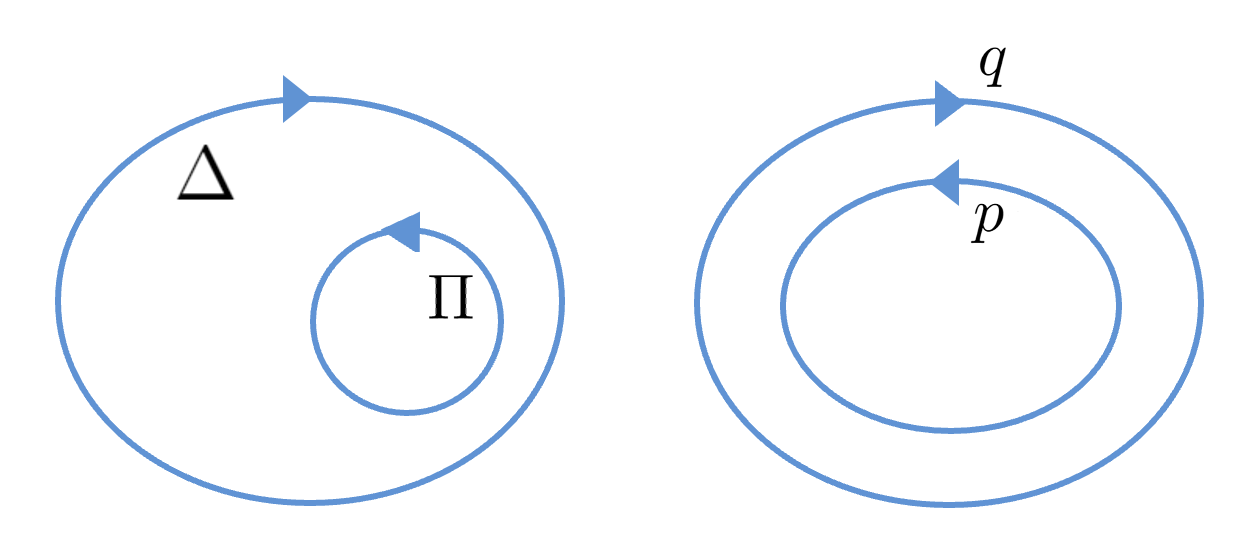}    \caption{Convention on orientation.}   \label{fig:orientation}\end{figure}

\subsection{Diagrams} 
Let $\langle S \mid R\rangle$ be a presentation of a Burnside-type group $G$, as in Definition \ref{def:minimal partial Burnside presentation}. 
Recall that $R = \bigcup_{i=0}^{\infty}R_i$, where $R_i\supseteq R_j$ if $i>j$, and equalities $R_i=R_j$ are possible. No word of $R_i\setminus R_j$ can coincide with a conjugate of a word (or an inverse of a word) from $R_j$ if $i>j$, and words in $R_i$ are non-empty and cyclically reduced, for all $i$. 
A \emph{diagram} over a Burnside-type presentation consists of a (circular or annular) map $\Delta$ 
together with a labeling function:
$$\Lab:\{\text{Edges of }\Delta\}\to S^{\pm 1}$$
that extends to the set of paths in $\Delta$ by declaring, for any path $e_1\cdots e_k$ in $\Delta$,
$$\Lab(e_1e_2 \cdots e_k) := \Lab(e_1)\Lab(e_2)\cdots \Lab(e_k),$$ 
and such that:
\begin{enumerate}
    \item $\Lab(e^{-1}) = \Lab(e)^{-1}$, and
    \item the contour label $\Lab(\partial \Pi)$ of any cell $\Pi$ 
    is a cyclic shift of a word (or an inverse of a word) from $R_i$ for some $i$. 
    The rank of $\Pi$ is the maximal $i$ such that $\Lab(\partial \Pi)$ is a cyclic shift of a word (or an inverse of a word) from $R_i \setminus R_{i-1}$.
\end{enumerate}
The \emph{rank} of a diagram $\Delta$ equipped with a labeling function as above is defined as the maximal $i$ such that $\Delta$ includes a cell of rank $i$. It is denoted by $r(\Delta)$. 


Since all groups in this paper are of Burnside-type, we will refer to diagrams over Burnside-type presentation as simply `diagrams'.  
We typically use the same name, $\Delta$, for both the diagram and the underlying map. 


\begin{remark}
    A circular (resp., annular) diagram $\Delta$ is not necessary homeomorphic to a topological disk (resp., annulus), but one can add to $\Delta$ several cells whose boundary label is trivial in the free group and obtain a topological disk (resp., annulus) $\Delta'$, see the figure below. However, we shall not use such cells (called $0$-cells) in this paper. For more details see \cite[Section §11]{OL-book}.\begin{figure}\label{fig:0 cells}
    \centering
\includegraphics[width=0.8\linewidth]{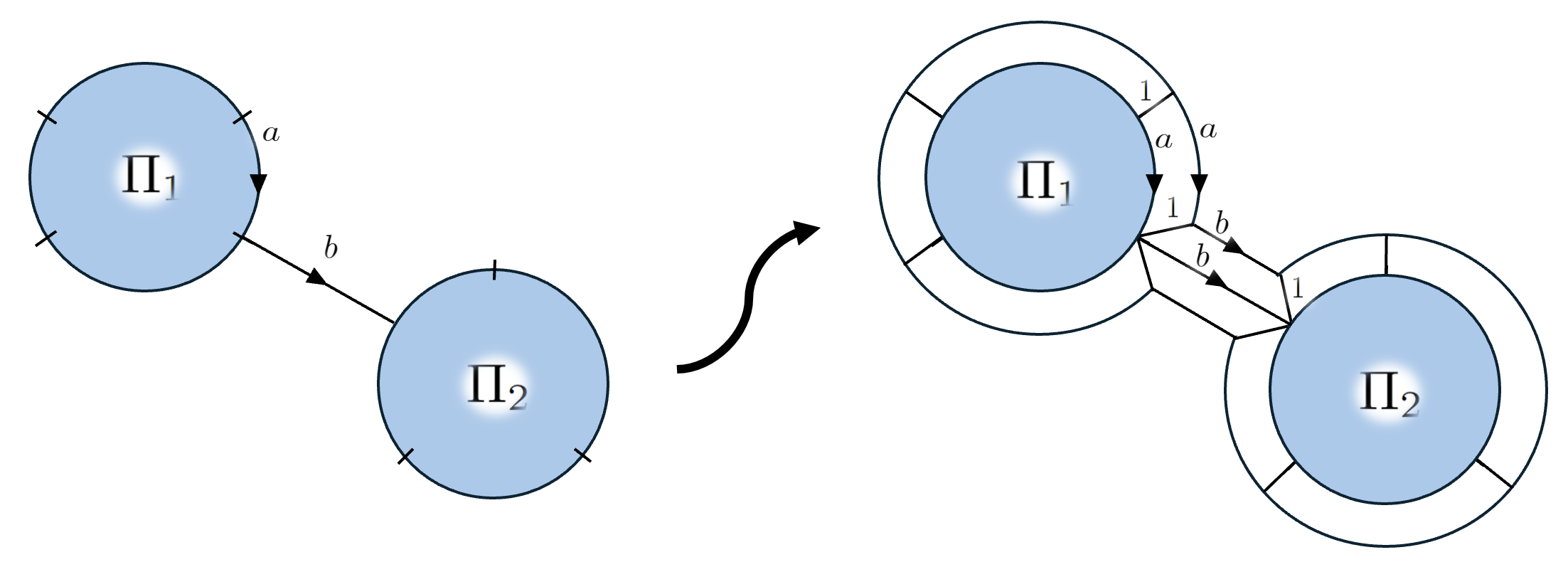}
    \caption{A diagram non-homeomorphic to a disc (left) can be transformed into a diagram homeomorphic to a disc (right) by adding cells whose boundary label has the form $a1a^{-1}1$, where $1$ denotes the identity element. By definition, the edges labeled $1$ have length $0$.}
    \label{fig:placeholder}
\end{figure}
\end{remark}

A diagram (over a Burnside-type presentation) is called \emph{reduced} if it does not contain a subdiagram $\Gamma$, having exactly two cells of some rank $i\ge 1$, such that the boundary label
of $\Gamma$ is trivial in rank $i-1$. In the current paper we only deal with reduced diagrams, which is possible due to the following. 

\begin{lemma}[Theorem 13.1 in \cite{OL-book}]
\label{lem:thm 13.1 van kampen} 
Let $W$ be a non-empty word over $S$. Then $W = 1$ in the group $G = \langle S \mid R=\bigcup_{i=0}^{\infty} R_i\rangle$ (respectively, in $G(i) = \langle S \mid R=\bigcup_{j=0}^{i} R_j\rangle$ for some $i\in \mathbb{N}$) if and only if there is a reduced circular diagram over this presentation of $G$ (resp., $G(i)$), with contour $p$,
such that $\Lab(p)\equiv W$.
\end{lemma}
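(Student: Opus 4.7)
The plan is to establish the two directions separately. The forward implication --- existence of a reduced diagram with contour label $W$ implies $W = 1$ --- does not actually use reducedness: reading off the contour of any circular diagram exhibits $W$ as a product of conjugates of cell boundary labels, each of which is a cyclic shift of a relator in some $R_i \subseteq R$. Hence $W$ lies in the normal closure of $R$ and equals $1$ in $G$; the same applies in $G(i)$ when the diagram has rank at most $i$.

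For the converse, I would first reduce the infinite-rank case to the finite-rank case: if $W = 1$ in $G = G(\infty)$, then the equality is witnessed by finitely many defining relators, so $W = 1$ already holds in $G(i)$ for some $i \in \mathbb{N}$. It therefore suffices to prove, by induction on $i \geq 0$, the sharper statement that $W = 1$ in $G(i)$ implies the existence of a reduced circular diagram of rank at most $i$ with contour labeled $W$. The base case $i = 0$ is the classical van Kampen Lemma for the free group $G(0) = F(S)$, where reducedness is automatic since there are no cells. For the inductive step, the standard van Kampen Lemma yields some (not necessarily reduced) circular diagram $\Delta$ of rank at most $i$ with contour labeled $W$. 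If $\Delta$ is not reduced, it contains a subdiagram $\Gamma$ with exactly two cells of some rank $j \geq 1$ whose boundary label is trivial in $G(j-1)$. Applying the inductive hypothesis at rank $j-1$ to this boundary label produces a reduced diagram $\Gamma'$ of rank at most $j-1$ with the same contour label as $\Gamma$. Excising $\Gamma$ from $\Delta$ and gluing $\Gamma'$ in its place yields a new diagram $\Delta'$ of rank at most $i$ whose contour is still labeled by $W$.

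To see termination, I would assign to each diagram $\Delta$ the type
\[
\tau(\Delta) = (n_i, n_{i-1}, \ldots, n_1),
\]
where $n_j$ is the number of cells of rank $j$ in $\Delta$, and compare such tuples lexicographically from the top. Each reduction strictly decreases $\tau$: two cells of rank $j$ are destroyed while only cells of rank strictly less than $j$ can be introduced by $\Gamma'$. Since lexicographic order on finite tuples of non-negative integers is well-founded, the process terminates after finitely many steps in a reduced diagram of rank at most $i$ with contour labeled $W$. The main technical obstacle is the geometric coherence of the replacement step: one must verify that $\Gamma$ really is a simply connected subdiagram, that the contour of $\Gamma$ (as a cyclic word, read with the correct orientation) literally matches that of $\Gamma'$ rather than merely being equal in the free group, and that the surgery leaves $\Delta'$ a well-formed circular map. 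These points are addressed by the careful definitions of submap, contour, and labeling introduced in the preceding subsections, and by choosing $\Gamma'$ whose contour is the precise cyclic word bounding $\Gamma$.
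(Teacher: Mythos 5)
Your proof is correct and is essentially the argument the paper relies on: the paper gives no proof of its own but cites Theorem 13.1 of Ol'shanskii's book, whose proof is exactly your scheme --- van Kampen in rank $0$, induction on the rank, replacement of a cancellable pair of rank-$j$ cells by a reduced diagram of rank at most $j-1$ supplied by the inductive hypothesis, and termination via the lexicographically ordered type $\tau(\Delta)$. The surgery subtleties you flag (simple connectivity of $\Gamma$, literal matching of contour labels) are handled in the reference via $0$-cells, consistent with the remark in the paper.
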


\begin{lemma}[Theorem 13.2 in \cite{OL-book}]
\label{lem:thm 13.2 annular van kampen} Let $V$ and $W$ be words over $S$ that are non-trivial in the group $G = \langle S \mid R=\bigcup_{j=0}^{\infty} R_j\rangle$ (respectively, in $G(i) = \langle S \mid R=\bigcup_{j=0}^{i} R_j\rangle$ for some $i\in \mathbb{N}$). Then they are conjugate in $G$ (resp., $G(i)$) if and only if there is a reduced annular diagram over this presentation of $G$ (resp., $G(i)$), with contours $p$ and $q$, such that $\Lab(p)\equiv V$ and $\Lab(q)\equiv W^{-1}$.\end{lemma}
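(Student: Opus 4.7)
The plan is to adapt the standard annular van Kampen argument (as in Olshanskii's book, Chapter 11) to the Burnside-type setting, using the disk version (Lemma \ref{lem:thm 13.1 van kampen}) that has already been stated.

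For the direction assuming a reduced annular diagram $\Delta$, I would pick a simple interior arc $t$ joining a vertex on the outer contour $p$ to a vertex on the inner contour $q$, and cut $\Delta$ open along $t$. The result is a circular diagram $\Delta'$, with the same cells as $\Delta$, whose contour label (up to cyclic shift) has the form $V\cdot T\cdot W\cdot T^{-1}$ for $T=\Lab(t)$, once orientations are tracked using the conventions of Section 3. Applying Lemma \ref{lem:thm 13.1 van kampen} to $\Delta'$ gives $V T W T^{-1}=1$ in $G$, which is precisely the statement that $V$ and $W$ are conjugate in $G$ (after absorbing a sign into $T$).

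For the converse, assume $V$ and $W$ are conjugate in $G$ and pick a word $U$ such that $UVU^{-1}W^{-1}=1$ in $G$. Lemma \ref{lem:thm 13.1 van kampen} supplies a reduced circular diagram $\Delta_0$ whose contour label is $UVU^{-1}W^{-1}$. I then identify the two boundary subpaths of $\partial\Delta_0$ labeled $U$ and $U^{-1}$ with opposite orientations to obtain a 2-complex $\Delta$. A straightforward topological check shows that $\Delta$ is an annulus whose outer contour is labeled by $V$ and whose inner contour is labeled by $W^{-1}$, as required.

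The remaining task, which is the most delicate, is to produce a \emph{reduced} annular diagram. Among all annular diagrams with the required contour labels I would pick one that minimizes the total number of cells (or, with more care, a lexicographic complexity refining this). If such a minimizer failed to be reduced, I could excise a mutually cancelling pair of rank-$i$ cells together with the subdiagram they bound, and fill the excised region with a lower-rank disk diagram produced by Lemma \ref{lem:thm 13.1 van kampen}; this strictly reduces cell count, contradicting minimality. The main obstacle, and the reason the non-triviality hypothesis is imposed, is to verify that this surgery never alters the topological type of the diagram, i.e.\ that it does not sever the annulus into disks by excising a strip that straddles the hole. In the hypothetical degenerate case, the excision would yield a disk diagram certifying that either $V$ or $W$ is trivial in $G$, contradicting the hypothesis. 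Thus the non-triviality of $V$ and $W$ forbids the degenerate case and the reduction process terminates with a genuine reduced annular diagram. The rank-$i$ version of the lemma is handled identically, invoking Lemma \ref{lem:thm 13.1 van kampen} for $G(i)$ and noting that the construction introduces no cells of rank exceeding $i$.
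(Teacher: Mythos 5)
The paper offers no proof of this lemma: it is quoted verbatim as Theorem 13.2 of \cite{OL-book}, with the surrounding text explaining that the arguments there carry over unchanged to Burnside-type presentations. Your sketch reconstructs exactly that standard argument (cut along an interior arc for one direction, glue the two $U$-labelled boundary arcs for the other, then reduce), so there is nothing in the paper itself to compare it against, and the overall strategy is the right one. One imprecision you should fix if writing this out in full: excising a cancellable pair of rank-$i$ cells and refilling with a rank-$(i-1)$ disk diagram need \emph{not} decrease the total number of cells, since the filling diagram supplied by Lemma \ref{lem:thm 13.1 van kampen} in rank $i-1$ can be arbitrarily large; so ``minimizes the total number of cells'' is not a valid induction parameter. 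The correct well-ordered invariant is the \emph{type} of the diagram (the multiset of ranks of its cells, compared lexicographically after sorting in decreasing order) --- your parenthetical gestures at this, but it is essential rather than optional. Your explanation of why the non-triviality of $V$ and $W$ is hypothesized --- to prevent the reduction surgery from severing the annulus, which would otherwise produce a disk diagram certifying that one of the boundary labels is trivial --- is the right point and matches the role this hypothesis plays in Ol'shanskii's proof.
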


The next fundamental properties of Burnside-type presentations are proved in \cite[§18, §19]{OL-book} for free Burnside group, and as observed in \cite[Chapters 7,8]{OL-book} (see also \cite{boatman2012partial}), they hold with identical proofs for Burnside-type presentations. For convenience, we include here the precise references of each lemma in both \cite{OL-book} and \cite{boatman2012partial}. 

Recall that words $X$ and $Y$ are said to be equal (resp., conjugate) \emph{in rank $i$}  if $X$ and $Y$ are equal (resp., conjugate) in the group $G(i)$. 

\begin{definition}[{\cite[$\mathsection 18.1$]{OL-book} and \cite[Definition II.48]{boatman2012partial}}]\label{def:simple in rank i} 
A non-trivial word $X$ is called  \textit{simple in rank $i \geq 0$} if it is not conjugate in rank $i$ to any power of any period of rank $k\leq i$, and not conjugate in rank $i$ to any power of any word shorter than $X$. 
We say that $X$ is \textit{simple in all ranks} (or just `simple') if it is simple in rank $i$ for every $i\geq 0$.
\end{definition}


\begin{lemma}
[{\cite[Lemma 18.1]{OL-book} and \cite[Lemma II.49]{boatman2012partial}}]
\label{lem:every word is conjugate to a power of a period or simple}
     Let $i \geq 0$. Every word is conjugate in rank $i$ to a power of some period of rank $j\leq i$ or to a power of a word simple in rank $i$.
\end{lemma}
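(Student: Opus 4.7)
The plan is to prove the lemma by strong induction on the length $|W|$ of the word $W$, relying directly on the definition of simple in rank $i$ (Definition~3.1) rather than on the geometric diagram machinery of Lemmas~3.4 and~3.5. Throughout, ``conjugate'' will mean conjugate in rank $i$, i.e.\ in the group $G(i)$.

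I would begin by disposing of the degenerate case: if $W$ represents the trivial element of $G(i)$, then $W$ is conjugate to $A^{0}=1$ for any period $A$ of rank $\le i$ and the conclusion holds vacuously, so I may assume $W$ is non-trivial in $G(i)$. For the inductive step, if $W$ is already simple in rank $i$ then $W=W^{1}$ exhibits it as a power of a word simple in rank $i$. Otherwise $W$ violates at least one of the two conditions in Definition~3.1. In case~(a), $W$ is already conjugate in rank $i$ to a power of a period of some rank $k\le i$, and there is nothing more to show. In case~(b), $W$ is conjugate in rank $i$ to $(W')^{s}$ for some word $W'$ with $|W'|<|W|$ and some non-zero integer $s$, which I may take positive by replacing $W'$ with $(W')^{-1}$ if necessary. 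Now I would apply the inductive hypothesis to $W'$: this yields a word $V$ that is either a period of some rank $j\le i$ or simple in rank $i$, together with an integer $m$, such that $W'$ is conjugate in rank $i$ to $V^{m}$. Since conjugation distributes over powers, $(W')^{s}$ is conjugate to $V^{ms}$, and so by transitivity of conjugacy $W$ is conjugate in rank $i$ to $V^{ms}$, a power of $V$, as required.

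The only potential obstacle is organizational, namely verifying that the strong induction is well-founded; but this is immediate, since $|W'|<|W|$ strictly decreases at each recursive step and word lengths are non-negative integers. This is the reason the deeper geometric apparatus of reduced diagrams and van Kampen-type statements is not required for this particular lemma: the assertion is a purely structural consequence of Definition~3.1 combined with the transitivity of conjugation and its compatibility with taking powers.
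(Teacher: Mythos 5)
Your proof is correct and is essentially the same induction on word length by which this lemma is proved in the cited sources (the paper itself quotes the lemma from \cite{OL-book} and \cite{boatman2012partial} without reproving it): a word that is neither simple in rank $i$ nor conjugate in rank $i$ to a power of a period must, by Definition~\ref{def:simple in rank i}, be conjugate in rank $i$ to a power of a strictly shorter word, to which the inductive hypothesis applies, and conjugacy is compatible with taking powers. The only cosmetic remark is that the trivial case is not ``vacuous'' --- one writes $1$ as the zeroth power of, say, a single generator, which is simple in every rank whenever $S\neq\emptyset$.
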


Note that Lemma \ref{lem:every word is conjugate to a power of a period or simple} holds also for $i=\infty$. Namely, every word is conjugate in $G$ to a power of some period (of some rank) or to a power of a word simple in $G$. In fact, every word that has finite order in $G$ is conjugate in $G$ to a power of a period of some rank, and every word of infinite order in $G$ is conjugate in $G$ to a power of a simple word.

\begin{lemma}[{\cite[Lemma 18.3]{OL-book} and \cite[Lemma II.51]{boatman2012partial}}]
\label{lem:finite order element is conjugate to a power of a period}
    Let $i \geq 0$. If $X$ is non-trivial in rank $i$ and $X$ has finite order in rank $i$, then it is conjugate in rank $i$ to a power of some period of rank $\leq i$.
\end{lemma}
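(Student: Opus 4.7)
The plan is to reduce the statement to the claim that any word $B$ that is simple and non-trivial in rank $i$ (in the sense of Definition 3.4) must have infinite order in $G(i)$, and then to derive a contradiction from the geometry of a minimal reduced diagram with $B$-periodic contour.

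For the reduction, Lemma 3.2 provides a dichotomy: every word $X$ is conjugate in rank $i$ either to a power of some period of rank $\le i$ — which is already the desired conclusion — or to a power $B^k$ of a word $B$ simple in rank $i$. In the latter case, if $X = gB^kg^{-1}$ in $G(i)$ has order $N < \infty$, then $B^{kN} = 1$ in $G(i)$, and $B \ne 1$ in $G(i)$ since $X \ne 1$. Thus it suffices to show: no non-trivial power of a word simple in rank $i$ is trivial in $G(i)$.

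Suppose for contradiction that $B^M = 1$ in $G(i)$ for some $M \ge 1$. By Lemma 3.1, fix a reduced circular diagram $\Delta$ over $G(i)$ whose contour reads $B^M$; choose $\Delta$ with the minimal possible number of cells, so in particular $\Delta$ is non-trivial. The contour of $\Delta$ is labeled by a $B$-periodic word, while each cell $\Pi$ of $\Delta$ has boundary label a cyclic shift of $A^{\pm n_A}$ for some period $A$ of rank $\le i$, with $n_A \ge n$. I would then invoke the contiguity-submap machinery of \cite[\S18]{OL-book} (adapted to the Burnside-type setting in \cite{boatman2012partial}), which guarantees the existence of a cell $\Pi \subset \Delta$ together with a contiguity submap between $\partial\Pi$ and the contour of $\Delta$ whose contiguity degree is close to $1$. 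Along the shared arc, the label is simultaneously a subword of $B^M$ and a subword of $A^{\pm n_A}$ of comparable length. Standard comparison of two periodic labels that overlap on a long arc, combined with the fact that the interior of the contiguity submap witnesses an equality in strictly lower rank, then forces $B$ to be conjugate in $G(i)$ either to a power of $A$ (when $|A| \le |B|$) or to a power of a strictly shorter word (when $|A| > |B|$, after swapping the roles of $A$ and $B$). Either case contradicts the simplicity of $B$ in rank $i$ as stipulated in Definition 3.4.

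The main obstacle is the geometric extraction of the cell $\Pi$ with high contiguity to the contour: this is the technical heart of the Burnside machinery and depends on the full chain of lemmas about bonds, admissible paths, and contiguity subdiagrams developed in Chapters 7--8 of \cite{OL-book}. Fortunately, as observed in the remarks following Definition 2.1, these lemmas port verbatim from the free Burnside case to the Burnside-type case, because the only role of the specific exponent in the arguments is through the uniform lower bound $n_A \ge n$. Thus the proof is effectively a direct transfer of \cite[Lemma 18.3]{OL-book}, paralleling the verification already carried out in \cite{boatman2012partial}.
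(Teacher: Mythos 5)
The paper does not actually prove this lemma --- it is quoted verbatim from \cite[Lemma 18.3]{OL-book} and \cite[Lemma II.51]{boatman2012partial} --- and your outline reproduces exactly the proof given in those sources: reduce via Lemma \ref{lem:every word is conjugate to a power of a period or simple} to showing that a word $B$ simple in rank $i$ has infinite order in $G(i)$, then rule out $B^M=1$ by taking a reduced diagram with $B$-periodic contour, extracting a cell with large contiguity degree to that contour, and contradicting the simplicity of $B$. So your proposal is correct and takes essentially the same approach, with the genuinely technical step (the contiguity/periodic-word comparison, which in \cite{OL-book} runs through the smooth-section formalism of \S\S 17--19) appropriately deferred to the machinery that, as the paper notes, transfers verbatim to Burnside-type presentations.
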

It is straightforward that Lemma \ref{lem:finite order element is conjugate to a power of a period} holds in rank $i=\infty$ as well. Namely, if a non-trivial word has a finite order in $G$ then it must be conjugate in $G$ to a power of some period of some rank.

\section{Diagrams over Burnside-type groups}\label{sec:geometry of BT groups}

\subsection{Lower Parameter Principle} \label{subsec:LPP}
Throughout the paper, we make use of real constants 
from the interval $(0,1)$, which satisfy certain inequalities. 
As in \cite[Section §15]{OL-book}, the consistency of the system of inequalities is based here on the Lower Parameter Principle (LPP), which we explain below. 

In the current paper, the list of our parameters is: 
\begin{equation}\label{eq:parameters}
    \theta, m^{-1}, \beta,\gamma,\epsilon,\zeta,n^{-1}.
\end{equation}
We choose the values for these parameters from left to right. 
Namely, we start by choosing a parameter $\theta\in (0,1)$ which is `small enough' so that certain inequalities, involving only the parameter $\theta$, are satisfied. After that, we choose $m$ (the number of generators) with $m^{-1}\in (0,1)$ being small enough so that certain inequalities, involving $\theta$ and $m$, are satisfied. We then choose $\beta$, to satisfy certain inequalities involving $\theta$, $m$, and $\beta$, and so on, where each parameter is chosen with dependene on the previous ones. In other words, we arrange the positive parameters according to their `height' where the first one to be chosen is considered the `heighest' and the last one the `lowest'. In particular, we may assume that 
$$\theta>m^{-1}>\beta>\dots>n^{-1}.$$ Moreover, when we meet an inequality involving certain higher parameter as well as lower ones, which could be satisfied for small enough choice of the lower parameters, we can assume that this inequality holds, by the LPP. E.g., one can assume that the inequality $10\gamma<\theta-2\beta$ holds. Indeed, since $\beta$ is chosen after $\theta$, one can ensure that $\theta-2\beta>0$; then, for the same reason, $\gamma$ can be chosen so that $10\gamma <\theta -2\beta$.

We further denote 
$\bar{\beta}=1-\beta$ and $\bar{\gamma}=1-\gamma$, for simplicity of some of our computations. 

For more details on the choice of the constants we refer the reader to \cite[Section §15]{OL-book}. All of the theorems to follow hold for odd $n\in \mathbb{N}$ sufficiently large and for some choice of the rest of the parameters.

\subsection{Auxiliary properties of diagrams}
This subsection presents various properties of diagrams over Burnside-type presentations that are used throughout the paper. Properties whose proofs mirror those in \cite{OL-book}—where they are established for presentations of free Burnside groups—are given without proof.

Some of the following lemmas are formulated in \cite{OL-book} for $A$-maps, but here we reformulate them for reduced diagrams instead. This is possible since by \cite[Lemma 19.4]{OL-book} every reduced diagram over a Burnside-type presentation is an $A$-map. We will not use $A$-maps in the current paper, and the interested reader can learn more about them in \cite[Chapter 5]{OL-book}.

We will use the concept of contiguity subdiagrams. 
\begin{figure*}    \centering \includegraphics[scale=0.25]{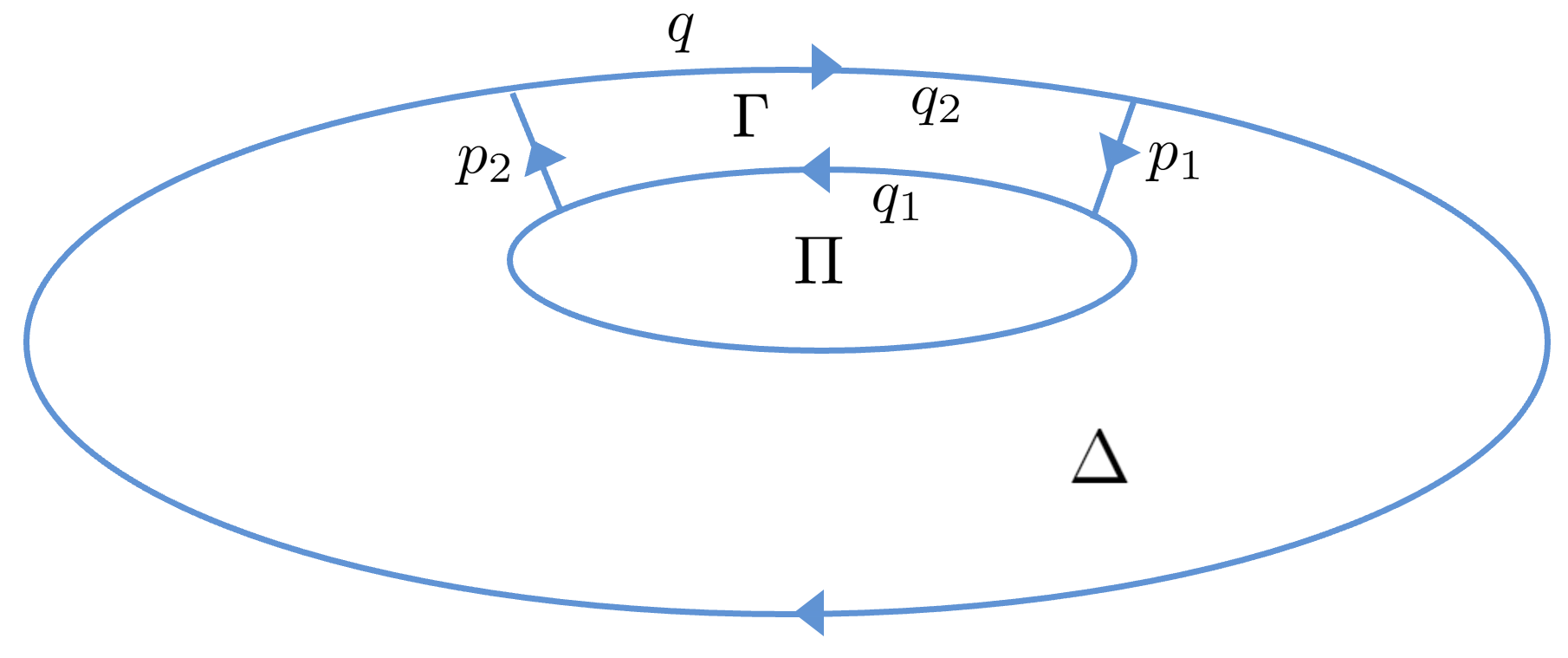}    \caption{A contiguity subdiagram $\Gamma$ between a cell $\Pi$ and a section $q$ of $\partial \Delta$.}   \label{fig:contiguity}\end{figure*}
Let $\Pi$ be a cell in a diagram $\Delta$ and let $q$ be a section of $\partial \Delta$. A \emph{contiguity diagram between $\Pi$ and $q$} is a subdiagram $\Gamma$ of $\Delta$, which does not contain $\Pi$, and whose contour is $\partial\Gamma = p_1q_1p_1q_2$, where $q_1$  is a subpath of $\partial \Pi$, $q_2$ is a subpath of $q$, and the paths $p_1$, $p_2$ are `very short' in comparison with the perimeter of $\Pi$ and the length of $q$, see Figure \ref{fig:contiguity} for illustration. 
The ratio $\frac{|q_1|}{|\partial\Pi|}$ is called the \emph{contiguity degree} of $\Pi$ to $q$ with respect to $\Gamma$, and is
denoted $(\Pi,\Gamma,q)$. 
In particular, 
any common edge of $\Pi$ and $q$ is a contiguity subdiagram with side arcs $p_1$ and $p_2$ of zero length. 
We do not need the more detailed definition
given in \cite[Section §14]{OL-book}, rather we shall use the properties of contiguity
subdiagrams given in the next lemmas.
\begin{lemma}[Lemma 15.3 in \cite{OL-book}]\label{lem:contiguity sides are short}
Let $\Delta$ be a reduced diagram, $\Gamma$ a contiguity subdiagram of a cell $\Pi$ to a section $q$ of $\partial \Delta$. As in Figure \ref{fig:contiguity}, denote the boundary of $\Gamma$ by 
$p_1q_1p_2q_2$. 
Then:
$$\max\{ |p_1|,|p_2| \}<
\zeta |\partial \Pi|.$$ 
\end{lemma}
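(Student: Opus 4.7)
The plan is to follow the $A$-map strategy of §15 of \cite{OL-book}, adapted to the Burnside-type setting of this paper. The first step is to unpack the precise definition of a contiguity subdiagram from §14 of \cite{OL-book} (only sketched in the current excerpt): the full definition requires the side arcs $p_1, p_2$ to be geodesic in $\Gamma$ and to satisfy a minimality condition ruling out shortcuts through $\Gamma$. These built-in assumptions are precisely what will make the desired bound possible, since without them $|p_i|$ would admit no nontrivial control at all.

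Next, I would invoke Lemma 19.4 of \cite{OL-book} to conclude that the reduced diagram $\Delta$ (hence its subdiagram $\Gamma$) is an $A$-map, making all the length and curvature estimates of \cite[§§14--15]{OL-book} available. The proof then proceeds by induction on the rank $r(\Gamma)$. In the base case $r(\Gamma)=0$, $\Gamma$ has no cells, so $\Lab(p_1 q_1 p_2 q_2)$ is trivial in the free group; since $q_1$ is a subpath of a cyclic shift of the cyclically reduced relator labeling $\partial \Pi$, a direct free-cancellation argument bounds $|p_1|, |p_2|$ by a small multiple of $|q_1| \le |\partial \Pi|$. In the inductive step, I would locate a cell $\Pi' \subset \Gamma$ of maximal rank; by the $A$-map condition it must admit a contiguity subdiagram $\Gamma'$ of high degree to one of the four sides of $\partial \Gamma = p_1 q_1 p_2 q_2$ or to $\Pi$. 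Peeling $\Pi'$ off along $\Gamma'$ yields a contiguity subdiagram of smaller rank, to which the inductive hypothesis applies, while the standard lower bound $|\partial \Pi'| \ge \bar{\beta} n |A'|$ on the perimeter of a cell of rank $|A'|$ controls the length budget lost in each peel.

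The main obstacle is that a self-contained argument really requires essentially all of the $A$-map formalism developed across §§14--15 of \cite{OL-book} --- smooth sections, cell-to-cell contiguity diagrams, and the Euler-characteristic curvature estimate --- which I would take as a black box rather than redevelop. The crucial place where the smallness of $\zeta$ is forced is the inductive step: one needs $\zeta$ to be sufficiently small (chosen after $\beta, \gamma, \epsilon$ via the Lower Parameter Principle of §\ref{subsec:LPP}) so that the per-cell contributions sum to strictly less than $\zeta |\partial \Pi|$ across all inductive peels, even in the worst-case configuration where the side arcs accumulate long smooth sections along many high-rank cells.
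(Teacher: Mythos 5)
The paper itself gives no proof of Lemma \ref{lem:contiguity sides are short}: it is quoted verbatim from \cite[Lemma 15.3]{OL-book}, and the only justification offered is the blanket remark preceding this block of lemmas, namely that by \cite[Lemma 19.4]{OL-book} every reduced diagram over a Burnside-type presentation is an $A$-map, so the $A$-map statements of \cite[\S\S 14--15]{OL-book} apply as stated. Your high-level plan --- invoke Lemma 19.4 to pass to $A$-maps and then use the \S\S 14--15 machinery as a black box --- is therefore exactly the route the paper takes, and to that extent the proposal is fine.

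However, in the places where you do commit to details of the underlying proof, those details do not match Ol'shanskii's argument, and a proof written along your lines would not go through. First, the side arcs $p_1,p_2$ of a contiguity submap are \emph{not} defined as geodesics in $\Gamma$ subject to a minimality condition; they are contours of \emph{bonds}, which are defined inductively ($0$-bonds are single common edges, and a $k$-bond is built from a principal cell $\pi$ together with two contiguity submaps of degree at least $\epsilon$ from $\pi$ to the objects being joined). The bound $\max\{|p_1|,|p_2|\}<\zeta|\partial\Pi|$ is proved by induction on this bond hierarchy: the $A$-map axioms force the perimeters of the successive principal cells to decrease geometrically relative to $|\partial\Pi|$, and summing the resulting geometric series (with $\zeta$ chosen last among the relevant parameters via the LPP) gives the estimate. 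Your proposed induction --- locating a maximal-rank cell $\Pi'\subset\Gamma$, peeling it off along a high-degree contiguity subdiagram, and controlling the ``length budget'' per peel --- is the scheme used for other results (e.g.\ the existence of $\gamma$-cells in \S 16), but it does not control $|p_1|$ and $|p_2|$, which are fixed by the bond structure before any peeling begins; likewise your base case (free cancellation when $r(\Gamma)=0$) is vacuous in the actual setup, since for a $0$-bond the side arcs have length $0$. None of this affects the paper, which legitimately treats the lemma as a citation, but as a reconstruction of the cited proof the sketch misidentifies both the definition and the inductive mechanism.
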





\begin{lemma}[Lemma 15.4 in \cite{OL-book}]\label{lem:q_2 is large}
Denote by $\Psi$ the degree of $\Gamma$-contiguity of a cell $\Pi$ 
to a section $q$ of the contour in a reduced diagram $\Delta$. As in Figure \ref{fig:contiguity}, denote the boundary of $\Gamma$ by 
$p_1q_1p_2q_2$. 
Then:
$$|q_2|>(\Psi-2\beta)|\partial \Pi|.$$
\end{lemma}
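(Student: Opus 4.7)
The plan is to exploit the contour equation $\partial\Gamma = p_1 q_1 p_2 q_2$ of the contiguity subdiagram $\Gamma$ and to upper-bound the ``distortion'' between the labels of $q_1$ and $q_2$ by the short side arcs plus a small correction absorbed by cells inside $\Gamma$. First I would collect the known data: $|q_1| = \Psi|\partial\Pi|$ by definition of the contiguity degree, and $|p_1|,|p_2| < \zeta|\partial\Pi|$ by Lemma \ref{lem:contiguity sides are short}. Since $\zeta < \beta$ in the parameter ordering (\ref{eq:parameters}), it therefore suffices to show that $|q_2|$ cannot fall much below $|q_1| - |p_1| - |p_2|$.

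In the simplest case, where $\Gamma$ contains no $2$-cells, the label $\Lab(p_1 q_1 p_2 q_2)$ freely reduces to the empty word, so $\Lab(q_2)$ is freely equal to $\Lab(p_2)^{-1}\Lab(q_1)^{-1}\Lab(p_1)^{-1}$. Since $q_2$ is a path in the $1$-skeleton of $\Delta$, its length equals the length of its (reduced) label, and a standard free-cancellation length count yields $|q_2| \ge |q_1| - |p_1| - |p_2| > (\Psi - 2\zeta)|\partial\Pi|$, which is already stronger than the required $(\Psi - 2\beta)|\partial\Pi|$.

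The main obstacle — and the source of the slack $2\beta$ in place of $2\zeta$ — is the general case, in which $\Gamma$ may contain $2$-cells that a priori could create ``shortcuts'' shortening $\Lab(q_2)$ relative to $\Lab(q_1)$. Here I would invoke the fact that every reduced diagram over a Burnside-type presentation is an $A$-map (\cite[Lemma 19.4]{OL-book}), and apply the isoperimetric-type constraints on $A$-maps from \cite[Chapter 5]{OL-book} to the subdiagram $\Gamma$. These constraints control the possible cancellation among the labels of the four contour sections of $\Gamma$, and translate — after using $|q_1| = \Psi|\partial\Pi|$ and Lemma \ref{lem:contiguity sides are short} — into a distortion bounded by at most $O(\beta)|\partial\Pi|$ coming from the interior cells of $\Gamma$, which yields exactly $|q_2| > (\Psi - 2\beta)|\partial\Pi|$. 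The delicate point is verifying that the $A$-map inequalities apply uniformly regardless of how many cells $\Gamma$ contains; I expect this to be the technical heart of the argument, drawing directly on the machinery developed by Ol'shanskii for free Burnside groups, which — as noted after Lemma \ref{lem:finite order element is conjugate to a power of a period} — transfers to the Burnside-type setting without change.
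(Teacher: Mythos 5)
This lemma is one of the imported facts that the paper deliberately states without proof, citing \cite[Lemma 15.4]{OL-book}; so there is no in-paper argument to compare against, and the question is whether your sketch would actually reconstruct the proof. Your setup is right: $|q_1|=\Psi|\partial\Pi|$ by definition, $|p_1|,|p_2|<\zeta|\partial\Pi|$ by Lemma \ref{lem:contiguity sides are short}, and the rank-$0$ case does follow by free cancellation exactly as you say. But the entire content of the lemma is the positive-rank case, and there your argument is an appeal to unspecified ``isoperimetric-type constraints on $A$-maps'' that you assert will ``translate into a distortion bounded by $O(\beta)|\partial\Pi|$.'' That is not a proof step: isoperimetric-type bounds generically control areas or perimeters from above, whereas what you need is a \emph{lower} bound on the length of one contour section of $\Gamma$ in terms of another, and you never identify which property of $A$-maps delivers it or why the constant comes out as $2\beta$.

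The missing idea is concrete and short. The contiguity arc $q_1$, read as a section of the contour of the subdiagram $\Gamma$ (which by definition does not contain $\Pi$), carries an $A$-periodic label where $A$ is the period of $\Pi$; such a section is \emph{smooth} in $\Gamma$, and smooth sections are $\bar{\beta}^{-1}$-geodesic (this is the content of Lemma \ref{lem:smooth section}(1) in the present paper, resp.\ condition A3 in \cite[\S 15.1]{OL-book}). Since $q_1$ is homotopic in $\Gamma$ to $p_1^{-1}q_2^{-1}p_2^{-1}$, this gives $\bar{\beta}|q_1|\le |p_1|+|q_2|+|p_2|$, hence
$$|q_2|\ \ge\ \bar{\beta}\,\Psi\,|\partial\Pi|-2\zeta|\partial\Pi|\ =\ (\Psi-\beta\Psi-2\zeta)|\partial\Pi|\ >\ (\Psi-2\beta)|\partial\Pi|,$$
using $\Psi\le 1$ and $2\zeta<\beta$ from the LPP. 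Note in particular that the $2\beta$ is not ``absorbing cancellation from interior cells'' in any averaged sense; it is exactly $\beta\Psi+2\zeta$, one contribution from the non-geodesity of the smooth section and one from the two side arcs. Without naming the smoothness of $q_1$ and the $\bar{\beta}^{-1}$-geodesic property, your sketch does not close.
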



\begin{lemma}[Corollary 16.1 in \cite{OL-book}]
\label{lem:existence of gamma-cell}
    Let $1\leq k\leq 4$. Let $\Delta$ be a circular reduced diagram of non-zero rank whose contour is decomposed into $k$ sections, $q_1,\dots,q_k$. Then there is a cell $\Pi$ and disjoint contiguity subdiagrams $\Gamma_{i_1}\dots,\Gamma_{i_l}$, $1\leq i_1<\dots <i_l\leq k$ of $\Delta$ to $q_{i_1},\dots,q_{i_l}$, respectively, in $\Delta$, such that: $$\Sigma_{j=1}^{l} (\Pi,\Gamma_{i_j},q_{i_j})>\bar{\gamma}.$$
\end{lemma}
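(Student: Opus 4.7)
The plan is to derive this corollary from Lemma 16.1 of Olshanskii's book---the principal weight-estimation result of Chapter 16---which asserts that in every reduced circular diagram of non-zero rank there is a cell $\Pi$ with a contiguity subdiagram $\Gamma$ to its boundary $\partial\Delta$ (viewed as a single section) of contiguity degree $(\Pi,\Gamma,\partial\Delta) > \bar\gamma$. I would apply this lemma directly to $\Delta$, obtaining such a cell $\Pi$ and subdiagram $\Gamma$, with a small additional slack $\delta > 0$ built in by strengthening parameters via the LPP.

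The next step is to split $\Gamma$ according to the four-section decomposition $\partial\Delta = q_1 q_2 q_3 q_4$. Denote by $r \subseteq \partial\Delta$ the outer boundary arc of $\Gamma$ (the portion denoted $q_2$ in Figure~\ref{fig:contiguity}; I rename it here to avoid notational collision with the sections $q_i$). Since $r$ is a connected subpath of the cyclic path $\partial\Delta$, it meets each section $q_i$ in at most one (possibly empty) connected subarc $r_i$, giving at most four subarcs total. For each non-empty $r_i$, I would construct an individual contiguity subdiagram $\Gamma_i$ of $\Pi$ to $q_i$ by slicing $\Gamma$ at the at-most-three junction points between adjacent sections. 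Following the topological template of Chapter 15 of Olshanskii, each slice can be arranged so that the newly created side arcs have length at most $\zeta|\partial\Pi|$, thereby producing valid contiguity subdiagrams with disjoint interiors. Any $\Gamma_i$ corresponding to an empty $r_i$ is simply omitted, which the statement of the corollary allows.

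For the final accounting: each of the at-most-three slices shortens the inner contiguity arc on $\partial\Pi$ by at most $O(\zeta)|\partial\Pi|$ (via Lemma~\ref{lem:contiguity sides are short}), so the total loss in contiguity degree across all slices is $O(\zeta)$. By the LPP and the parameter ordering (\ref{eq:parameters}), $\zeta$ is chosen after $\gamma$ and can thus be made small enough for this loss to be absorbed by the slack $\delta$, yielding
\[
\sum_{i=1}^{4} (\Pi,\Gamma_i,q_i) \;\geq\; (\Pi,\Gamma,\partial\Delta) - O(\zeta) \;>\; \bar\gamma,
\]
as required.

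I expect the main technical obstacle to be the slicing construction in paragraph two: one must verify that after cutting $\Gamma$ at the junction points between sections, the resulting subdiagrams $\Gamma_i$ actually satisfy the combinatorial conditions in the definition of contiguity subdiagram---in particular, that the freshly created side arcs at each junction are bounded by $\zeta|\partial\Pi|$, and that the inner arcs $r_i^{\Pi} \subseteq \partial\Pi$ behave correctly under the split. This bookkeeping is routine but delicate, and uses the standard topological techniques for $A$-maps from Chapter~15 of Olshanskii.
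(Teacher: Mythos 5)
The paper does not prove this statement at all: it is imported verbatim as Corollary 16.2 of \cite{OL-book}, so the comparison is really with Ol'shanskii's argument in \S 16 of that book. Measured against that, your proposal both starts from the wrong input and contains a genuine gap. The result you invoke as ``Lemma 16.1'' --- a single cell $\Pi$ with a contiguity subdiagram to $\partial\Delta$ \emph{viewed as one section} of degree $>\bar\gamma$ --- is not an independent prior fact; it is precisely the one-section instance of the corollary you are trying to prove. In \cite{OL-book} the engine of \S 16 is a global estimate on the total length of the exterior arcs of a system of contiguity subdiagrams of \emph{all} cells to the given sections, and Corollary 16.2 is extracted from it by an averaging (pigeonhole) argument over cells; the decomposition into up to four sections is present in that estimate from the start rather than recovered afterwards by cutting. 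If your reduction worked, the four-section statement would be an easy consequence of the one-section one, and there would be no reason for \cite{OL-book} to set it up the way it does.

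The step that actually fails is the slicing. To split $\Gamma$ at a junction point $o$ between two consecutive sections you need a path inside $\Gamma$ from $o$ to the inner arc of $\Gamma$ on $\partial\Pi$ of length $<\zeta|\partial\Pi|$, so that each piece is again a contiguity subdiagram. Nothing provides such a path: Lemma \ref{lem:contiguity sides are short} controls only the two \emph{existing} side arcs $p_1,p_2$ of $\Gamma$, not the thickness of $\Gamma$ at an arbitrary interior point of its outer arc. Contiguity subdiagrams are assembled from bonds, short transversal paths are available only at the bonds, and the junction point need not lie near one --- $\Gamma$ may contain cells of large perimeter there. (A secondary problem: the outer arc of $\Gamma$ is a subpath of the \emph{cyclic} contour and can meet a single section $q_i$ in two disjoint pieces, while the statement permits only one $\Gamma_i$ per section, and discarding a piece loses an uncontrolled amount of contiguity degree.) Without a justified cutting lemma the final accounting has nothing to rest on; the argument must be run with the four sections present from the beginning, as in \cite{OL-book}.
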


\begin{lemma}[[Corollary 16.2 in \cite{OL-book}]\label{lem:existence of gamma-cell annular}
    Let $\Delta$ be an annular reduced diagram of non zero rank with contours $q_1$ and $q_2$ (regarded as cyclic sections). Then $\Delta$ has a cell $\Pi$ and disjoint contiguity subdiagrams $\Gamma_1$ and $\Gamma_2$ of $\Pi$ to $q_1$ and $q_2$ respectively (one of these may be absent), such that: $$(\Pi,\Gamma_1,q_1)+(\Pi,\Gamma_2,q_2)>\bar{\gamma}.$$
\end{lemma}
\noindent A cell $\Pi$ as in Lemma \ref{lem:existence of gamma-cell} and Lemma \ref{lem:existence of gamma-cell annular} is called a $\gamma$-cell.

\begin{lemma}[Theorem 16.2 in \cite{OL-book} and Theorem II.25 in \cite{boatman2012partial}]\label{lem:if Gamma is with minimal num of cells then r(Gamma)=0}
Let $\Delta$ be a reduced diagram of positive rank. 

Assume $\Delta$ is circular and its contour is partitioned in  sections $q_1...q_s, s\le 4$.  Then $\Delta$ has a cell $\Pi$  and a contiguity subdiagram $\Gamma$ of $\Pi$ to a section $q_j, j\le s$, such 
that $r(\Gamma) = 0$ and $(\Pi,\Gamma, q_j)\ge\epsilon$.

Moreover, assume $\Delta$ is circular or annular and let $q$ be a section of the contour $\partial \Delta$, such that there exists a cell $\Pi$ with contiguity degree at least $\epsilon$ to $q$, then $\Delta$ contains a cell $\Pi'$ and a contiguity subdiagram $\Gamma'$ of $\Pi'$ to $q$ with $r(\Gamma') = 0$ and $(\Pi',\Gamma',q) > \epsilon$.
\end{lemma}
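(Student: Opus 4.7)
The plan is to deduce both assertions from the $\gamma$-cell lemma (Lemma~\ref{lem:existence of gamma-cell}) together with a minimal counter-example argument that exploits the reducedness of $\Delta$. First I would reduce the opening claim to the moreover clause: apply Lemma~\ref{lem:existence of gamma-cell} to $\Delta$ with the given partition $q_1,\ldots,q_s$ (padding with empty sections if $s<4$) to obtain a cell $\Pi$ and contiguity subdiagrams $\Gamma_1,\ldots,\Gamma_4$ with $\sum_i(\Pi,\Gamma_i,q_i)>\bar\gamma$. Pigeonhole gives some $(\Pi,\Gamma_j,q_j)>\bar\gamma/4$, and by the Lower Parameter Principle of Section~\ref{subsec:LPP} the parameter $\gamma$ is chosen small enough that $\bar\gamma/4>\epsilon$. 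It therefore suffices to establish the moreover clause for $q=q_j$.

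For the moreover clause, among all pairs $(\Pi',\Gamma')$ with $\Gamma'$ a contiguity subdiagram from $\Pi'$ to $q$ of degree exceeding $\epsilon$ (such pairs exist by hypothesis, after a negligible adjustment of constants permitted by the LPP to pass from $\ge\epsilon$ to $>\epsilon$), choose one minimizing the number of $2$-cells of $\Gamma'$. I claim $r(\Gamma')=0$. Supposing for contradiction that $\Gamma'$ contains a cell, write $\partial\Gamma'=p_1q_1p_2q_2$ with $q_1\subseteq\partial\Pi'$ and $q_2\subseteq q$, so that Lemma~\ref{lem:contiguity sides are short} yields $|p_1|,|p_2|<\zeta|\partial\Pi'|$. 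Since $\Gamma'$ is itself reduced of positive rank with a four-section contour, Lemma~\ref{lem:existence of gamma-cell} supplies a cell $\pi\subset\Gamma'$ and contiguity subdiagrams $\delta_1,\delta_2,\delta_3,\delta_4$ of $\pi$ to $p_1,q_1,p_2,q_2$ satisfying $\sum_i(\pi,\delta_i,\cdot)>\bar\gamma$.

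Now I would analyze the four contributions. The contiguity of $\pi$ to the short sides $p_1,p_2$ is at most a multiple of $\zeta|\partial\Pi'|/|\partial\pi|$, which the standard $A$-map comparison between cell perimeters makes negligible relative to $\bar\gamma$. Hence either $(\pi,\delta_4,q_2)>\epsilon$---in which case $\delta_4\subsetneq\Gamma'$ is a strictly smaller contiguity subdiagram to $q$ of degree greater than $\epsilon$, contradicting the minimality of $\Gamma'$---or $(\pi,\delta_2,q_1)$ is large, so that the interior cell $\pi$ has substantial contiguity to the bounding cell $\Pi'$ itself.

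The hard part is ruling out this last case. Since $\Delta$ is reduced, $\pi$ and $\Pi'$ cannot form a cancellable pair of the same rank via a rank-zero contiguity, and the rank-compatibility analysis of \cite[\S16]{OL-book} forces either a rank-reducing configuration in $\Delta$ (contradicting reducedness) or a new contiguity subdiagram with strictly fewer cells (contradicting the minimality of $\Gamma'$). This rank-compatibility bookkeeping---carefully interlocking the estimates on $|\partial\pi|$, $|\partial\Pi'|$, the contiguity degree, and the side-arc lengths---is the delicate technical heart of the argument, and it transfers verbatim from the free Burnside setting to Burnside-type presentations, as noted in \cite[Chapters 7--8]{OL-book} and \cite{boatman2012partial}.
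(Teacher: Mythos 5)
The paper does not prove this lemma: it is one of the facts imported from \cite[Theorem~16.2]{OL-book} and \cite[Theorem~II.25]{boatman2012partial}, appearing in the subsection that explicitly announces such properties ``are given without proof.'' So your proposal can only be measured against the cited proof, and its skeleton does match that proof: reduce the first claim to the ``moreover'' clause via Lemma~\ref{lem:existence of gamma-cell} and pigeonhole (using $\epsilon<\bar{\gamma}/4$, which is legitimate under the LPP since $\epsilon$ is fixed after $\gamma$), then run an extremal argument over contiguity subdiagrams to $q$ of degree exceeding $\epsilon$ with minimal cell count, and derive a contradiction by locating a $\gamma$-cell inside a putative positive-rank $\Gamma'$.

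As a proof, however, the proposal has a genuine gap exactly where you flag ``the delicate technical heart,'' and the gap is not mere bookkeeping. First, your bound on the contiguity of $\pi$ to the side arcs $p_1,p_2$ needs a lower bound on $|\partial\pi|$ relative to $\zeta|\partial\Pi'|$: Lemma~\ref{lem:q_2 is large} converts $|p_i|<\zeta|\partial\Pi'|$ into a degree bound only after dividing by $|\partial\pi|$, and Lemma~\ref{lem:boundary of a cell shorter than of the diagram} gives an \emph{upper} bound on $|\partial\pi|$, so the ``standard comparison between cell perimeters'' you invoke is not supplied by anything you cite. Second, the decisive step is bounding the contiguity of $\pi$ to the arc $q_1\subseteq\partial\Pi'$ strictly below $\bar{\gamma}-\epsilon-(\text{small})$, and the mechanism for this is not the cancellable-pair dichotomy you describe but a quantitative estimate: $q_1$ carries a periodic label with period the period of $\Pi'$, hence is a smooth ($\bar{\beta}^{-1}$-geodesic-type) section, and the contiguity degree of any cell to such a section is bounded near $1/2$, in the spirit of Lemmas~\ref{lem:contig deg to lambda-geodesic} and~\ref{lem:smooth section}; reducedness enters only indirectly, via the fact that reduced diagrams over Burnside-type presentations are $A$-maps. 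Without these two estimates the contradiction with minimality cannot be extracted. Deferring to \cite{OL-book} is editorially defensible---the paper does the same---but what you have written is a proof outline, not a proof.
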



\begin{lemma} [Lemma 17.1 in \cite{OL-book}]
\label{lem:short path conneting contours of annual A-map}
Let $\Delta$ be an annular reduced diagram with contours $p$ and $q$ such that 
$\Lab(p)$ is non-trivial in the group $G$.
Then there is a path $t$ connecting vertices $o_1$ and $o_2$ of the paths $p$ and $q$, respectively, such that $|t|<\gamma (|p|+|q|)$.
\end{lemma}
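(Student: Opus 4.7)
The plan is to argue by contradiction. Suppose every path in $\Delta$ connecting a vertex of $p$ to a vertex of $q$ has length at least $\gamma(|p|+|q|)$, and let $t$ be one such path of minimal length. Cutting $\Delta$ along $t$ produces a reduced circular diagram $\Delta'$ whose contour decomposes naturally into four sections: $p$, $t$, $q^{-1}$, $t^{-1}$. Since $\Lab(p)$ is non-trivial in $G$, Lemma \ref{lem:thm 13.1 van kampen} guarantees that $\Delta$, and hence $\Delta'$, has positive rank, so Lemma \ref{lem:existence of gamma-cell} supplies a cell $\Pi \subset \Delta'$ together with disjoint contiguity subdiagrams $\Gamma_1,\Gamma_2,\Gamma_3,\Gamma_4$ of $\Pi$ to $p,t,q^{-1},t^{-1}$ respectively (some possibly absent), with contiguity degrees $\Psi_i$ satisfying $\sum_i \Psi_i > \bar\gamma$.

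The next step is to exploit the minimality of $|t|$ to bound $\Psi_2$ and $\Psi_4$. For $i\in\{2,4\}$, if $\Psi_i$ exceeded a small threshold, then by Lemma \ref{lem:q_2 is large} the subpath of $t$ covered by $\Gamma_i$ would have length $>(\Psi_i - 2\beta)|\partial\Pi|$, and this subpath could be replaced by a detour consisting of the two side arcs of $\Gamma_i$ (each of length $<\zeta|\partial\Pi|$ by Lemma \ref{lem:contiguity sides are short}) together with a complementary arc of $\partial\Pi$; for suitable parameters this detour is strictly shorter, contradicting the choice of $t$. Applying the LPP with $\beta,\zeta\ll\gamma$, one concludes $\Psi_2+\Psi_4 = O(\beta+\zeta)$, and hence $\Psi_1+\Psi_3 > \bar\gamma - O(\beta+\zeta)$, forcing both $\Gamma_1$ and $\Gamma_3$ to be non-empty.

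Finally, one constructs an explicit alternative path between $p$ and $q$ by concatenating: a side arc of $\Gamma_1$, a subpath of $\partial\Pi$ joining the $\Gamma_1$-contact region to the $\Gamma_3$-contact region (chosen to go the short way around), and a side arc of $\Gamma_3$. Using Lemma \ref{lem:q_2 is large} on $\Gamma_1$ and $\Gamma_3$ one gets $|p|+|q| > (\Psi_1+\Psi_3 - 4\beta)|\partial\Pi|$, which bounds $|\partial\Pi|$ above in terms of $|p|+|q|$. I expect the main obstacle to be closing the length estimate: the naive bound $(\tfrac12 + 2\zeta)|\partial\Pi|$ on this new path is generally still larger than $\gamma(|p|+|q|)$, so one cannot immediately contradict the standing hypothesis. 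The intended workaround is to proceed by induction on the number of cells of $\Delta$: in the stubborn case, excise $\Pi$ together with $\Gamma_1$ and $\Gamma_3$, close up the boundary, and obtain an annular reduced diagram with strictly fewer cells whose contours are only mildly longer than $p$ and $q$; the inductive hypothesis supplies a short path in the smaller diagram, which lifts back to $\Delta$ with only a small additive correction controlled by the LPP, finally yielding the desired contradiction.
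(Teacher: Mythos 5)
A preliminary remark: the paper does not actually prove this lemma --- it is imported verbatim as Lemma 17.1 of \cite{OL-book}, where it is proved by induction on the number of cells. So your attempt must be judged on its own merits, and it has a genuine gap at its pivotal step.

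The gap is your claim that $\Psi_2+\Psi_4=O(\beta+\zeta)$. The detour you describe replaces the contact arc of $\Gamma_i$ on $t$, of length $>(\Psi_i-2\beta)|\partial\Pi|$ by Lemma~\ref{lem:q_2 is large}, with the path (side arc)$\cdot$(complementary arc of $\partial\Pi$)$\cdot$(side arc), of length $<(1-\Psi_i+2\zeta)|\partial\Pi|$. Minimality of $t$ therefore yields only $\Psi_i-2\beta\le 1-\Psi_i+2\zeta$, i.e.\ $\Psi_i<\tfrac12+\beta+\zeta$: the threshold at which the detour becomes strictly shorter is $\tfrac12$, not ``small'', and this is exactly the bound already supplied by Lemma~\ref{lem:contig deg to lambda-geodesic} with $\lambda=1$. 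Consequently $\Psi_2+\Psi_4$ may a priori be close to $1$, and $\sum_i\Psi_i>\bar\gamma$ then gives only the vacuous $\Psi_1+\Psi_3>-\gamma-2\beta-2\zeta$; you cannot conclude that $\Gamma_1$ and $\Gamma_3$ are both present, let alone that $\Psi_1+\Psi_3$ is close to $1$. The hard case --- a $\gamma$-cell whose contiguity is concentrated on the two copies of the cut path --- is precisely the one your argument leaves untreated, and everything downstream (the upper bound on $|\partial\Pi|$, the construction of the short connecting path) depends on it.

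The closing induction is also not viable as described. If excising $\Pi$ together with its contiguity subdiagrams leaves contours that are ``only mildly longer'', the inductive bound $\gamma(|\tilde p|+|\tilde q|)$ already exceeds $\gamma(|p|+|q|)$ and nothing follows; the induction closes only when the excision \emph{shortens} the relevant contour by a definite multiple of $|\partial\Pi|$ (which again requires contiguity to $p\cup q$ exceeding $\tfrac12$ by a margin, i.e.\ the unproved Step 2), so that the saving $\gamma\cdot\Theta(|\partial\Pi|)$ absorbs the $O((\gamma+\zeta)|\partial\Pi|)$ cost of rerouting the endpoint of the inductively obtained path back onto the original contour. One must also separate the case where $\Pi$ is contiguous to both $p$ and $q$: excision then turns the annulus into a disc, and instead one should read off the desired path directly through the two side arcs and the short complementary arc of $\partial\Pi$. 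Finally, a minor point: $\Lab(p)\neq 1$ in $G$ does not force $r(\Delta)>0$ (freely conjugate words admit rank-$0$ annular diagrams); the rank-$0$ case should be the base of the induction, where $p$ and $q$ share a vertex and $t$ may be taken empty.
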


\begin{lemma}[Corollary 17.1 in \cite{OL-book}]\label{lem:boundary of a cell shorter than of the diagram}
If a circular reduced diagram $\Delta$ contains a cell $\Pi$, then $|\partial \Delta|> \overline{\beta}|\partial \Pi|$.
\end{lemma}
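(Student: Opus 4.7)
The plan is to first observe that the universal claim ``$|\partial \Delta| > \bar{\beta}|\partial \Pi|$ for every cell $\Pi$ of $\Delta$'' is equivalent to the statement that $|\partial \Delta| > \bar{\beta} \max_{\Pi}|\partial \Pi|$, so it suffices to prove the inequality for a single cell $\Pi^*$ of maximum perimeter in $\Delta$. I will proceed by induction on the number $N$ of cells of $\Delta$; the base case $N=1$ is immediate because then $\Delta$ consists of the single cell $\Pi^*$ and $|\partial \Delta| = |\partial \Pi^*|$.

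For the inductive step I argue by contradiction: assume $\Delta$ has $N\ge 2$ cells and that $|\partial \Delta| \le \bar{\beta}|\partial \Pi^*|$. The strategy is the standard ``annular surgery'' around $\Pi^*$. In order: (i)~form the annular sub-diagram $A = \Delta \setminus \mathrm{int}(\Pi^*)$, with inner contour $\partial \Pi^*$ and outer contour $\partial \Delta$; (ii)~apply a rank-relative variant of Lemma~\ref{lem:short path conneting contours of annual A-map} to obtain a path $t$ joining the two contours with $|t| < \gamma(|\partial \Pi^*|+|\partial \Delta|)$; (iii)~cut $A$ along $t$ to produce a circular diagram $B$ with $N-1$ cells, whose contour decomposes into four sections $\partial \Pi^*,\, t,\, \partial \Delta^{-1},\, t^{-1}$; (iv)~apply Lemma~\ref{lem:existence of gamma-cell} to $B$ with this partition, obtaining a cell $\Pi'$ with combined contiguity $>\bar{\gamma}$ to the four sections; (v)~use Lemma~\ref{lem:contiguity sides are short} to show that the contiguity across the short $t$-sections is negligible, and use reducedness of $\Delta$ to bound $\Pi'$'s contiguity to $\partial \Pi^*$ away from $1$ (else $\Pi^*$ and $\Pi'$ form a reducing pair, contradicting reducedness); (vi)~deduce that most of $\Pi'$'s contiguity lies on $\partial \Delta$, and extract from Lemma~\ref{lem:q_2 is large} a lower bound on $|\partial \Delta|$ in terms of $|\partial \Pi'| \le |\partial \Pi^*|$; combined with the inductive hypothesis applied to the smaller diagram $B$, this contradicts $|\partial \Delta| \le \bar{\beta}|\partial \Pi^*|$.

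The principal obstacle is that Lemma~\ref{lem:short path conneting contours of annual A-map} requires one of the boundary labels of the annular diagram to be non-trivial in the ambient group, whereas $\Lab(\partial \Pi^*) = A^{n_A}$ is a relator of $G$ and $\Lab(\partial \Delta)$ need not be non-trivial in $G$ either. The workaround is to pass to the quotient $G(r(\Pi^*)-1)$, in which $\Lab(\partial \Pi^*)$ is non-trivial by condition (L2) on periods of rank $r(\Pi^*)$; this entails first modifying $A$ so that it sits over $G(r(\Pi^*)-1)$ and treating cells of $A$ of rank $\ge r(\Pi^*)$ separately. A secondary but genuinely delicate technical point is that extracting the sharp constant $\bar{\beta}$, as opposed to the weaker $\bar{\gamma}-O(\beta,\zeta)$ that a naive application of the contiguity lemmas would yield, requires careful parameter bookkeeping through the Lower Parameter Principle of Subsection~\ref{subsec:LPP}; this is the step where the specific ordering $\theta>m^{-1}>\beta>\gamma>\cdots$ is crucially used, and the full details are carried out in \cite[Chapter~17]{OL-book}.
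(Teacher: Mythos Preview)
The paper does not give its own proof of this lemma; it is simply cited as Corollary~17.1 of \cite{OL-book}. In Ol'shanskii's book the argument is not the inductive $\gamma$-cell surgery you outline, but a one-line deduction from Theorem~17.1 there (on \emph{smooth} sections): one excises $\Pi$ from $\Delta$, obtaining an annular reduced diagram with inner contour $\partial\Pi$ and outer contour $\partial\Delta$; the inner contour has a periodic label with period a period of the presentation, hence is a smooth section, and Theorem~17.1 then yields $\bar\beta\,|\partial\Pi|\le|\partial\Delta|$ directly (compare the ``In particular'' clause of Lemma~\ref{lem:smooth section}(2) in the present paper).

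Your inductive scheme, by contrast, has a genuine gap at step~(vi). After locating the $\gamma$-cell $\Pi'$ in the cut diagram $B$ and bounding its contiguity to $\partial\Pi^*$ and to the $t$-sections, Lemma~\ref{lem:q_2 is large} yields only a lower bound of the form $|\partial\Delta|\gtrsim c\,|\partial\Pi'|$ for some constant $c<1$. But $\Pi'$ need not be $\Pi^*$, and you only know $|\partial\Pi'|\le|\partial\Pi^*|$, so this inequality points in the wrong direction to contradict $|\partial\Delta|\le\bar\beta\,|\partial\Pi^*|$. The inductive hypothesis on $B$ gives $|\partial B|>\bar\beta\,|\partial\Pi'|$, again an inequality involving $|\partial\Pi'|$ and the (larger) perimeter of $B$, not a lower bound on $|\partial\Delta|$ in terms of $|\partial\Pi^*|$. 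I do not see how to close this loop without already knowing that $\partial\Pi^*$ is $\bar\beta^{-1}$-geodesic in $B$---which is exactly the content of Theorem~17.1 and would render the whole induction superfluous. Your own caveat that the sharp constant $\bar\beta$ ``requires careful parameter bookkeeping'' is an understatement: a naive $\gamma$-cell count yields at best $1-\gamma-2\beta$, which is strictly smaller than $\bar\beta=1-\beta$ since $\gamma>0$, and no amount of LPP reshuffling recovers the missing $\gamma+\beta$.

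A secondary issue you already flag: the hypothesis of Lemma~\ref{lem:short path conneting contours of annual A-map} as stated in this paper (non-triviality of the inner label in $G$) fails for $\partial\Pi^*$, and your proposed descent to $G(r(\Pi^*)-1)$ does not work since $\Pi^*$ was chosen of maximal \emph{perimeter}, not maximal \emph{rank}, so $A=\Delta\setminus\Pi^*$ may still contain cells of rank $\ge r(\Pi^*)$. (In Ol'shanskii's original, the short-path lemma is stated for $A$-maps with a geometric hypothesis that is automatic here, so this obstacle is an artifact of the paper's reformulation.)
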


A path $q$ in a diagram $\Delta$ is called \emph{$\lambda$-geodesic} for $\lambda\ge 1$ if for every subpath $t$ of $q$ and for every path $s$ homotopic to $t$ in $\Delta$, we have $|t|\le \lambda |s|$. 
A path $q$ is called \emph{geodesic} if it is $\lambda$-geodesic with $\lambda=1$. A contour $p$ of an annular diagram is called \emph{$\lambda$-geodesic} if any subpath of the cyclic path $p$ is $\lambda$-geodesic. We have:

\begin{lemma}\label{lem:contig deg to lambda-geodesic}
If a boundary section $q$ of a reduced diagram $\Delta$ is $\lambda$-geodesic for $1\le \lambda\le 2$, then the degree of contiguity of any cell $\Pi$ of $\Delta$ to $q$ is less than $\frac{\lambda}{\lambda+1}+2\beta$.
\end{lemma}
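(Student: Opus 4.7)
The plan is as follows. Let $\Gamma$ be an arbitrary contiguity subdiagram of $\Pi$ to $q$, with contiguity degree $\Psi = (\Pi,\Gamma,q)$ and with boundary decomposed in the standard way as $\partial\Gamma = p_1 q_1 p_2 q_2$, where $q_1 \subset \partial\Pi$ has length $\Psi|\partial\Pi|$ and $q_2 \subset q$. The goal is to bound $\Psi < \frac{\lambda}{\lambda+1} + 2\beta$ uniformly in $\Gamma$. The key geometric observation is that, by definition of a contiguity subdiagram, the cell $\Pi$ does not lie in $\Gamma$, so $\Gamma' := \Gamma\cup\Pi$ is a simply connected subdiagram of $\Delta$ whose contour has the form $p_1\, q_1''\, p_2\, q_2$, where $q_1''$ is the complementary path to $q_1$ in $\partial\Pi$, of length $(1-\Psi)|\partial\Pi|$.

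Since $\Gamma'$ is a topological disk, $q_2$ is homotopic in $\Delta$ to $(p_1 q_1'' p_2)^{-1}$. Applying the $\lambda$-geodesic property of the section $q$ to its subpath $q_2$ against this alternative path, I would conclude
$$|q_2|\;\le\;\lambda\bigl(|p_1|+|p_2|+(1-\Psi)|\partial\Pi|\bigr).$$
Combining this upper bound with Lemma~\ref{lem:contiguity sides are short} (which gives $|p_1|,|p_2|<\zeta|\partial\Pi|$) and the lower bound $|q_2|>(\Psi-2\beta)|\partial\Pi|$ from Lemma~\ref{lem:q_2 is large}, and dividing through by $|\partial\Pi|$, gives
$$\Psi - 2\beta \;<\; \lambda\bigl(2\zeta + 1 - \Psi\bigr),$$
which rearranges to $\Psi(1+\lambda) < \lambda + 2\beta + 2\lambda\zeta$, i.e.\ $\Psi < \frac{\lambda}{\lambda+1} + \frac{2\beta+2\lambda\zeta}{\lambda+1}$.

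To finish, the extra term must be absorbed into $2\beta$. By the Lower Parameter Principle I may assume $\zeta<\beta/2$, and since $\lambda\ge 1$ we have $\frac{2\beta}{\lambda+1}\le\beta$ and $\frac{2\lambda\zeta}{\lambda+1}\le 2\zeta$, so that $\frac{2\beta+2\lambda\zeta}{\lambda+1}\le \beta+2\zeta<2\beta$, yielding the desired inequality. The main---indeed essentially the only---nontrivial step is justifying that $\Gamma\cup\Pi$ is a legitimate simply-connected subdiagram whose boundary provides a competitor path for $q_2$ going around $\Pi$ the long way; once that homotopy is in place, the bound follows from a routine combination of the earlier contiguity lemmas with the LPP.
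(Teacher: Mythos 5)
Your proof is correct and follows essentially the same route as the paper: replace $q_2$ by the competitor path $p_1(q')^{-1}p_2$ around the long side of $\Pi$, apply the $\lambda$-geodesic property together with Lemmas \ref{lem:contiguity sides are short} and \ref{lem:q_2 is large}, and absorb the error term $\frac{2(\beta+\lambda\zeta)}{\lambda+1}$ into $2\beta$ via the LPP. The only cosmetic difference is that you flag the homotopy of $q_2$ to $(p_1 q_1'' p_2)^{-1}$ as the main step, whereas the paper treats it as immediate from the definition of a contiguity subdiagram.
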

 
\begin{proof}
Let $\Gamma$ be a contiguity diagram
of $\Pi$ to $q$. Denote by $\psi = (\Pi, \Gamma, q)$ the contiguity degree of $\Pi$ to $q$, by $p_1q_1p_2q_2$ the boundary path of $\Gamma$ (as in Figure \ref{fig:contiguity}), and by $q'q_1$ be the boundary path of $\Pi$. 

By Lemma \ref{lem:contiguity sides are short}, we have that $|p_1|,|p_2|<\zeta |\partial \Pi|$, and so by 
definition of $\lambda$-geodesic path, we have: \[ |q_2|\le \lambda (|p_1(q')^{-1}p_2|)\le \lambda(1-\psi+ 2\zeta )|\partial\Pi|. \]
On the other hand, by 
Lemma \ref{lem:q_2 is large}, $|q_2|>(\psi -2\beta)|\partial\Pi|$. Eliminating $|q_2|$ and
$|\partial\Pi|$ from these inequalities, one gets that: \[\psi < \frac{\lambda}{\lambda + 1} + 2\frac{\beta + \lambda \zeta}{\lambda+1}.\] Since $1\le \lambda \le 2$ and $\zeta<\beta$ (LPP), the desired upper bound for $\psi$ follows.
\end{proof}

Let $A$ be a word. A \emph{periodic word with period $A$} (also called an `$A$-periodic word') is any subword of a power of $A^m$ where $m>0$. 
The following follows from Lemmas 19.5, 13.3 and Theorem 17.1 of \cite{OL-book}.
\begin{lemma}\label{lem:smooth section}
\ \ \
    \begin{enumerate}
        \item 
        Let $p$ be a boundary section in a reduced circular diagram $\Delta$ of rank $i$. If $\Lab(p)$ is a periodic word with a simple in rank $i$ period $A$, then $p$ is a $\bar{\beta}^{-1}$-geodesic path. Moreover, $p$ is geodesic if $|p|\le |A|$.
        \item  Let $p$ and $q$ be two boundary components of a reduced annular diagram $\Delta$ and $\Lab(p)=A^k$ for a period $A$ of some rank $j$, $k \ne 0 \modd n_A$. Then there is a reduced annular diagram $\Delta'$ 
with boundary components $p'$ and $q'$ such that $\Lab(q')\equiv \Lab(q)$,
 $\Lab(p') \equiv A^{k'}$, where $k'=k \modd n_A$, $|k'|<n_A$, and the cyclic 
path $p'$ is $\bar{\beta}^{-1}$-geodesic. Furthermore, $\bar{\beta}|p'|\le |q'|$ and $|A|\le |q|$.

    \end{enumerate}
    
\end{lemma}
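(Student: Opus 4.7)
The plan is to treat the two parts separately, in both cases using the $A$-map methodology of \cite[Chapter 5]{OL-book} together with the geometric lemmas already established in this section. In both parts the key idea is to force a contradiction out of a non-$\bar{\beta}^{-1}$-geodesic boundary arc via the $\gamma$-cell lemma.

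For part~(1), I would argue by contradiction. Suppose $p$ is not $\bar{\beta}^{-1}$-geodesic; then there is a subpath $t$ of $p$ and a homotopic path $s$ in $\Delta$ with $|s|<\bar{\beta}|t|$. Choose $s$ of minimum length among such paths, so that $s$ is itself geodesic. The paths $t$ and $s^{-1}$ cobound a reduced circular subdiagram $\Delta_0$ of rank at most $i$. If $\Delta_0$ has no cell, then $\Lab(t)$ freely reduces to $\Lab(s)$ and $|t|\le|s|$, a contradiction. Otherwise, Lemma~\ref{lem:existence of gamma-cell} applied to the two-section decomposition $\partial\Delta_0 = t\cdot s^{-1}$ produces a $\gamma$-cell $\Pi$ together with disjoint contiguity subdiagrams $\Gamma_t,\Gamma_s$ to $t,s$ whose degrees sum to more than $\bar{\gamma}$. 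Because $s$ is geodesic, Lemma~\ref{lem:contig deg to lambda-geodesic} (with $\lambda=1$) bounds $(\Pi,\Gamma_s,s) < 1/2 + 2\beta$, so $(\Pi,\Gamma_t,t) > \bar{\gamma} - 1/2 - 2\beta$, which by the LPP exceeds $\epsilon$. This contradicts the standard upper bound on the contiguity degree of any cell to a section whose label is periodic with period simple in the ambient rank---the single step where the simplicity of $A$ is genuinely used (see \cite[Lemma 19.5]{OL-book}).

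For part~(2), I would first normalise the exponent: write $k = \ell n_A + k'$ with $k' = k \bmod n_A$ and $0 < |k'| < n_A$, which is possible since $k \not\equiv 0 \pmod{n_A}$. Gluing $|\ell|$ rank-$j$ cells with boundary label $A^{n_A}$ onto the $p$-contour of $\Delta$ replaces its label by $A^{k'}$; cancelling any resulting mirror cell pairs yields a reduced annular diagram with inner label $A^{k'}$ and outer label $\Lab(q)$. Among all such reduced annular diagrams, let $\Delta'$ be one of minimal complexity (lexicographic rank-vector, then cell count, as in \cite[\S15]{OL-book}), with contours $p',q'$. I claim that $p'$ is $\bar{\beta}^{-1}$-geodesic in $\Delta'$: otherwise the argument of part~(1) applies inside $\Delta'$---the relevant simplicity is supplied by hypothesis~(L2) of Definition~\ref{def:minimal partial Burnside presentation}, which ensures that $A$ is simple in rank $j-1$---and yields a homotopic replacement of a subpath of $p'$ that strictly decreases the complexity of $\Delta'$, contradicting minimality. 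The remaining length inequalities then follow by cutting $\Delta'$ open along a short transversal arc supplied by Lemma~\ref{lem:short path conneting contours of annual A-map}, applying the $\bar{\beta}^{-1}$-geodesic property of $p'$, and using $|p'| \ge |A|$ since $|k'| \ge 1$.

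The step I expect to be the main obstacle is the contiguity estimate against a section whose label is periodic with simple period, which is the mechanism that converts the combinatorial simplicity hypothesis on $A$ into a usable geometric bound, and which depends on the careful parameter calibration baked into the LPP. Once that estimate is in place, both parts reduce to straightforward applications of Lemmas~\ref{lem:existence of gamma-cell}, \ref{lem:contig deg to lambda-geodesic}, and \ref{lem:short path conneting contours of annual A-map} combined with the minimality arguments above.
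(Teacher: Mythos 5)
First, note that the paper does not actually prove this lemma: it is stated as a direct import of Lemmas 19.5, 13.3 and Theorem 17.1 of \cite{OL-book} (smoothness of periodic sections with simple period, normalization of exponents in annular diagrams, and the near-geodesity of smooth sections in $A$-maps). You are therefore attempting a from-scratch proof, and the attempt fails at its central step in part (1). After producing the $\gamma$-cell $\Pi$ with $(\Pi,\Gamma_t,t)>\bar{\gamma}-\tfrac12-2\beta\approx\tfrac12$, you assert a contradiction with ``the standard upper bound on the contiguity degree of any cell to a section whose label is periodic with simple period,'' implicitly taking that bound to be $\epsilon$ (or at least something below $\tfrac12$). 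No such bound exists. What Lemma 19.5 of \cite{OL-book} actually provides is that such a section is \emph{smooth}, and the $A$-map axiom then only bounds the contiguity degree of a cell to a smooth section by $\bar{\alpha}=\tfrac12+\alpha$, which is strictly larger than the $\approx\tfrac12$ you obtained; nothing is contradicted. (Compare the paper's own Lemma \ref{lem:contig deg to lambda-geodesic}: even a genuinely geodesic section only forces contiguity degree below $\tfrac12+2\beta$, and a periodic section with simple period cannot do better. A contiguity degree $\ge\epsilon$ to such a section is also entirely possible when $|A|$ is large compared with $|\partial\Pi|$, so there is no $\epsilon$-type bound either.) The passage from smoothness to the $\bar{\beta}^{-1}$-geodesic property is precisely the content of Theorem 17.1 of \cite{OL-book}: an induction on the number of cells in which the $\gamma$-cell is excised together with its contiguity submaps and the perimeters are compared. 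Your proof is missing exactly this induction.

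Part (2) inherits this gap and adds another. The smoothness of the section labelled $A^{k'}$ in $\Delta'$ is not supplied by ``$A$ is simple in rank $j-1$'': a period of rank $j$ is by definition \emph{not} simple in rank $j$, and $\Delta'$ generally has rank $\ge j$ and may a priori contain rank-$j$ cells with period $A$. What is needed is that no such cell is \emph{compatible} with the boundary section, which is exactly what the exponent normalization $k\mapsto k'$ together with reducedness is designed to guarantee (the role of Lemma 13.3 in \cite{OL-book}); your minimal-complexity choice is the right instinct, but the justification offered is the wrong mechanism. Finally, your derivation of $|A|\le|q|$ only yields $|A|\le\bar{\beta}^{-1}|q|$ from $|p'|\ge|A|$; one needs the extra case split that if $|k'|\ge2$ then $|p'|\ge2|A|$ makes $\bar{\beta}|p'|\le|q'|$ strong enough, while if $k'=\pm1$ then $A^{\pm1}$ is conjugate in $G$ to a word of length $|q|$ and condition (L2) (pushed down to low rank via Lemma \ref{lem:conjugation holds in low rank}) forbids $|q|<|A|$.
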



Consider a word $W$. By Lemma \ref{lem:every word is conjugate to a power of a period or simple}, $W$ is conjugate in $G$ to $A^k$, where $A$ is either a period, or simple in $G$. The following lemma shows that up to cyclic permutations, this conjugation can be done using a very short word.

\begin{lemma}\label{lem:up to cyclic shift conj is done by short word}
Let $A$ be a period or a word simple in $G$, let $k\in \mathbb{Z}$, and suppose a word $W$ is conjugate in $G$ to $A^k$.
Then there exists a word $T$ of length $3\gamma|W|$ and cyclic shifts $W'$ and $A'$ of $W$ and $A$ respectively, such that $W'=T^{-1}(A')^kT$ in $G$.
\end{lemma}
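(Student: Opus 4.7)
The plan is to realize the conjugacy geometrically via a reduced annular diagram (Lemma \ref{lem:thm 13.2 annular van kampen}), then cut it along a short path between the two contours (Lemma \ref{lem:short path conneting contours of annual A-map}). After disposing of the trivial case where $W=1$ in $G$ (take $T$ empty, $W'=W$, $A'=A$), we may assume $W$ and $A^k$ are non-trivial in $G$. Then Lemma \ref{lem:thm 13.2 annular van kampen} provides a reduced annular diagram $\Delta$ with inner contour $p$ labeled $A^k$ and outer contour $q$ labeled $W^{-1}$, so in particular $|q|=|W|$.

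Next, I bound $|p|$ in terms of $|W|$ by making $p$ geodesic. If $A$ is a period, Lemma \ref{lem:smooth section}(2) lets me replace $\Delta$ by a reduced annular diagram in which the inner contour $p'$ is labeled $A^{k'}$ with $k'\equiv k\pmod{n_A}$, $|k'|<n_A$, and is $\bar{\beta}^{-1}$-geodesic, yielding $|p'|\le \bar{\beta}^{-1}|W|$. If $A$ is simple in $G$, then $A$ is simple in the finite rank $r(\Delta)$, and the same Theorem~17.1 of \cite{OL-book} underlying Lemma \ref{lem:smooth section} ensures that the $A$-periodic inner contour $p$ is $\bar{\beta}^{-1}$-geodesic, so $|p|\le \bar{\beta}^{-1}|W|$ by homotopy of the two annular boundary components. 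In either case, $|p|+|q|\le (\bar{\beta}^{-1}+1)|W|<3|W|$ by LPP, since $\bar{\beta}>1/2$.

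Applying Lemma \ref{lem:short path conneting contours of annual A-map} now yields a path $t$ from a vertex $o_1\in p$ to a vertex $o_2\in q$ with $|t|<\gamma(|p|+|q|)<3\gamma|W|$. Setting $T := \Lab(t)$ and cutting $\Delta$ along $t$ produces a reduced circular diagram whose boundary label, read starting at $o_1$ in accordance with the orientation conventions of Section~3.2, has the form $(A')^{k'}\cdot T\cdot (W')^{-1}\cdot T^{-1}$, where $A'$ and $W'$ are the cyclic shifts of $A$ and $W$ beginning at $o_1$ and $o_2$ respectively (with $k'=k$ in the simple case). By Lemma \ref{lem:thm 13.1 van kampen} this word equals $1$ in $G$, so $W' = T^{-1}(A')^{k'}T$ in $G$. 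In the period case, $(A')^{n_A}=1$ in $G$ gives $(A')^{k'}=(A')^k$, yielding the desired identity $W'=T^{-1}(A')^kT$.

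The main obstacle I anticipate is handling the period and simple cases uniformly---specifically securing the bound $|p|\le \bar{\beta}^{-1}|W|$ in the simple case (where no exponent reduction is available) and carefully bookkeeping the cyclic shifts and the rewriting $(A')^{k'}\to (A')^k$ in the period case. Once these are in place, the constant $3\gamma$ follows immediately from $\bar{\beta}^{-1}+1<3$.
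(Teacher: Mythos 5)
Your proposal is correct and follows essentially the same route as the paper: construct a reduced annular diagram for the conjugacy, use Lemma \ref{lem:smooth section} to arrange that the $A^k$-contour is $\bar{\beta}^{-1}$-geodesic (hence of length less than $2|W|$), cut along the short connecting path from Lemma \ref{lem:short path conneting contours of annual A-map}, and read off the relation $W'=T^{-1}(A')^kT$ via Lemma \ref{lem:thm 13.1 van kampen}. Your explicit handling of the trivial case and of the exponent replacement $k\to k'$ in the period case is slightly more careful than the paper's (which simply assumes $k\not\equiv 0 \bmod n_A$ at the outset), but the argument is the same.
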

\begin{proof}
We may assume that $k \ne 0 \modd n_A$ if $A$ is a period.
By Lemma \ref{lem:thm 13.2 annular van kampen}, we can draw a reduced diagram $\Delta$ for the conjugation of $A^k$ and $W$. So the contours of $\Delta$ are $p$ and $q$ and have labels $\Lab(p)\equiv A^k$ and $\Lab(q)\equiv W$. 
By Lemma \ref{lem:smooth section}, we can assume that $|p|\le \bar{\beta}^{-1}|q|$, namely, $|A^k|\le \bar{\beta}^{-1}|W|<2|W|$.

By Lemma \ref{lem:short path conneting contours of annual A-map}, there is a path $t$ connecting $p$ and $q$, with 
$$|t|<\gamma (|p|+|q|)=\gamma (|W|+|A^k|)<3\gamma|W|.$$
Denote $\Lab(t)\equiv T$. So $|T|<3\gamma |W|$.

Cutting $\Delta$ along $t$, we obtain a circular diagram $\Delta'$ whose contour is 
$\partial\Delta' \equiv (q')^{-1}t^{-1}p't$, 
where $p'$ and $q'$ are cyclic shifts of $p$ and $q$ respectively.
So $\Lab(p')\equiv W'$ is a cyclic shift of $W$, and $\Lab(q')\equiv (A')^k$ is the k$^{th}$ power of a cyclic shift $A'$ of $A$. (Indeed, every cyclic shift of $A^k$ is of the form $(A')^k$ for some cyclic shift $A'$ of $A$). Now Lemma \ref{lem:thm 13.1 van kampen} ensures that 
$\Lab(\partial \Delta')\equiv W'^{-1}T^{-1}(A')^kT$ is trivial in $G$, completing the proof.\end{proof}


\section{Growth of Burnside-type groups}
\subsection{Stabilization of balls}
For a finitely generated group $G=\langle S\rangle$ denote by $B_{G,S}(r)$ the ball of radius $r$ in $G$ with respect to $S$, namely, the set of elements in $G$ that can be expressed as a product of at most $r$ elements from  $S^{\pm 1}$. 
In this subsection we show that if $G$ is a Burnside-type group then the $r$-ball in $G$ is already determined in rank $\lceil \frac{3}{n}r\rceil$, and torsion elements in the $r$-ball of $G$ can be safely counted in the $r$-ball of the rank $r$ group, $G(r)$, rather than in $G$ itself.


\begin{lemma}\label{lem:conjugation holds in low rank}
Let $U,V$ be reduced words. If $U,V$ are equal (resp., conjugate) in $G$, then they are equal (resp., conjugate) in rank $j$ for any $j\geq \frac{3}{n}\max\{|U|,|V|\}$.
\end{lemma}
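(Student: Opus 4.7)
The plan is to bound the rank of any cell appearing in a minimal Van Kampen diagram witnessing the equality (respectively, the conjugacy) of $U$ and $V$ in $G$, and to show that this bound is at most $j$. Once every cell has rank $\le j$, the same diagram is a diagram over the presentation of $G(j)$, so $U=V$ (respectively, $U$ and $V$ are conjugate) in $G(j)$.

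\textbf{The equality case.} Assume $U=V$ in $G$. If $U\equiv V$ as words there is nothing to prove, so assume otherwise; then the word $W=UV^{-1}$ is non-empty and equals $1$ in $G$. By Lemma~\ref{lem:thm 13.1 van kampen} there is a reduced circular diagram $\Delta$ with $\Lab(\partial\Delta)\equiv UV^{-1}$, hence $|\partial\Delta|=|U|+|V|\le 2\max\{|U|,|V|\}$. If $\Delta$ has no cells, its rank is $0\le j$. Otherwise, pick any cell $\Pi\subset\Delta$ of rank $i\ge 1$; by definition its boundary label is a cyclic shift of $A^{n_A}$ for a period $A$ of rank $i$, so $|\partial\Pi|=i\cdot n_A\ge in$. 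By Lemma~\ref{lem:boundary of a cell shorter than of the diagram},
$$ in\le |\partial\Pi| < \bar\beta^{-1}|\partial\Delta| \le 2\bar\beta^{-1}\max\{|U|,|V|\}. $$
By LPP we may assume $\bar\beta\ge 2/3$, giving $i < \tfrac{3}{n}\max\{|U|,|V|\}\le j$, hence $i\le j$. Thus $r(\Delta)\le j$, and $\Delta$ is a diagram over $G(j)$, proving $U=V$ in $G(j)$.

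\textbf{The conjugacy case.} If both $U$ and $V$ are trivial in $G$, apply the equality case to $U$ and to $V$ separately to conclude that both are trivial (hence trivially conjugate) in $G(j)$. Otherwise both are nontrivial in $G$ (if exactly one were trivial they could not be conjugate in $G$), and by Lemma~\ref{lem:thm 13.2 annular van kampen} there is a reduced annular diagram $\Delta$ with contours $p,q$ labeled by $U$ and $V^{-1}$. By Lemma~\ref{lem:short path conneting contours of annual A-map} there is a path $t$ joining $p$ and $q$ with $|t|<\gamma(|p|+|q|)\le 2\gamma\max\{|U|,|V|\}$. Cutting $\Delta$ along $t$ yields a reduced circular diagram $\Delta'$ with the same cells as $\Delta$ and
$$ |\partial\Delta'|=|p|+|q|+2|t|\le (2+4\gamma)\max\{|U|,|V|\}. $$
Applying Lemma~\ref{lem:boundary of a cell shorter than of the diagram} to $\Delta'$ as before, any cell of rank $i\ge 1$ satisfies
$$ in\le |\partial\Pi| < \bar\beta^{-1}(2+4\gamma)\max\{|U|,|V|\}. $$
By LPP we may assume $4\gamma+3\beta\le 1$, i.e.\ $(2+4\gamma)\bar\beta^{-1}\le 3$, which gives $i<\tfrac{3}{n}\max\{|U|,|V|\}\le j$. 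Hence $r(\Delta)=r(\Delta')\le j$ and $\Delta$ is an annular diagram over $G(j)$, so $U$ and $V$ are conjugate in $G(j)$.

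\textbf{Main obstacle.} The argument is essentially a length–vs–perimeter counting, so there is no conceptual difficulty; the only points requiring care are (i)~keeping track of the small constants so that the required inequalities $\bar\beta\ge 2/3$ and $4\gamma+3\beta\le 1$ are available from LPP, and (ii)~cleanly handling the cases when $U$ or $V$ is trivial in $G$, which reduce to the equality case.
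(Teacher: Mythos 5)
Your proposal is correct and follows essentially the same route as the paper: a reduced (circular or cut-open annular) diagram, the perimeter bound from Lemma \ref{lem:boundary of a cell shorter than of the diagram}, and the LPP to absorb the constants $\bar\beta^{-1}$ and $\gamma$ into the factor $3$. The only cosmetic difference is bookkeeping (you write the cut-open perimeter as $(2+4\gamma)\max\{|U|,|V|\}$ where the paper uses $(1+\gamma)(|U|+|V|)$), which does not affect the argument.
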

\begin{proof}
Our proof uses ideas from the proof of \cite[Theorem 19.4, Item 3]{OL-book}. 
Suppose $U=V$ in $G$. By Lemma \ref{lem:thm 13.1 van kampen}, there exists a reduced circular diagram $\Delta$ with contour label $\Lab(\partial\Delta)\equiv UV^{-1}$. 
By construction, $|\partial \Delta|\leq 2\max\{|U|,|V|\}$. Let $\Pi$ be a cell in $\Delta$. By Lemma \ref{lem:boundary of a cell shorter than of the diagram}, $|\partial \Pi|<\bar{\beta}^{-1}|\partial \Delta|$. The cell $\Pi$ has contour label $\Lab(\partial\Pi)\equiv B^{n_B}$ for some period $B$, 
and we have:
$$|B|=\frac{|\partial \Pi|}{n_B}\leq \frac{2\bar{\beta}^{-1}\max\{|U|,|V|\}}{n_B}<\frac{3\max\{|U|,|V|\}}{n}.$$ 
Hence, $B$ is a period of rank less than $\frac{3}{n}\max\{|U|,|V|\}$.
Since $\Pi$ is arbitrary, the rank of $\Delta$ is less than $\frac{3}{n}\max\{|U|,|V|\}$. By Lemma \ref{lem:thm 13.1 van kampen}, $UV^{-1}=1$ in $G(j)$ for any $j\ge \frac{3}{n}\max\{|U|,|V|\}$, completing the proof of the statement.

Next, suppose $U$ and $V$ are conjugate in $G$. By Lemma \ref{lem:thm 13.2 annular van kampen}, there exists a reduced annular diagram $\Delta$ with contour labels $U$ and $V$. 
If $U=1$ in $G$, then so is $V$, and the statement follows from the first part of the proof. Otherwise, we can cut $\Delta$ along the path $t$ as in Lemma \ref{lem:short path conneting contours of annual A-map}, obtaining a circular reduced diagram $\Delta'$ with perimeter less than $(|U|+|V|)(1+\gamma)$.
We proceed as in the previous case: Let $\Pi$ be a cell in $\Delta$, which is now viewed as a cell in $\Delta'$. It has contour label $\Lab(\partial\Pi)\equiv B^{n_B}$ for some period $B$. By Lemma \ref{lem:boundary of a cell shorter than of the diagram}, $|\partial \Pi|<\bar{\beta}^{-1}|\partial \Delta'|$. Since $\bar{\beta}^{-1}(1+\gamma)<\frac{3}{2}$ (by the LPP), it follows that:
$$|B|=\frac{|\partial \Pi|}{n_B}
<\frac{\bar{\beta}^{-1}(1+\gamma)(|U|+|V|)}{n_B}<\frac{3}{n}\max\{|U|,|V|\}.$$ 
Hence, the period $B$ has rank less than $\frac{3}{n}\max\{|U|,|V|\}$, and since $\Pi$ was arbitrary, $r(\Delta)=r(\Delta')<\frac{3}{n}\max\{|U|,|V|\}$. By Lemma \ref{lem:thm 13.2 annular van kampen}, $U$ and $V$ are conjugate in $G(j)$ for any $j\ge r(\Delta)$, and the statement follows. 
\end{proof}


For a word $V$, denote by $\ord_G(V)$ (respectively, $\ord_{G(i)}(V)$) its order in $G$ (resp., $G(i)$). We now show that the ball $B_{G,S}(R)$, as well as the orders of all elements in this ball, are determined in low ranks.

\begin{corollary}\label{cor:R-th ball is determined in low rank}
Let $i,r\in \mathbb{N}$ be such that $i\geq \frac{3}{n}r$. Then the identity map on $S$ induces a bijection 
$f_i:B_{G(i),S}(r)\to B_{G,S}(r)$. If $i\geq r$, then $f_i$ preserves the order of elements. That is, for every word $V$ of length $\leq r$, we have  $\ord_{G(i)}(V)=\ord_G(V)$.
\end{corollary}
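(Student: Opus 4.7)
My plan is to derive $f_i$ from the natural epimorphism $G(i)\twoheadrightarrow G$ induced by the identity on $S$ (i.e., from the direct-limit diagram $G(0)\xrightarrow{\alpha_0}G(1)\xrightarrow{\alpha_1}\cdots$). The bijection claim reduces almost immediately to Lemma~\ref{lem:conjugation holds in low rank}: surjectivity is trivial since any element of $B_{G,S}(r)$ is represented by some word of length $\leq r$ which also gives an element of $B_{G(i),S}(r)$, and for injectivity I would represent two preimages by words $w_1,w_2$ of length $\leq r$, note that $w_1=w_2$ in $G$, and apply the equality case of Lemma~\ref{lem:conjugation holds in low rank} to deduce $w_1=w_2$ already in $G(j)$ for every $j\geq \frac{3r}{n}$, hence in $G(i)$.

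For the order-preserving statement, the epimorphism $G(i)\twoheadrightarrow G$ immediately yields $\ord_G(V)\mid \ord_{G(i)}(V)$, which in particular settles the case when $V$ has infinite order in $G$. For the reverse divisibility in the finite-order case, I would use Lemma~\ref{lem:finite order element is conjugate to a power of a period} to write $V$ as conjugate in $G$ to $A^k$ for some period $A$, and then invoke Lemma~\ref{lem:smooth section}(2) to normalize the annular conjugacy diagram so that $|A|\leq |V|\leq r$, $|k|<n_A$, and $|A^k|\leq \bar\beta^{-1}|V|<2r$ (using $\bar\beta^{-1}<2$ from the LPP). The conjugacy case of Lemma~\ref{lem:conjugation holds in low rank} then lifts $V\sim A^k$ to $G(j)$ for every $j\geq \frac{6r}{n}$; by the LPP with $n$ sufficiently large this is implied by $j\geq r$, so the conjugacy already holds in $G(i)$. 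Moreover, since $A$ has rank $|A|\leq i$, the defining relation $A^{n_A}=1$ holds in $G(i)$ as well.

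The remaining step is to compare $\ord_{G(i)}(A^k)$ with $\ord_G(A^k)$, which by conjugacy invariance will match $\ord_{G(i)}(V)$ and $\ord_G(V)$. This is where I expect the principal obstacle to lie: I need the standard Burnside-type fact (the analog for Burnside-type presentations of \cite[Theorem~19.4]{OL-book}) that every period $A$ has order exactly $n_A$ in $G$ and in $G(j)$ for all $j\geq |A|$, rather than merely some divisor of $n_A$. Without this, the relation $A^{km}=1$ in $G$ need not propagate to $A^{km}=1$ in $G(i)$, even though the conjugacy $V\sim A^k$ itself transfers. Once invoked, both orders of $A^k$ equal $n_A/\gcd(n_A,k)$ and the corollary follows; all numerical comparisons between $r$, $i$, $\frac{3r}{n}$, and $\frac{6r}{n}$ then collapse routinely under the LPP.
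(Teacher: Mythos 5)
Your proposal is correct and follows essentially the same route as the paper: injectivity of $f_i$ via the equality case of Lemma~\ref{lem:conjugation holds in low rank}, and order preservation via Lemma~\ref{lem:finite order element is conjugate to a power of a period}, Lemma~\ref{lem:smooth section}(2), and the conjugacy case of Lemma~\ref{lem:conjugation holds in low rank}. The ``principal obstacle'' you flag --- that a period $A$ of rank $\leq i$ has order exactly $n_A$ in $G(i)$ and in $G$ --- is exactly the fact the paper also uses (implicitly, writing $\ord_{G(i)}(A)=n_A$ and $\ord_G(A^k)=n_A/(n_A,k)$ without comment), so invoking the Burnside-type analogue of \cite[Theorem 19.4]{OL-book} is the intended resolution rather than a gap.
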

\begin{proof}
    It is clear that $f_i$ is a well-defined surjective homomorphism, since $G$ is defined as a quotient of $G(i)$. Let us show the injectivity of $f_i$. Let $U,V$ be words of length $\leq r$, representing two elements in $B_{G(i),S}(r)$ and assume that $f_i(U)=f_i(V)$. In particular, $U=V$ in $G$. By Lemma \ref{lem:conjugation holds in low rank}, and since $i\geq \frac{3}{n}r$, we have that $U=V$ in $G(i)$.

    Suppose now that $i\geq r$. Let $V$ be a non empty word of length $\leq r$.
    If $\ord_G(V)=\infty$ then clearly $\ord_{G(i)}(V)=\infty$. Suppose $\ord_G(V)<\infty$. By Lemma \ref{lem:finite order element is conjugate to a power of a period}, $V$ is conjugate in $G$ to $A^k$, where $A$ is a period, and $k\in \mathbb{Z}$, $|k|<n_A$. It follows that: $$\ord_G(V)=\ord_G(A^k)=\frac{n_A}{(n_A,k)}.$$

By Lemma \ref{lem:smooth section}(2), $|A^k|\leq \overline{\beta}^{-1}|V|$, and also $|A|\leq |V|\le r$. In particular, $A$ is a period of rank at most $r$. Since $i\geq r$, it follows that $\ord_{G(i)}(A)=n_A$.
    Since $\max\{|A^k|,|V|\}\le \max\{\bar{\beta}^{-1}|V|,|V|\}\le 2r$, Lemma \ref{lem:conjugation holds in low rank} implies that $V$ and $A^k$ are conjugate in rank $i$. It follows that: $$\ord_{G(i)}(V)=\ord_{G(i)}(A^k)=\frac{n_A}{(n_A,k)}.$$
    The proof is completed.
\end{proof}

\subsection{Growth and uniform measures on Cayley balls}

Recall the discussion on free Burnside groups, $\calB(m,n)$, from Section \ref{sec:partial burnside groups}. Free Burnside groups with sufficiently large exponents are not only infinite, but they are also `as large as possible', by means of growth rates. Let $G=\left<S\right>$ be a finitely generated group. The growth function of $G$ with respect to $S$ is:
\[
\gamma_{G,S}(r) = \# B_{G,S}(r) 
\]
This function depends on $S$, but only up to asymptotic equivalence; for more on growth functions of groups we refer the reader to \cite{delaHarpe}. 
It follows from the works of Adian \cite{Ad} and Ol'shanskii \cite{OL-book} that if $n$ is sufficiently large then the growth function of $\calB(m,n)$ is bounded from below by an exponential function of $r$  (see also \cite[Theorem 1.3]{Coulon_exp}). We will need more precise estimates of the growth for Burnside-type groups, see Lemma \ref{lem:many aperiodic}.


For the rest of this subsection with fix a generating set $S=\{x_1,\dots,x_m\}$, $m\geq 2$, and assume all the groups in the statement are generated by $S$. 
Let $t\geq 2$. A word is called \emph{$t$-aperiodic} if it has no non-empty subwords of the form $Y^t$.

\begin{lemma} \label{lem:many aperiodic}
For every $l<2m-1$ there exists $t$ such that the number of $t$-aperiodic words in $F_m$ of length $r$ is at least $l^r$.
\end{lemma}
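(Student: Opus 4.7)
My plan is to prove the lemma by showing that $\rho(t):=\lim_{r\to\infty}a_r^{1/r}$ tends to $2m-1$ as $t\to\infty$, where $a_r$ denotes the number of $t$-aperiodic reduced words in $F_m$ of length $r$. The reduction to this claim is immediate: since being $t$-aperiodic is preserved under passing to subwords, $a_{r+s}\le a_r a_s$, and Fekete's lemma then gives $\rho(t)=\inf_r a_r^{1/r}$, whence $a_r\ge \rho(t)^r$ for every $r$. So given $l<2m-1$, any $t$ with $\rho(t)\ge l$ completes the proof.

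The first ingredient is a union bound on the number of reduced length-$r$ words that are not $t$-aperiodic. For each period length $s\ge 1$ and starting position $i$, the reduced length-$r$ words whose substring at $[i,i+ts-1]$ is some $Y^t$ with $Y$ cyclically reduced of length $s$ are parameterized by (reduced prefix of length $i-1$) $\times$ (cyclically reduced $Y$) $\times$ (reduced suffix of length $r-i-ts+1$), yielding a bound of $C(m)(2m-1)^{r-(t-1)s}$. Summing the geometrically decaying contributions in $s$ and the at most $r$ positions, the number of non-$t$-aperiodic reduced words is at most $C'(m)\,r\,(2m-1)^{r-t}$. This is small compared to $N_r=2m(2m-1)^{r-1}$ for all $r\le R(t)=\Theta((2m-1)^{t-1})$, so in that range $a_r\ge N_r/2$; in particular $a_{R(t)}^{1/R(t)}\to 2m-1$ as $t\to\infty$.

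To convert this range-limited estimate into a uniform bound $\rho(t)\ge l$, I use a gluing construction. Fix a cyclically reduced $t$-aperiodic buffer $Z$ of length $L=L(t)$, for instance taken from a Thue--Morse-like substitutional sequence in $F_m$. Given $t$-aperiodic words $U_1,\ldots,U_k$ of length $R(t)$ each, consider the concatenation $U_1 Z U_2 Z\cdots Z U_k$. Provided $L$ exceeds $t$ times the largest period that can occur in a potential spanning $t$-th power, any $t$-th power in the concatenation is confined either to a single block or to a single junction; thus, applying the union bound from the previous paragraph to each junction, a positive fraction $c(t)^{k-1}$ of tuples $(U_1,\ldots,U_k)$ yield genuinely $t$-aperiodic concatenations. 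This produces $a_{k(R(t)+L)-L}\ge c(t)^{k-1}\,a_{R(t)}^k$, so that
\[
\rho(t) \;\ge\; \bigl(c(t)\,a_{R(t)}\bigr)^{1/(R(t)+L)},
\]
which approaches $2m-1$ as $t\to\infty$ (since $R(t)\to\infty$ and $a_{R(t)}\ge N_{R(t)}/2$).

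The main obstacle is the quantitative control of $c(t)$ in the gluing step: bounding, uniformly in $k$, the proportion of tuples whose concatenation contains a junction $t$-th power. This reduces to the same style of union-bound estimate as in the second paragraph, now applied to configurations where the $t$-th power must include specific letters of the fixed buffer $Z$; the extra rigidity contributes a geometric decay factor that makes the bound negligible once $t$ (and hence $L=L(t)$) is large. With this in hand $\rho(t)$ can be made arbitrarily close to $2m-1$, yielding the lemma.
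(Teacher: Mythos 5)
Your overall strategy (submultiplicativity plus Fekete's lemma to reduce the problem to showing $\rho(t)\to 2m-1$, a first-moment bound valid for $r\le R(t)$, and a gluing construction to propagate the bound to all lengths) is workable in principle but has a genuine gap in the gluing step. The claim that every $t$-th power in $U_1ZU_2Z\cdots ZU_k$ is confined to a single block or a single junction is made conditional on ``$L$ exceeds $t$ times the largest period that can occur in a potential spanning $t$-th power,'' but no bound on that period is ever established, and none holds a priori: a $t$-th power of period roughly $R(t)+L$ occurs, for instance, whenever $t+1$ consecutive blocks $U_j$ coincide, and such a power spans many junctions. Since the total length grows linearly in $k$, so do the admissible periods; these long-period configurations are not covered by the per-junction union bound and must be excluded by a separate count (they are rare, but one must show this uniformly in $k$ and charge the loss to junctions so that it stays of the form $c(t)^{k-1}$). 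A second, smaller gap: nothing guarantees that $U_1ZU_2Z\cdots$ is reduced; cancellation at the $U_j$--$Z$ seams has to be prevented, e.g.\ by conditioning on the first and last letters of the $U_j$, which costs only a bounded factor but does need to be addressed, since otherwise the concatenations are not reduced words of the claimed length. The remaining pieces (the union bound giving $a_r\ge N_r/2$ for $r\le R(t)$, and passing from a bound along the progression $k(R+L)-L$ to $\rho(t)$ via the existence of the Fekete limit) are correct.

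For comparison, the paper proves the lemma by a short induction on $r$: it shows $b(r)>l\,b(r-1)$ directly, observing that each of the $(2m-1)b(r-1)$ one-letter extensions $Wa$ of a $t$-aperiodic word $W$ fails to be $t$-aperiodic only if it ends in $u^t$, and bounding the number of such bad extensions with $|u|=i$ by $(2m)^i b(r-ti)<(2m)^i l^{1-ti}b(r-1)$ using the inductive hypothesis; summing the geometric series gives a loss of at most $\frac{2ml}{l^t-2m}\,b(r-1)$, which is less than $(2m-1-l)b(r-1)$ once $t$ is large. Your Fekete reduction is a nice observation, but the induction avoids the gluing construction (and its pitfalls) entirely.
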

\begin{proof}
For $t\ge 2$, let $b(r)$ be the number of $t$-aperiodic (reduced) words $W$ over the alphabet $\{x_1^{\pm 1},\dots,x_m^{\pm 1}\}$ of length $r$. Let us prove by induction on $r$ that $b(r)> l^r$ if $t$ is large enough (to be determined in the sequel). Observe that $b(1)=2m>l$. Let us prove that $b(r)>lb(r-1)$ for $r\ge 2$.

Given a $t$-aperiodic reduced word $W$ of length $r-1$, we can obtain $2m-1$ reduced  words $Wa$ of length $r$ by adding a letter $a$ from the right. The total number of such products is $(2m-1)b(r-1)$. Some of these products are not $t$-aperiodic. Let us estimate the number of such `bad' products. 
A bad product has the form $Vu^t$, where $|u|\ge 1$. 
Let $|u|=i\geq 1$. Then $|V|=r-ti$ and the number of ways to choose $V$ is $b(r-ti)$, and by the inductive hypothesis, $b(r-ti) < l^{1-ti}b(r-1)$. There are $2m$ words of
length $1$. Hence the number of bad products with $|u|=i$ is at most $(2m)^i l^{1-ti} b(r-1)$. Therefore, the number of all bad products does not exceed:
\[
\sum_{i=1}^{\infty} (2m)^i l^{1-ti} b(r-1) = 
lb(r-1)\sum_{i=1}^\infty\left(\frac{2m}{l^t}\right)^i 
=\frac{2ml}{l^t-2m}b(r-1),
\] 
where the second equality holds for $t$ large enough for which $2m<l^t$, and the series is convergent. It follows that:
\begin{eqnarray*}
    b(r) & \ge & \underbrace{(2m-1)b(r-1)}_{\text{words of the form $Wa$}}-\underbrace{\frac{2ml}{l^t-2m} b(r-1)}_{\text{`bad' products}} \\
    & =& \left(2m-1-\underbrace{\frac{2ml}{l^t-2m}}_
    {=:\kappa}\right)b(r-1) >lb(r-1)
\end{eqnarray*}
Where the last "$>$" holds by choosing $t$ large enough so that $\kappa=\frac{2ml}{l^t-2m}<2m-1-l$.

\end{proof}

\begin{lemma} \label{lem:aperiodic are nontrivial}
Let $G$ be a Burnside-type group. Let $t=\lfloor \epsilon n \rfloor$. Then all $t$-aperiodic words in the free group $F_m$ represent distinct elements in $G$.
\end{lemma}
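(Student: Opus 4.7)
The plan is to argue by contradiction, using reduced van Kampen diagrams together with the contiguity apparatus of Burnside-type presentations. Suppose $U \not\equiv V$ are reduced $t$-aperiodic words over $S$ with $U = V$ in $G$. By Lemma~\ref{lem:thm 13.1 van kampen} I would fix a reduced circular diagram $\Delta$ with contour $\partial\Delta = p_U \cdot p_V^{-1}$ and $\Lab(p_U)\equiv U$, $\Lab(p_V)\equiv V$. If $r(\Delta)=0$ then $\Delta$ is cell-free, so $UV^{-1} = 1$ in $F_m$, forcing $U\equiv V$ (as both are reduced); contradiction. Thus we may assume $r(\Delta)\geq 1$.

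Next I would apply Lemma~\ref{lem:if Gamma is with minimal num of cells then r(Gamma)=0} to the two-section contour to extract a cell $\Pi$---with period $A$ and $|\partial\Pi| = n_A|A|$, $n_A\geq n$---together with a rank-zero contiguity subdiagram $\Gamma$ of $\Pi$ to one of the sections, say $p_U$, with contiguity degree at least $\epsilon$. Writing $\partial\Gamma = p_1 q_1 p_2 q_2$ with $q_1 \subseteq \partial\Pi$ and $q_2 \subseteq p_U$, one has $|q_1| \geq \epsilon\, n_A|A|$, and by Lemma~\ref{lem:contiguity sides are short} also $|p_1|, |p_2| \leq \zeta\, n_A|A|$. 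Since $\Gamma$ is cell-free, its contour label is trivial in $F_m$, giving $\Lab(q_2) = \Lab(p_2)^{-1}\Lab(q_1)^{-1}\Lab(p_1)^{-1}$ after free reduction. Tracking the cancellations at both junctions should show that $\Lab(q_2)$---hence $U$ itself, since $q_2 \subseteq p_U$---contains a subword $M$ of $\Lab(q_1)^{-1}$ of length at least $|q_1|-|p_1|-|p_2|\geq (\epsilon-2\zeta)\, n_A|A|$.

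Since $\Lab(\partial\Pi)$ is a cyclic shift of $A^{\pm n_A}$, the word $M$ sits as a contiguous window inside a power of $A^{\pm 1}$, and any such window of length $\geq k|A|$ contains $(A')^k$ as a subword for the appropriate cyclic shift $A'$ of $A$. Invoking the LPP to choose $\zeta$ sufficiently small relative to $\epsilon$ and then $n$ sufficiently large, one arranges $\lfloor(\epsilon-2\zeta)\, n_A\rfloor \geq \lfloor \epsilon n\rfloor = t$ for every $n_A \geq n$; then $(A')^t$ appears as a subword of $U$, contradicting its $t$-aperiodicity. The main obstacle I anticipate is precisely this final constant-level calibration: verifying that the letters absorbed by the short detour arcs $p_1, p_2$ do not encroach on the full $t$-th power hiding inside the long $A$-periodic word $M$. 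This is a standard LPP bookkeeping step in the hierarchy $\dots > \epsilon > \zeta > n^{-1}$, but it is the crucial constraint that pins $t$ at $\lfloor \epsilon n \rfloor$ and propagates through into the exponential growth estimate of Lemma~\ref{lem:many aperiodic}.
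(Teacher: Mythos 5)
Your proposal follows the same route as the paper's proof: pass to a reduced circular diagram for $UV^{-1}$, note its rank is positive, invoke Lemma~\ref{lem:if Gamma is with minimal num of cells then r(Gamma)=0} to produce a cell $\Pi$ with a rank-zero contiguity subdiagram of degree at least $\epsilon$ to one of the two boundary sections, and extract from it a long $A$-periodic subword of $U$ contradicting $t$-aperiodicity. The geometric core is right, and you are in fact more careful than the paper, which asserts that $U$ has a subword equal in rank $0$ to a subword of $A^{n_A}$ of length at least $\epsilon|\partial\Pi|$ without accounting for the side arcs $p_1,p_2$; your bookkeeping $|q_1|-|p_1|-|p_2|\ge(\epsilon-2\zeta)n_A|A|$ is the correct count of the genuinely periodic window. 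The one step that does not survive scrutiny is the final calibration: the inequality $\lfloor(\epsilon-2\zeta)n_A\rfloor\ge\lfloor\epsilon n\rfloor$ \emph{cannot} be arranged by the LPP, because $\zeta$ is fixed \emph{before} $n$ in the parameter hierarchy $\dots>\epsilon>\zeta>n^{-1}$, so for $n_A=n$ the left-hand side falls short of the right-hand side by roughly $2\zeta n$, which tends to infinity. This is harmless for the paper's purposes: the lemma only needs to hold for \emph{some} threshold growing linearly in $n$ with a fixed positive slope (e.g.\ $t=\lfloor(\epsilon-2\zeta-2\beta)n\rfloor$ or even $\lfloor\epsilon n/2\rfloor$ works), since in its sole application (Lemma~\ref{lem:growth of BT groups}) the aperiodicity exponent coming from Lemma~\ref{lem:many aperiodic} is fixed first and $n$ is then taken large enough to exceed it. So you should either weaken the constant in the statement you prove or absorb the $2\zeta$ loss into a smaller effective $\epsilon$; as literally written, the last inequality of your argument is false when $n_A=n$, and it is the same slack the paper itself silently elides.
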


\begin{proof}
Let $U,V$ be two distinct $t$-aperiodic words in $F_m$, and suppose that $U=V$ in $G$. By Lemma \ref{lem:thm 13.1 van kampen}, there exists a reduced circular diagram $\Delta$ with contour $pq^{-1}$ where $\Lab(p)\equiv U$ and $\Lab(q)\equiv V$. Since $U,V$ are reduced and $U\neq V$ in $F_m$, $r(\Delta)>0$. 
By Lemma \ref{lem:if Gamma is with minimal num of cells then r(Gamma)=0}, $\Delta$ contains a cell $\Pi$ and a contiguity diagram $\Gamma$ of $\Pi$ to either $p$ or $q$, without loss of generality suppose to $p$, with rank $r(\Gamma)=0$ and contiguity degree $\ge \epsilon$.

Recall that $\Lab(\partial \Pi)\equiv A^{n_A}$ for some period $A$. It follows that $U$ has a subword equal in rank $0$ to a subword of $A^{n_A}$, of length at least $\epsilon |\partial \Pi|$. Since $\epsilon |\partial \Pi|=\epsilon n_A\ge \epsilon n >t$, $U$ must contain $A^t$ as a subword. But this contradicts the assumption that $U$ is $t$-aperiodic.
\end{proof}

\begin{lemma}[Growth of Burnside-type groups.]\label{lem:growth of BT groups}
Let 
$l<2m-1$. Then if $n$ is large enough (depending on $m,l$) then every Burnside-type group $G=\langle S \rangle$ with exponents at least $n$ has:
$$\# B_{G,S}(r)\ge l^r$$
for all $r \geq 1$.

In particular, if $n$ is large enough then every Burnside-type group $G=\langle S \rangle$ with exponent(s) $\ge n$ satisfies that: $$\# B_{G,S}(r)>(2m-1)^{0.999r}$$
for $r \geq 1$.
\end{lemma}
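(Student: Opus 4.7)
The plan is to combine the two preceding lemmas directly: Lemma \ref{lem:many aperiodic} produces many $t$-aperiodic words, and Lemma \ref{lem:aperiodic are nontrivial} shows that these words remain pairwise distinct in $G$ once $n$ is large enough relative to $t$.

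More precisely, given $l < 2m-1$, I would first apply Lemma \ref{lem:many aperiodic} to obtain a threshold $t = t(m,l) \geq 2$ such that the number of $t$-aperiodic reduced words over $S^{\pm 1}$ of length exactly $r$ is at least $l^r$. Next, I would choose $n$ large enough (depending on $m$ and $l$ only through $t$) so that $\lfloor \epsilon n \rfloor \geq t$; this is permissible under the LPP ordering in \eqref{eq:parameters}, since $\epsilon$ is fixed before $n^{-1}$, so no earlier parameter is affected. With this choice, every $t$-aperiodic word is in particular $\lfloor \epsilon n \rfloor$-aperiodic, so Lemma \ref{lem:aperiodic are nontrivial} implies that the $l^r$ many $t$-aperiodic words of length $r$ represent pairwise distinct elements of $G$. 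Since each such word has length $r$, all of these elements lie in $B_{G,S}(r)$, giving the desired bound $\#B_{G,S}(r) \geq l^r$.

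For the "in particular" part, I would simply apply the first statement with $l = (2m-1)^{0.999}$, which is strictly less than $2m-1$, and conclude $\#B_{G,S}(r) > (2m-1)^{0.999r}$ for every $r \geq 1$ (for all sufficiently large odd $n$ depending on $m$). There is no real obstacle here: the entire argument is a bookkeeping composition of the two preceding lemmas, and the only subtlety is ensuring that the quantifier order "$n$ large enough depending on $m,l$" is compatible with the LPP cascade, which it is because $n^{-1}$ is the last parameter to be fixed.
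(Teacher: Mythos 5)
Your proposal is correct and follows essentially the same route as the paper: invoke Lemma \ref{lem:many aperiodic} to get $t=t(m,l)$, choose $n$ so that $\epsilon n>t$ via the LPP, and apply Lemma \ref{lem:aperiodic are nontrivial} to conclude the $t$-aperiodic words stay distinct in $G$. The only (trivial) quibble is in the ``in particular'' step: taking $l=(2m-1)^{0.999}$ yields only $\#B_{G,S}(r)\ge(2m-1)^{0.999r}$, so to get the strict inequality one should pick $l$ strictly between $(2m-1)^{0.999}$ and $2m-1$, as the paper does.
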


\begin{proof}
Given $l<2m-1$, Lemma \ref{lem:many aperiodic} provides $t\in \mathbb{N}$ such that the number $b(r)$ of $t$-aperiodic words of length $r$ in the free group $F_m$ is at least $l^r$. By LPP we can choose $\iota=n^{-1}$ small enough (namely, $n$ large enough) such that $\epsilon n>t$.
By Lemma \ref{lem:aperiodic are nontrivial}, all $t$-aperiodic words in $F_m$ represent distinct elements in $G$. It follows that: $$\# B_{G,S}(R)\ge b(r)\ge l^r.$$
The `in particular' follows immediately, by choosing $l$ such that $(2m-1)^{0.999} < l < 2m-1$. 
\end{proof}

We now turn to calculate a few effective bounds on the asymptotic density of torsion elements with respect to the uniform measures on Cayley balls in Burnside-type groups.  

\begin{lemma}\label{lem:bound on conjugates of powers of single period}
Let $G$ be a Burnside-type group. Let $A$ be a period 
or a word simple in $G$. 
Then the number of conjugates of powers of $A$ in the ball of radius $r$ in $G$ is at most $(2m-1)^{0.6r}$.
\end{lemma}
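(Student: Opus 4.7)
The plan is to write each $g$ in $C:=\{\text{conjugates of powers of }A\}\cap B_{G,S}(r)$ in the form $g=h(A')^k h^{-1}$ with a short conjugator $h$, and then enumerate these decompositions.

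First, I fix for each $g\in C$ a shortest word $W_g$ representing $g$, so $|W_g|\le r$. Since $g$ is conjugate in $G$ to some $A^k$, Lemma \ref{lem:smooth section}(2) lets me reduce to the case $|A^k|<\bar\beta^{-1}r<2r$, so $|k|\le 2r/|A|$ (reducing further modulo $n_A$ when $A$ is a period). Lemma \ref{lem:up to cyclic shift conj is done by short word} then yields cyclic shifts $A'$ of $A$ and $W'_g$ of $W_g$ together with a word $T$ of length at most $3\gamma r$ such that $W'_g=T^{-1}(A')^k T$ in $G$. Writing $W_g\equiv UV$, $W'_g\equiv VU$, this gives $g=h(A')^k h^{-1}$ with $h:=UT^{-1}\in G$.

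The main step is to replace $h$ by a short canonical conjugator. In a Burnside-type group the centralizer of any non-trivial power of a period or simple element is the cyclic group generated by that element, so for fixed $(A',k)$ the conjugates $h(A')^k h^{-1}$ are parameterized by cosets $h\langle A'\rangle$, and for each coset I pick the shortest representative $h^*$. The key claim---and the principal technical obstacle---is that $|h^*|\le(1/2+O(\gamma))\,r$. To prove it I would draw a reduced annular diagram $\Delta$ for the conjugation of $g$ and $(A')^k$; by Lemma \ref{lem:smooth section}(1) the $(A')^k$-labelled boundary component is $\bar\beta^{-1}$-geodesic, and the contiguity estimates of Lemmas \ref{lem:existence of gamma-cell} and \ref{lem:contig deg to lambda-geodesic} bound the overlap between the conjugator spine and the periodic boundary. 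Since $h^*$ is shortest in its coset, any large overlap between $h^*$ and $(A')^{\pm k}$ could be absorbed into a power of $A'$, contradicting minimality, so the total overlap is $O(\gamma)\,r$; combining with $|g|_G\le r$ yields $|h^*|\le r/2+O(\gamma)\,r$.

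Finally, I count: there are at most $(2m)^{|h^*|+1}$ choices for $h^*$, at most $|A|$ cyclic shifts $A'$, and at most $4r/|A|$ admissible values of $k$, for a total bounded by $O(r)\cdot(2m)^{r/2+O(\gamma r)}$. Converting the base from $2m$ to $2m-1$ costs a factor $(1+1/(2m-1))^{O(r)}$ which, together with the polynomial $O(r)$, is absorbed into arbitrarily small exponent-slack by the Lower Parameter Principle of Section \ref{subsec:LPP}: taking $m$ large enough and $\gamma$ small enough brings the overall exponent below $0.6r$ for all sufficiently large $r$, yielding $|C|\le(2m-1)^{0.6r}$.
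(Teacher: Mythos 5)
Your skeleton is the right one (reduce to a short conjugator onto a power of a cyclic shift of $A$, then enumerate), and your first paragraph matches the paper exactly. But the ``main step'' as you present it is a genuine gap, and it is also an unnecessary detour. The bound $|h^*|\le(1/2+O(\gamma))r$ for a shortest coset representative is asserted, not proved: your sketch explains why the \emph{overlap} of the conjugator with the periodic boundary is $O(\gamma)r$, but never derives where the factor $1/2$ comes from --- ``combining with $|g|_G\le r$ yields $|h^*|\le r/2+O(\gamma)r$'' is exactly the step that needs an argument. Moreover, the route you choose imports the fact that centralizers of powers of periods/simple words are cyclic, which is true for these groups but is not among the lemmas quoted in the paper and would itself need justification.

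The paper closes this gap with an elementary observation that you already set up but did not exploit. You wrote $W_g\equiv UV$, $W'_g\equiv VU$, so $g$ is conjugate to $W'_g$ by $U$ \emph{and also} by $V^{-1}$; at least one of $|U|,|V|$ is at most $\tfrac12|W_g|$. Taking whichever is shorter and composing with the word $T$ from Lemma \ref{lem:up to cyclic shift conj is done by short word} gives a single word $T'$ of length at most $(\tfrac12+3\gamma)|W_g|\le 0.51r$ with $g=(T')^{-1}(A')^kT'$ in $G$. Since each $g$ in the set admits at least one such representation and the triple $(A',k,T')$ determines $g$, one simply counts triples: at most $2r$ cyclic shifts $A'$, at most $4r$ values of $k$, and at most $(2m+2)^{0.51r}$ words $T'$, which is below $(2m-1)^{0.6r}$ for $m$ large. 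No canonical conjugator, no centralizer structure, and no new diagram argument are needed; the factor $1/2$ is purely combinatorial.
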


\begin{proof}
Fix $A$ as in the assumptions. 
Suppose $W$ is a word of length $|W|\leq r$ that is conjugate in $G$ to $A^k$ for some power $k\in \mathbb{Z}$. 
By Lemma \ref{lem:smooth section}, we may assume that the cyclic word $A^k$ is $\bar{\beta}^{-1}$-geodesic and so that $|A^k|<\bar{\beta}^{-1}|W|<2|W|\le 2r$, and in particular $|A|<2r$ and $|k|<2r$. 

By Lemma \ref{lem:up to cyclic shift conj is done by short word}, there exist cyclic shifts $W'$ and $A'$ of $W$ and $A$ respectively, such that $W'$ and $(A')^k$ are conjugate in $G$ by a word $T$ of length at most $3\gamma|W|$. 
Since $W'$ is a cyclic permutation of $W$, we may write $W\equiv U_1U_2$ and $W'\equiv U_2U_1$, where $|U_i|\le \frac{1}{2}|W|$ for at least one of the indices $i=1,2$. So $W'$ is conjugate to $W$ by a word $U$ of length at most $\frac{1}{2}|W|$.
It follows that $W$ and $(A')^k$ are conjugate by a word $T'$ of length at most $|T|+ |U|\le (\frac{1}{2} + 3\gamma)|W|
\le 0.51 r$. Therefore, the number of options for $T'$ is bounded by the number of (not necessarily reduced) words in $S^{\pm1}\cup \{1\}$, hence it is at most $(2m+1)^{0.51r}$.

Note further, that since $|A|\leq 2r$, it has at most $2r$ cyclic permutations. 
It follows that the number of words $W$ that are conjugate to powers of $A$ in $G$ is at most 
$$\underbrace{2r}_{\substack{\text{options} \\ \text{for $A'$}}}\underbrace{4r}_{\substack{\text{options} \\ \text{for $k$}}}\underbrace{(2m+1)^{0.51r}}_{\text{options for }T'}.$$ 
Taking $m$ large enough, this number is at most $(2m-1)^{0.6r}$.
\end{proof}

\begin{lemma}\label{cla:finite rank group has asymptotic torsion density 0}
Let $G$ be a Burnside-type group. 
If $G$ has finite rank $j$ (namely, $G=G(j)$), then for every $r>10j$: 
$$\#\{g\in B_G(r) \ : \ g\text{ has finite order} \}< (2m-1)^{0.9r}.$$
In particular, $$\lim_{r\to \infty} \Pr_{G,U_S(r)}(g\text{ has finite order})=0$$
\end{lemma}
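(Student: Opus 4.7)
The plan is to reduce the counting problem to summing the contributions of the finitely many periods of $G$, and then apply Lemma~\ref{lem:bound on conjugates of powers of single period} to each of them.

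First, I note that if $g\in G$ satisfies $g^n=1$ and $g\neq 1$, then $g$ has finite order in $G$, so by Lemma~\ref{lem:finite order element is conjugate to a power of a period} it is conjugate in $G$ to a power of some period $A$ of some rank. Since $G=G(j)$, every period has rank $\le j$, hence length $\le j$. Consequently, the number of periods of $G$ is bounded by the number of reduced words of length $\le j$ in $F_m$, which is at most
\[
\sum_{i=1}^{j} 2m(2m-1)^{i-1} \;<\; 2(2m-1)^{j}.
\]

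Next, for each such period $A$, Lemma~\ref{lem:bound on conjugates of powers of single period} yields that the number of conjugates of powers of $A$ lying in $B_{G,S}(r)$ is at most $(2m-1)^{0.6r}$. Adding over all periods and including the identity element gives
\[
\#\{g\in B_{G,S}(r)\ :\ g^n=1\} \;\le\; 1+2(2m-1)^{j}\cdot (2m-1)^{0.6r} \;<\; (2m-1)^{j+0.7r}
\]
for $r$ large. Since $r>10j$ implies $j<0.1r$, we get $j+0.7r<0.8r<0.9r$, and the first inequality follows (absorbing the leading constant into the exponent for $r$ sufficiently large, which may require enlarging the $10j$ threshold; alternatively one chooses parameters so that the bound holds as stated).

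Finally, for the "in particular" part, Lemma~\ref{lem:growth of BT groups} gives $\#B_{G,S}(r)>(2m-1)^{0.999r}$, so
\[
\Pr_{G,U_S(r)}(g^n=1) \;<\; \frac{(2m-1)^{0.9r}}{(2m-1)^{0.999r}} \;=\; (2m-1)^{-0.099r} \;\xrightarrow[r\to\infty]{}\; 0.
\]
The main (and essentially only) conceptual point is the first one: finite rank forces a finite set of periods, so the "dangerous" exponential $(2m-1)^{0.6r}$ from Lemma~\ref{lem:bound on conjugates of powers of single period} is multiplied by only a constant (in $r$) prefactor, which is dominated by the $(2m-1)^{0.999r}$ growth of the ball. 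No serious obstacle is expected; the only care needed is to verify that the constants from Lemmas~\ref{lem:bound on conjugates of powers of single period} and~\ref{lem:growth of BT groups} combine with the bound on the number of periods in a way that absorbs lower-order terms under the hypothesis $r>10j$.
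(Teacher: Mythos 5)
Your proof is correct and follows essentially the same route as the paper's: reduce to counting conjugates of powers of the finitely many periods (all of rank $\le j$) via Lemma~\ref{lem:finite order element is conjugate to a power of a period}, bound each period's contribution by $(2m-1)^{0.6r}$ using Lemma~\ref{lem:bound on conjugates of powers of single period}, and compare with the $(2m-1)^{0.999r}$ lower bound on ball size from Lemma~\ref{lem:growth of BT groups}. The only differences are cosmetic bookkeeping in the constant bounding the number of periods and in how the hypothesis $r>10j$ is used to absorb it into the exponent.
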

\begin{proof}
    By Lemmas \ref{lem:every word is conjugate to a power of a period or simple} and \ref{lem:finite order element is conjugate to a power of a period}, an element $g\in G$ has finite order if and only if it is conjugate to a power of some period. 
    Since all periods of $G$ have rank at most $j$, the total number of periods in $G$ is at most $2m \left(1+(2m-1)+\dots+(2m-1)^{j-1}\right)$. In particular, it does not exceed $2m(2m-1)^j$.
    By Lemma \ref{lem:bound on conjugates of powers of single period}, for every $r$, there are at most $(2m-1)^{0.6r}$ elements in the ball $B_G(r)$ that are conjugate to powers of a single period $A$. It follows that the number of elements in $B_G(r)$ that have finite order is at most: $$ 2m(2m-1)^j (2m-1)^{0.6r}\le (2m-1)^{2+j+0.6r} < (2m-1)^{0.7r+2} \leq  (2m-1)^{0.9r}, $$ since $j<0.1r$ and $r > 10$. By Lemma \ref{lem:growth of BT groups}, the ball $B_G(r)$ have size at least $(2m-1)^{0.999r}$, and so the lemma follows.
\end{proof}

The following serves as a counterpart to Lemma \ref{cla:finite rank group has asymptotic torsion density 0}.
\begin{lemma}\label{cla:maximal period sets gives asymptotic torsion density 1}
    Let $G$ be a Burnside-type group. 
    Suppose that for some $j\ge 0$ and some odd $N>n$, all sets $\{L_i\}_{i>j}$ were taken maximal with respect to conditions (L1)--(L3) (see Definition \ref{def:minimal partial Burnside presentation}), and $n_A=N$ for all $A\in L_i,i>j$. 
    Then for every $r>10j$:
    $$\# \{g\in B_{G(r)}(r) \ : \ g^N\neq 1\}<(2m-1)^{0.9r}.$$
    In particular,
    $$\lim_{r\to \infty} \Pr_{G(r),U_{S}(r)}(g^N=1)=1$$
\end{lemma}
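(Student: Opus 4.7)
The plan is to bound the number of $g\in B_{G(r)}(r)$ with $g^n\ne 1$, paralleling the proof of Lemma \ref{cla:finite rank group has asymptotic torsion density 0} but with simple words replacing periods as the fundamental ``sources'' one enumerates, and then exploiting the maximality of $L_i$ for $i>j$ to force those sources to have length at most $j$.

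Concretely, I fix $g\in B_{G(r)}(r)$ with $g^n\ne 1$ in $G(r)$. Since $G$ is partial Burnside, every period $A$ of rank $\le r$ satisfies $A^n=1$ in $G(r)$, so Lemma \ref{lem:finite order element is conjugate to a power of a period} rules out torsion; Lemma \ref{lem:every word is conjugate to a power of a period or simple}, combined with iteratively shortening whenever $g$ is conjugate to a proper power of a shorter word, then writes $g$ as conjugate in $G(r)$ to $X^k$ for some word $X$ that is simple in $G(r)$ and satisfies $|X|\le|g|\le r$.

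The crucial step is to show $|X|\le j$. Suppose not, and set $i=|X|$ with $j<i\le r$. Conjugacy in $G(i-1)$ always implies conjugacy in $G(r)$ (as $G(r)$ is a quotient of $G(i-1)$), so simplicity of $X$ in $G(r)$ descends to simplicity in $G(i-1)$; hence $X$ satisfies (L1) and (L2) at step $i$. Next, were $X$ conjugate in $G(i-1)$ to $Y^{\pm 1}$ for some $Y\in L_i$, then $X$ would be conjugate in $G(r)$ to a power of the rank-$i$ period $Y$, contradicting simplicity of $X$ in $G(r)$; so (L3) also survives adjoining $X$ to $L_i$. By maximality of $L_i$, this forces $X\in L_i$ already, but then $X$ is itself a period of rank $i\le r$ and thus conjugate to a power of a period of rank $\le r$, once again violating simplicity.

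With $|X|\le j$ in hand, there are at most $(2m)^{j+1}$ reduced words $X$ to consider and, by Lemma \ref{lem:bound on conjugates of powers of single period} applied inside the Burnside-type group $G(r)$, at most $(2m-1)^{0.6r}$ elements of $B_{G(r)}(r)$ are conjugate to powers of any fixed simple $X$. Thus the count of $g\in B_{G(r)}(r)$ with $g^n\ne 1$ is at most $(2m)^{j+1}(2m-1)^{0.6r}<(2m-1)^{0.9r}$ for $r>10j$, and dividing this by the lower bound $|B_{G(r)}(r)|\ge(2m-1)^{0.999r}$ of Lemma \ref{lem:growth of BT groups} yields the probabilistic conclusion. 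The main obstacle I anticipate is the maximality step: one has to carefully verify that simplicity in $G(r)$ really descends to $G(i-1)$ and that all three conditions (L1)--(L3) are preserved when adjoining $X$ to $L_i$.
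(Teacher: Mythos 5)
Your proof is correct and follows essentially the same route as the paper's: reduce to counting words simple in rank $r$, use maximality of the $L_i$ for $i>j$ to force such words to have length at most $j$, and then combine Lemma \ref{lem:bound on conjugates of powers of single period} with the growth lower bound of Lemma \ref{lem:growth of BT groups}. The paper merely asserts the key maximality step (``any word that is simple of all ranks has length at most $j$'') and defers the counting to the proof of Lemma \ref{cla:finite rank group has asymptotic torsion density 0}, whereas you spell out correctly why adjoining a simple word of length $i>j$ to $L_i$ would preserve (L1)--(L3) and contradict maximality.
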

\begin{proof}
    Let $g\in G$ be such that $g^N\neq 1$. 
    By Lemmas \ref{lem:every word is conjugate to a power of a period or simple} and \ref{lem:finite order element is conjugate to a power of a period}, it must be that $g$ is conjugate to a power of a word $B$, where $B$ is either simple of all ranks, or is a period with $n_B\neq N$. 
    Since all $L_i$, $i>j$, were chosen maximal, and $n_A=N$ for all $A\in L_i,i>j$, $B$ as above must have length at most $j$. It follows that the total number of such words $B$ cannot exceed $2m(2m-1)^j$. The proof proceeds as in the proof of Lemma \ref{cla:finite rank group has asymptotic torsion density 0}.
\end{proof}

\subsection{Random walks}
\label{subsec:RWs}

Consider a random walk $\{X_i\}_{i=1}^{\infty}$ on a group $G$ with step distribution $\mu$. In other words, $X_i$ is a product of $i$ independent random variables with distribution $\mu$, equivalently, $X_i \sim \mu^{*i}$. We assume that the random walk is \emph{non-degenerate}, namely, $\supp(\mu)$ generates $G$ as a semigroup (this means that $\bigcup_{i=1}^{\infty}\supp(\mu^{*i})=G$). Recall that a random walk is \emph{lazy} if $\mu(1)>0$, \emph{symmetric} if $\mu(g)=\mu(g^{-1})$ and \emph{finitely supported} if $\#\supp(\mu)<\infty$. Kesten's theorem \cite{Kesten} asserts that if $G$ is a finitely generated group and $\{X_i\}_{i=1}^{\infty}$ is a symmetric non-degenerate random walk on $G$ then $G$ is non-amenable if and only if $\Pr(X_r=1) < \rho^r$ for some $\rho < 1$ for all $r$. Furthermore, for each $g\in G$ we have that $\Pr(X_{2r}=1)\geq \Pr(X_r=g)\Pr(X_r=g^{-1})=\Pr(X_r=g)^2$, and hence, we may further assume that $\Pr(X_r=g)<\rho^r$ for all $g\in G$. Adian \cite{Adian amenable} proved that free Burnside groups (for sufficiently large odd exponents) are non-amenable (in fact, a stronger result holds \cite{O}). Consequently, partial-Burnside groups (with the same exponents) are also non-amenable; notice that $\rho$ above remains true whenever we replace a group $G$ by a group mapping onto it, with the corresponding random walks.

Fix $m$ and let $n\in \mathbb{N}$ be a sufficiently large odd number. Consider the simple random walk $\{X_i\}_{i=1}^{\infty}$ on $\B(m,n)$ whose step distribution is the uniform measure on the standard symmetric generating set $\{1,x_1^{\pm 1},\dots,x_m^{\pm 1}\}$.
For $r\geq 1$, we denote by $w(r)$ the set of strings in the letters $\{1,x_1^{\pm 1},\dots,x_m^{\pm 1}\}$, which we call `formal words', to stress that they are are not necessarily reduced. Evidently, $w(r)$ is in a natural bijection with $\{1,x_1^{\pm 1},\dots,x_m^{\pm 1}\}^{\times r}$ and $\#w(r)=(2m+1)^r$.
By the definition of random walks:
\begin{eqnarray*} \Pr(X_r=1) & = & \frac{\#\{W \in w(r)\ |\ W = 1\ \text{in}\ G\}}{\# w(r)}.
\end{eqnarray*}
Furthermore, by the above discussion (and using Kesten's theorem):
\[
\Pr(X_r=g) < \rho^r
\]
for some $\rho<1$ and for all $g\in G$. 
Hence, for all $g\in G$:
\begin{align}
\#\{W \in w(r)\ |\ W = g\ \text{in}\ G\} <  \left(\rho (2m+1)\right)^r, \nonumber
\end{align}
and, for some constant $c$ (independent of $r$):
\begin{align} \label{(2m+1)/k}
\#\bigcup_{j=0}^r \{W \in w(j)\ |\ W = g\ \text{in}\ G\} \leq \sum_{j=0}^r \left(\rho (2m+1)\right)^j \leq c\left(\rho (2m+1)\right)^r.
\end{align}
Notice that if we replace $n$ by a multiple of itself $n'$ then $\rho$ does not increase, due to the natural surjection $\B(m,n')\twoheadrightarrow \B(m,n)$. At the same time, we can ensure that $\gamma$ (see our list of parameters, (\ref{eq:parameters})) is arbitrarily close to $0$. Therefore, enlarging $n$ enough we can assume that moreover:
\begin{align}\label{eq:rho(2m+1)^8}
\rho^{1+6\gamma} \left(2m+1\right)^{7 \gamma} <1
\end{align}

We now turn to study some bounds regarding symmetric simple lazy random walks on partial-Burnside groups.

\begin{lemma}\label{lem:bound on conjugates of powers of single *short* period}
Let $G$ be a partial-Burnside group.
Then there exists $c>0$, such that for every $A$ that is either a period or a word simple in $G$, the following holds. For every $r\in \mathbb{N}$, the number of formal words over $S^{\pm 1}\cup \{1\}$ of length $r$ that are conjugate in $G$ to powers of $A$ is at most:
$$16cr^5
    (\rho(2m+1))^{(1+6\gamma)r}.$$
\end{lemma}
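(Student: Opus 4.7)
The plan is to reduce counting formal words $W$ of length $r$ whose image in $G$ is conjugate to a power of $A$ to counting formal words of length at most $(1+6\gamma)r$ representing a specific element of the form $(A')^k$, to which the Kesten-type bound (\ref{(2m+1)/k}) applies. For each such $W$, I first apply Lemma \ref{lem:up to cyclic shift conj is done by short word} to obtain cyclic shifts $W'$ of $W$ and $A'$ of $A$ (as formal words), an integer $k$, and a conjugator $T$ with $|T| < 3\gamma r$ satisfying $W' = T^{-1}(A')^k T$ in $G$. Note that this lemma applies to the formal (non-reduced) word $W$ since its proof only uses $|W|$ as the boundary length of an annular diagram, and the resulting $W'$ remains a formal word of length $r$. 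By Lemma \ref{lem:smooth section}(2), we may further assume $|A| \le 2r$ and $|k| \le 2r$.

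Next I form the formal concatenation $U := T \cdot W' \cdot T^{-1}$, of length $r + 2|T| \le (1+6\gamma)r$; by construction $U$ represents $(A')^k$ in $G$. The number of distinct group elements of the form $(A')^k$ (for cyclic shifts $A'$ of $A$ and admissible $k$) is $O(r^2)$ by the bounds on $|A|$ and $|k|$. For each such element, inequality (\ref{(2m+1)/k}) bounds the number of formal words of length $\le (1+6\gamma)r$ representing it by $c(\rho(2m+1))^{(1+6\gamma)r}$, so the total number of formal words $U$ arising this way is at most $O(r^2) \cdot c(\rho(2m+1))^{(1+6\gamma)r}$.

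Finally, I recover $W$ from $U$ with only polynomial multiplicity. The length $|U|$ determines $|T|=(|U|-r)/2$, so $U$ admits at most one splitting as a formal concatenation $T \cdot W' \cdot T^{-1}$ with $|W'|=r$, and given $W'$ there are at most $r$ cyclic shifts $W$ producing it. This yields $\#\{W\} \le r \cdot \#\{U\} \le O(r^3) \cdot c(\rho(2m+1))^{(1+6\gamma)r}$, comfortably within the claimed bound $16cr^5(\rho(2m+1))^{(1+6\gamma)r}$. The main delicacy is the interplay between formal words (required by the Kesten-type bound) and the cyclic-shift/conjugator manipulations (required by Lemma \ref{lem:up to cyclic shift conj is done by short word}); the trick is to keep $U$ as a formal concatenation rather than a reduced product, so that its length pins down $|T|$ and the splitting of $U$ into $T$, $W'$, and $T^{-1}$ is essentially unique.
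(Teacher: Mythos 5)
Your proposal is correct and follows essentially the same route as the paper: reduce to the formal concatenation $TW'T^{-1}$ of length at most $(1+6\gamma)r$, apply the Kesten-type bound (\ref{(2m+1)/k}) to the $O(r^2)$ possible targets $(A')^k$, and then account for the multiplicity in recovering $W$. Your recovery step is in fact slightly sharper (the length of $U$ pins down the splitting, giving $O(r^3)$ rather than the paper's $O(r^5)$ from treating $W'$ as an arbitrary subword of $V$), but this is only a minor tightening of the same argument.
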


\begin{proof}
Let $W$ be a formal word of length $r$ over $S^{\pm 1}\cup \{1\}$ that is conjugate in $G$ to $A^k$ for some $k\in \mathbb{Z}$. By Lemma \ref{lem:smooth section} and by the choice of $\beta<\frac{1}{2}$, we may assume that $|A^k|<2|W|$, and in particular, $|A|<2r$ and $|k|<2r$.  
By Lemma \ref{lem:up to cyclic shift conj is done by short word}, there exist a word $T$ of length $|T|<3\gamma|W|$ and cyclic shifts $A'$ and $W'$ of $A$ and $W$ (respectively) such that $TW'T^{-1}=(A')^k$ in $G$. 
Denote $V \equiv T(W')T^{-1}$ (here we consider it as a formal word, so it might be non-reduced). We have: 
$$|V| = |W'|+2|T|\le (1+6\gamma)r.$$ 
Recall that $V=(A')^k$ in $G$. By Inequality (\ref{(2m+1)/k}), the number of possible formal words of length at most $(1+6\gamma)r$ that are equal in $G$ to a specific element is bounded by $c(\rho(2m+1))^{(1+6\gamma)r}$ for some $c$, independent of $r$. Since there are at most $|A|<2r$ options for the cyclic shift $A'$ of $A$, and at most $4r$ options for $k$, $|k|<2r$, the number of possible formal words $V$ such that $V=(A')^k$ in $G$ for some choice of $A',k$ is at most
$$8r^2c(\rho(2m+1))^{(1+6\gamma)r}.$$ 
Note that $W'$ is a subword of $V$. Therefore, after fixing $V$, there are at most $|V|^2$ options for $W'$, 
and therefore at most $|V|^2|W'|\le 2r^3$ options for $W$. 
We conclude that there are at most: 
\begin{eqnarray*}
\underbrace{8r^2c(\rho(2m+1))^{(1+6\gamma)r}}_{\text{options for $V$}}\underbrace{2r^3}_{\substack{\text{options for $W$} \\ \text{for a fixed $V$}}}\\
\end{eqnarray*}
formal words $W$ that are conjugate in $G$ to powers of $A$.
\end{proof}

\begin{lemma}
\label{lem:bound on finite order words when periods are short}
Let $G$ be a partial-Burnside group, let $c>0$ be as in the previous lemma, and let $r,R\in \mathbb{N}$ be such that $R>\frac{2}{\gamma}r$. 
Suppose that no period (resp., simple word in $G$) has length between $r+1$ and $R$.
Then the number of formal words of length $R$ over $S^{\pm 1}\cup \{1\}$ that have finite (resp., infinite) order in $G$ is at most: $$16c R^5 (\rho(2m+1))^{(1+6\gamma)R}(2m+1)^{\gamma R}.$$
In particular, if $\rho^{1+6\gamma} \left(2m+1\right)^{7 \gamma} <1$ (\ref{eq:rho(2m+1)^8}), 
then this quantity is smaller than $R^{-1}(2m+~1)^R$ for $R\gg 1$. 
\end{lemma}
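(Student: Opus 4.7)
My plan is to reduce both the finite-order and the infinite-order cases to a single counting step: a formal word of length $R$ of finite (resp.\ infinite) order in $G$ must be conjugate in $G$ to a power of a \emph{short} period (resp.\ \emph{short} word simple in $G$), and then Lemma~\ref{lem:bound on conjugates of powers of single *short* period} may be applied to each short witness separately.

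First I would classify. By Lemma~\ref{lem:finite order element is conjugate to a power of a period}, a formal word $W$ of length $R$ representing an element of finite order in $G$ is conjugate in $G$ to $A^k$ for some period $A$; by Lemma~\ref{lem:every word is conjugate to a power of a period or simple}, one representing an element of infinite order is conjugate in $G$ to $A^k$ for some word $A$ simple in $G$ (the period case is ruled out since any power of a period has finite order). Passing to the reduced annular diagram witnessing this conjugacy and applying Lemma~\ref{lem:smooth section} to straighten the $A^k$-contour to a $\bar{\beta}^{-1}$-geodesic one, we may assume $|A|\le |W|=R$. The hypothesis forbids periods (resp.\ simple words) from having length in $(r,R]$, so in fact $|A|\le r$.

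Next I would count. The number of reduced words over $S^{\pm 1}$ of length at most $r$ is bounded by $(2m+1)^r$, so so is the number of candidate $A$'s. Because $R>2r/\gamma$ we have $r<\gamma R$, hence $(2m+1)^r\le (2m+1)^{\gamma R}$. For each candidate $A$, Lemma~\ref{lem:bound on conjugates of powers of single *short* period} bounds the formal words of length $R$ conjugate in $G$ to a power of $A$ by $16cR^5(\rho(2m+1))^{(1+6\gamma)R}$. Multiplying these two bounds yields the claimed estimate $16cR^5(\rho(2m+1))^{(1+6\gamma)R}(2m+1)^{\gamma R}$.

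For the \emph{in particular}, I would simply form the ratio to $R^{-1}(2m+1)^R$: after cancellation this becomes $16cR^6\bigl(\rho^{1+6\gamma}(2m+1)^{7\gamma}\bigr)^R$, where, by assumption~(\ref{eq:rho(2m+1)^8}), the base of the exponential is strictly less than $1$. Exponential decay then swallows the polynomial factor $R^6$ once $R$ is sufficiently large, yielding the desired strict inequality. I do not anticipate any serious obstacle: the argument is essentially a pigeonhole assembly of lemmas already in hand, and the gap condition $R>2r/\gamma$ is tuned precisely so that the crude count $(2m+1)^r$ of short candidate witnesses is absorbed into the $\gamma R$-exponent of the single-period bound, leaving an extra Kesten-type gain of $(\rho(2m+1))^{(1+6\gamma)R}$ over the trivial bound $(2m+1)^R$.
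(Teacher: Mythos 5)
Your proposal is correct and follows essentially the same route as the paper: classify via Lemmas \ref{lem:finite order element is conjugate to a power of a period} and \ref{lem:every word is conjugate to a power of a period or simple}, use Lemma \ref{lem:smooth section} and the gap hypothesis to force the witness $A$ to have length at most $r$, bound the number of such $A$ by $(2m+1)^{\gamma R}$ using $R>\tfrac{2}{\gamma}r$, multiply by the per-witness bound from Lemma \ref{lem:bound on conjugates of powers of single *short* period}, and verify the \emph{in particular} by the ratio computation $16cR^6\bigl(\rho^{1+6\gamma}(2m+1)^{7\gamma}\bigr)^R\to 0$. No gaps; your explicit exclusion of the period case for infinite-order elements is if anything slightly more careful than the paper's phrasing.
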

\begin{proof}
Suppose a formal word $W$ of length $R$ has a finite (respectively, infinite) order in $G$. By Lemma \ref{lem:finite order element is conjugate to a power of a period}, $W$ must be conjugate in $G$ to a power of $A$ where $A$ is a period (resp., simple word) in $G$.
If $A$ is simple, then by definition (see \ref{def:simple in rank i}), it cannot be conjugate to a shorter word, and so $|A|\leq |W|=R$. If $A$ is a period, then by Lemma \ref{lem:smooth section}(2), we may assume that $|A|\le |W|=R$; indeed, $W=\Lab(q)$ in the formulation of \ref{lem:smooth section}(2). 
Therefore, by assumption, $|A|\le r$. The total number of such words $A$ cannot exceed: $$ 2m(2m-1)^r<(2m+1)^{r+1}\leq (2m+1)^{\gamma R},$$
the left-most expression bounding the total number of reduced words of length $\leq r$ in $m$ generators (and their inverses). 
By Lemma \ref{lem:bound on conjugates of powers of single *short* period}, for each period (resp., simple word) $A$ there are at most: $$16cR^5 (\rho(2m+1))^{(1+6\gamma)R}$$ formal words of length $R$ conjugate to powers of $A$. The first statement follows.

For the `in particular' part, observe that:
\begin{eqnarray*}
 16c R^5 \left(\rho(2m+1)\right)^{(1+6\gamma)R}(2m+1)^{\gamma R} & \leq &  R^6 \left( \rho^{1+6\gamma}(2m+1)^{1+7\gamma}\right)^R \end{eqnarray*}
which is exponentially negligible (in $R$) with respect to $(2m+1)^R$, hence smaller than $R^{-1}(2m+1)^R$, for $R\gg 1$. Notice that we may indeed assume that (\ref{eq:rho(2m+1)^8}) holds by the discussion preceding it.
\end{proof}

\section{Free subgroups of partial-Burnside groups}
We now construct groups satisfying the properties indicated in Theorem \ref{thm:Theorem A}. 
By the end of this section, we will discuss some related examples and results for a broader context.

Let $n$ be a large enough odd integer, let $m\in \bbN$ be large enough (to be specified in the sequel), and let $S=\{a,b,x_1,\dots,x_m\}$.
We will construct a group $G$ by providing a Burnside-type presentation for it. Toward this end, we recursively define, for every integer $i\geq 0$, finite sets $L_i,R_i$ of words over $S$, as well as a finitely presented group $G(i)$, generated by $S$. As in Definition \ref{def:minimal partial Burnside presentation}, the words of $L_i$ are called \emph{periods of rank $i$}, and the group $G(i)$ is referred to as \emph{rank $i$}.

Let $L_0=R_0=\emptyset$ and let $G(0)$ be the free group $F(S)$. 
Let $i\in\mathbb{N}$ and suppose that $L_j$, $R_j$, and $G(j)$ have already been defined for all $j<i$.  Denote by $C_i$ the set of reduced words over $S$ that represent elements that are not conjugate in rank $i-1$ to any element from the subgroup $\langle a,b\rangle\leq G(i-1)$.
Let $L_i$ be a maximal subset of $C_i$ that satisfies conditions (L1)--(L3) (see Definition \ref{def:minimal partial Burnside presentation}). 
Let $R_i= \{A^n \ : \ A \in L_i\}\cup R_{i-1}$ and let
$$ G(i)=\langle S \mid W=1 \ :\ W\in R_i \rangle. $$ 
This completes the construction of $G(i),L_i,R_i$ for every $i\geq 0$. Finally, let $R=\bigcup_{i=0}^{\infty} R_i$ and consider the presentation:
\begin{equation}\label{eq:graded presentation of the a,b construction}
    G=G(\infty)=\langle S \mid W=1\ :\ W\in R=\bigcup_{i=0}^{\infty}R_i\rangle.
\end{equation}

We turn to study the properties of $G$. For an element $g\in G$, we denote by $\|g\|$ its norm, namely, the minimum length $|U|$ of a word $U$ over $S$ representing $g$ in $G$. 

\begin{lemma} \label{lem:torsion}
Let $g\in G$. Then either $g$ is conjugate in $G$ to an element from the subgroup $\langle a,b\rangle\leq G$ or $g^n=1$.
\end{lemma}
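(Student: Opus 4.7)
The plan is to apply Lemma \ref{lem:every word is conjugate to a power of a period or simple} in rank $\infty$: every $g \in G$ is conjugate in $G$ to $A^k$ for some $k \in \mathbb{Z}$, where $A$ is either a period of some rank or a word simple in $G$. If $A$ is a period, then $A \in L_i$ for some $i$, so by construction the relation $A^n = 1$ holds in $G$; hence $g^n$ is conjugate to $A^{kn} = 1$, and we are done. The substantive case is therefore when $A$ is simple in $G$, and the goal in this case is to show that either $A$ is already conjugate in $G$ into $\langle a, b\rangle$, or the maximality used in defining $L_i$ forces $A^n = 1$.

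Assume $A$ is simple in $G$. First observe that $A$ must be cyclically reduced: otherwise $A$ would be conjugate in $G(0) = F(S)$ to a strictly shorter word, contradicting simplicity. Set $i := |A|$ and split according to whether $A$ lies in the set $C_i$ used in the construction. If $A \notin C_i$, then by definition $A$ is conjugate in $G(i-1)$ to some $c \in \langle a, b\rangle \le G(i-1)$; pushing the conjugating word through the natural epimorphism $G(i-1) \twoheadrightarrow G$ shows that $A^k$, and hence $g$, is conjugate in $G$ to $c^k \in \langle a, b\rangle \le G$, which is exactly the desired conclusion.

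Suppose instead that $A \in C_i$. Here the key step is to exploit the maximality of $L_i$ inside $C_i$ as a set satisfying (L1)--(L3). The word $A$ individually satisfies (L1) (it has length $i$) and (L2) (being simple in $G$, it is simple in $G(i-1)$, hence not conjugate in $G(i-1)$ to any power of a shorter word). So the enlargement $L_i \cup \{A\}$ can fail (L1)--(L3) only through the pairwise condition (L3), forcing either $A \in L_i$ or the existence of some $B \in L_i$ with $B \neq A$ that is conjugate in $G(i-1)$ to $A$ or $A^{-1}$. In the former case $A^n = 1$ holds in $G$ by construction; in the latter, $A^n$ is conjugate in $G$ to $B^{\pm n} = 1$. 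Either way $g^n = 1$.

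The one place requiring care is precisely this maximality step: one must confirm that ruling out (L1) and (L2) for $L_i \cup \{A\}$ leaves (L3) as the only possible obstruction, which is where the strength of simplicity of $A$ (rather than merely $A \in C_i$) is used. Beyond that, the argument is a clean inductive unpacking of the construction, and requires no geometric input beyond Lemma \ref{lem:every word is conjugate to a power of a period or simple}.
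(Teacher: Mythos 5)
Your proof is correct and follows essentially the same route as the paper: the paper replaces your appeal to Lemma \ref{lem:every word is conjugate to a power of a period or simple} (in rank $\infty$) with an explicit induction on the norm $\|g\|$, but the decisive step — that a word in $C_i$ satisfying (L1)--(L2) must, by maximality of $L_i$, either lie in $L_i$ or be conjugate in rank $i-1$ to some $B^{\pm 1}$ with $B\in L_i$ — is identical in both arguments. The paper itself remarks that the lemma ``follows directly from the construction,'' which is precisely the reading you give.
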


\begin{proof}
This follows directly from the construction, but we prove it here by induction on the norm for the reader's convenience. 
The claim trivially holds for the only element of zero norm, namely, $1$. Let $g\in G$ have norm $\|g\|=i>0$, and let $W$ be a word representing $g\in G$ with $|W|=i$. Suppose that any word of length less than $i$ satisfies the claimed assertion.

Assume first that $W$ is conjugate in $G$ to a power of a shorter word, namely to $U^k$ with $|U|<|W|$, $k\in \mathbb{Z}$. By the inductive assumption, either $U$ is conjugate to a word $V\in \langle a,b\rangle$, in which case $W$ is conjugate to $V^k\in \langle a,b\rangle$, or else $U^n=1$, in which case $W^n$ is conjugate to $U^{nk}$, and so $W^n=1$. In both cases the assertion follows. We can therefore assume that $W$ is not conjugate in $G$ to a power of a shorter word. 

Next, assume that $W$ is conjugate to $L^k$ where $L$ is a period in $G$, and $k\in \mathbb{Z}$. Then $L^n=1$ in $G$, and as before, $W^n$ is conjugate to $L^{nk}=1$, and so $W^n=1$. So the assertion is satisfied in this case as well. We can therefore assume that $W$ is not conjugate to a power of a period.

To conclude, we can assume that $W$ satisfies condition (L2) for $i$. Furthermore, notice that $W$ satisfies (L1).

To complete the proof, suppose that $W$ is not conjugate in $G$ (and therefore, in $G(i)$) to any element from the subgroup $\langle a,b\rangle$. Then by definition, $W$ belongs to $C_i$. Since $L_i$ is chosen maximal, and since $W\in C_i$, and it satisfies (L1) and (L2), we get that either $W\in L_i$, or $W$ is conjugate in rank $i-1$ to $B^{\pm 1}$ for some $B\in L_i$. In the first case, $W^n=1$ is a relation in the presentation of $G$, and in the second, $B^n=1$ is. In both cases, $W^n=1$ in $G$.
\end{proof}

We now turn to study the geometry of the group $G$.
\begin{lemma}\label{lem:word in F(a,b) is not conj to a shorter word}Let $W$ be a cyclically reduced word in $\{a,b\}^{\pm 1}$. Then $W$ is not conjugate in $G$ to any word in $S^{\pm 1}$ shorter than $|W|$.
\end{lemma}
\begin{proof}
    Let $W$ be a cyclically reduced word in $\{a,b\}^{\pm 1}$. Let $V$ be a word conjugate to $W$ in $G$ that has minimal possible length. By Lemma \ref{lem:thm 13.2 annular van kampen}, there exists an annular reduced diagram $\Delta$ with contours $p$ and $q$ and labels $\Lab(p)\equiv W$ and $\Lab(q)\equiv V$. Note that by the choice of $V$, we have that $q$ is cyclically minimal.
    
    Suppose toward contradiction that $|V|<|W|$. It follows that $r(\Delta)>0$, and so by Lemma \ref{lem:existence of gamma-cell annular}, $\Delta$ contains a cell $\Pi$ and contiguity diagrams $\Gamma_1,\Gamma_2$ of $\Pi$ to $p,q$ respectively, such that 
    $$(\Pi,\Gamma_1,p)+(\Pi,\Gamma_2,q)>\bar \gamma.$$
    By the choice of $V$, we have that $q$ (regarded as a cyclic section) is geodesic. 
    Therefore, Lemma \ref{lem:contig deg to lambda-geodesic} used with $\lambda=1$ implies that $(\Pi,\Gamma_2,q)<\frac{1}{2}+2\beta$. 
    It follows that $$(\Pi,\Gamma_1,p)>\bar\gamma -(\frac{1}{2} + 2\beta) =\frac{1}{2}-(\gamma + 2\beta),$$ which is larger than $\epsilon$ by the LPP.
    Therefore, using (the second part of) Lemma \ref{lem:if Gamma is with minimal num of cells then r(Gamma)=0}, $\Delta$ contains a cell $\Pi'$ and a contiguity subdiagram $\Gamma'$ of $\Pi'$ to $p$ with $r(\Gamma')=0$ and $(\Pi',\Gamma',p)>\epsilon$. Recall that by construction, $\Lab(\partial\Pi')$ is equal to $A^n$ for some period $A$. Since $r(\Gamma')=0$, it follows that $W$ has a subword $T$ equal in rank $0$ to a subword of $A^n$, of length at least $\epsilon |\partial \Pi'|=\epsilon n |A|$. Since $\epsilon n >2$, $T$ must contain $A$ as a subword. In particular, $A$ consists of the letters $\{a,b\}^{\pm 1}$ only. But this contradicts the fact that only words not conjugate to the subgroup $\langle a,b\rangle$ can be chosen as periods. 

    We conclude that $|V|\ge |W|$, completing the proof.
\end{proof}

We get the following direct corollary:
\begin{lemma}\label{lem:F(a,b) is free}     The subgroup $\langle a,b \rangle \leq G$ is free. \end{lemma}

Recall that $\langle\langle a,b \rangle\rangle$ denotes the normal closure of the subgroup $\langle a,b\rangle$. Since all of the defining relations of $G$ are of the form $W^n=1$, and since, by Lemma \ref{lem:torsion}, every $g\in G$ that is not in $\langle\langle a,b\rangle\rangle$ satisfies $g^n=1$, we obtain:

\begin{corollary} \label{cor:quotient burnside}
There is an isomorphism $G/\left<\left< a,b \right> \right>\cong \calB(m,n)$ carrying $x_1,\dots,x_m$ to the standard generating set of $\B(m,n)$.
\end{corollary}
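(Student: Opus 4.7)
The plan is to exhibit mutually inverse homomorphisms between $G/\langle\langle a,b\rangle\rangle$ and $\calB(m,n)$, with Lemma~\ref{lem:torsion} doing essentially all of the work.

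First, I would construct a homomorphism $\phi: G \to \calB(m,n)$ by sending $a,b\mapsto 1$ and each $x_i$ to the corresponding standard generator of $\calB(m,n)$. To see this is well-defined, observe that for each defining relator $A^n$ of $G$ (where $A$ is a period), $\phi(A)$ is a word in $\{x_1^{\pm 1},\dots,x_m^{\pm 1}\}$, so $\phi(A)^n = 1$ holds in $\calB(m,n)$ by the Burnside law. Since $\phi$ kills $a$ and $b$, it descends to a homomorphism $\bar\phi: G/\langle\langle a,b\rangle\rangle \to \calB(m,n)$ carrying each $x_i$ to $x_i$.

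Second, I would construct a homomorphism $\psi: \calB(m,n) \to G/\langle\langle a,b\rangle\rangle$ by sending each standard generator $x_i$ of $\calB(m,n)$ to the image of $x_i\in G$ in the quotient. This is the direction where the real content lies: I must verify that every Burnside relation $U^n = 1$ (for $U$ an arbitrary word in $x_1,\dots,x_m$) holds in $G/\langle\langle a,b\rangle\rangle$. Here Lemma~\ref{lem:torsion} applies to $U$ viewed in $G$ and yields two alternatives. Either $U$ is conjugate in $G$ to an element of $\langle a,b\rangle$, in which case $U$ itself lies in $\langle\langle a,b\rangle\rangle$ (by normality of the normal closure) and hence maps to the identity in the quotient; or else $U^n = 1$ already holds in $G$. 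In both cases $U^n = 1$ in $G/\langle\langle a,b\rangle\rangle$, so $\psi$ extends to a homomorphism.

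Finally, $\bar\phi$ and $\psi$ act as the identity on each generator $x_i$ of the respective groups, so $\bar\phi\circ\psi$ and $\psi\circ\bar\phi$ are identities on generating sets and therefore on the entire groups. Thus $\bar\phi$ is an isomorphism with the claimed correspondence of generators. No serious obstacle is expected at this stage; the only substantive step is the verification of well-definedness of $\psi$, and the hard work has already been absorbed into Lemma~\ref{lem:torsion}, whose proof in turn leveraged the maximal choice of each $L_i$ among words not conjugate into $\langle a,b\rangle$.
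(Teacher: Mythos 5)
Your proposal is correct and follows essentially the same route as the paper, which derives the corollary in one line from Lemma~\ref{lem:torsion} together with the observation that all defining relators of $G$ are $n$-th powers; you have simply spelled out the two well-definedness checks and the mutual-inverse verification that the paper leaves implicit.
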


\begin{remark}
Simple groups in which every element has order dividing a large odd exponent $n$ except for those conjugate to a free subgroup $F(a,b)$, were constructed by Obraztsov \cite{Obraztsov}. This is done by choosing $G_1 = F(a,b)$ and $G_2$ a finite cyclic group, in the formulation of his theorem that appears in \cite[Theorem 35.1]{OL-book}. Another possible approach is to construct the desired examples as direct limits of hyperbolic groups \cite{Ol91,Coulon_Burnside_Quotients}.\\
However, controlling the torsion probabilities is a quantitative challenge that requires a different tool, one that we develop in the current section.
\end{remark}
Recall that $B_{G,S}(R)$ is the radius-$R$ ball in the Cayley graph of $G$ with respect to $S=\{a,b,x_1,\dots,x_m\}$; we henceforth omit the subscript $S$. We further denote $H=\langle a,b\rangle$, and let $H^G := \bigcup_{x\in G} xHx^{-1}$ denote the union of conjugates of $H$. 

We now turn to bound the density of torsion elements in $G$. By Lemma \ref{lem:torsion}, this amounts to computing the density of $H^G$.

\begin{lemma} \label{lem:uniform}
We have an exponential decay:
$$ \frac{\#\left(B_G(R)\cap H^G\right)}{\#B_G(R)}\xrightarrow{R\rightarrow \infty} 0. $$
\end{lemma}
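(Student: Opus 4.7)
The plan is to bound $\#(B_G(R)\cap H^G)$ from above by counting geodesic conjugating representations, and to compare this with the lower bound on ball growth coming from Lemma~\ref{lem:growth of BT groups}. By Lemma~\ref{lem:conjugates of K have length 2V+K}, every $g\in H^G$ with $\|g\|\le R$ is represented by a word $VKV^{-1}$, where $V$ is a word over $S$, $K$ is a word over $\{a,b\}$, and $\|g\|=2|V|+|K|\le R$. The geodesic equality forces $V$ and $K$ to be reduced, so the assignment $(V,K)\mapsto VKV^{-1}$ defines a surjection from the set of reduced pairs $(V,K)$ with $2|V|+|K|\le R$ onto $H^G\cap B_G(R)$.

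Writing $N_S(k)\le (2m+4)(2m+3)^{\max(0,k-1)}$ for the number of reduced words of length $k$ over $S$, and $N_{ab}(j)\le 4\cdot 3^{\max(0,j-1)}$ for the number of reduced words of length $j$ over $\{a,b\}$, the surjection above yields
$$\#(H^G\cap B_G(R))\le \sum_{k=0}^{\lfloor R/2\rfloor}\sum_{j=0}^{R-2k} N_S(k)\,N_{ab}(j) = O\!\left(3^R\sum_{k=0}^{\lfloor R/2\rfloor}\left(\frac{2m+3}{9}\right)^k\right).$$
Having the freedom to choose $m$ large (as stated at the start of the construction), we ensure that $2m+3>9$, in which case the geometric sum in $k$ concentrates at its top term, giving $\#(H^G\cap B_G(R))\le C\cdot (2m+3)^{R/2}$ for a constant $C$ independent of $R$.

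On the other hand, Lemma~\ref{lem:growth of BT groups}, applied to $G$ and its generating set $S$ of cardinality $m+2$, gives $\#B_G(R)\ge (2m+3)^{0.999R}$ for all sufficiently large $R$. Dividing,
$$\frac{\#(B_G(R)\cap H^G)}{\#B_G(R)} \le C\cdot (2m+3)^{-0.499R}\xrightarrow[R\to\infty]{}0,$$
as required. No essential obstacle is anticipated: the decisive geometric input is the sharp geodesic normal form in Lemma~\ref{lem:conjugates of K have length 2V+K}, which forces the conjugator $V$ to appear twice (as $V$ and $V^{-1}$) in the shortest representative of an element of $H^G$. This halving of the conjugator's exponent from $R$ to $R/2$ is precisely what makes its contribution exponentially negligible against the $0.999R$ exponent of the ball growth, and is the only subtle point in the proof.
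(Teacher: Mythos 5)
Your proof is correct and follows essentially the same route as the paper: both use the geodesic normal form $g=VKV^{-1}$ with $\|g\|=2|V|+|K|$ from Lemma~\ref{lem:conjugates of K have length 2V+K} to bound $\#(B_G(R)\cap H^G)$ by a double sum over conjugators and $\{a,b\}$-words, and then divide by the exponential lower bound on $\#B_G(R)$ from Lemma~\ref{lem:growth of BT groups}. The only (immaterial) differences are that you evaluate the geometric sum over $|V|$ sharply where the paper crudely bounds it by the number of terms times the largest term, and that you correctly track the cardinality $m+2$ of $S$ where the paper writes $2m-1$ in place of $2m+3$; both versions converge for $m$ large enough.
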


\begin{proof}
Let $g\in B_G(R)\cap H^G$ be a non-trivial element.
        
        We first claim that $g$ can be written as $CW'C^{-1}$, where $W'\in F(a,b)$ is a word of length at most $R$ and $C$ is a word over $S$ of length at most $(\frac{1}{2}+2\gamma) R$.
        Let $V$ be a shortest word in $S^{\pm 1}$ representing $g$; so $|V|\le R$. Let $W$ be a  
        cyclically reduced word in $\{a,b\}^{\pm 1}$ that is conjugate to $U$ in $G$.
        By Lemma \ref{lem:word in F(a,b) is not conj to a shorter word}, $|W|\le |V|\le R$. 
        By Lemma \ref{lem:thm 13.2 annular van kampen}, there exists a reduced annular  diagram $\Delta$, with contours $p,q$, $\Lab(p)\equiv V$, $\Lab(q) \equiv W$. 
        By Lemma \ref{lem:short path conneting contours of annual A-map}, there is a path $t$ connecting $p$ and $q$, with 
        $$|t|<\gamma (|p|+|q|)=\gamma (|V|+|W|)\leq 2\gamma R.$$
        Cutting $\Delta$ along $t$, we obtain a circular diagram $\Delta'$ whose contour is  $\partial\Delta' \equiv t^{-1}p't(q')^{-1}$, where $p'$ and $q'$, are cyclic shifts of $p$ and $q$ respectively. Denote $\Lab(t)\equiv T$, $\Lab(p')\equiv V'$, and $\Lab(q')\equiv W'$. By Lemma \ref{lem:thm 13.1 van kampen}, this amounts to 
        $$T^{-1}V'TW'^{-1}=1$$ in $G$.
        Since $V'$ is a cyclic shift of $V$, we have that $V'= X^{-1}VX$ for some $X$ of length $|X|\leq \frac{1}{2}|V|$. Denoting $C=XT$, we have that $C^{-1}VCW'^{-1}=1$, and so $$V=CW'C^{-1}$$ in $G$. Finally, since $W'$ is a cyclic shift of $W$, it has the same length as $W$, $|W'|=|W|\leq R$. As for $C$, we have that $|C|\leq |X|+|T|<\frac{1}{2}R+2\gamma R$, completing the proof of the claim.
        
        Next, we will bound the number of possible elements in $B_G(R)\cap H^G$ by bounding the number of possibilities for $W'$ and $C$ as above. Since $g$ is equal to $CW'C^{-1}$, it is determined by $C$ and $W'$, and so we have that $$\# \left(B_G(R)\cap H^G\right)\leq \#\text{Options for }W'\ \cdot \#\text{Options for }C.$$
Observe that
\begin{itemize}
    \item Since $W'$ is a reduced word of length at most $R$ over the alphabet $\{a,b\}^{\pm 1}$, there are at most $5^R$ options for $W'$;
    \item Since $C$ is a word of length at most $\left(\frac{1}{2}+2\gamma\right)R$ over $S^{\pm 1}$, there are at most $(2m+5)^{(\frac{1}{2}+2\gamma)R}$ options for $C$.
\end{itemize}
It follows that $$\# (B_R(G)\cap H^G) \leq 5^R \cdot (2m+5)^{(\frac{1}{2}+2\gamma)R} \leq \left(5 (2m+5)^{\frac{2}{3}}\right)^R<(2m-1)^{0.99R},$$ as we may assume that $\gamma<\frac{1}{12}$ and $m\geq 80$. 
By Lemma \ref{lem:growth of BT groups} we then have,
$$ \frac{\#\left(B_G(R)\cap H^G\right)}{\#B_G(R)} 
\leq  \frac{(2m-1)^{0.99R}}{(2m-1)^{0.999R}}  \xrightarrow{R\rightarrow \infty} 0.$$
\end{proof}

We are ready to prove Theorem \ref{thm:Theorem A}.


\begin{proof}[{Proof of Theorem \ref{thm:Theorem A}}]
As before, we denote by $H$ the subgroup $H=\langle a,b\rangle$. By Lemma \ref{lem:torsion}
and Lemma \ref{lem:uniform}:
$$ \Pr_{U_S(R)} (g^n \neq 1) \leq \frac{\#\left(B_G(R)\cap H^G\right)}{\#B_G(R)} \xrightarrow{R\rightarrow \infty} 0, $$
proving the first assertion.

To prove the second one, consider any lazy random walk $\{X_i\}_{i=1}^{\infty}$ with a non-degenerate step distribution $\mu$. 
Recall that by Corollary \ref{cor:quotient burnside}, $G/\langle\langle H \rangle\rangle\cong \calB(m,n)$; denote the corresponding surjection by $\pi\colon G\twoheadrightarrow \calB(m,n)$ and notice that the random walk on $\calB(m,n)$ whose step-distribution is obtained by pushing forward $\mu$ along $\pi$ (denoted $\pi_{\#}\mu$), say, $\{\overline{X}_i\}_{i=1}^{\infty}$ is again a lazy non-degenerate random walk.
Since by Lemma \ref{lem:torsion}, every element in $G$ which does not have order dividing $n$ is conjugate to an element from $H$, we obtain:
\begin{eqnarray*}
\Pr(X_i^n \neq 1) & = & \Pr\left(X_i\in H^G \setminus \{1\} \right) \\ & \leq & \Pr\left(X_i \in \langle\langle H \rangle\rangle\right) 
 = \Pr(\overline{X}_i=1).
\end{eqnarray*}
Now $\Pr(\overline{X}_i=1)$ is the return probability of the non-degenerate random walk $\{\overline{X}_i\}_{i=1}^{\infty}$ on $\calB(m,n)$ with step-distribution $\pi_{\#} \mu$, and therefore decays to zero 
(even exponentially, since large enough free Burnside groups are non-amenable \cite{Adian amenable}; see e.g. \cite{DS} for the general setting of non-symmetric random walks)
and hence: $$\Pr(X_i^n = 1) \xrightarrow{i\rightarrow \infty} 1,$$ 
as required.

Finally, by Lemma \ref{lem:F(a,b) is free}, $H\leq G$ is a free non-abelian subgroup, completing the proof of the theorem.
\end{proof}

Interestingly, even virtually torsion-free groups can be torsion with high probability. Let $G$ be a finitely generated group and let $H\leq G$ be a torsion-free subgroup of finite index.  Then for every symmetric, lazy, non-degenerate random walk $\{X_i\}_{i=1}^{\infty}$ we have by \cite[Theorem 1.11]{Tointon} that: \[\Pr(X_i\ \text{is torsion})\leq 1-\Pr(X_i\in H)\xrightarrow{i\rightarrow \infty} 1-\frac{1}{[G:H]},\]
so the torsion probability is bounded away from $1$.
By \cite[Proposition 2.4]{BV2002}, for every finitely generated group of subexponential growth, the asymptotic volume of its balls occupied by every finite-index subgroup $H\leq G$ is $\frac{1}{[G:H]}$. Since virtually nilpotent groups are virtually torsion-free, it follows that for such groups, the torsion limit probability, even with respect to the uniform measures on balls in their Cayley graphs, is bounded away from $1$.
\begin{remark}
The authors of \cite{ABGK} constructed virtually abelian semidirect products of the from $\mathbb{Z}^{m-1}\rtimes \mathbb{Z}/m$ which satisfy $\Pr(X_i^m=1)=\Pr(X_i\ \text{is torsion})\geq \varphi(m)/m$ where $\{X_i\}_{i=1}^{\infty}$ is a random walk as above, so the torsion probability in virtually torsion-free groups can be arbitrarily close to $1$ (see \cite[Proposition 5.13]{ABGK}). Moreover, by \cite{BV2002}, a similar result holds for the uniform probability measures on balls in the Cayley graph of $G$. 
\end{remark}

We conclude this section by proving that the torsion probability in every finitely generated virtually \textit{solvable} group is --- unlike the groups constructed in Theorem \ref{thm:Theorem A} --- bounded away from $1$. (Notice that \cite[Proposition 2.4]{BV2002} does not apply in this case, as finitely generated solvable groups have exponential growth unless they are virtually nilpotent.)

\begin{lemma} \label{lem:MN}
Let $G=\left<S\right>$ be a finitely generated group and let $M,N\subseteq G$ be subsets such that $|M|<\infty,|N|=\infty$ and $G=MN=\{mn \ : \ m\in M,~n\in N\}$. Then there exists a constant $c=c(G,S,M)>0$ such that for $r\gg 1$ we have:
\[\frac{|N\cap B_{G,S}(r)|}{B_{G,S}(r)}\geq c.\]
\end{lemma}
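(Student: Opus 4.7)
The plan is to show that every element $g \in B_{G,S}(r)$ admits a decomposition $g = mn$ with $m \in M$, $n \in N$ of controlled length, and then to use this to bound $|N \cap B_{G,S}(r)|$ from below in terms of $|B_{G,S}(r)|$ up to a multiplicative constant depending only on $G$, $S$, $M$.

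First I would set $D := \max_{m \in M} \|m\|$, which is finite since $|M| < \infty$. Using the hypothesis $G = MN$, fix, for every $g \in G$, a decomposition $g = m_g n_g$ with $m_g \in M$ and $n_g \in N$. The key observation is that if $g \in B_{G,S}(r)$ then
\[\|n_g\| = \|m_g^{-1} g\| \leq D + r,\]
so $n_g \in N \cap B_{G,S}(r+D)$. Since $g$ is recovered from the pair $(m_g, n_g)$ via multiplication, the map $g \mapsto (m_g, n_g)$ is an injection $B_{G,S}(r) \hookrightarrow M \times \bigl(N \cap B_{G,S}(r+D)\bigr)$, and hence
\[|B_{G,S}(r)| \leq |M| \cdot |N \cap B_{G,S}(r+D)|.\]

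Next I would replace $r$ by $r - D$, valid for $r > D$, to obtain $|N \cap B_{G,S}(r)| \geq |B_{G,S}(r-D)|/|M|$. Combined with the elementary submultiplicativity bound $|B_{G,S}(r)| \leq |B_{G,S}(D)| \cdot |B_{G,S}(r-D)|$, which follows from the inclusion $B_{G,S}(r) \subseteq B_{G,S}(D) \cdot B_{G,S}(r-D)$ obtained by splitting a word of length at most $r$ after its $D$-th letter, this yields
\[\frac{|N \cap B_{G,S}(r)|}{|B_{G,S}(r)|} \geq \frac{1}{|M| \cdot |B_{G,S}(D)|}\]
for all $r > D$, so one can take $c := 1/(|M| \cdot |B_{G,S}(D)|)$, which indeed depends only on $G$, $S$, and $M$.

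I do not anticipate any serious obstacle: the argument is a purely combinatorial pigeonhole bound involving ball sizes, and does not invoke the Burnside-type geometry developed earlier in the paper. The only subtlety is to fix the decomposition $g = m_g n_g$ as a function of $g$ before starting, so that the resulting map to $M \times N$ is genuinely injective; the hypothesis $|N| = \infty$ is not explicitly needed in the estimate (it is automatic given $|M|<\infty$ when $G$ is infinite, which is the only case of interest).
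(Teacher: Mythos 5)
Your proof is correct and follows essentially the same route as the paper's: the same decomposition $g=mn$ with $\|n\|\leq \|g\|+\max_{m\in M}\|m\|$, the same counting bound $|B_{G,S}(r-D)|\leq |M|\cdot|N\cap B_{G,S}(r)|$, and the same submultiplicativity of ball sizes, yielding the identical constant $c=1/(|M|\cdot|B_{G,S}(D)|)$. The only cosmetic difference is that the paper states the estimate directly for $g\in B_{G,S}(r-l)$ rather than shifting $r$ afterwards.
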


\begin{proof}
Let $l=\max\{\|m\|_S\ :\ m\in M\}$ and let $r\geq l$. Every $g\in B_{G,S}(r-l)$ can be decomposed  as $g=hf$ where $h\in M,f\in N$ and therefore $\|f\|_S \leq \|g\|_S+\|h\|_S\leq (r-l)+l=r$. In addition, $|B_{G,S}(r)|\leq |B_{G,S}(r-l)|\cdot |B_{G,S}(l)|$ so:
\[
|B_{G,S}(r)|\leq |M| \cdot |N\cap B_{G,S}(r)| \cdot |B_{G,S}(l)|,
\]
proving the claimed assertion for $c=\frac{1}{|M|\cdot |B_{G,S}(l)|}$.
\end{proof}

\begin{lemma} \label{lem:solvable}
Let $G$ be a finitely generated, infinite, virtually solvable group. Then there exist $M,N\subseteq G$ such that $|M|<\infty$ and $G=MN$, and moreover, $N$ contains no elements of finite order.
\end{lemma}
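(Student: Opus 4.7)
The statement as literally worded is essentially immediate once a single element of infinite order in $G$ is located, so the plan is to reduce to that and then extract such an element from the virtually solvable structure.

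The reduction: if $g \in G$ has infinite order, set $M := \{g\}$ and $N := g^{-1}G$. Then $|M| = 1 < \infty$; the singleton $M$ consists of one infinite-order element and so contains no torsion; $N$ is in bijection with $G$ via left multiplication by $g^{-1}$ and hence infinite; and $MN = \{g\}\cdot g^{-1}G = G$. Every clause of the statement is satisfied.

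It therefore suffices to exhibit $g \in G$ of infinite order. I would pass to a finite-index solvable subgroup $H \le G$; since $[G:H]<\infty$ and $G$ is infinite, $H$ is infinite, and by Schreier's lemma $H$ is finitely generated. I then argue by induction on the derived length $d$ of $H$ that any finitely generated infinite solvable group contains an element of infinite order. For $d=1$, the structure theorem for finitely generated abelian groups produces a $\mathbb{Z}$-summand in $H$. For $d \ge 2$: if $H/[H,H]$ is infinite, then as a finitely generated abelian group it has a $\mathbb{Z}$ quotient, and any preimage of a generator of this quotient has infinite order in $H$; otherwise $[H,H]$ has finite index in $H$, hence is finitely generated (Schreier again) and infinite, and its derived length is at most $d-1$, so induction applies. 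Lifting the resulting infinite-order element from $H$ to $G$ and plugging into the first paragraph completes the argument.

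The only genuine content is the classical induction above (equivalently, the fact that a finitely generated solvable torsion group is finite), and there is no serious obstacle. The substantive companion statement in the vicinity of this lemma — where the \emph{infinite} set $N$, rather than the finite set $M$, is required to be torsion-free — does not follow from the argument above and would need a separate construction; what is claimed and proved here is only the much weaker literal assertion.
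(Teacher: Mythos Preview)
Your reading is sharp: as literally stated (with $M$, the \emph{finite} set, required to be torsion-free), the lemma is indeed trivial, and your one-line construction $M=\{g\},\ N=g^{-1}G$ for any infinite-order $g$ is a valid proof of it. Your extraction of such a $g$ via the standard induction on derived length is also fine.

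You are equally right to suspect a typo. The lemma is invoked together with Lemma~\ref{lem:MN} to show that a positive fraction of each Cayley ball consists of non-torsion elements; for that it is the \emph{infinite} set $N$ that must be torsion-free, and the paper's own proof in fact builds such an $N$. In the virtually abelian base case it takes $N=A\setminus\{1\}$ for a torsion-free finite-index abelian subgroup $A$; in the inductive step it passes to the virtually abelian quotient $G_1=G/[H,H]$, obtains $N_1\subseteq G_1$ torsion-free there, and sets $N$ to be the full preimage of $N_1$ in $G$ (preimages of infinite-order elements have infinite order). So while your argument correctly disposes of the statement as written, it does not touch the intended lemma, which genuinely requires the paper's inductive lift of a torsion-free set from a virtually abelian quotient.
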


\begin{proof}
We work by induction on the minimum derived length of a finite-index normal subgroup of $G$. If $G$ is virtually abelian then it contains a finite-index torsion-free subgroup $A\leq G$, and we can take $N=A\setminus \{1\}$ and $M$ a finite set containing at least two distinct representatives from each coset in $G/A$.

In the general case, let $H\leq G$ be a finite-index normal subgroup with minimum possible derived length. In particular, $H/[H,H]$ is infinite (otherwise, replace $H$ by $[H,H]$, which is still normal in $G$) so $G_1:=G/[H,H]$ is an infinite, virtually abelian group and we can find $M_1,N_1$ satisfying the claim for $G_1$. Let $N$ be the pre-image of $N_1$ under the natural projection $G\twoheadrightarrow G_1$; thus every element in $N_1$ has an infinite order.
We let $M$ be a lift 
of $M_1$, so $|M|=|M_1|<\infty$. 
For every element $g\in G$, let $\bar{g}=g[H,H]$ denote the image of $g$ under the natural projection $G\twoheadrightarrow G_1$.
Notice that for every $g\in G$ we have in $G_1$ that $\bar{g}=\bar{x}\cdot \bar{y}$ for suitable $\bar{x}\in M_1,\bar{y}\in N_1$ (and some $x,y\in G$ reducing to them respectively). By the construction of $M,N$ there exists some $x'\in M$ such that $\bar{x'} = \bar{x}$, and $y\in N$, so $\bar{g} = \bar{x'} \cdot \bar{y}$ and therefore $g\in x'y[H,H]$. Notice that by the definition of $N$, since $\bar{y}\in N_1$ then $y[H,H]\subseteq N$ and it follows that $g\in MN$, as claimed.
\end{proof}

\begin{corollary}
Let $G$ be a finitely generated virtually solvable infinite group. Then: \[\limsup_{R\rightarrow \infty} \Pr_{U_{G,S}(R)}(g\ \text{is torsion}) < 1\]
and for every finitely supported, non-degenerate, symmetric, lazy random walk $\{X_i\}_{i=1}^{\infty}$:
\[\limsup_{i\rightarrow \infty} \Pr(X_i\  \text{is torsion}) < 1.\]
\end{corollary}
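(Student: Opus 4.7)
My plan is to combine Lemmas \ref{lem:MN} and \ref{lem:solvable} with classical return-probability estimates. First I apply Lemma \ref{lem:solvable} to obtain $M, N \subseteq G$ with $|M| < \infty$, $|N| = \infty$, and $G = MN$, where additionally $N$ is torsion-free. The torsion-freeness of $N$ is extracted from the inductive proof of the lemma, which exhibits $N$ as the preimage in $G$ of $A \setminus \{1\}$ under a projection $\pi \colon G \twoheadrightarrow Q$ onto an infinite virtually abelian quotient $Q$, for a torsion-free finite-index subgroup $A \leq Q$. Writing $K := \pi^{-1}(A)$ and $L := \ker \pi$, this yields the decomposition $N = K \setminus L$, with $K \leq G$ of finite index and $L \trianglelefteq G$ of infinite index (since $Q = G/L$ is infinite).

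For the statement about balls, I apply Lemma \ref{lem:MN} to $M, N$, obtaining a constant $c > 0$ such that $|N \cap B_{G,S}(R)| / |B_{G,S}(R)| \geq c$ for all sufficiently large $R$. Since every element of $N$ has infinite order,
\[
\Pr_{U_{G,S}(R)}(g \text{ is torsion}) \leq 1 - \frac{|N \cap B_{G,S}(R)|}{|B_{G,S}(R)|} \leq 1 - c,
\]
proving the first bound.

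For the random walk statement, I use the decomposition $N = K \setminus L$ to estimate
\[
\Pr(X_i \in N) \geq \Pr(X_i \in K) - \Pr(X_i \in L).
\]
By Tointon's equidistribution theorem (\cite[Theorem 1.11]{Tointon}), $\Pr(X_i \in K) \to 1/[G:K]$. On the other hand, the projection $\bar X_i := \pi(X_i)$ is again a symmetric, lazy, non-degenerate random walk on the infinite finitely generated group $Q = G/L$, so standard return-probability decay on infinite groups gives $\Pr(X_i \in L) = \Pr(\bar X_i = 1_Q) \to 0$. Combining, $\liminf_i \Pr(X_i \in N) \geq 1/[G:K] > 0$, and the torsion-freeness of $N$ yields $\limsup_i \Pr(X_i \text{ is torsion}) \leq 1 - 1/[G:K] < 1$.

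The main technical subtlety is that Lemma \ref{lem:solvable} as stated does not explicitly record the algebraic decomposition $N = K \setminus L$, so the argument must revisit its proof rather than use it as a black box; in particular one needs to verify that the normal subgroup $L$ produced at each stage of the recursion has infinite index, which is precisely what guarantees the vanishing of $\Pr(X_i \in L)$. Once this bookkeeping is in place, the two conclusions follow in parallel: the ball case from the density estimate of Lemma \ref{lem:MN}, and the random-walk case from Tointon's theorem together with the elementary decay of return probability on the infinite quotient $G/L$.
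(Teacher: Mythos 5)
Your proof is correct and follows essentially the same route as the paper: the ball case is exactly Lemma \ref{lem:MN} combined with Lemma \ref{lem:solvable}, and the random-walk case is the paper's argument (pass to the infinite virtually abelian quotient, apply \cite[Theorem 1.11]{Tointon} to a finite-index torsion-free subgroup there, and use that the return probability of the projected walk on the infinite quotient vanishes), merely phrased upstairs in $G$ via the decomposition $N=K\setminus L$ rather than downstairs in the quotient. You are also right to read the conclusion of Lemma \ref{lem:solvable} as torsion-freeness of $N$ --- that is the set its proof actually shows to be free of finite-order elements, and it is what both halves of the corollary require.
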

\begin{proof}
For uniform measures on balls in the Cayley graph, this is immediate from Lemmas \ref{lem:MN} and \ref{lem:solvable}. For random walks, this follows since every finitely generated virtually solvable infinite group surjects onto a finitely generated virtually abelian infinite group, which thus contains a finite-index torsion-free subgroup, and the claim follows by \cite[Theorem 1.11]{Tointon}.
\end{proof}

\section{Contrasting generating sets: random walks}
\label{sec:different generating sets}

In this section, we prove the following:
\begin{theorem}
[{Contrasting generating sets: random walks}] \label{thm:Theorem B'} There exists a group $G$, $n\in \mathbb{N}$, and two finitely supported, symmetric, lazy, non-degenerate random walks $\{X_i\}_{i=1}^{\infty}$ and $\{Y_i\}_{i=1}^{\infty}$ on $G$ such that: 
$$\limsup_{i\to \infty}\Pr((X_i)^n=1)=1\ \ \ \text{and}\ \ \ \lim_{i\to \infty}\Pr((Y_i)^n=1)=0.$$
\end{theorem}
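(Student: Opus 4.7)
\textbf{Plan for Theorem \ref{thm:Theorem B'}.}

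The plan is to construct a single Burnside-type group $G$ on generators $S = \{a,b,x_1,\ldots,x_m\}$ following the template of Section 6, with two modifications that make the torsion structure probed differently by two different random walks. The construction will make $F := \langle a,b\rangle$ a free subgroup of $G$ (by restricting all periods to words not conjugate in rank $i-1$ to elements of $\langle a,b\rangle$, as in Definition \ref{def:minimal partial Burnside presentation} and the argument of Corollary \ref{lem:<a,b> is a free subgroup}), so that the Lemma \ref{lem:torsion} dichotomy holds: every element of $G$ is either conjugate to an element of $F$ or has $g^n=1$. In addition, I take the sets $L_i$ to be nonempty (and maximal in $C_i$) only for $i$ in a sparse rapidly increasing sequence $r_1 \ll r_2 \ll \cdots$, with gaps tuned so that the hypothesis $R > \frac{2}{\gamma}r$ of Lemma \ref{lem:bound on finite order words when periods are short} is satisfied between consecutive bursts.

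For the walk $X_i$, I take the simple symmetric lazy random walk with step distribution uniform on $S^{\pm 1} \cup \{e\}$. By the ball counting arguments of Lemma \ref{cla:maximal period sets gives asymptotic torsion density 1} (adapted to random walks via (\ref{(2m+1)/k})--(\ref{eq:rho(2m+1)^8}) and the combinatorial estimates in Lemmas \ref{lem:bound on conjugates of powers of single *short* period}--\ref{lem:bound on finite order words when periods are short}), at the burst times $i_k \asymp r_k$ the walk $X_{i_k}$ is concentrated on elements whose length is close to $r_k$, where by maximality of $L_{r_k}$ the fraction of non-torsion elements is at most $(2m-1)^{-\Omega(r_k)}$ of the total. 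This forces $\Pr((X_{i_k})^n=1) \to 1$ along the subsequence $\{i_k\}$, so $\limsup_i \Pr((X_i)^n=1) = 1$.

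For the walk $Y_i$, I take a step distribution $\mu_Y$ with support mostly on $\{a^{\pm 1},b^{\pm 1}, e\}$ and a small symmetric perturbation on some $x_j$ (ensuring non-degeneracy on $G$). By the Lemma \ref{lem:torsion} dichotomy, $(Y_i)^n = 1$ if and only if $Y_i$ is not conjugate to any element of $F$. The key estimate is then that the number of formal words of length $i$ in the support of $\mu_Y$ which represent torsion elements of $G$ is bounded using Lemma \ref{lem:bound on conjugates of powers of single *short* period} summed over all periods. Thanks to the sparse placement of periods, the total number of such formal words grows strictly slower than $|\operatorname{supp}(\mu_Y)|^i$, giving $\Pr((Y_i)^n=1) \to 0$.

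The main obstacle is Step for $Y_i$: one must carefully control the torsion probability at \emph{every} time $i$ (not just along a subsequence of gap times). This is delicate because the simple projection argument used in the proof of Theorem \ref{thm:Theorem A} actually forces $\Pr \to 1$, not $\to 0$, through the non-amenability of the quotient $G/\langle\langle F\rangle\rangle$. The way around this is to avoid that projection entirely and instead work directly with the formal-word estimate of Lemma \ref{lem:bound on finite order words when periods are short}, together with a scale-by-scale analysis exploiting the fact that at times between bursts the walk cannot be conjugate to any period of appropriate length. Choosing the burst spacings and the weight $\epsilon$ of the perturbation sufficiently fast-growing relative to $\gamma$ and to $\rho(2m+1)$ (as in (\ref{eq:rho(2m+1)^8})) yields the desired $\Pr((Y_i)^n=1) \to 0$ uniformly in $i$.
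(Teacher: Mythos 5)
There is a genuine gap, and it is concentrated in the $\{Y_i\}$ half of your argument. You ask for two incompatible things from the same group: on one hand you want the dichotomy of Lemma \ref{lem:torsion} (every element is either conjugate into $F=\langle a,b\rangle$ or satisfies $g^n=1$), which requires $L_i$ to be \emph{maximal} in $C_i$ at \emph{every} rank $i$; on the other hand you want $L_i=\emptyset$ outside a sparse sequence of bursts. If you keep maximality at every rank, then the projection argument in the proof of Theorem \ref{thm:Theorem A} is not merely ``a proof technique you can avoid'' --- it establishes the true fact that $\Pr((Y_i)^n=1)\ge 1-\Pr(\overline{Y}_i=1)\to 1$ for \emph{every} non-degenerate walk, because $G/\langle\langle F\rangle\rangle\cong\calB(m,n)$ is infinite and non-amenable. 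No alternative counting estimate can then yield $\Pr((Y_i)^n=1)\to 0$; the statement you are trying to prove about $Y_i$ is simply false for that group. If instead you make the bursts sparse, the dichotomy fails, but a new problem appears: a single finitely supported walk passes through every length scale as $i\to\infty$ (its typical word length grows linearly), so for infinitely many $i$ the walk $Y_i$ is concentrated at scales lying just above a burst radius, where Lemma \ref{lem:bound on finite order words when periods are short} (applied to simple words rather than periods) forces the torsion probability to be close to $1$. Thus $\Pr((Y_i)^n=1)$ oscillates; it does not tend to $0$. Your ``scale-by-scale analysis'' cannot repair this, because one walk cannot skip scales.

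The paper's proof avoids exactly this obstruction by changing the group: it takes $\Gamma=G\times G$, where $G$ is the oscillating construction of Section \ref{sec:different generating sets} (sparse bursts, no free subgroup needed), and uses two product walks whose coordinates run at \emph{different speeds}: $\nu\times\nu$ for the $\limsup=1$ walk (both coordinates are simultaneously torsion with probability $\to 1$ along the burst subsequence), and $\nu^{*4K^2}\times\nu$ for the $\lim=0$ walk, with the burst spacings tuned so that at every time $r$ at least one of the two coordinates sits in a period-free window and hence is non-torsion with probability $\ge 1-1/r$. Since $\Pr((Z_r)^n=1)$ is the product of the two coordinate probabilities, it tends to $0$. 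If you want to salvage your approach, you need some mechanism --- such as this product-of-speeds trick --- that guarantees the ``free'' scales are probed at every time $i$, not just along a subsequence.
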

\noindent  
Theorem \ref{thm:Theorem B'} shows that the limit probabilities of Burnside laws can be highly sensitive to the choice of the random walks. (As in previous cases, in fact, the above group satisfies the formally stronger property $\lim_{r\to \infty}\Pr(Y_r\text{ is torsion})=0$.)

In Section \ref{sec:contrasting uniform}, we will prove an (even stronger) variation of Theorem \ref{thm:Theorem B'} for uniform measures, namely, Theorem \ref{thm:Theorem B}. 
However, the cases of random walks and uniform measures require different toolkits. 
The current section introduces an interesting application of oscillating probabilities and weighted random walks on Cartesian squares of groups, which are utilized to solve \cite[Question 13.4]{ABGK} for random walks. Section \ref{sec:contrasting uniform} is more combinatorially involved and solves \cite[Question 13.4]{ABGK} for uniform measures in a strong sense of limit probability (namely, $\lim$ over 
$\limsup$). We proceed to describing the construction of the group $G$, which will be used to prove Theorem \ref{thm:Theorem B'}.

Let $m\ge 2$, let $X=\{x_1,x_2,\dots,x_m\}$, and $n$ be large enough as discussed in Section \ref{subsec:RWs}. Recall that $\gamma$ is close to $0$ and let $K>\frac{2}{\gamma}$, in particular, $K\gg 1$.
Consider a sequence $\{r_i\}_{i=0}^{\infty}$ defined inductively as follows. Let $r_0=0$ and $r_1>0$ and for $i\geq 1$ put:
\[ r_{i+1} = \begin{cases} 
      2Kr_{i} & \text{if}\ i+1\ \text{is even} \\
      8K^3r_{i} & \text{if}\ i+1\ \text{is odd}
   \end{cases}
\]
Call a positive integer $j$ 
a \emph{free radius} if $r_{i}\leq j < r_{i+1}$ for some even $i$, and a \emph{torsion radius} otherwise. 

We now construct by induction, for every integer $i\geq 0$, sets $L_i$, and $R_i$ of words over $X$ and a finitely presented group $G(i)$ generated by $X$. 

For the base case let $L_0=R_0=\emptyset$, and let $G(0)$ be the free group $F(X)$. Let $i\in \mathbb{N}$ and suppose that $L_j$, $R_j$, and $G(j)$ have already been defined for all $j<i$.
If $i$ is a free radius, let $L_i=\emptyset$.
Otherwise, namely, if $i$ is a torsion radius let $L_i$ be a maximal subset of words over $X$ that satisfies conditions (L1)--(L3) (see Definition \ref{def:minimal partial Burnside presentation}).
In either case, let $R_{i}=\{A^n \mid A\in L_{i}\}\cup R_{i-1}$ (so if $L_{i}=\emptyset$, then $R_i=R_{i-1}$) and let:
\begin{equation*}\label{eq:presentation for the oscillation construction}
    G(i)=\langle X \mid W=1\ :\ W\in R_{i}\rangle.
\end{equation*}
This completes the inductive construction of $L_i$, $R_i$, and $G(i)$. 
Finally, let $R=\bigcup_{i=0}^{\infty} R_i$ and consider the group $G$ given by the presentation
\begin{equation}\label{eq:graded presentation of the oscillation construction}
    G=G(\infty)=\langle X \mid W=1\ :\ W\in R=\bigcup_{i=0}^{\infty}R_i\rangle
\end{equation}

Note that $G$ is a partial-Burnside group, and so all torsion elements in $G$ have orders dividing $n$. 

Consider the random walk $\{X_i\}_{i=1}^{\infty}$ on $G$ whose step distribution is the uniform distribution on the set $X^{\pm 1} \cup \{1\}$.
\begin{lemma}\label{lem:torsion density RW S oscillation}
   Suppose $i\gg 1$. If $i\in \mathbb{N}$ is even, then for every $r\in [Kr_{i},\frac{1}{2}r_{i+1}]$:
    \[    \Pr((X_{r})^n=1) \leq \frac{1}{r}. \]
    If $i\in \mathbb{N}$ is odd, then for $r=Kr_i$:
    \[ \Pr((X_{r})^n=1) \geq 1-\frac{1}{r}. \]
\end{lemma}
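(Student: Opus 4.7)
The plan is to derive both bounds as nearly direct consequences of Lemma \ref{lem:bound on finite order words when periods are short}, after identifying the correct ``period-free'' (respectively, ``simple-word-free'') range of lengths for each parity of $i$.

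For the even case, I will fix $i$ even and $r \in [Kr_i, \tfrac{1}{2}r_{i+1}]$. Since $r < r_{i+1}$ and every integer in $[r_i, r_{i+1})$ is a free radius, by construction $L_j = \emptyset$ for all $j \in [r_i, r]$, so no period has length in this range. Applying Lemma \ref{lem:bound on finite order words when periods are short} (finite-order version) with $R := r$ and $r' := r_i - 1$---the hypothesis $R > \tfrac{2}{\gamma}r'$ follows from $K > \tfrac{2}{\gamma}$---bounds the number of formal words of length $r$ of finite order in $G$ by $r^{-1}(2m+1)^r$. Dividing by $(2m+1)^r$ gives $\Pr(X_r \text{ has finite order}) \leq 1/r$, and since $G$ is partial Burnside with partial exponent $n$, this event coincides with $\{(X_r)^n = 1\}$.

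For the odd case, I will fix $i$ odd and $r = Kr_i$. Since $r_{i+1} = 2Kr_i$, every integer $\ell \in [r_i, r]$ lies in the torsion interval $[r_i, r_{i+1})$, so $L_\ell$ was chosen maximal for each such $\ell$. The crucial sub-step is to show that maximality of $L_\ell$ precludes any word of length $\ell$ from being simple in $G$: if $W$ were such a word, being simple in rank $0$ forces $W$ to be cyclically reduced, being simple in rank $\ell - 1$ furnishes condition (L2), while $W$ cannot belong to $L_\ell$ (else $W^n = 1$, contradicting simplicity in any rank $\geq \ell$). Maximality then forces $W$ to be conjugate in rank $\ell - 1$ to some $B^{\pm 1}$ with $B \in L_\ell$, making $W$ conjugate to a power of a period of rank $\ell$ in $G(\ell)$, contradicting simplicity in rank $\ell$. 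Granted this, no word simple in $G$ has length in $[r_i, r]$, and applying Lemma \ref{lem:bound on finite order words when periods are short} (infinite-order version) with the same $R, r'$ gives at most $r^{-1}(2m+1)^r$ formal words of length $r$ of infinite order, hence $\Pr((X_r)^n = 1) \geq 1 - 1/r$.

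The main subtlety will be the sub-claim in the odd case---carefully parsing ``simple in $G$'' against Definition \ref{def:simple in rank i} and extracting the right consequence of maximality of $L_\ell$, being attentive to the distinction between simplicity in rank $\ell - 1$ and in rank $\ell$. The remainder amounts to verifying that the narrow interval $[Kr_i, \tfrac{1}{2}r_{i+1}]$ (even case) and the point $r = Kr_i$ (odd case) satisfy the numerical hypotheses already built into Lemma \ref{lem:bound on finite order words when periods are short}, which the choices $K > 2/\gamma$ and the ratios $r_{i+1}/r_i \in \{2K, 8K^3\}$ are precisely tailored to guarantee.
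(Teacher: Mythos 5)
Your proposal is correct and follows essentially the same route as the paper: in each parity case, identify the length range in which there are no periods (respectively, no simple words), verify the numerical hypothesis $r \geq \frac{2}{\gamma}r_i$ from the choice of $K$, and invoke Lemma \ref{lem:bound on finite order words when periods are short} together with the fact that in a partial Burnside group the event $\{(X_r)^n=1\}$ coincides with $\{X_r\ \text{has finite order}\}$. Your sub-argument in the odd case (that maximality of $L_\ell$ forces every word of length $\ell$ to fail to be simple in rank $\ell$, hence in $G$) is a correct elaboration of what the paper dismisses with ``by construction.''
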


\begin{proof}
Recall that for any $r>0$, we denote by $w(r)$ the set of formal words of length $r$ over $X^{\pm 1}\cup \{1\}$.

Assume first that $i$ is even, and let $r\in [Kr_i,\frac{1}{2}r_{i+1}]$. Then $r$ must be a free radius, and by construction, there are no periods of length between $r_i$ and $r$. By the choice of $K$, we have that $r\geq \frac{2}{\gamma}r_i$, and by taking $n$ large enough we can further assume that Inequality (\ref{eq:rho(2m+1)^8}) holds. It therefore follows from Lemma \ref{lem:bound on finite order words when periods are short} that there are at most $r^{-1}(2m+1)^r$
formal words of length $r$ over $X^{\pm 1}\cup \{1\}$ that have finite order in $G$ (we may assume that $r\gg 1$).

Since the step distribution of $X_r$ is uniform on $w(r)$, we have:
\begin{eqnarray*}
\Pr(X_r\text{ has finite order in $G$}) & = & \frac{\# \{ W \in w(r) \ : \ W\text{ has finite order in $G$}\}}{\# w(r)}\\
& \le & \frac{r^{-1}(2m+1)^r}{(2m+1)^r} =  \frac{1}{r}
\end{eqnarray*}
This completes the proof in case that $i$ is even.

Suppose next that $i\in \mathbb{N}$ is odd and let $r=Kr_i$; observe that $r$ is a torsion radius. By construction, there are no words of length between $r_i$ and $r$ that are simple in $G$. Again, $r>\frac{2}{\gamma}r_i$ by the choice of $K$, and so Lemma \ref{lem:bound on finite order words when periods are short} implies that there are at most $r^{-1}(2m+1)^r$ formal words of length $r$ over $X^{\pm 1}\cup \{1\}$ that have infinite order in $G$ (again, we may assume that $r\gg 1$). Again, we have:

\begin{eqnarray*}
\Pr(X_r\text{ has infinite order in $G$}) & = & \frac{\# \{ W \in w(r) \ : \ W\text{ has infinite order in $G$}\}}{\# w(r)}\\
& \le & \frac{r^{-1}(2m+1)^r}{(2m+1)^r} =  \frac{1}{r}
\end{eqnarray*}
and so:
$$\Pr(X_r\text{ has finite order in $G$})\ge 1-\frac{1}{r}.$$
This completes the proof.
\end{proof}

We conclude the section with the proof of Theorem \ref{thm:Theorem B'}. We thank Gady Kozma for inspiring the strategy used in the following proof.
\begin{proof}[Proof of Theorem \ref{thm:Theorem B'}]
Consider the direct product $\Gamma = G \times G$. Let $\nu$ be the uniform probability measure on $X^{\pm 1}\cup \{1\}$ and consider a random walk $\{Y_i\}_{i=1}^{\infty}$ on $\Gamma$ given by running two independent random walks as $\{X_i\}_{i=1}^{\infty}$ above (whose step distribution is $\nu$), namely, $Y_i$ has step distribution $\nu \times \nu$. Thus, $\Pr((Y_r)^n=1)=\Pr((X_r)^n=1)^2$ for all $r\ge 0$ and by Lemma \ref{lem:torsion density RW S oscillation}, taking radii along the subsequence $\{Kr_i\}_{i\in \mathbb{N}}$:
%
\[ 
\limsup_{r\rightarrow \infty}  \Pr((Y_r)^n=1)\geq \lim_{i\to \infty}\Pr((X_{Kr_i})^n=1)^2\ge \lim_{i\rightarrow \infty} \left(1-\frac{1}{Kr_i}\right)^2 = 1. \] 
This completes the proof of the first assertion of the theorem.





Consider a random walk $\{Z_i\}_{i=1}^\infty$ on $\Gamma$ defined by the step distribution:
\[\nu^{*4K^2}\times \nu. \]
(As always, $\nu^{*i}$ stands for the $i$-fold self-convolution of $\nu$.) 
Namely, let 
$\{X_i\}_{i=1}^{\infty}$ and $\{X'_i\}_{i=1}^{\infty}$ be two independent random walks on $G$ with step distribution $\nu$, and let:
\[ Z_i=(X_{4K^2 i}, X'_i)\sim \nu^{*4K^2i}\times \nu^{*i}. \]
We have that:
\[
\Pr((Z_r)^n=1) = \Pr((X_{4K^2r})^n=1) \cdot \Pr((X'_r)^n=1).
\]
We claim that for every $r\gg 1$, at least one of the two factors of the right hand side is very small. Indeed, if $r\in [Kr_i,\frac{1}{2} r_{i+1}]$ for some even $i$, then by Lemma \ref{lem:torsion density RW S oscillation}: $$\Pr(X'_r\ \text{is torsion})\leq \frac{1}{r}.$$ Otherwise, $r\in [\frac{1}{2}r_{i-1},Kr_{i}]$ for some even $i$, and so $4K^2r\in [2K^2r_{i-1},4K^3r_{i}]=[Kr_i,\frac{1}{2}r_{i+1}]$, and again by Lemma \ref{lem:torsion density RW S oscillation}:
\[\Pr((X_{4K^2r})^n=1)\leq \frac{1}{4K^2r}.\]
In both cases, we get the bound $ \Pr(Z_r\ \text{is torsion}) \leq \frac{1}{r}$ and therefore:
\[ \lim_{r\rightarrow \infty} \Pr((Z_r)^n=1) = 0,\]
as required, completing the proof of the theorem.
\end{proof}

\section{Efficiently encodable words}

In this section we introduce {\it code length} of a word $W$ over an alphabet $X^{\pm 1}$. This information-theoretic concept, inspired by the idea of Kolmogorov complexity, takes into account periodic patterns in the structure of $W$. This will be used in studying the groups constructed in Sections \ref{sec:contrasting uniform} and \ref{sec:co-prime}, and also might be of independent interest. The definition is not inductive; it is purely combinatorial and not group theoretic, but it takes into account the nature of Burnside-type relations. Similar encoding can be applied to the study of other direct limits of hyperbolic groups provided every defining relation $R = 1$ can be uniquely reconstructed from the length $|R|$ and a short subword of the word $R$.
Much more sophisticated inductive approaches to the study
of the nesting structure of periodic words can be found in \cite{Ad} and \cite{ART}.

Recall that a word $W$ is called $A$-periodic if it is a subword of a power of cyclically reduced word $A$.

\begin{definition}[Code Length]\label{def:code length}
Let $W$ be a non-empty $A$-periodic word over $X^{\pm1}$, such that $|A|<|W|$ and $A$ is cyclically reduced and is not a proper power in the free group $F(X)$.
We may assume that $W$ and $A$ have the same common prefix, equal to $A$. 
The \emph{periodic code of $W$ with respect to $A$}, denoted $\pcode_A(W)$, is the word $0A|W|$, where the length $|W|$ is expressed in binary digits using the alphabet $\{0,1\}$. Thus $\pcode_A(W)$ is a word over the alphabet $X^{\pm1 }\cup \{0,1\}$.
    
Now let $W$ be an arbitrary word over $X^{\pm1}$.
Given a factorization $f$ of $W$ as: $$W\equiv V_0W_1V_1\cdots W_kV_k,$$
where $W_i$ is a periodic word with period $A_i$ for every $i=1,\dots,k$, we define the \emph{code of $W$ with respect to $f$} to be $$\code_f(W)\equiv V_0\pcode_{A_1}(W_1) V_1\cdots\pcode_{A_k}(W_k)V_k.$$
For every word $W$ over $X^{\pm 1}$, we choose a factorization $f$ for which the length of $\code_f(W)$ (as a word over $X^{\pm 1}\cup \{0,1\}$) is minimal among all possible factorizations $f$ of $W$, and define the \emph{code of $W$} to be $\code(W) := \code_f(W)$. We further define the \emph{code length} of $W$, denoted $\cln(W)$, to be the length of $\code(W)$.
\end{definition}

Every word $W$ can be recovered from $\code(W)$ starting with the end, and so the map $W\mapsto \code(W)$ is injective. 
The following claim is straightforward.
\begin{claim}\label{cla:cln(W)} \ \ \
    \begin{enumerate}
        \item For any $W$, $\cln(W)\le |W|$; for any $W_1$ and $W_2$, $\cln(W_1W_2)\le \cln(W_1)+\cln(W_2)$.
        \item Denote $m=|X|$. Then the number of words $W$ with $\cln(W)=k$ is at most $|X^{\pm 1}\cup \{0,1\}|^k=(2m+2)^k$.  
        \item If $W$ is a periodic word with 
        period $A$, then:
$$\cln(W)=1+|A|+\text{Binary length of }|W|.$$ 
If furthermore $|W|\le n|A|$, then $\cln(W)\le \zeta n|A|$.
    \end{enumerate}
\end{claim}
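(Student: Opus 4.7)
The plan is to verify each item directly from the definition of $\cln$, as all three are essentially unwindings.

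For item (1), the bound $\cln(W)\le |W|$ follows from the trivial factorization $f$ with $k=0$ and $V_0\equiv W$, which yields $\code_f(W)\equiv W$, a word of length $|W|$ over $X^{\pm 1}\subset X^{\pm 1}\cup\{0,1\}$. Subadditivity is obtained by concatenating an optimal factorization of $W_1$ with one of $W_2$: the concatenation is again a valid factorization of $W_1W_2$, and its code is the concatenation of the two individual codes, whose total length is $\cln(W_1)+\cln(W_2)$.

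For item (2), the key observation, already noted just before the claim, is that $W$ can be uniquely reconstructed from $\code(W)$ by scanning from the end (each $\pcode_{A_i}(W_i)$ block begins with the marker $0$, which cannot appear in $X^{\pm 1}$, and its length field and period are then readable). Hence $W\mapsto \code(W)$ is injective. Since $\code(W)$ is a word over an alphabet of size $|X^{\pm 1}\cup\{0,1\}|=2m+2$, the number of preimages of codes of length exactly $k$ is at most $(2m+2)^k$.

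For item (3), when $W$ is periodic with period $A$ take the single-block factorization $f$ with $V_0\equiv V_1\equiv \emptyset$ and $W_1\equiv W$; then $\code_f(W)\equiv \pcode_A(W)\equiv 0\,A\,\ell$, where $\ell$ is the binary expansion of $|W|$. This gives $\cln(W)\le 1+|A|+\lceil\log_2 |W|\rceil$ (the stated equality follows from the fact that any valid factorization must, for each periodic block, spend at least the length of its period and its length-field, plus the leading marker; only the upper bound is ever used in the sequel). For the ``furthermore'' clause, if $|W|\le n|A|$ then $\cln(W)\le 2+|A|+\log_2 n+\log_2 |A|$. By the Lower Parameter Principle we have $\zeta>n^{-1}$, and for $n$ sufficiently large the inequality $(\zeta n-1)|A|\ge 2+\log_2 n+\log_2 |A|$ holds for every $|A|\ge 1$: the worst case $|A|=1$ reduces to $\zeta n\ge 3+\log_2 n$, which is immediate from the LPP choice of $n$, and for $|A|\ge 2$ the linear growth of the left-hand side in $|A|$ dominates the logarithmic right-hand side.

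The main potential obstacle would be the equality (as opposed to just $\le$) in item (3), but this is never needed in later sections: the only quantitative fact invoked is the upper bound $\cln(W)\le\zeta n|A|$. Thus no genuine difficulty arises, and the entire claim is a routine accounting exercise built on the Lower Parameter Principle.
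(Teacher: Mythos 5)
Your proposal is correct; the paper offers no proof at all (it simply declares the claim ``straightforward''), and your argument is exactly the intended unwinding of the definitions, including the correct LPP bookkeeping for the bound $\cln(W)\le\zeta n|A|$ (with $\zeta$ fixed before $n$, so $\zeta n\ge 3+\log_2 n$ for $n$ large). One caveat: the equality in item (3) is actually false as stated for short periodic words (e.g.\ $|W|=|A|+1$, where the trivial factorization gives $\cln(W)\le|W|<1+|A|+\text{binary length of }|W|$), and your parenthetical justification of the lower bound does not repair this since an optimal factorization need not use a periodic block at all; but as you correctly observe, only the upper bound is ever invoked in the paper, so this is a defect of the statement rather than of your argument.
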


All the diagrams in this section are assumed to be reduced diagrams over Burnside-type presentations.  
For a path $p$ in a diagram $\Delta$, we define $\cln(p)$ as the code length of the label of $p$.

\begin{definition}\label{def:(p,q)}
Let $\Delta$ be a circular (or an annular) diagram containing two paths $p$ and $q$. We denote by $(p,q)$ the number of edges of $\Delta$, which belong to both $p$ and $q$.
\end{definition}

\begin{lemma}
\label{lem:p+ and p- connected with short cln}
Let $\Delta$ be a reduced circular diagram
with boundary $pq^{-1}$. Then the vertices $p_-$ and $p_+$
can be connected with a path $z$ such that
$$\cln(z)\le (p,q) + \beta(|p|+|q|).$$
\end{lemma}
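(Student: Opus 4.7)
The plan is to prove the lemma by induction on the number of cells of $\Delta$. The intuition is that cells of Burnside-type groups have periodic boundary labels $A^{n_A}$, so paths running along such boundaries are very cheap in code length (by Claim \ref{cla:cln(W)}(3)); the bound $(p,q)+\beta(|p|+|q|)$ records a trade-off between edges shared by $p$ and $q$ (which contribute one per edge) and short contiguity-arc detours through cells (which contribute little thanks to periodicity).

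In the base case, $\Delta$ has no cells. Then $\Delta$ is a reduced rank-$0$ diagram, and its underlying $1$-skeleton is topologically a planar tree whose boundary is traversed once by $p$ and once by $q^{-1}$, in opposite directions. Consequently, every edge of $\Delta$ lies in both $p$ and $q$, so the unique geodesic $z$ in the $1$-skeleton from $p_-$ to $p_+$ satisfies $|z|\le (p,q)$, giving $\cln(z)\le |z|\le (p,q)$.

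For the inductive step, assume $\Delta$ has at least one cell. Apply Lemma \ref{lem:if Gamma is with minimal num of cells then r(Gamma)=0} to obtain a cell $\Pi$ and a contiguity subdiagram $\Gamma$ of $\Pi$ to a section of $\partial\Delta$ with $r(\Gamma)=0$ and contiguity degree at least $\epsilon$. Write $\partial\Gamma = s_1 r_1 s_2 r_2$ in the usual way: $r_1\subseteq\partial\Pi$ (so $\Lab(r_1)$ is a subword of $A^{n_A}$ for the period $A$ of $\Pi$), $r_2$ is a subpath of $\partial\Delta$, and $|s_1|,|s_2|<\zeta|\partial\Pi|$ by Lemma \ref{lem:contiguity sides are short}. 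I will then ``peel off'' $\Pi\cup\Gamma$, obtaining a smaller diagram $\Delta'$ with strictly fewer cells, whose boundary is obtained by replacing $r_2$ with the detour $s_2^{-1}r_1^{-1}s_1^{-1}$. Depending on whether $r_2$ lies in $p$, in $q^{-1}$, or straddles a corner $p_{\pm}=q_{\pm}$, this yields updated contour paths $p',q'$. Apply the inductive hypothesis to $\Delta'$ to produce a path $z'$ from $p_-$ to $p_+$ with $\cln(z')\le(p',q')+\beta(|p'|+|q'|)$, and take $z=z'$ regarded inside $\Delta$.

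The bookkeeping uses three estimates: (i) by Claim \ref{cla:cln(W)}(3), the periodic piece $r_1$ contributes at most $1+|A|+\log_2|r_1|$ to the code length (and in particular much less than $\zeta|\partial\Pi|$, since $|A|\le|\partial\Pi|/n_A\le|\partial\Pi|/n$); (ii) the side arcs together contribute at most $2\zeta|\partial\Pi|$; (iii) by Lemma \ref{lem:q_2 is large}, $|r_2|\ge(\epsilon-2\beta)|\partial\Pi|$, so $|\partial\Pi|=O(|r_2|)$ and the combined detour cost is bounded by a small multiple of $|r_2|$, which by LPP is at most $\beta|r_2|$. Thus the excess of $\cln(z)$ over $(p',q')+\beta(|p'|+|q'|)$ is absorbed into the $\beta(|p|+|q|)$ budget.

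The main obstacle is closing the induction: one must verify that $(p',q')+\beta(|p'|+|q'|)$ plus the extra detour cost does not exceed $(p,q)+\beta(|p|+|q|)$. In particular, when $|r_1|>|r_2|$ the replacement lengthens the contour, which is a genuine difficulty. I expect to resolve this either by choosing the shorter way around $\partial\Pi$ using the cyclic periodicity of $A^{n_A}$ (so that the detour label is at most $|\partial\Pi|/2\le|r_2|/(2(\epsilon-2\beta))$ long, combined with the trivial code-length bound on $r_2$ that the induction already pays for), or, if needed, by strengthening the inductive hypothesis with a weighted invariant incorporating $|\partial\Pi|$; the LPP choice $\zeta\ll\beta\ll\epsilon$ is tailored to make these constant-factor losses absorb at each step.
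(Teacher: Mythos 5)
Your base case and the general shape (induction on the number of cells, exploiting periodicity of cell boundaries via Claim \ref{cla:cln(W)}(3)) match the paper, but the inductive step has a genuine gap that your own closing paragraph correctly identifies and does not resolve. The problem is your choice of cell. You take a cell $\Pi$ with contiguity degree only $\ge\epsilon$ (from Lemma \ref{lem:if Gamma is with minimal num of cells then r(Gamma)=0}). Peeling off $\Pi\cup\Gamma$ replaces the boundary arc $r_2$, of length roughly $\psi|\partial\Pi|$ with $\psi\ge\epsilon$ possibly small, by a detour around the far side of $\Pi$ of length roughly $(1-\psi)|\partial\Pi|$, so the contour of $\Delta'$ can be \emph{longer} than that of $\Delta$ by almost $|\partial\Pi|$. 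The inductive bound you then invoke contains the term $\beta(|p'|+|q'|)$, which depends on the \emph{actual length} of the new contour, not on its code length; so the cheapness of the periodic detour in code length does not help close the induction. Your first proposed repair (take the shorter way around $\partial\Pi$) still gives a detour of length up to $|\partial\Pi|/2$, a factor of order $1/(2\epsilon)$ longer than the arc $|r_2|\ge(\epsilon-2\beta)|\partial\Pi|$ it replaces, and $\epsilon$ is fixed before $\beta$ in the LPP ordering, so the loss cannot be absorbed into the $\beta$-budget. Your second repair (a weighted invariant) is not carried out.

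The paper's proof avoids this by invoking the $\gamma$-cell lemma (Lemma \ref{lem:existence of gamma-cell}) instead: the chosen cell has \emph{total} contiguity degree to $p$ and $q^{-1}$ exceeding $\bar\gamma=1-\gamma$, and the argument splits into two cases. If essentially all the contiguity is to one section (say $p$), the complementary arc $v$ of $\partial\Pi$ has length $<\gamma|\partial\Pi|$ while the removed arc $t_2$ has length $>(\bar\gamma-2\beta)|\partial\Pi|$ by Lemma \ref{lem:q_2 is large}, so the contour strictly \emph{shrinks} by a definite multiple of $|\partial\Pi|$ and the bookkeeping closes. If $\Pi$ is contiguous to both $p$ and $q$, the diagram is cut into two subdiagrams on either side of $\Pi$, the inductive hypothesis is applied to each, and the two resulting paths are joined by the arc $(s_1t_1s_2)^{-1}$ through $\Pi$, whose periodic label costs only $O(\zeta|\partial\Pi|)$ in code length; the savings needed to pay for this come from the removed arc $t_2$ of length at least $(\bar\gamma/2-2\beta)|\partial\Pi|$. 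This two-case split around a $\gamma$-cell is the missing idea in your proposal.
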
 

\begin{proof}
If $\Delta$ has rank $0$, then the edges of the geodesic path $z$ connecting the vertex $p_-=q_-$ to $p_+=q_+$ have to
be among the edges of $p$ and of $q$, and so $\cln(z)\le |z|\le (p,q)$ and the statement holds. 

Therefore $\Delta$ has a positive rank. 
We now induct on the number of 
cells in $\Delta$, assuming it is positive. 
By Lemma 
\ref{lem:existence of gamma-cell}, 
$\Delta$ has a $\gamma$-cell $\Pi$. We consider two cases.

\smallskip

\textbf{Case 1.} There is a maximal contiguity diagram of $\Pi$ to one of the sections $p$ and $q^{-1}$ only, say, to $p$. Denote it by $\Gamma_p$. We have $(\Pi, \Gamma_p,p)>\bar\gamma = 1-\gamma$. Denote the contour of $\Gamma_p$ by $s_1t_1s_2t_2$ (See Figure \ref{fig:lem3 case1}). 
So $t_2$ is a subpath of $p$ and we have a decomposition $p=p_1t_2p_2$. Denote by $v$ the section of $\partial \Pi$ complementing $t_1$, namely, $\partial \Pi = t_1v^{-1}$; it is possible that $|v|=0$. Consider the path $p'=p_1(s_1 vs_2)^{-1}p_2$ and the diagram $\Delta'$ inscribed by the path $p'q^{-1}$. One may apply the inductive hypothesis to $\Delta'$ and obtain a path $z$ connecting $p_-$ to $p_+$ in $\Delta'$ where
\begin{equation}
    \cln(z)\le (p',q) +\beta(|p'|+|q|).\label{eq:cl(z)}
\end{equation}
We conclude that $v$ has no common edges with $q$ since there exists no contiguity diagram of $\Pi$ to $q^{-1}$. Therefore, and by Lemma \ref{lem:contiguity sides are short} applied to $s_1,s_2$, $$(p',q)\le (p,q)+|s_1|+|s_2| <(p,q)+2\zeta|\partial\Pi|.$$
By Lemma \ref{lem:q_2 is large} applied to $t_2$, and again by Lemma \ref{lem:contiguity sides are short},
$$|p'|= |p|-|t_2|+|s_1vs_2|< |p|-(\bar \gamma -2\beta -2\zeta-\gamma)|\partial\Pi|.$$
Substituting the last two bounds in Inequality (\ref{eq:cl(z)}), it follows that:
\begin{eqnarray*}
\cln(z) & \le & (p,q)+ 2\zeta|\partial\Pi| + \beta(|p|+|q|) -\beta
(1-2\gamma-2\beta-2\zeta)|\partial\Pi| \\ & < & (p,q)+\beta(|p|+|q|)    
\end{eqnarray*}
since the coefficients of $|\partial\Pi|$ sum up to a negative number (by the LPP).
\begin{figure*}    \centering    \includegraphics[scale=0.35]{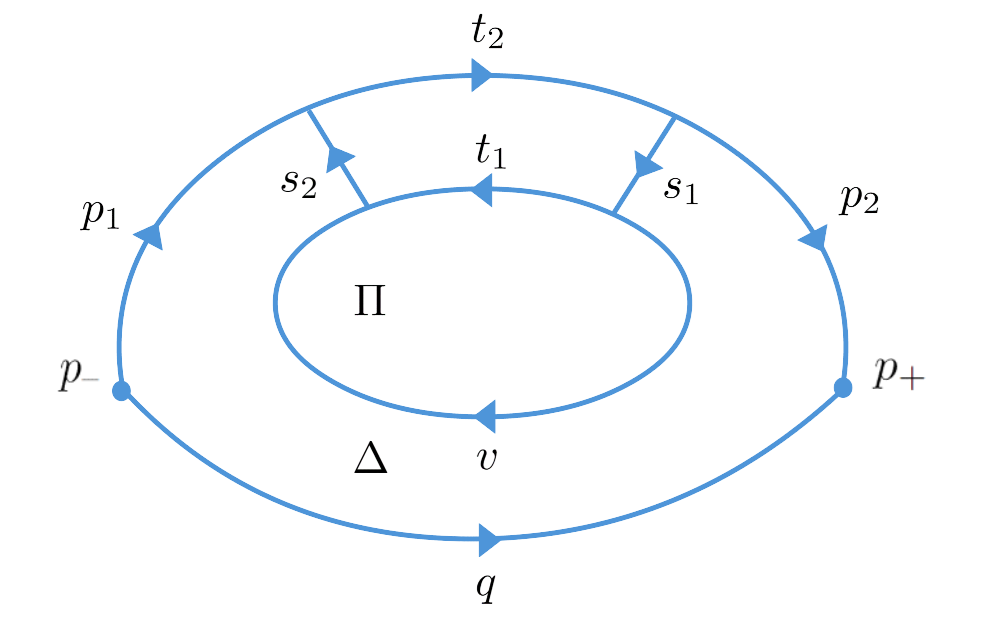}    \caption{Case 1}    \label{fig:lem3 case1}\end{figure*}
\begin{figure*}    \centering    \includegraphics[scale=0.34]{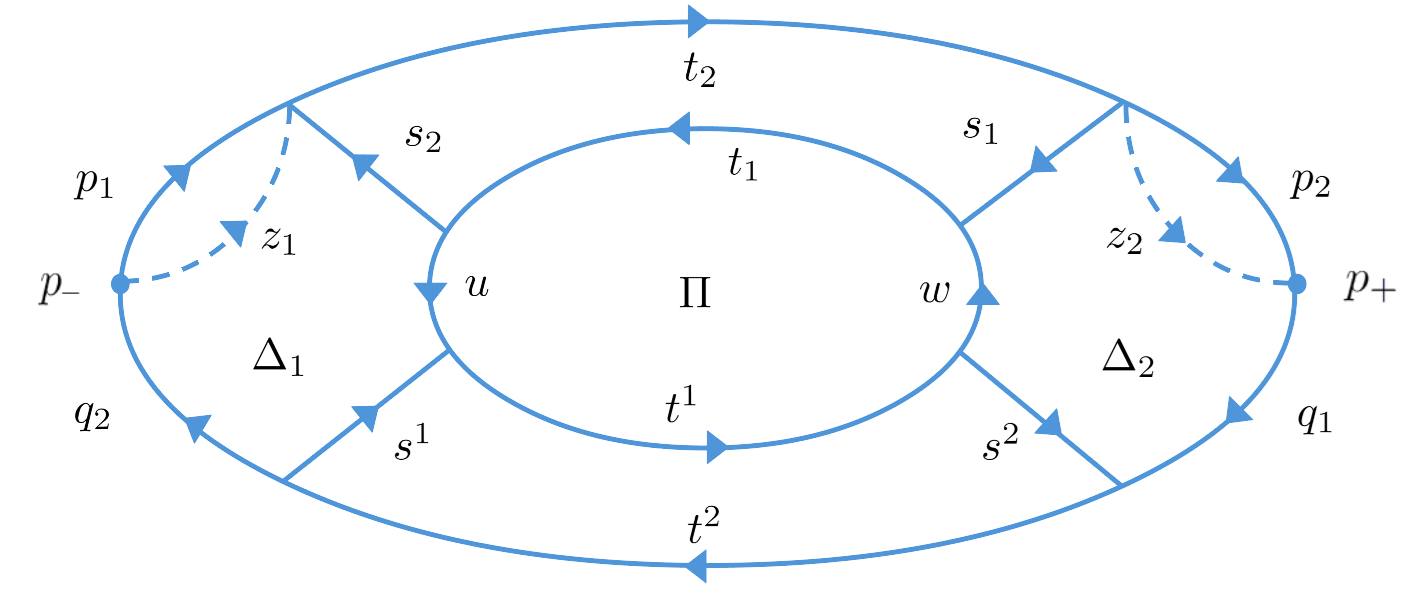}    \caption{Case 2}    \label{fig:lem3 case2} \end{figure*}

\smallskip

\textbf{Case 2.} There exist maximal contiguity subdiagrams $\Gamma_p$ and $\Gamma_q$ of $\Pi$ to $p$ and to $q$, resp. Using the notation
from the previous case for $\Gamma_p$, we denote the contour of $\Gamma_q$ by $s^1t^1s^2t^2$, and so $q^{-1}=q_1t^2q_2$. Without loss of generality, we can assume that $(\Pi,\Gamma_p,p)>\frac 12 \bar\gamma$. Denote by $t_1ut^1w$
the boundary of $\Pi$.

We now apply the inductive hypothesis to the subdiagram $\Delta_1$ inscribed by the path $p_1(q' _1)^{-1}$, where $q'_1=q_2^{-1} s^1u^{-1}s_2$, and get a path $z_1$ from $(p_1)_-$ to $(p_1)_+$ such that:
\begin{eqnarray*}
    \cln(z_1) & \le & (p_1,q'_1)+\beta(|p_1|+|q'_1|) \\ 
    & \le & (p_1,q_2)+|s_2^{-1}us_1^{-1}|
+\beta\big(|p_1|+|q_2|+(\gamma+2\zeta)|\partial\Pi|\big) \\
  & \le & (p_1,q_2)+2\gamma|\partial\Pi|+\beta(|p_1|+|q_2|)+ 2\beta\gamma |\partial\Pi|
\end{eqnarray*}
using again Lemma \ref{lem:contiguity sides are short}, along with the fact that $\Pi$ is a $\gamma$-cell. 
Similarly, there is a path $z_2$ connecting $(p_2)_-$ and $(p_2)_+=p_+$ such that
$$\cln(z_2) \le (p_2,q_1)+2\gamma|\partial\Pi|+\beta(|p_2|+|q_1|)+ 2\beta\gamma |\partial\Pi|.$$ 
The path $t_1$ has a periodic label with period of length $|\partial\Pi|/n$. 
By Claim \ref{cla:cln(W)}(3), we have that $\cln(t_1^{-1})\le \zeta |\partial\Pi|$
and, again by Lemma \ref{lem:contiguity sides are short}, we conclude that: $$\cln((s_1t_1s_2)^{-1}) < 3\zeta|\partial\Pi|.$$
Let now $z=z_1(s_1t_1s_2)^{-1}z_2$. Taking into account that:
\[ |p_1|+|p_2|=|p|-|t_2|<|p|-\left(\frac{\bar\gamma}{2}-2\beta\right)
|\partial\Pi| \] by Lemma \ref{lem:q_2 is large}, we obtain:
\begin{eqnarray*}
    \cln(z) & \le & \cln(z_1)+\cln((s_1t_1s_2)^{-1})+\cln(z_2) \\
    & \le & (p,q)+
\beta(|p|+|q|) - (\frac{1}{2}- \frac{1}{2}\gamma-2\beta) |\partial\Pi| +4\gamma|\partial\Pi|+4\beta\gamma |\partial\Pi|+3\zeta |\partial\Pi| \\ 
& <& (p,q)+ \beta(|p|+|q|),
\end{eqnarray*}
as required.    
\end{proof} 

Note that the path $z$ provided by Lemma \ref{lem:p+ and p- connected with short cln} depends on both $p$ and $q$; in particular, $z=p$ if $p=q$.
Note further, that the lemma  only guarantees that $\cln(z)$ is short, namely, that $z$ is of `low complexity', while $z$ need not be short, or (quasi)-geodesic in any way. Indeed, consider the following example.
\begin{example}
Let $\Delta$ consist of a single cell $\Pi$ whose boundary label is $A^n$, for some word $A$. We can decompose $\partial \Pi$ as $pq^{-1}$, where $\Lab(p)\equiv A$ and $\Lab(q)\equiv A^{1-n}$. One can take $z=p$ or $z=q$, as both choices fulfill the inequality of Lemma \ref{lem:p+ and p- connected with short cln}, though $|p|<<|q|$ and $q$ is highly non-geodesic.
\end{example}

\begin{remark}
Recall that $\beta$ is very small. Thus, if the number of common edged $(p,q)$ is significantly smaller than $|p|,|q|$, we get an encoding of $\Lab(z)$ that is efficient compared to $|p|,|q|$.

For `most' words, the code length is not far from their actual length. Hence the Lemma reads as `$|z|\approx (p,q)$', namely, the diagram $\Delta$ is very thin and $\Lab(p)\approx \Lab(q)$ in the free group. 
Note that $\Lab(z)$ represents the same group element as $\Lab(p),\Lab(q)$.
This will help us to estimate the number of elements in $G$ with some properties, which can be represented by the words of length $\le r$, for large $r$.
\begin{figure}
    \centering
    \includegraphics[width=0.4\linewidth]{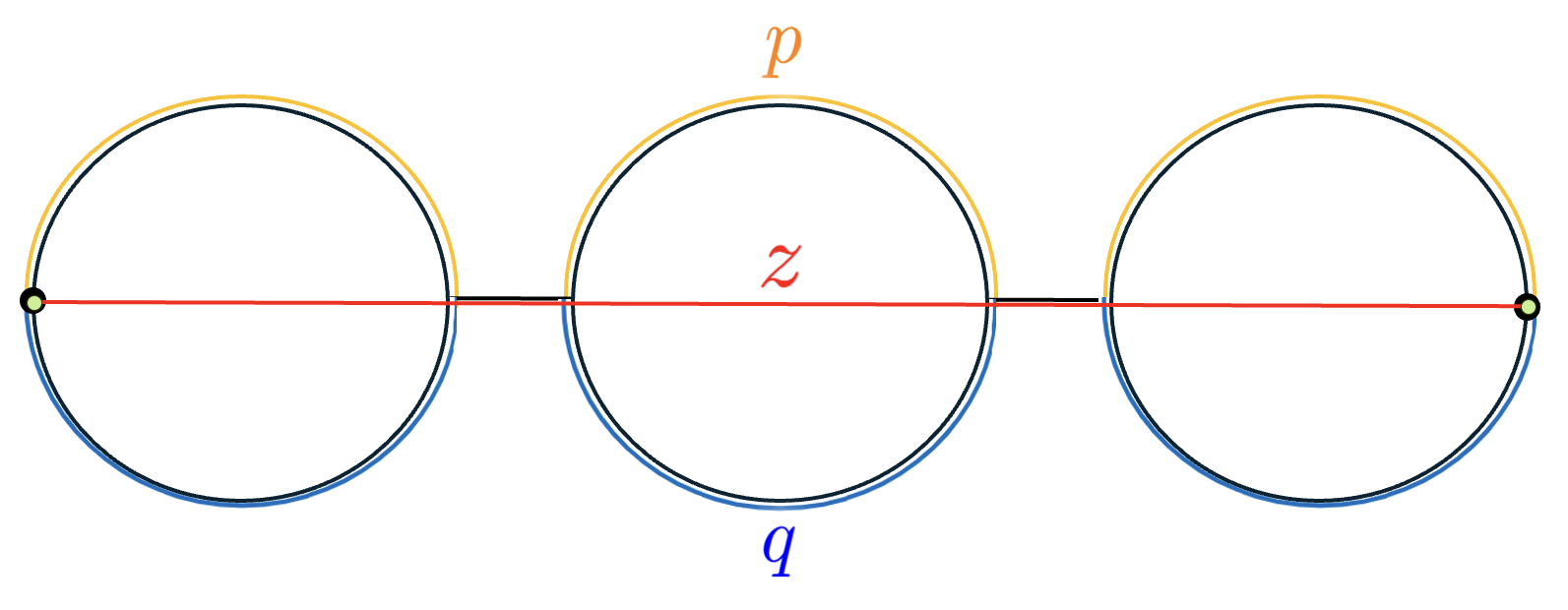}
    \caption{Subpaths of $z$ that do not go along common edges of $p$ and $q$ have small code length.}
    \label{fig:placeholder}
\end{figure}
\end{remark}


\begin{lemma}
\label{lem:connecting O to the triangle floor with short code length}
Let $T$ be a minimal (`triangular') diagram
over $G$ with boundary $A-B-C-A$, where the side $BC$ is $\lambda$-geodesic for some $1\leq \lambda \leq \bar{\beta}^{-1}$. Then there exists a path $t$ connecting the vertex $A$ and the side $BC$
such that: $$\cln(t) \le \frac 12 (|CA|+|AB|-\lambda^{-1}|BC|)+ 6\gamma(|CA|+|AB|).$$
\end{lemma}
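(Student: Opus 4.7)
I plan to prove this by induction on the number of cells in $T$, in parallel with the proof of Lemma \ref{lem:p+ and p- connected with short cln}, but exploiting the $\lambda$-geodesic assumption on $BC$ to obtain the main-term gain $-\tfrac{1}{2}\lambda^{-1}|BC|$.

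For the base case, when $T$ has no cells, $T$ is homotopic to a tree, and the no-backtracking hypothesis forces the sides $AB$ and $CA$ to share an initial tree-segment from $A$ to a branch vertex $O$ lying on the tree-geodesic traced by $BC$. Taking $t$ to be the tree-path $AO$, the tree geometry yields $|t|=\tfrac{1}{2}(|AB|+|CA|-|BC|)\le \tfrac{1}{2}(|CA|+|AB|-\lambda^{-1}|BC|)$ since $\lambda\ge 1$, so $\cln(t)\le |t|$ gives the desired bound (the $6\gamma$-slack is unused).

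For the inductive step, Lemma \ref{lem:existence of gamma-cell} supplies a $\gamma$-cell $\Pi$ with contiguity subdiagrams $\Gamma_1, \Gamma_2, \Gamma_3$ to $AB, BC, CA$ whose contiguity degrees satisfy $\Psi_1+\Psi_2+\Psi_3>\bar\gamma$. Since $BC$ is $\lambda$-geodesic with $\lambda\le \bar\beta^{-1}$, Lemma \ref{lem:contig deg to lambda-geodesic} bounds $\Psi_2\le \tfrac{\lambda}{\lambda+1}+2\beta$; combined with the algebraic identity $(1+\lambda^{-1})\cdot\tfrac{\lambda}{\lambda+1}=1$, this yields the key balance
$$\tfrac{1}{2}(\Psi_1+\Psi_3)-\tfrac{\lambda^{-1}}{2}\Psi_2 \ge -\tfrac{\gamma}{2}-\beta(1+\lambda^{-1}),$$
expressing that absorbing $\Pi$ together with its contiguity subdiagrams leaves the main term $\tfrac{1}{2}(|AB|+|CA|-\lambda^{-1}|BC|)$ invariant up to $O(\gamma+\beta)|\partial\Pi|$. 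Simultaneously, $|AB|+|CA|$ shrinks by at least $(\Psi_1+\Psi_3-4\beta)|\partial\Pi|\ge (\tfrac{1}{2}-O(\gamma+\beta))|\partial\Pi|$ (by Lemma \ref{lem:q_2 is large}), so the error term $6\gamma(|AB|+|CA|)$ shrinks by about $3\gamma\cdot|\partial\Pi|$. Applying the inductive hypothesis to the smaller triangular diagram $T'$ resulting from the absorption, we obtain a path $t'$ from $A$ to a vertex $O'$ on the new middle side; extending $t'$ to a path $t$ in $T$ by appending a sub-arc of $\partial\Pi$ (contributing at most $\zeta|\partial\Pi|$ to $\cln$ by Claim \ref{cla:cln(W)}(3)) and at most two contiguity side arcs (each of length $<\zeta|\partial\Pi|$ by Lemma \ref{lem:contiguity sides are short}), the combined main-term loss and extension overhead is dominated by the $\approx 3\gamma|\partial\Pi|$ error-term savings, closing the induction; the constant $6$ is chosen precisely so this bookkeeping goes through via the LPP.

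The main obstacle will be twofold: (i) ensuring that the new middle side of $T'$ retains the $\lambda$-geodesic property, since replacing a sub-arc of $BC$ with a detour around $\partial\Pi$ plus short contiguity side arcs could in principle create shortcuts; and (ii) verifying the precise coefficient comparisons among $\beta,\gamma,\zeta$ in the LPP. Resolving (i) will likely require strengthening the inductive hypothesis to allow a slightly degraded geodesic constant $\lambda'\le \lambda(1+O(\gamma))$ tracked through the induction, or else choosing which contiguity subdiagrams to absorb so that the middle side's $\lambda$-geodesic property is manifestly preserved. Resolving (ii) amounts to checking that the parameters can be chosen such that $6\gamma$ strictly exceeds $2\beta(1+\lambda^{-1})+O(\zeta)$, which, while admissible via the LPP, requires $\gamma$ not to be chosen too small relative to $\beta$ and hence must be accounted for in the global parameter selection.
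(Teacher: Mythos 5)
Your skeleton --- induction on the number of cells, tripod base case, a $\gamma$-cell via Lemma \ref{lem:existence of gamma-cell}, with the slack term $6\gamma(|CA|+|AB|)$ paying for the per-step bookkeeping --- is indeed the paper's strategy, and you correctly identify the two genuine obstacles. However, neither of your proposed resolutions works, so the induction as you set it up does not close. The quantitative problem is fatal as stated: your ``key balance'' leaves a main-term error of $\beta(1+\lambda^{-1})|\partial\Pi|\approx 2\beta|\partial\Pi|$ per step, inherited from the $2\beta$ slack in Lemma \ref{lem:contig deg to lambda-geodesic} (and, if you instead track lengths via Lemma \ref{lem:q_2 is large}, a comparable $O(\beta)|\partial\Pi|$ error), which you hope to absorb into the $\approx 3\gamma|\partial\Pi|$ savings from the error term. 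But in the LPP the parameters are fixed in the order $\beta$ first, then $\gamma$ sufficiently small (the paper explicitly assumes $\beta>\gamma$, and the lemmas imported from \cite{OL-book} are proved under the hierarchy $\beta\gg\gamma\gg\zeta$), so $6\gamma$ can never dominate $2\beta(1+\lambda^{-1})$; ``accounting for it in the global parameter selection'' is not available. The paper sidesteps this by never converting the contiguity of $\Pi$ to $BC$ into a contiguity degree at all: in the case where $\Pi$ is contiguous to $BC$ and to exactly one other side, the shrinkage $f$ of $BC$ is bounded directly by the $\lambda$-geodesic property, $f\le\lambda(e+2\gamma|\partial\Pi|)$ where $e$ is the shrinkage of the other side, so $-\tfrac{1}{2}\lambda^{-1}f$ and $+\tfrac{1}{2}e$ cancel in the main term up to $\gamma|\partial\Pi|$, and the residual $O(\gamma)|\partial\Pi|$ is then dominated by $6\gamma e>\tfrac{12}{5}\gamma|\partial\Pi|$.

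Your obstacle (i) is likewise not repairable by degrading the geodesic constant: a loss of $(1+O(\gamma))$ per cell compounds over an unbounded number of induction steps, and the hypothesis $\lambda\le\bar\beta^{-1}$ (needed so that Lemma \ref{lem:contig deg to lambda-geodesic} applies and so that the $\gamma$-cell must touch at least two sides) is exited after the very first degradation. The paper takes your second alternative, via a three-case analysis in which induction is only ever applied to a triangle whose middle side is a \emph{literal subpath} of $BC$: either one cuts off a corner at $C$ (contiguity to $BC$ and $CA$), or a corner at $A$ leaving $BC$ untouched (contiguity to $CA$ and $AB$), or $\Pi$ is contiguous to all three sides --- and in that last case no induction is used at all: one walks from $A$ along $AB$ to the contiguity subdiagram on $AB$, crosses $\Pi$ along an arc of $\partial\Pi$ (cheap in code length by Claim \ref{cla:cln(W)}(3), which is exactly why the statement is about $\cln$ rather than length), and exits through the contiguity subdiagram on $BC$, with $|AB'|\le\tfrac{1}{2}(b+c-\lambda^{-1}a)+\gamma|\partial\Pi|$ following from the $\lambda$-geodesic property of $BC$. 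Until your argument is restructured along these lines, it has a genuine gap.
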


\begin{remark}
Since $\lambda^{-1}$ is close to $1$ and $\gamma$ to $0$, the inequality of Lemma \ref{lem:connecting O to the triangle floor with short code length} resembles the inequality of geodesic triangles in hyperbolic spaces. However, one cannot replace $\cln(t)$ by $|t|$ here; consider, for example, a diagram consisting of a single cell, with three vertices on the boundary defining an (almost) equilateral triangle.
\end{remark}

\begin{proof}[Proof of Lemma \ref{lem:connecting O to the triangle floor with short code length}]
If $T$ 
has rank $0$, then it is a tripod, and the path $t$ from $A$ to the branch point has length $\frac{1}{2} (|CA|+|AB|-|BC|)$, and so it satisfies the assertion of the lemma.
Hence one may assume that $T$ has positive rank, 
and we induct on its number of cells. By Lemma \ref{lem:existence of gamma-cell}, $T$ contains a $\gamma$-cell $\Pi$.

Denote by $a$, $b$, and $c$ the lengths of the sides $BC$, $CA$, and $AB$, respectively. There exist contiguity diagrams of $\Pi$ to at least two sides of the triangle, since by Lemma \ref{lem:contig deg to lambda-geodesic}, the contiguity degree to one side is less than $\frac{\lambda}{\lambda+1} +2\beta<\frac{1}{2}+3\beta<\bar \gamma$. (Here we used the LPP as well as the assumption on $\lambda$.) 
Along the proof, we will apply Lemma \ref{lem:contiguity sides are short} multiple times in order to bound any side path of any of the contiguity diagrams of $\Pi$, by $\zeta|\partial \Pi|$. 
We consider three cases.

\smallskip

\textbf{Case 1.} Assume that there are only two maximal contiguity subdiagrams of $\Pi$, say $\Gamma_a$ and $\Gamma_b$ (or $\Gamma_c$) to the sides $BC$ and $CA$ (or $AB$), respectively. Thus, each one of the contiguity degrees is greater than $\bar\gamma - (\frac12 +3\beta)= \frac12 -3\beta-\gamma$.

Denote by $p_1q_1p_2q_2$ 
the boundary path of $\Gamma_a$, by
$s_1t_1s_2t_2$
the boundary path of $\Gamma_b$, and by $q_1wt_1u$ the boundary path of $\Pi$. Denote by $C'$ the vertex $(p_2)_+$ (i.e. the end of $p_2$) and by $D$ the vertex $(t_2)_+$. The length of the boundary subpaths $BC'$, $C'C$, and $CD$ are denoted by $a'$, $f$, and $e$ resp.  

Let $AC'$ be a geodesic connecting the vertices $A$ and $C'$. We have that $|s_1w^{-1}p_2| <(\gamma +2\zeta)|\partial\Pi|<2\gamma|\partial\Pi|$, and it follows that
\begin{equation}\label{eq:AC'}
    |AC'|\le |AD|+|s_1w^{-1}p_2|< (b-e) +2\gamma|\partial\Pi|.
\end{equation}
Moreover, since $BC$ is $\lambda$-geodesic, we have $f\leq \lambda (|(s_1w^{-1}p_2)^{-1}|+e)\leq \lambda(2\gamma|\partial\Pi| + e)$, and therefore $$a'=a-f>a-\lambda e- 2\lambda\gamma|\partial\Pi|.$$
\begin{figure*}    \centering    \includegraphics[scale=0.5]{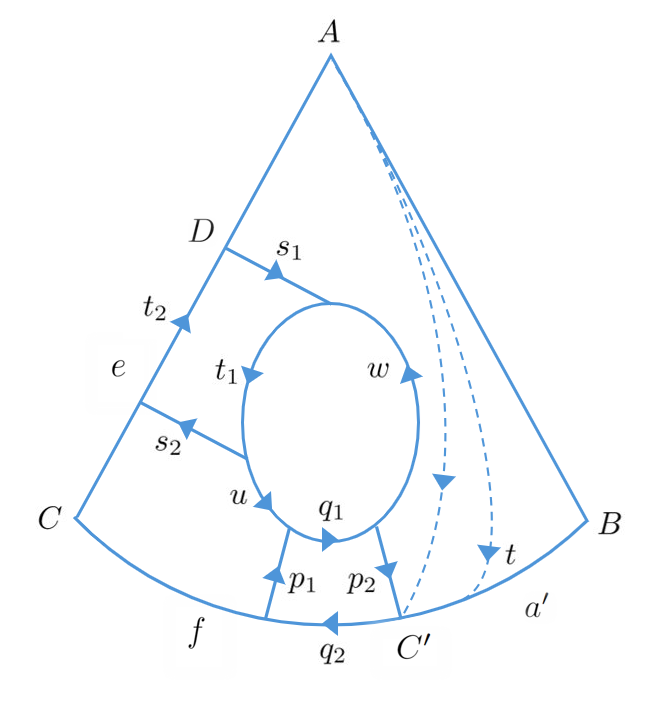}    \caption{Case 1}    \label{fig:enter-label} \end{figure*}

We define $\Delta'$ as the subdiagram with the boundary $(D-A-B-C')p_2^{-1}ws_1^{-1}$. 
Since the triangle $ABC'$ satisfies the conditions of the lemma, and contains less cells 
than $T$ does, the vertex $A$ can be connected to the side $BC'$ with a path $t$, where
\begin{eqnarray*}
    \cln(t) & \le & \frac 12 (c+|AC'|-\lambda^{-1}a') + 6\gamma(c+|AC'|)\\ 
    & \le & \frac 12\big(c+|AC'|-\lambda^{-1}(a-\lambda e-2\lambda\gamma|\partial\Pi|)\big)+6\gamma(c+|AC'|) \\
    & \le & \frac 12 (c+b-e+2\gamma|\partial\Pi|-\lambda^{-1} a  + e +2\gamma|\partial\Pi|) + 6\gamma(c+b-e + 2\gamma|\partial\Pi|) \\
    & = & \frac 12 (b+c-\lambda^{-1}a)+6\gamma(b+c)+2\gamma|\partial \Pi|-6\gamma e +12\gamma^2|\partial \Pi|
\end{eqnarray*}
using the bounds on $a'$ and $|AC'|$ given above. Now this value is less than $$\frac 12 (b+c-\lambda^{-1}a) + 6\gamma (b+c),$$
because by Lemma \ref{lem:q_2 is large}, and since the contiguity degree of $\Gamma_b$ is at least $\frac{1}{2} - 3\beta -\gamma$,
\[ e\ge |t_2|> |t_1| - 2\beta|\partial\Pi| \ge (\frac 12 -3\beta - \gamma -2\beta)|\partial\Pi|> \frac 25|\partial\Pi|,\] and so
$6\gamma e > (2\gamma + 12\gamma^2)|\partial\Pi|$.
Thus, $t$ satisfies the assertion of the lemma.
\medskip

\smallskip

{\bf Case 2.}  Assume that there are only two maximal
contiguity subdiagrams of $\Pi$, say $\Gamma_b$ and $\Gamma_c$  to the sides $CA$ and $AB$, resp. So as in Case 1, each of the contiguity degrees is greater than
$\frac 12 -3\beta-\gamma$.

Denote by $p_1q_1p_2q_2$ the boundary path of $\Gamma_c$, 
by $s_1t_1s_2t_2$ the boundary path of $\Gamma_b$, 
and by $q_1wt_1u$ the boundary path of $\Pi$.
Denote further by $A'$ (respectively, by $D$) the vertex $(p_1)_-$ (resp., the vertex $(s_2)_+$) and by $\Delta'$ the subdiagram bounded by the circle $(A'-B -C - D) s_2^{-1}up_1^{-1}$. Denote by $A'C$ a geodesic line in $\Delta'$ connecting the vertices $A'$ and $C$.
We also use the following notation: $|BA'|=x,\  |A'A|=x',\ |CD|=y,\ |DA|=y'$, so $x+x'=c, y+y'=b$.
Without loss of generality, assume that $x'\le y'$.

\begin{figure*}    \centering    \includegraphics[scale=0.5]{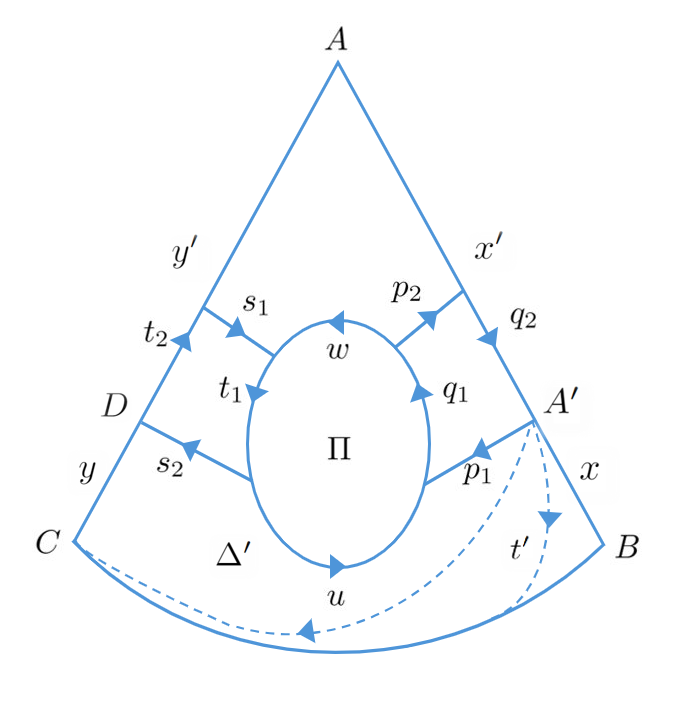}    \caption{Case 2}    \label{fig:enter-label} \end{figure*}

By the inductive hypothesis, the vertex $A'$ of the triangle $T'$ with boundary path $A'-B-C-A'$ can be connected to the side $BC$ with a path $t'$ such that
$$\cln(t')\le \frac 12(x+|CA'|-\lambda^{-1}a)+ 6\gamma(x+|CA'|).$$ 
Since $|u|<\gamma|\partial \Pi|$, we have: $$|CA'|\le y + |s_2^{-1}up_1^{-1}| \le y +(\gamma +2\zeta)|\partial\Pi|),$$ therefore:
$$\cln(t')\le \frac 12\left(x+y+(\gamma +2\zeta)|\partial\Pi|-\lambda^{-1}a\right)+ 6\gamma\left(x+y+(\gamma +2\zeta)|\partial\Pi|\right).$$ 
The path $t=(A-A')t'$ connects the vertex $A$ and the side $BC$, and since $x'\leq y'$, we have: 
\begin{eqnarray*}
    \cln(t) & \le & \cln(t')+x' \\
    & \le & \frac 12(x+y+(\gamma +2\zeta)|\partial\Pi|-\lambda^{-1}a)+ 6\gamma(x+y+(\gamma +2\zeta)|\partial\Pi|)+\frac12 (x'+y') \\ 
    & = & \frac 12(b+c-\lambda^{-1}a)
+ 6\gamma(b+c)+(\frac{1}{2}+6\gamma)(\gamma+2\zeta)|\partial\Pi| - 6\gamma(x'+y')
\end{eqnarray*}
where the last equality uses that $c=x+x'$ and $b=y+y'$.

Next, we note that $(\frac{1}{2}+6\gamma)(\gamma+2\zeta)|\partial\Pi|<\gamma|\partial\Pi|$ is less than $6\gamma(x'+y')$, since $x'>q_2> (1-6\beta)|\partial\Pi|$ by Lemma \ref{lem:q_2 is large}, keeping in mind that the contiguity degree of $\Gamma_c$ is at least $1-3\beta-\gamma$. It follows
that $\cln(t) < \frac 12(b+c-\lambda^{-1}a)
+ 6\gamma(b+c)$, as required.
\medskip

\smallskip

{\bf Case 3.} There are three maximal
contiguity subdiagrams of $\Pi$: $\Gamma_a$, $\Gamma_b$, and $\Gamma_c$  to the sides $BC$, $CA$ and $AB$, respectively. We align the notations for the boundaries of $\Gamma_a$ and $\Gamma_b$ with those from Case 2. We now denote $B'=(p_2)_+, C'=(s_1)_-, v=|BB'|, v'=|B'A|, z=|CC'|, z'=|C'A|$, whence $v+v'=c$ and $z+z'=b$. Assume without loss of generality that $v'\le z'$.

Since $BC$ is $\lambda$-geodesic, $\Pi$ is a $\gamma$-cell, and using Lemma \ref{lem:contiguity sides are short}, we have:
\begin{eqnarray*}
    a & \le & \lambda(v +|B'C'|+z) \\
    & < & \lambda (v+z+(\gamma+2\zeta)|\partial\Pi|) \\
    & < & \lambda (b+c-v'-z' +2\gamma|\partial\Pi|) \\ 
    &\le & \lambda (b+c-2v'+2\gamma|\partial\Pi|)
\end{eqnarray*}
from which we conclude: $$v'\le\frac 12(b+c-\lambda^{-1}a)+\gamma|\partial\Pi|.$$

\begin{figure*}    \centering    \includegraphics[scale=0.5]{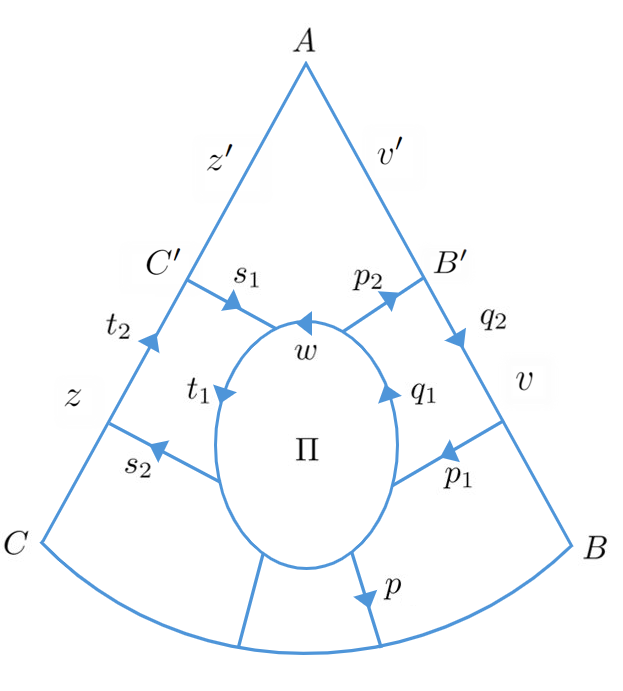}    \caption{Case 3}    \label{fig:enter-label}\end{figure*}

The vertex $B'$ can be connected to $BC$ by a path $r$ as follows. The first segment of $r$ is $p_2^{-1}$, the third segment is a side arc $p$ of the contiguity diagram $\Gamma_a$, and the second segment is a part $p'$ of the boundary $\partial\Pi$.
Since $|p'|< |\partial\Pi|$ and the label of $p'$ is a periodic word, we have by Claim \ref{cla:cln(W)} that $\cln(p')<\zeta |\partial\Pi|$.
It follows that:
\begin{eqnarray*}
    \cln(r) & \le & \cln(p_2)+\cln(p')+\cln(p) \\ & \le & |p_2|+|p|+ \cln(p') \\ & < & 3\zeta|\partial\Pi|.
\end{eqnarray*}
The path $t=(A-B')r$ connects $A$ and the side $BC$, and we have that:
\begin{eqnarray*}
    \cln(t) & \le & v'+\cln(r) \\
    & \le & \frac 12 (b+c-\lambda^{-1}a) + \gamma| \partial\Pi| +3\zeta|\partial\Pi| \\
    & < & \frac 12(b+c-\lambda^{-1}a) + 2\gamma|\partial\Pi|.
\end{eqnarray*}
Since the sum of the contiguity degrees of $\Gamma_b$ and $\Gamma_c$ is greater than $\frac 12-\gamma-3\beta$, by Lemma \ref{lem:q_2 is large}, we obtain that $|q_2|+|t_2|> \frac 13|\partial\Pi|$.
Hence $b+c> \frac 13|\partial\Pi|$, and therefore one can replace the term $2\gamma|\partial\Pi|$ in the last bound by $6\gamma(b+c)$. This completes the proof of the lemma.
\end{proof}

Recall that a word $U$ is called $\lambda$-geodesic for some $\lambda\ge 1$ if for every word $U'$ which is equal to $U$ in $G$, we have $|U|\le \lambda |U'|$.
A word $U$ is called \emph{geodesic} (or \emph{minimal}) if it is $\lambda$-geodesic with $\lambda=1$. Finally, a word $U$ is called \emph{cyclically $\lambda$-geodesic} if every cyclic permutation of $U$ is $\lambda$-geodesic.

\begin{lemma}
\label{lem:conj to cyclic minimal by a short cl}
Let $\Delta$ be a reduced annular diagram over $G$ with contours $p$ and $q$. Assume that the label of $q$ is a word $V$ and the label of $p$ is a word $U$ which is cyclically $\lambda$-geodesic in $G$, $1\le \lambda \le \bar{\beta}^{-1}$. Then the vertex $O=q_-=q_+$ can be connected
to $p$ with a path $t$, where: 
$$\cln(t)\le \frac 12 (|V|-\lambda^{-1}|U|)+9\gamma|V|.$$
In particular, if $U$ is cyclically minimal in $G$, namely, it is not conjugate to any shorter word, then: $$\cln(t)\le \frac 12 (|V|-|U|)+9\gamma|V|.$$
\end{lemma}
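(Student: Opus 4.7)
The plan is to reduce to the triangular estimate of Lemma~\ref{lem:connecting O to the triangle floor with short code length} by slicing the annulus along a short transversal and re-reading the resulting disk as a triangle with vertex $O$ and floor $p$.

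First, apply Lemma~\ref{lem:short path conneting contours of annual A-map} to obtain a path $r$ in $\Delta$ from a vertex $o_p\in p$ to a vertex $o_q\in q$ with $|r|<\gamma(|U|+|V|)$. Cutting $\Delta$ along $r$ yields a reduced circular diagram $\tilde\Delta$ whose contour reads $q'\cdot r\cdot(p')^{-1}\cdot r^{-1}$, where $p'$ and $q'$ are the cyclic shifts of $p$ and $q$ based at $o_p$ and $o_q$. Reading the contour relation as $p' = r^{-1} q' r$ in $G$ and using that $p'$ is $\lambda$-geodesic (being a cyclic shift of $U$), we obtain $|U|\le\lambda(|V|+2|r|)$; combining with $|r|<\gamma(|U|+|V|)$ and applying the LPP gives $|U|\le\lambda|V|(1+O(\gamma))$ and hence $|r|<4\gamma|V|$.

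Now write $q'=q_1q_2$, with $q_1$ running from $o_q$ to $O$ along $q$ and $q_2$ from $O$ back to $o_q$, and denote by $o_p^+,o_p^-$ the two boundary lifts of $o_p$ in $\tilde\Delta$ at the endpoints of the two copies of the cut. Consider the triangular decomposition of $\tilde\Delta$ with vertices $A=O$, $B=o_p^+$, $C=o_p^-$ and sides $AB=q_2\cdot r$, $BC=(p')^{-1}$, $CA=r^{-1}\cdot q_1$. Then $|AB|+|CA|=|V|+2|r|$ and $|BC|=|U|$, and the side $BC$ is $\lambda$-geodesic since it is labeled by a cyclic shift of $U^{-1}$. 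Reducedness of $\Delta$ transfers to $\tilde\Delta$ (and, if necessary, one passes to a minimal triangular representative with the same boundary label), while cyclic reducedness of $U$ and $V$ prevents backtracking at the three corners.

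Lemma~\ref{lem:connecting O to the triangle floor with short code length} then produces a path $t$ from $A=O$ to the side $BC$ (which lies on $p$ in $\Delta$) with
\[
\cln(t)\le\tfrac12\bigl(|V|+2|r|-\lambda^{-1}|U|\bigr)+6\gamma\bigl(|V|+2|r|\bigr).
\]
Substituting $|r|<4\gamma|V|$ yields $\cln(t)\le\tfrac12(|V|-\lambda^{-1}|U|)+|r|+6\gamma|V|(1+8\gamma)$, which the LPP brings under $\tfrac12(|V|-\lambda^{-1}|U|)+9\gamma|V|$; the particular case is simply $\lambda=1$. The main technical obstacle is checking that the triangular decomposition $T$ really satisfies the hypotheses of Lemma~\ref{lem:connecting O to the triangle floor with short code length}, namely reducedness and freedom from backtracking at $A,B,C$; this follows routinely from the reducedness of $\Delta$ and cyclic reducedness of the contour labels, with only minor case analysis at the corners where $q$ meets $r$ or $r$ meets $p$.
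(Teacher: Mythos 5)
Your proposal follows essentially the same route as the paper's proof: cut the annulus along the short transversal from Lemma \ref{lem:short path conneting contours of annual A-map}, view the resulting disk as a triangle with apex $O$ and $\lambda$-geodesic floor $p'$, and invoke Lemma \ref{lem:connecting O to the triangle floor with short code length}. (The paper takes geodesics from $O$ to the endpoints of $p'$ as the two remaining sides rather than the boundary paths $q_2r$ and $r^{-1}q_1$, but the length bookkeeping is identical.)

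One concrete slip in the last step: with your stated bound $|r|<4\gamma|V|$, the quantity $|r|+6\gamma|V|(1+8\gamma)$ exceeds $10\gamma|V|$, so the LPP cannot bring it under $9\gamma|V|$ --- the target constant is fixed before $\gamma$ is chosen small. The fix is already in your own intermediate estimates: from $|r|<\gamma(|U|+|V|)$ and $|U|\le\lambda(|V|+2|r|)$ with $\lambda\le\bar\beta^{-1}$ one gets $|r|\le\frac{\gamma(\lambda+1)}{1-2\lambda\gamma}|V|$, which is at most $(2+O(\beta+\gamma))\gamma|V|$; substituting this sharper bound gives roughly $\frac12(|V|-\lambda^{-1}|U|)+8\gamma|V|+O(\gamma^2)|V|$, which the LPP does bring under the claimed $9\gamma|V|$. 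This matches the paper's computation, where the transversal has length at most $2\gamma|V|$ and the residual term is $(1+12\gamma)\cdot 2\gamma|V|\le 3\gamma|V|$.
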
 

\begin{proof}
By Lemma \ref{lem:short path conneting contours of annual A-map}, the path $p$ can be connected to $q$ by a path $s$, where $|s|<\gamma(|p|+|q|)\le 2\gamma|V|$. The cut along $s$ transforms $\Delta$ into a circular diagram $\Delta'$ with boundary $q_1s^{-1}p'sq_2$, where $q=q_1q_2$ and $p'$ is a cyclic permutation of $p^{-1}$. 
\begin{figure}
\centering    \includegraphics[scale=0.3]{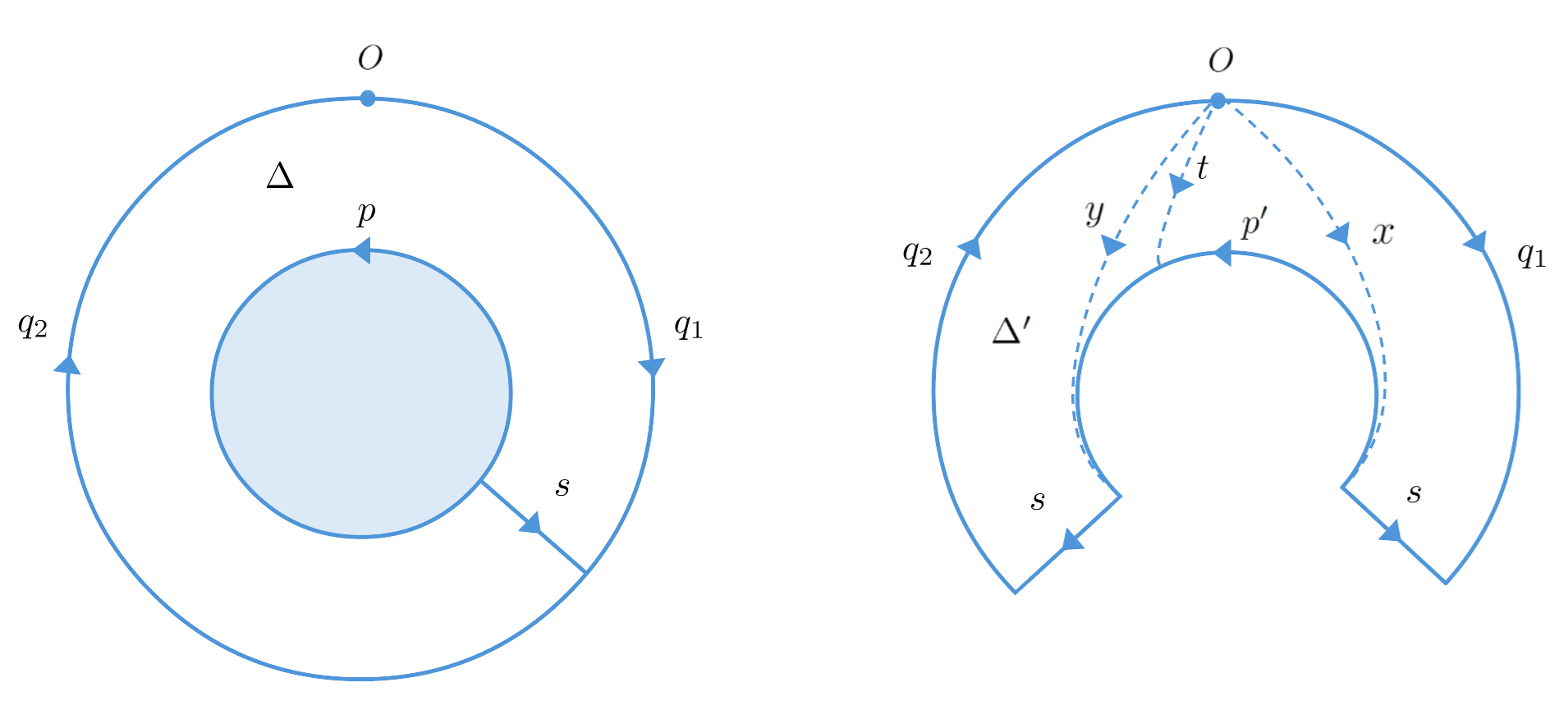}
\caption{The diagrams $\Delta$ (left) and $\Delta'$ (right).} 
\label{fig:enter-label}
\end{figure}

Let $x$ and $y$ be two geodesic paths in $\Delta'$ connecting the vertex $O$ to the vertices $p'_-$ and $p'_+$, respectively. Note that $p'$ is $\lambda$-geodesic by assumption. Therefore, Lemma \ref{lem:connecting O to the triangle floor with short code length} gives a path $t$ in the `triangle' (subdiagram) $xp'y^{-1}$, which connects $O$ to $p'$, and thus connects $O$ to $p$ in
$\Delta$, such that: $$\cln(t)\le \frac 12 (|x|+|y|-\lambda^{-1}|p'|) +6\gamma(|x|+|y|).$$ 
Since the geodesic path $x$ is homotopic to $q_1s^{-1}$,
we have $|x|\le |q_1|+|s|$; similarly, $|y|\le |q_2|+|s|$.
Therefore we obtain:
\begin{eqnarray*}
    \cln(t) & \le & \frac 12 (|q_1|+|q_2|-\lambda^{-1}|p'|+2|s|)+6\gamma(|q_1|+|q_2|+2|s|)\\
    & = & \frac 12 (|V|-\lambda^{-1}|U|) +6\gamma |V| +(1+12\gamma)|s|.
\end{eqnarray*}
It  remains to observe that $(1+12\gamma)|s|\le (1+12\gamma)2\gamma |V|\le 3\gamma|V|$.
\end{proof}

\section{Contrasting generating sets: uniform measures} \label{sec:contrasting uniform}
The current Section is dedicated to proving Theorem \ref{thm:Theorem B}. 
Let $\theta$ be a number between $0$ and $1/2$. Let $X=\{x_1,\dots,x_m\}$ and assume  that $m> 2^{\frac {3}{\theta}}$. 
We call a reduced word $W$ over $X^{\pm 1}$ a \emph{$\theta$-word} if the number of disjoint occurrences of squares $x^{\pm 2}$ ($x\in X$) is at least
$\theta|W|$, where by \emph{disjoint} occurrences we mean occurrences with no overlaps. For the rest of this section, we fix $\theta=0.03$ and $m> 2^{100}=2^{\frac{3}{\theta}}$.

Let $n$ be a large enough odd integer. 
We now construct a partial-Burnside group $G$ satisfying the conditions of Theorem \ref{thm:Theorem B}. 
Let $L_0=R_0=\emptyset$ and let $G(0)$ be the free group $F(X)$. Let $C$ be the set of all $\theta$-words over $X^{\pm 1}$. 
Let $i\in\mathbb{N}$ and suppose that $L_j$, $R_j$, and $G(j)$ have already been defined for all $j<i$.
Let $L_i$ be a maximal subset of $C$ that satisfies conditions (L1)--(L3) (see Definition \ref{def:minimal partial Burnside presentation}). 
Let $R_i= \{A^n \ : \ A \in L_i\}\cup R_{i-1}$ and let:
$$ G(i)=\langle X \mid W=1 \ :\ W\in R_i \rangle. $$ 
This completes the construction of $G(i),L_i,R_i$ for every $i\geq 0$. Finally, let $R=\bigcup_{i=0}^{\infty} R_i$ and consider the group $G$ given by the presentation: 
\begin{equation*}\label{eq:presentation constracting gen sets}
    G=\langle X \mid W=1\ :\ W\in R=\bigcup_{i=0}^{\infty}R_i\rangle.
\end{equation*}
As in previous sections, this is a Burnside-type group. 
We turn to study further properties of $G$. We start with the following observation.
\begin{lemma}\label{lem:not too many theta-words}
The number of $\theta$-words of length $r$ 
is less than~$(2m-~1)^{(1-\frac{\theta}{2})r}$ for $r\gg 1$. 
\end{lemma}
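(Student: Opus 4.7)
The plan is to prove the bound by a direct combinatorial counting argument, grouping $\theta$-words by the locations of their disjoint squares. Set $k := \lceil \theta r \rceil$. By the definition of a $\theta$-word, any such $W$ of length $r$ admits at least $k$ pairwise disjoint adjacent pairs $(i,i+1)$ at which $w_i = w_{i+1}$. Since only an upper bound is required, it suffices to bound the number of pairs $(W,\mathcal{P})$ where $W$ is a reduced word of length $r$ and $\mathcal{P}$ is a choice of $k$ such disjoint square-positions in $W$; different pairs sharing the same $W$ only inflate the count.

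For a fixed configuration $\mathcal{P}=\{(i_1,i_1+1),\dots,(i_k,i_k+1)\}$ of $k$ pairwise-disjoint adjacent pairs in $\{1,\dots,r\}$, the number of reduced words of length $r$ satisfying $w_{i_j}=w_{i_j+1}$ for every $j$ is at most $2m\,(2m-1)^{r-1-k}$: the first letter contributes a factor of $2m$; each subsequent letter $w_{i+1}$ generically contributes a factor of $2m-1$ (any element of $X^{\pm 1}$ other than $w_i^{-1}$), and for each of the $k$ indices $i = i_j$ this factor is replaced by $1$ since $w_{i+1}$ is forced to equal $w_i$ (and such a forced value is automatically consistent with reducedness). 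On the other hand, the number of such configurations $\mathcal{P}$ equals $\binom{r-k}{k}$ by the standard domino-placement bijection, and in particular is at most $2^r$. Combining,
\begin{equation*}
\#\{\theta\text{-words of length }r\} \;\leq\; \binom{r-k}{k}\cdot 2m\,(2m-1)^{r-1-k} \;\leq\; 2^{r}\cdot 2m\,(2m-1)^{r-1-k}.
\end{equation*}

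The hypothesis $m>2^{3/\theta}$ yields $(2m-1)^{\theta/3}>2$, hence $2^r<(2m-1)^{\theta r/3}$. Substituting $k\geq \theta r$ gives the upper bound
\begin{equation*}
\#\{\theta\text{-words of length }r\}\;\leq\; 2m\cdot (2m-1)^{r-1-2\theta r/3},
\end{equation*}
and comparing with $(2m-1)^{(1-\theta/2)r}$ reduces to the inequality $(2m-1)^{1+\theta r/6}>2m$, which holds for all sufficiently large $r$. The argument is elementary; the only point needing care is balancing the combinatorial overcounting factor $2^r$ against the gain $(2m-1)^k$ obtained from the $k$ forced letters, which is precisely what the assumption $m>2^{3/\theta}$ is designed to guarantee.
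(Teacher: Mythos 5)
Your proof is correct and follows essentially the same strategy as the paper's: both arguments parametrize a $\theta$-word by the positions of $\lceil\theta r\rceil$ disjoint squares (a binomial factor bounded by $2^r$) times the number of reduced skeletons (a factor of order $(2m-1)^{(1-\theta)r}$), and then use $m>2^{3/\theta}$ to absorb the $2^r$ overcount into $(2m-1)^{\theta r/3}$. The only cosmetic difference is that the paper builds the $\theta$-word by doubling marked letters of a shorter word $V$, whereas you fix the square positions inside the length-$r$ word directly; the resulting bounds coincide.
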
 

\begin{proof}
Every $\theta$-word of length $r$ can be obtained from a reduced word $V$ of length $k=\lfloor r-\theta r \rfloor$ as follows.
We mark $r-k$ occurrences of letters in $V$ and replace every marked letter by its square. There are $2m(2m-1)^{k-1}$ reduced words $V$, yielding at most: $$ {k\choose r-k}2m(2m-1)^{k-1}\le 2^r 2m(2m-1)^{k-1} $$ $\theta$-words of length $r$. It is now easy to check that $$2^r2m(2m-1)^{(1-\theta) r-1}<(2m-1)^{(1-\frac{\theta}{2})r}$$ for sufficiently large $r$, since $m> 2^{\frac {3}{\theta}}$. 
\end{proof}

\begin{lemma}\label{lem:bounding the length r torsion (Lemma 6)}
The number of elements in $G$ having finite order and length $r$ with respect to the generating set $X$ is less than  $(2m-1)^{0.995r}$ for $r\gg 1$.
\end{lemma}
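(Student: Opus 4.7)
The plan is to bound finite-order elements of length $r$ by exploiting the conjugacy of each such element to a periodic power $A^k$, counting the length-$r$ conjugates of a single $A^k$ via the code-length machinery from Section~8, and then summing over all periods. The crucial input is that periods of $G$ are chosen to be $\theta$-words, so by Lemma~\ref{lem:not too many theta-words} their count of length $a$ is bounded by $(2m-1)^{(1-\theta/2)a}$; this $\theta/2 = 0.015$ arithmetic gap is what ultimately delivers the $0.995$ exponent in the final estimate.

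By Lemma~\ref{lem:finite order element is conjugate to a power of a period} every finite-order $g$ with $|g|_G = r$ is conjugate to some $A^k$ with $A$ a period and $k \not\equiv 0 \pmod{n_A}$, and by Lemma~\ref{lem:smooth section}(2) we may assume $|A| \leq r$ and $k|A| \leq \bar\beta^{-1}r$. Since $A^k$ is cyclically $\bar\beta^{-1}$-geodesic (Lemma~\ref{lem:smooth section}(1) applied to its cyclic shifts), I apply Lemma~\ref{lem:conj to cyclic minimal by a short cl} to a reduced annular diagram whose contours are labelled by $A^k$ and by a geodesic word $W$ for $g$; this yields a path $t$ with
\[
\cln(t) \,\leq\, \tfrac{1}{2}(r - \bar\beta k|A|) + 9\gamma r.
\]
Cutting the annulus along $t$ produces an equality $W = T(A')^{\pm k} T^{-1}$ in $G$, where $T = \Lab(t)$ and $A'$ is a cyclic shift of $A$; hence $g$ is determined by the triple $(T, A', k)$ up to a sign. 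By Claim~\ref{cla:cln(W)}(2) the number of words $T$ satisfying the above $\cln$-bound is at most $(2m+2)^{\frac{1}{2}(r - \bar\beta k|A|) + 9\gamma r + 1}$, so the number of length-$r$ conjugates of a fixed $A^k$ is bounded by $2|A|$ times this count.

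Summing over periods (via Lemma~\ref{lem:not too many theta-words}) and performing the geometric sum in $k$, the total is bounded by
\[
\mathrm{poly}(r)\cdot (2m+2)^{r/2 + 9\gamma r}\cdot \max_{1\leq a\leq r} (2m-1)^{(1-\theta/2)a}\,(2m+2)^{-\bar\beta a/2}.
\]
The maximum is attained near $a=r$, and since $(2m+2)^L \leq (2m-1)^{L(1+o(1))}$ for $m$ large, the inner expression at $a=r$ is at most $(2m-1)^{r(1/2 - \theta/2 + \beta/2 + o(1))}$. Combining, the total is $\mathrm{poly}(r)\cdot (2m-1)^{r(1-\theta/2 + 9\gamma + \beta/2 + o(1))}$, which with $\theta=0.03$ equals $\mathrm{poly}(r)\cdot (2m-1)^{r(0.985 + 9\gamma + \beta/2 + o(1))}$. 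By the LPP we select $\gamma,\beta$ small and then $m$ large so that $9\gamma + \beta/2 + o(1)$ fits comfortably under the slack $0.995 - 0.985 = 0.01$, giving the required bound for $r \gg 1$. The main obstacle is precisely this tight budgeting: the arithmetic margin between $1 - \theta/2$ and $0.995$ is only $\theta/2 = 0.015$, and each of the small quantities $9\gamma$, $\beta/2$, and the correction from replacing $(2m+2)$ by $(2m-1)$ must be simultaneously made much smaller than this gap using the prescribed parameter-selection order of the LPP.
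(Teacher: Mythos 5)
Your proposal is correct and follows essentially the same route as the paper: reduce to conjugates of powers of periods, invoke the $\theta$-word count of periods (Lemma \ref{lem:not too many theta-words}), bound the conjugator via Lemma \ref{lem:conj to cyclic minimal by a short cl} and count it by code length via Claim \ref{cla:cln(W)}(2), then balance the exponents under the LPP. The only (harmless) difference is bookkeeping: you work directly with $A^k$ as a cyclically $\bar\beta^{-1}$-geodesic representative and sum geometrically over $k$, whereas the paper passes to a cyclically minimal conjugate $U$ of length $i$, applies the lemma with $\lambda=1$, and stratifies the sum by $i$ — both yield the same $(2m-1)^{(0.985+O(\gamma+\beta))r}$ bound.
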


\begin{proof}
Let $\calM_r$ denote the set of words of length $r$ that have finite order in $G$. Recall that a word is said to be \emph{cyclically minimal}, if it is not conjugate in $G$ to any shorter word.
We subdivide $\calM_r$ into a union of subsets $M_i$, where a word $V\in M_i$ is conjugate to a cyclically minimal word of length $i$ ($i\le r$). 
If  $V\in M_i$, then by Lemma \ref{lem:finite order element is conjugate to a power of a period}, $V$ is conjugate in $G$ to a power $A^k$ of a period $A$ for some $|k|\leq \frac{n}{2}$. We may assume $|A|\le i$ by Lemma \ref{lem:smooth section}(2). 

By Lemma \ref{lem:not too many theta-words}, there exists some $j_0$ such that, for all $j\geq j_0$, the cardinality of the set of periods $A$ of rank $j$ is less than $(2m-1)^{(1-\theta/2)j}$. (This follows from the definition of the set of periods.) Since $j_0$ is constant, there exists a large constant $C'>0$ such that the cardinality of periods of rank $j$, for $j<j_0$, is smaller than $C'(2m-1)^{(1-\theta/2)j}$. It follows that for some constant $C>0$, the set $\calY_i$ of powers $A^k$ which are conjugate to words from $M_i$ has cardinality $$ \# \calY_i \leq \underbrace{n}_{\text{options for }k} \underbrace{C'\sum_{j=0}^{i} (2m-1)^{(0.985)j}}_{\text{options for }A}\leq C(2m-1)^{0.985i}.$$

If $A^k\in \calY_i$ is a conjugate of a word of length $r$ but not cyclically minimal in $G$,  we replace it with a conjugate word $U$, which is cyclically minimal and $|U|\le i$. Thus, we have a set
$\calZ_i$  of cyclically minimal words such that every word in $\calM_r$ is a conjugate of a word $U\in \calZ_i$ for some $i\le r$, and the cardinality of $\calZ_i$ is at most the cardinality of $\calY_i$. In particular, this is less than $C(2m-1)^{(0.985)i}$. 
Lastly, since words in $\calZ_i$ have length at most $r$, the set $\calZ_i'$ of all cyclic permutations of words from $\calZ_i$ is of size at most $rC(2m-1)^{(0.985)i}$.

By Lemma \ref{lem:conj to cyclic minimal by a short cl}, 
for every word $V\in M_i$, we have a cyclic permutation $U'$ of a word $U\in {\calZ}_i$ (so $U'\in \calZ_i'$) 
and a word $T$ such that $V=T^{-1}U'T$ in $G$ and
$\cln(T)\le\frac 12 (r-i)+9\gamma r=f(r,i)$. For any given $j$, the number of $T$'s with $\cln(T)=j$ is bounded from above by $(2m+2)^j$ (see Claim \ref{cla:cln(W)}). So for any fixed $i$, we have at most:
\[ \sum_{l=0}^{f(r,i)} (2m+2)^l \leq (2m+2)^{f(r,i)+1} \] 
words $T$ which are distinct in $G$, with $\cln(T)\leq f(r,i)$.

Hence, the number of elements in $G$ corresponding to words from $M_i$ is less than:

These are the inequalities that appeared in the previous version:
\begin{eqnarray} \label{eqn:Lemma 6}
    \underbrace{C r (2m-1)^{0.985i}}_{\text{options for }U'}\underbrace{(2m+2)^{f(r,i)+1}}_{\text{options for }T} & = & Cr(2m-1)^{0.985i} (2m+2)^{(9\gamma+1/2)r-i/2+1}   \\ & \leq & 
    Cr(2m-1)^{0.485i} (2m+2)^{(9\gamma+1/2)r+1}  \nonumber \\
    & = & C(2m+2)r\underbrace{(2m-1)^{(0.985+9\gamma)r}}_{\text{(I)}}\underbrace{\left(\frac{2m+2}{2m-1}\right)^{(9\gamma+1/2)r}}_{\text{(II)}}.  \nonumber
\end{eqnarray}
Now exponent $\text{(I)}$ is less than $(2m-1)^{(0.99)r}$ since  one can take
$\gamma < 10^{-4}$ and exponent $\text{(II)}$ is less than $(2m-1)^{0.004r}$, if $m$ is large enough.
Thus, the total product in (\ref{eqn:Lemma 6}) is less than $C(2m+2)r(2m-1)^{(0.994)r}$. 
The sum of such numbers over $i$ is less than $(2m-1)^{0.995r}$, for sufficiently large $r$. The lemma is proved.
\end{proof}

Assuming that the number of generators $m$ and exponent $n$ are large enough, we get the following:

\begin{lemma}[Item (1) of Theorem \ref{thm:Theorem B}.]\label{lem:asym density wrt X is 0}
     $$\lim_{R\to \infty}\Pr_{U_X(R)}(x^n=1)=0.$$
\end{lemma}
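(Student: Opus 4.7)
The plan is a direct comparison between an upper bound on the number of torsion elements in the ball $B_{G,X}(R)$ and a lower bound on the total size of the ball.

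First, I would invoke Lemma \ref{lem:bounding the length r torsion (Lemma 6)}, which bounds the number of torsion elements of word-length exactly $r$ by $(2m-1)^{0.995r}$ for $r\gg 1$. Summing a geometric series in $r$, I obtain for $R\gg 1$:
\[
\#\{g\in B_{G,X}(R)\ :\ g^n=1\}\ \le\ C_0 + \sum_{r=r_0}^{R}(2m-1)^{0.995r}\ \le\ C(2m-1)^{0.995R},
\]
where $C_0$ absorbs contributions from the finitely many small radii not covered by Lemma \ref{lem:bounding the length r torsion (Lemma 6)}, and $C$ is an absolute constant (depending only on $m$).

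Next, I would apply Lemma \ref{lem:growth of BT groups}, which, since $G$ is a Burnside-type group with all exponents equal to $n$ (and $n$ taken sufficiently large), yields
\[
\#B_{G,X}(R)\ \ge\ (2m-1)^{0.999R}
\]
for all $R\ge 1$. Combining the two bounds,
\[
\Pr_{U_X(R)}(x^n=1)\ =\ \frac{\#\{g\in B_{G,X}(R)\ :\ g^n=1\}}{\#B_{G,X}(R)}\ \le\ C(2m-1)^{-0.004R},
\]
which tends to $0$ as $R\to\infty$.

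There is no serious obstacle here, since the heavy lifting is already done in Lemma \ref{lem:bounding the length r torsion (Lemma 6)}; the only mild point to check is that the discrepancy $0.999-0.995=0.004$ between the growth exponent and the torsion-count exponent is strictly positive, which holds by the LPP and the choice $\theta=0.03$, $m>2^{100}$, $\gamma<10^{-4}$ made earlier in the section. Thus the limit is $0$, as claimed.
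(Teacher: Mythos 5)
Your proposal is correct and follows essentially the same route as the paper: combine the torsion-count bound of Lemma \ref{lem:bounding the length r torsion (Lemma 6)} with the growth lower bound of Lemma \ref{lem:growth of BT groups}. The only difference is that you spell out the geometric summation over sphere radii $r\le R$, which the paper leaves implicit.
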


\begin{proof}
From Lemma \ref{lem:growth of BT groups}, $\#B_{G,X}(R)>(2m-1)^{0.999R}$ for all $R$ large enough. On the other hand, by Lemma \ref{lem:bounding the length r torsion (Lemma 6)}, there exists some $R_0$ such that for all $r\geq R_0$, the number of elements $x$ of length $r$ satisfying $x^n=1$ is at most $(2m-1)^{0.995r}$. Hence
$$\#\{x\in B_{G,X}(R)\setminus B_{G,X}(R_0)|\ x^n=1\}\leq \sum_{r=R_0+1}^{R} (2m-1)^{0.995r}\leq (2m-1)^{0.996R},$$
if $R$ is sufficiently large.
Notice that $R_0$ is constant, and so $\#B_{G,X}(R_0)\leq C$ for some constant $C$. Therefore
$$\Pr_{U_X(R)}(x^n=1)=\frac{\#\{x\in B_{G,X}(R)|x^n=1\}}{\#B_{G,X}(R)} \leq \frac{(2m-1)^{0.996R}+C}{(2m-1)^{0.999R}}\xrightarrow{R\rightarrow \infty} 0,$$
as claimed.
\end{proof}

\begin{remark}
Since $G$ is a partial-Burnside group, for every $g\in G$, either $g^n=1$, or $g$ has infinite order. It therefore follows from Lemma \ref{lem:asym density wrt X is 0} that also
$$\lim_{R\to \infty}\Pr_{U_X(R)}(x\text{ has finite order in }G)=0.$$
\end{remark}

Define the generating set $Y$ of $G$ by $Y=\{y_1,\dots y_{2m}\}=\{x_1,x_1^2,\dots,x_m,x_m^2\}$. The rest of this section is dedicated to proving Item (2) of Theorem \ref{thm:Theorem B}, namely, that $\lim_{R\to \infty}\Pr_{U_Y(R)}(x^n=1)=1$.

A word $W$ of length $r$ over $Y^{\pm 1}$  is called \emph{$Y$-minimal} if it is not equal in $G$ to any $Y$-word of length $<r$. The substitution $y_{2j-1}\mapsto x_{j},\ y_{2j}\mapsto x_{j}^2$ in a word $W$ provides us with a word $\overrightarrow{W}$
over $X^{\pm 1}$. We call it \emph{the $X$-form} of $W$. We have: 
\begin{lemma}\label{lem:X form of minimal Y-word is reduced}
    The $X$-form $\overrightarrow{W}$ of a $Y$-minimal word $W$ is a reduced $X$-word. 
\end{lemma}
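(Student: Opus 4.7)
The plan is to argue by contradiction: assume $\overrightarrow{W}$ is not reduced, so some pair of consecutive letters in $\overrightarrow{W}$ cancels. Writing $W \equiv y_{i_1}^{\varepsilon_1}\cdots y_{i_r}^{\varepsilon_r}$, the $X$-form decomposes as a concatenation $\overrightarrow{W} \equiv X_1 X_2 \cdots X_r$, where each block $X_t$ equals $x_j^{\pm 1}$ (when $i_t = 2j-1$) or $x_j^{\pm 2}$ (when $i_t = 2j$). Each individual block is reduced, so any cancellation in $\overrightarrow{W}$ must take place at the boundary between two consecutive blocks $X_t$ and $X_{t+1}$. In particular, $X_t$ and $X_{t+1}$ are both powers of the same generator $x_j$, with opposite signs at the meeting point.

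I would then enumerate the four possible shapes for the pair $(X_t,X_{t+1})$, organized by the lengths of the two blocks. In the ``diagonal'' cases $(x_j^{\varepsilon}, x_j^{-\varepsilon})$ and $(x_j^{2\varepsilon}, x_j^{-2\varepsilon})$, the corresponding adjacent letters in $W$ are $y_{2j-1}^{\varepsilon} y_{2j-1}^{-\varepsilon}$ and $y_{2j}^{\varepsilon} y_{2j}^{-\varepsilon}$ respectively, so $W$ itself fails to be reduced over $Y^{\pm 1}$; deleting these two letters produces a strictly shorter $Y$-word representing the same element of $G$. In the ``mixed'' cases $(x_j^{\varepsilon}, x_j^{-2\varepsilon})$ and $(x_j^{2\varepsilon}, x_j^{-\varepsilon})$ the corresponding pairs of $Y$-letters are $y_{2j-1}^{\varepsilon} y_{2j}^{-\varepsilon}$ and $y_{2j}^{\varepsilon} y_{2j-1}^{-\varepsilon}$. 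Using the identities $x_j^{\varepsilon}\cdot x_j^{-2\varepsilon} = x_j^{-\varepsilon}$ and $x_j^{2\varepsilon}\cdot x_j^{-\varepsilon} = x_j^{\varepsilon}$ in $G$, each of these pairs equals the single letter $y_{2j-1}^{-\varepsilon}$ or $y_{2j-1}^{\varepsilon}$; replacing the pair by that single letter yields a $Y$-word of length $r-1$ equal to $W$ in $G$.

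In every case the conclusion contradicts the $Y$-minimality of $W$, completing the proof. I do not anticipate any genuine obstacle: the argument is a direct case analysis that exploits the definition of the substitution $y_{2j-1} \mapsto x_j$, $y_{2j} \mapsto x_j^2$, together with identities that already hold in the free group on $X$ and hence in $G$; no geometry of diagrams or coding-length estimates are needed for this lemma.
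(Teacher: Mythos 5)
Your proof is correct and follows essentially the same route as the paper's: the paper treats the representative cancellation $y_{2k-1}y_{2k}^{-1}\mapsto y_{2k-1}^{-1}$ and notes the other cases are similar, while you spell out all four block-boundary cases explicitly. No gap.
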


\begin{proof}
One could obtain a cancellation in $\overrightarrow{W}$ if $W$ has a subword $y_{2k-1}y_{2k}^{-1}$. But in this case one could
replace the subword with $y_{2k-1}^{-1}$ and
obtain a shorter word, contrary to the $Y$-minimality
of $W$. Other types of cancellations are also
impossible for similar reasons.
\end{proof} 

\begin{lemma}\label{lem:growth of t-aperiodic words}
Let $m\ge 3$. For every positive real constant $l<4m-4$, there is a positive integer $t$ such that the number of
$t$-aperiodic words $\overrightarrow W$, where $W$ is a reduced word over $Y^{\pm1}$ that has $Y$-length $r$, is at least $l^r$.
\end{lemma}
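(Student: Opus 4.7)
The plan is to adapt the inductive argument of Lemma \ref{lem:many aperiodic} to the two-alphabet setting, counting via the greedy $Y$-decomposition. Call a reduced $Y$-word $W$ \emph{canonical} if it is the greedy (left-to-right, maximal-length-$2$) $Y$-decomposition of its $X$-form, equivalently, if no two consecutive length-$1$ $Y$-letters of $W$ share the same base letter and sign. Distinct canonical $Y$-words produce distinct $X$-forms, so writing $c(r)$ for the number of canonical $Y$-words of length $r$ with $t$-aperiodic $X$-form gives a lower bound for the quantity in the lemma. It thus suffices to prove $c(r) > l \cdot c(r-1)$ for $r \geq 2$, with $c(1) = 4m$; by monotonicity in $l$ we may restrict to $l$ close to $4m - 4$, in particular $l > 2$ (achievable since $m \geq 3$).

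Each canonical $W$ of length $r-1$ admits at least $4m - 2$ extensions $Wa$ preserving both $Y$-reducedness and canonicality: if the last $Y$-letter of $W$ is length-$1$, namely $y_{2j-1}^{\sigma}$, then $a \notin \{y_{2j-1}^{\pm\sigma}\}$ (forbidding $Y$-reduction or a greedy merge); if it is length-$2$, only $a \neq y_{2j}^{-\sigma}$ is required. Among these we must bound the \emph{bad} extensions, namely those for which $\overrightarrow{Wa}$ contains a $t$-th power subword $u^t$. Since $\overrightarrow{W}$ is $t$-aperiodic and $|\overrightarrow{a}| \leq 2$, any new $t$-th power must end at position $|\overrightarrow{W}|+1$ or $|\overrightarrow{W}|+2$ of $\overrightarrow{Wa}$; this forces the $X$-suffix of $\overrightarrow{W}$ of length $ti - |\overrightarrow{a}|$ to be a specific prefix of $u^t$, and determines $a$ up to a bounded number of choices per period length $i$. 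For $i = 1$, since the trailing run of any single letter in $\overrightarrow{W}$ has length at most $t-1$, there are at most $2$ bad extensions per $W$, contributing at most $2\, c(r-1)$ in total.

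The main step, for $i \geq 2$, is to bound the number of canonical good $W$'s whose $X$-form ends with a prescribed $X$-suffix $S$ of length $s_0 = ti - |\overrightarrow{a}|$. I would factor $W = V \cdot W_S$ where $W_S$ is a $Y$-suffix whose $X$-form covers the last $s_0$ letters of $\overrightarrow{W}$, with $Y$-length $k \in [\lceil s_0/2 \rceil, s_0 + 1]$ (the $+1$ accommodating the case where $S$ straddles a length-$2$ block). The number of valid $Y$-decompositions of the relevant $X$-block into $k$ $Y$-letters is at most $\binom{k}{s_0-k} \leq 2^k$ (each size-$2$ block requires two identical consecutive $X$-letters), while the number of canonical good prefixes $V$ of length $r-1-k$ is at most $c(r-1-k) < c(r-1)/l^k$ by the inductive hypothesis. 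Since $l > 2$, the geometric sum over $k$ yields a bound of the form $C_l \cdot c(r-1) \cdot (2/l)^{s_0/2}$ for a constant $C_l$ depending only on $l$.

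Summing this estimate over the $(2m)^i$ choices of $u$ of length $i$ (and the constantly many cases for $|\overrightarrow{a}|$ and the ending position) gives, for $i \geq 2$, a geometric series in $i$ with common ratio $2m \cdot (2/l)^{t/2}$. For $t$ sufficiently large this ratio is arbitrarily small, so the total bad contribution for $i \geq 2$ is at most $\varepsilon \cdot c(r-1)$ for any prescribed $\varepsilon > 0$. Combining with the $i=1$ bound gives $c(r) \geq (4m - 2 - 2 - \varepsilon)\,c(r-1) = (4m - 4 - \varepsilon)\,c(r-1)$, which exceeds $l \cdot c(r-1)$ for any $l < 4m - 4$ once $\varepsilon$ is chosen small enough, completing the induction. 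The main obstacle will be the careful bookkeeping needed to justify the $X$-suffix bound inside the canonical-$Y$-word framework, which is where the two-alphabet structure interacts nontrivially with the $t$-aperiodicity condition.
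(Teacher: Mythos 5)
Your strategy is viable but genuinely different from the paper's, and considerably heavier. The paper never imposes $t$-aperiodicity on the $X$-form directly: it counts $Y$-words $W$ that are $t$-aperiodic \emph{as $Y$-words} and contain no two consecutive letters whose subscripts lie in a common pair $\{2j-1,2j\}$; the induction is then verbatim the one from Lemma \ref{lem:many aperiodic} (extension factor $4m-4$, bad products bounded by a geometric series in the period length), and only at the very end does one observe that $t$-aperiodicity of $W$ over $Y$ forces $(t+1)$-aperiodicity of $\overrightarrow{W}$, because any subword $V^k$ of $\overrightarrow{W}$ contains $\overrightarrow{U}^{k-1}$ for a subword $U^{k-1}$ of $W$. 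That one-line transfer is exactly what lets the paper avoid all of your $X$-suffix bookkeeping. Your route --- imposing aperiodicity on $\overrightarrow{W}$ and encoding a bad word by a block-prefix $V$ together with a block decomposition of the forced suffix $S$ --- does close: block-prefixes of canonical good words are canonical and good, the encoding is injective, and the geometric sums in $k$ and $i$ converge for large $t$ once $l>2$. It even yields the conclusion for $t$ itself rather than $t+1$, but at the cost of the delicate suffix analysis you anticipate.

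Two local errors need repair before the argument is sound as written. First, your parenthetical ``equivalently'' is false: the static condition ``no two consecutive length-$1$ letters with the same base and sign'' is strictly weaker than being the greedy decomposition (e.g.\ $y_{2j-1}^{\sigma}y_{2j}^{\sigma}$ satisfies it, yet greedy applied to $x_j^{3\sigma}$ gives $y_{2j}^{\sigma}y_{2j-1}^{\sigma}$), and the injectivity of $W\mapsto\overrightarrow{W}$ on which your count relies holds only for the greedy definition. Second, with the greedy definition your extension count is wrong when $W$ ends in a length-$1$ block $y_{2j-1}^{\sigma}$: you must also exclude $a=y_{2j}^{-\sigma}$ (cancellation in the $X$-form) and $a=y_{2j}^{\sigma}$ (greedy would re-split the terminal $x_j^{3\sigma}$ as $y_{2j}^{\sigma}y_{2j-1}^{\sigma}$), leaving only $4m-4$ admissible extensions, not $4m-2$. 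The final bound survives only because in that case the two $i=1$ bad extensions, $a\in\{y_{2j-1}^{\sigma},y_{2j}^{\sigma}\}$, coincide with letters already excluded, so both cases land on $4m-4-\varepsilon$; this coincidence must be made explicit rather than subtracting $2$ from a uniform count of $4m-2$.
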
 

\begin{proof}
The proof is very similar to that of Lemma \ref{lem:many aperiodic}, with only the necessary adaptations to dealing with $\overrightarrow W$ instead of $W$. 

For $t\ge 2$, let $b(r)$ be the number of $t$-aperiodic words $W$ in the alphabet $Y$ of length $r$ such that $W$ does not contain
2-letter subwords $yy'$ where both subscripts of the letters $y$ and $y'$
belong to some set $\{2j-1,2j\}$ for $j\le m$.

We first prove that $b(r)> l^r$ if $t$ is large enough. Since $b(1)=4m>l$, it suffices, in turn, to prove by induction on $r$ that $b(r)>lb(r-1)$ for $r\ge 2$.

Given a $t$-aperiodic word $W$ of length $r-1$ as above, we can obtain $4m-4$ reduced  words $Wa$ of length $r$ by adding a letter $a$ from the right such that the subscripts of $a$ and the last letter of $W$ do not belong to the same set $\{2j-1,2j\}$. The total number of such products is
$(4m-4)b(r-1)$. However, we should take into account that some products are not $t$-aperiodic. Let us estimate the number of such `bad' products.

A bad product has the form $Vu^t$, where $|u|\ge 1$. 
Let $|u|=i\geq 1$. Then $|V|=r-ti$ and the number of ways to choose $V$ is $b(r-ti)$, and by the inductive hypothesis, $b(r-ti)<l^{1-ti}b(r-1)$. There are $4m$ words of length $1$. Hence the number of bad products with $|u|=i$ is at most $(4m)^i l^{1-it} b(r-1)$. Therefore the number of all bad products does not exceed:
\[
\sum_{i=1}^{\infty} (4m)^i l^{1-ti} b(r-1) = b(r-1)\frac{4ml}{l^t-4m},
\] which can be arbitrarily small if $t$ is chosen large enough. 
Hence the number of bad words can
be made less than $\kappa b(r-1)$ for an arbitrarily small positive $\kappa$.

It follows that the number of $t$-aperiodic words of length $r$ in $Y$ without forbidden $2$-letter subwords (let us denote this set by ${\calZ}_r$) is greater than $b(r-1)(4m-4-\kappa)$  which is greater than $lb(r-1)$ if one chooses
$\kappa < 4m-4 - l$.

It is clear that for $W\in {\calZ}_r$ the word $\overrightarrow W$ is
$(t+1)$-aperiodic, because every subword $V^k$
of $\overrightarrow W$ contains a subword $\overrightarrow{U}^{k-1}$
where $U^{k-1}$ is a subword of $W$.
So the lemma is proved for $t'=t+1$.
\end{proof} 

We need a couple of standard combinatorial inequalities, both of which are well-known and follow from Stirling's formulae.

\begin{lemma}\label{lem:Stirling's bounds}
\ \ \    \begin{enumerate}
        \item For any constant $0 < \lambda<1/2$, there are positive constants $c$ and $d<2$ such that $\sum_{0\le j\le \lambda r} {r\choose j}\le c d^r$ for every $r>0$.
        \item For any $\mu>1$, there is $\lambda>0$ such that $\sum_{0\le j\le \lambda r} {r\choose j}\le \mu^r$ for every $r>0$
    \end{enumerate} 
\end{lemma}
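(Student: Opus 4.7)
The plan is to reduce both parts to the standard binary-entropy bound on partial binomial sums. Recall that the binary entropy function $H(\lambda)=-\lambda\log_2\lambda-(1-\lambda)\log_2(1-\lambda)$ is continuous on $[0,1]$, vanishes at $0$, increases strictly on $[0,1/2]$, and satisfies $H(1/2)=1$. I will prove the single inequality
\[
\sum_{0\le j\le \lambda r}\binom{r}{j}\le 2^{H(\lambda)r}\qquad \text{for every }0<\lambda\le 1/2\text{ and every }r>0,
\]
and then read off both conclusions.

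To establish the inequality I would use a one-line Chernoff-style argument: for any $x\ge 1$ and any integer $j$ with $0\le j\le \lambda r$, the factor $x^{\lambda r-j}$ is $\ge 1$, so
\[
\sum_{0\le j\le \lambda r}\binom{r}{j}\;\le\;x^{\lambda r}\sum_{j=0}^{r}\binom{r}{j}x^{-j}\;=\;x^{\lambda r}\bigl(1+\tfrac1x\bigr)^r.
\]
Substituting the optimal choice $x=(1-\lambda)/\lambda\ge 1$ (valid since $\lambda\le 1/2$) and simplifying yields exactly $2^{H(\lambda)r}$, as desired.

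Given this, part (1) is immediate: for any $\lambda<1/2$ we have $H(\lambda)<H(1/2)=1$, hence one may set $d:=2^{H(\lambda)}<2$ and $c:=1$. For part (2), given $\mu>1$ we have $\log_2\mu>0$; by continuity of $H$ at $0$ we can choose $\lambda>0$ small enough that $H(\lambda)\le \log_2\mu$, and then $\sum_{0\le j\le\lambda r}\binom{r}{j}\le 2^{H(\lambda)r}\le \mu^r$.

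There is essentially no obstacle here, as this is a routine application of the entropy bound; the only points requiring any care are verifying that the bound $2^{H(\lambda)r}$ covers the trivial edge cases (when $\lambda r<1$ the left side equals $1\le 2^{H(\lambda)r}$, so the claim holds with $c=1$), and noting that the substitution $x=(1-\lambda)/\lambda$ is legitimate precisely because $\lambda\le 1/2$.
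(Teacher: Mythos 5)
Your proof is correct. The paper itself gives no argument for this lemma --- it simply remarks that both parts are ``well-known and follow from Stirling's formulae'' --- so any complete proof is, in a sense, a different route. What you do is derive the sharp non-asymptotic entropy bound $\sum_{0\le j\le \lambda r}\binom{r}{j}\le 2^{H(\lambda)r}$ via the Chernoff-type substitution $x=(1-\lambda)/\lambda$, which I checked simplifies exactly to $2^{H(\lambda)r}$, and then both parts follow from $H(\lambda)<1$ for $\lambda<1/2$ and from continuity of $H$ at $0$. This has two advantages over the Stirling route the authors gesture at: it avoids asymptotic constant-chasing entirely (you get $c=1$ and a bound valid for every $r$, including the small-$r$ edge cases you flag), and it is completely elementary. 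The only cosmetic remark is that part (1) does not even need the optimal $x$: any fixed $x>1$ with $x^{\lambda}(1+1/x)<2$ would do; but your optimized choice is cleaner and also yields part (2) in one stroke.
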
 

The next lemma bounds the density of unbalanced words in the $r$-th sphere in $G$ with compare to $Y$. 

\begin{lemma}\label{lem:very few unbalanced words}
Let ${\calO}_r$ be the set of $Y$-minimal words $W$ of length $r$ where the number of occurrences of even (respectively, odd) $Y$-letters is at
most $0.499 r$.
Then there are constants $K< 4$ and $c>0$ such that $|{\calO}_r| < c(Km)^r$.
\end{lemma}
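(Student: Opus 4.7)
The plan is to bound $|\calO_r|$ by a crude counting argument that ignores $Y$-minimality entirely; the conclusion will hold even for arbitrary (not necessarily reduced or minimal) words in $Y^{\pm 1}$ of length $r$ that are ``unbalanced'' in the above sense, and a fortiori for $\calO_r$.

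First I would observe that any word $W$ of length $r$ in the alphabet $Y^{\pm 1}$ is completely determined by two pieces of data: (a) the subset $S\subseteq\{1,\dots,r\}$ of positions at which an odd-indexed letter $y_{2j-1}^{\pm 1}$ appears, and (b) the actual letter in each position. There are exactly $2m$ odd-indexed letters $y_1^{\pm 1},\dots,y_{2m-1}^{\pm 1}$ and exactly $2m$ even-indexed letters $y_2^{\pm 1},\dots,y_{2m}^{\pm 1}$, so the number of words with a prescribed set $S$ of odd-indexed positions is exactly $(2m)^{|S|}(2m)^{r-|S|}=(2m)^{r}$.

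Next, if $W\in\calO_r$ then by definition either the number of even-indexed letters in $W$ is at most $0.499r$ (equivalently, $|S|\ge r-0.499r$) or the number of odd-indexed letters is at most $0.499r$ (equivalently, $|S|\le 0.499r$). Hence
\[
|\calO_r|\;\le\;2\cdot \sum_{0\le j\le 0.499r}\binom{r}{j}\cdot (2m)^{r}.
\]
Applying Lemma \ref{lem:Stirling's bounds}(1) with $\lambda=0.499<1/2$ yields constants $c_0>0$ and $d<2$ such that $\sum_{0\le j\le 0.499r}\binom{r}{j}\le c_0 d^r$ for all $r$. Setting $K=2d$ and $c=2c_0$, we obtain
\[
|\calO_r|\;\le\; 2c_0\, d^{r}(2m)^{r}\;=\;c\,(Km)^{r},
\]
with $K<4$ as required.

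The argument has no real obstacle; the only place one could slip is in the bookkeeping of how many odd- vs.\ even-indexed letters are available (namely, $2m$ of each, giving the $(2m)^{r}$ factor independent of $|S|$), together with the factor of $2$ accounting for the two possibilities (even letters are scarce or odd letters are scarce). No use of $Y$-minimality, of the partial Burnside presentation, or of any diagram machinery is needed for this lemma; it is a purely combinatorial bound of the form ``unbalanced words are exponentially rarer than all words.''
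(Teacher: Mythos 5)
Your proof is correct and follows essentially the same route as the paper: both determine an unbalanced word by the positions of the scarce letter type (at most $0.499r$ of them), count $(2m)^r$ choices of letters once the positions are fixed, and invoke Lemma \ref{lem:Stirling's bounds}(1) to bound the binomial sum by $c\,d^r$ with $d<2$, giving $K=2d<4$. The only cosmetic difference is that you carry an explicit factor of $2$ for the two symmetric cases (even scarce versus odd scarce), which the paper absorbs implicitly; this does not affect the conclusion.
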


\begin{proof}
Every word from ${\calO}_r$ is determined  by a choice of $k\le 0.499r$ positions for letters with even subscripts and by the choice of a word of length $k$ in even generators and the choice of a word of length $r-k$ in odd generators. Therefore the cardinality of ${\calO}_r$ is at most
$$\sum_{k\le 0.499 r} {r\choose k} (2m)^k(2m)^{r-k}=(2m^r)\sum_{k\le 0.499R} {r\choose k}.$$
Since $0.499<1/2$ the sum is bounded by $c d^r$, where $d<2$, by Lemma \ref{lem:Stirling's bounds}(1). So the lemma is proved with $K=2d$.
\end{proof}

\begin{definition}[Exponentially generic property]
Let $\{S_i\}_{i\in \mathbb{N}}$ be an infinite sequence of finite sets. We will say that some property is \emph{(exponentially) generic for the series $\{S_i\}_i$}  if there is a
constant $\lambda<1$ such that for all big enough $i$-s, the number of elements in $S_i$ enjoying  this property is greater than $(1-\lambda^i)|S_i|$.  
\end{definition}

Clearly, if two properties $\calP$ and $\calQ$ are generic, then so is the property $\calP \wedge \calQ$. 
We now construct a sequence of finite sets, as follows. 
Let $K_r$ be the set of elements $g\in G$ having $Y$-length $r$. To every $g\in K_r$, we correspond a $Y$-minimal word $W$ representing $g$. We then denote $V=\overrightarrow{W}$, which is a word in $X$, and finally, we choose a cyclically minimal word $U$ in $X$ which is a conjugate of $V$ in $G$. We denote by $S_r$ the set of quadruples $(g, W, V, U)$ for $g\in K_r$; $|S_r|=|K_r|$.

We have:
\begin{lemma}\label{lem:1.499 r < |V|_X <1.501r is generic}
    We have that $|S_r|\geq (4m-5)^r$.  
    Moreover, for a generic quadruple in the sequence $\{S_r\}_{r\in \mathbb{N}}$, $1.499 r < |V|_X <1.501r$. 
\end{lemma}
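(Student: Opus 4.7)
The plan has two parts corresponding to the two assertions. For the cardinality bound, I would combine Lemma \ref{lem:growth of t-aperiodic words} with Lemma \ref{lem:aperiodic are nontrivial} to produce enough $t$-aperiodic witnesses, and then verify that each witness is $Y$-minimal, so that the corresponding elements lie in $K_r$ and not merely in the $Y$-ball of radius $r-1$. For the length estimate, I would compare this lower bound against Lemma \ref{lem:very few unbalanced words}.

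More precisely, I would fix $l$ with $4m-5 < l < 4m-4$ and invoke Lemma \ref{lem:growth of t-aperiodic words} to produce an integer $t$ and at least $l^r$ reduced $Y$-words $W$ of length $r$, each satisfying the restriction that no two consecutive letters have subscripts in a common $\{2j-1,2j\}$, whose $X$-form $\overrightarrow W$ is a $t$-aperiodic reduced word. Enlarging $n$ if necessary so that $\lfloor\epsilon n\rfloor \ge t+1$, Lemma \ref{lem:aperiodic are nontrivial} ensures that these $l^r$ $X$-forms represent pairwise distinct elements of $G$.

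To show that each such $W$ is $Y$-minimal, suppose toward a contradiction that $|W'|_Y < r$ and $[W']_G = [W]_G$ for some $Y$-word $W'$. The subscript constraints on $W$ force every maximal $x_j$-run in $\overrightarrow W$ to have length at most $2$, with each length-$2$ run encoded by a single $y_{2j}$-letter of $W$; hence if $\overrightarrow{W'} = \overrightarrow W$ already in $F(X)$, then $W$ is already the shortest $Y$-reading of this $X$-word, giving $|W'|_Y \ge r$, a contradiction. Otherwise, replace $\overrightarrow{W'}$ by a shortest $X$-representative $V$ of $[W]_G$, so that $|V|_X \le |\overrightarrow{W'}|_X \le 2r-2$ and $V$ is geodesic in $G$. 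In the reduced diagram with boundary label $\overrightarrow W \cdot V^{-1}$, which has positive rank whenever $V\neq \overrightarrow W$ in $F(X)$, Lemma \ref{lem:existence of gamma-cell} provides a $\gamma$-cell and Lemma \ref{lem:contig deg to lambda-geodesic} bounds its contiguity degree to $V$ by $\tfrac12 + 2\beta$; hence its contiguity degree to $\overrightarrow W$ exceeds $\bar\gamma - \tfrac12 - 2\beta > \epsilon$ by LPP. The ``moreover'' part of Lemma \ref{lem:if Gamma is with minimal num of cells then r(Gamma)=0} then yields a rank-$0$ contiguity subdiagram to $\overrightarrow W$ of degree exceeding $\epsilon$, forcing $\overrightarrow W$ to contain a subword of some $A^{n_A}$ of length exceeding $\epsilon n|A|$, and hence containing $A^t$; this contradicts the $t$-aperiodicity of $\overrightarrow W$. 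The remaining subcase $V \equiv \overrightarrow W$ in $F(X)$ is handled by noting that $V$ itself is then $t$-aperiodic and geodesic, and applying the same contiguity argument to $V$ and a geodesic $X$-representative of $\overrightarrow{W'}$. We conclude $|S_r|=|K_r| \ge l^r \ge (4m-5)^r$.

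For the genericity statement, note that for the quadruple $(g,W,V,U)\in S_r$ we have $|V|_X = r+k$, where $k$ is the number of even-subscripted letters of $W$; the required bounds $1.499r < |V|_X < 1.501r$ translate to $0.499r < k < 0.501r$. By Lemma \ref{lem:very few unbalanced words}, applied symmetrically to the counts of even and odd letters, the number of $Y$-minimal words of length $r$ violating this balance is at most $2c(Km)^r$ for some $K<4$ and $c>0$. Since $|S_r|\ge (4m-5)^r$, the proportion of violating quadruples is at most $2c\bigl(\tfrac{Km}{4m-5}\bigr)^r$, which is exponentially small in $r$ once $m$ is large enough that $\tfrac{Km}{4m-5}<1$.

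The main obstacle is establishing $Y$-minimality of the words produced by Lemma \ref{lem:growth of t-aperiodic words}, which is what promotes the count from the $Y$-ball to the sphere $K_r$; the intrinsic re-reading case is ruled out by the forbidden-subword constraints on $W$, while the genuinely group-theoretic case is dispatched by the contiguity analysis combined with the $t$-aperiodicity of $\overrightarrow W$.
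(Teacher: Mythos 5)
Your overall skeleton coincides with the paper's: the cardinality bound comes from Lemma \ref{lem:growth of t-aperiodic words} together with the fact that sufficiently aperiodic words represent distinct elements of $G$ (Lemma \ref{lem:aperiodic are nontrivial}), and the length estimate comes from comparing that bound against Lemma \ref{lem:very few unbalanced words}; the paper's own proof consists of essentially these two citations and nothing more. The extra content of your proposal is the $Y$-minimality of the witnesses, which the paper does not address but which is indeed what places the $l^r$ elements in $K_r$ rather than merely in the $Y$-ball, and what later justifies writing $|V|_X=r+k$ with $|W|_Y=r$. Your free-group case is correct: the run-counting argument does show that if $\overrightarrow{W'}$ freely reduces to $\overrightarrow{W}$ then $|W'|_Y\ge r$.

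The gap is in the other case. Your contiguity argument compares $\overrightarrow{W}$ with a geodesic $X$-representative $V$ of $g$ and correctly concludes $V\equiv\overrightarrow{W}$, i.e.\ that $\overrightarrow{W}$ is $X$-geodesic. But this does not rule out a $Y$-word $W'$ with $|W'|_Y<r$ whose reduced $X$-form $v'$ is a \emph{longer, non-geodesic} $X$-representative of $g$: the quantity you must bound from below is $|W'|_Y$, not the $X$-length of $g$, and a priori $|W'|_Y$ can be as small as $|v'|_X/2$. Your closing sentence does not repair this, since ``a geodesic $X$-representative of $\overrightarrow{W'}$'' is again just a geodesic representative of the same element $g$ and carries no information about $W'$. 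A repair is available along your own lines: take $W'$ to be $Y$-minimal, note that its reduced $X$-form $v'$ is then $2$-geodesic in $G$ (any $X$-word $u$ equal to $v'$ in $G$ is itself a $Y$-word, so $|u|_X\ge |W'|_Y\ge |v'|_X/2$, and the same holds for subwords up to trimming a letter at each end), and run the $\gamma$-cell argument directly on the diagram between $\overrightarrow{W}$ and $v'$, using Lemma \ref{lem:contig deg to lambda-geodesic} with $\lambda=2$ to bound the contiguity to $v'$ by $\tfrac{2}{3}+2\beta$ and hence force contiguity greater than $\tfrac{1}{3}-\gamma-2\beta>\epsilon$ to $\overrightarrow{W}$, contradicting aperiodicity as before. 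With that substitution your argument closes; as written, the second case is not established.
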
 

\begin{proof} 
The first part follows from Lemma \ref{lem:growth of t-aperiodic words} since for $n>6t$, different $t$-aperiodic elements are not equal in $G$, and so the number of elements in $K_r$ is greater than
$(4m-5)^r$. 

Using this bound and Lemma \ref{lem:very few unbalanced words}, it now follows that for a generic quadruple in $\{S_r\}_{r\in \mathbb{N}}$, the number of occurrences of even (respectively, odd) letter in $W$ is greater than $0.499r$. Recall that every even letter in $W$ corresponds to two letters in $\overrightarrow{W}$, while every odd letter in $W$ to one letter in $\overrightarrow{W}$.
It follows that the length of $V=\overrightarrow{W}$ (in the alphabet $X$) is greater than $2\times  0.499r+0.501r =1.499r$. 
The inequality $|V|_X<1.501r$ follows similarly , and we get:
$1.499 r < |V|_X <1.501r$. 
\end{proof}

From the previous lemma it follows that generically, $V$ is a $0.332$-word. (Indeed, this is since $0.499/1.499>0.332$.) We now show a similar result for the cyclically minimal conjugate, $U$.

\begin{lemma}\label{lem:U 0.03-word is generic}
For a generic quadruple $(g,W,V,U)$ in $\{S_r\}_{r}$, the word
$U$ is a $0.03$-word.
\end{lemma}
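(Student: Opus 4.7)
The plan is to count the bad quadruples $(g,W,V,U)\in S_r$ (those in which $U$ is not a $0.03$-word) and show their number is at most $\lambda^r|S_r|$ for some $\lambda<1$. By Lemma~\ref{lem:1.499 r < |V|_X <1.501r is generic} I may restrict to generic $V$ with $v:=|V|\in[1.499r,1.501r]$, and, refining the counting in the proof of Lemma~\ref{lem:growth of t-aperiodic words}, I may further assume $W$ avoids two-letter subwords $yy'$ whose subscripts both lie in a common pair $\{2j-1,2j\}$, so that $V$ contains at least $(0.5-o(1))r$ disjoint squares located precisely at the positions corresponding to the even-subscript $Y$-letters of $W$. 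For each such quadruple, setting $s:=|U|$, Lemma~\ref{lem:conj to cyclic minimal by a short cl} produces a word $T$ with $\cln(T)\le f:=\tfrac{1}{2}(v-s)+9\gamma v$ and $V=T^{-1}U'T$ in $G$ for some cyclic shift $U'$ of $U$; as in the proof of Lemma~\ref{lem:bounding the length r torsion (Lemma 6)}, each bad $g$ is determined by the triple $(U,U',T)$.

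The crucial observation is that $T$ is forced to carry many disjoint squares. In the rank-$0$ case, the equation $V=T^{-1}U'T$ holds as a free-group reduced concatenation (no cancellation occurs at the junctions since $U'$ is cyclically minimal), so every disjoint square of $V$ lies entirely inside one of the segments $T^{-1}$, $U'$, $T$, up to at most two junction squares. Since $U'$ contributes at most $0.03s$ squares and the counts of disjoint squares in $T$ and $T^{-1}$ agree, using $|T|=(v-s)/2$ I conclude that $T$ must be a $\theta''$-word with $\theta''\ge (0.499r-0.03s-2)/(v-s)$. For $s>0.534r+O(1)$ this density exceeds $1/2$ and is impossible; otherwise Lemma~\ref{lem:not too many theta-words} bounds the number of such $T$'s by $(2m-1)^{(1-\theta''/2)(v-s)/2}$. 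Multiplying by $s\cdot(2m-1)^s$ (the count of $U$'s and cyclic shifts) and summing over admissible $s$, the resulting $(2m-1)$-exponent simplifies to $v/2+0.5075s-0.125r+O(1)\le 0.897r+O(1)$, which, for $m$ sufficiently large (by LPP), is exponentially smaller than the exponent $r(1+o(1))$ of $(4m-5)^r$.

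The positive-rank case follows in the same spirit after an additional geometric step. Using Lemma~\ref{lem:short path conneting contours of annual A-map} to cut the annular conjugation diagram along a path of length at most $2\gamma v$, I obtain a disk diagram in which the shortened conjugating word $T$ contributes only $O(\gamma r)$ disjoint squares to $V$, so that the remaining $(0.5-O(\gamma))r$ disjoint squares of $V$ must be aligned with interior cell boundaries labeled $A^n$ (with $A$ itself a $0.03$-word). The contiguity estimates of Lemmas~\ref{lem:existence of gamma-cell}, \ref{lem:if Gamma is with minimal num of cells then r(Gamma)=0}, and~\ref{lem:contig deg to lambda-geodesic} then bound the aggregate contribution of cells by their total contiguity to $V$, which in turn is controlled by the code-length bound $\cln(T)\le f$; this reduces the count to the same exponential bookkeeping as in the rank-zero case. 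The main obstacle is precisely this positive-rank alignment argument: one must show that the interior cells cannot collectively absorb too many disjoint squares of $V$ without forcing the code length of $T$ to grow correspondingly, and this is where the geometric analysis of Burnside-type diagrams developed in Section~\ref{sec:geometry of BT groups} plays its decisive role.
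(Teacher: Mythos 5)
Your proposal has a genuine gap, and you have named it yourself: the positive-rank case. Your rank-zero computation (distributing the $\approx 0.499r$ disjoint squares of $V$ among $T^{-1}$, $U'$, $T$, forcing $T$ to be a high-density $\theta$-word, and then invoking Lemma~\ref{lem:not too many theta-words}) is a plausible piece of bookkeeping, but it only applies when $V$ and $U'$ are conjugate \emph{in the free group}. For a Burnside-type group the conjugacy diagram generically contains cells, the identity $V=T^{-1}U'T$ holds only in $G$, and the squares of $V$ need not appear anywhere in the words $T$ or $U'$. Your sketch for this case --- that the squares of $V$ ``must be aligned with interior cell boundaries'' and that contiguity estimates ``reduce the count to the same exponential bookkeeping'' --- is not an argument: you give no mechanism by which a cell absorbing a portion of $\partial\Delta$ constrains the number of admissible $V$'s, and no quantitative link between the squares swallowed by cells and the code length of $T$. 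Since this is exactly the case you must handle, the proof is incomplete.

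The paper's route avoids this alignment problem entirely. It works in the annular diagram $\Delta$ with contours $p$ (label $U$) and $q$ (label $V$) and argues in four steps: (1) $|U|>0.498r$ generically, by counting pairs $(U',T)$ with $\cln(T)\le\frac12(|V|-|U|)+9\gamma|V|$; (2) the number $(p,q)$ of \emph{common edges} of $p$ and $q$ exceeds $0.49r$ generically --- this is the key step, and it uses Lemma~\ref{lem:p+ and p- connected with short cln} to deduce $\cln(U)\le (p,q)+O(\beta r)$, so that a small common part would make $U$ (hence $g$) too compressible to account for $|S_r|\ge(4m-5)^r$ elements; (3) the common part splits into at most $4r/n$ arcs (each gap contains a cell of perimeter $\ge\bar\beta n$), so it pulls back to subwords $P_j$ of $W$ with $\sum|P_j|_Y\ge 0.24r$; (4) if $U$ had few squares, these $P_j$ would be strongly unbalanced, and Lemma~\ref{lem:very few unbalanced words} shows such $W$ are exponentially rare. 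In other words, the squares of $U$ are located on the literal overlap of the two boundary words, and the genericity of $W$ (balancedness of even and odd letters) is what forces them to exist; no analysis of where the squares of $V$ go inside the diagram is needed. If you want to salvage your approach, the missing ingredient is precisely an analogue of Step (2): a lower bound on the shared boundary, obtained from the code-length lemma, rather than an upper bound on what the cells can ``absorb.''
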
 

\begin{proof}
Observe that the word $U$ is reduced by assumption, and the word $V=\overrightarrow{W}$ is reduced by Lemma \ref{lem:X form of minimal Y-word is reduced}. Let $\Delta$ be a reduced annular diagram $\Delta$ for the conjugacy of $V$ and $U$ in $G$.  
Denote by $p,q$ the boundaries of $\Delta$, so $\Lab(p)\equiv U$, $\Lab(q)\equiv V$. Let $q_1,\dots,q_k$ be the maximal common subpaths of $p$ and $q^{-1}$ in $\Delta$. So $\sum_{i=1}^k |q_i|=(p,q)$ (see Definition \ref{def:(p,q)}). Denote by $Q_1,\dots, Q_k$ the labels of $q_1,\dots, q_k$ respectively. Recall that $V=\overrightarrow{W}$. So for each $j$ there is a subword $Q'_j$ in $Q_j$ of length $\ge |Q_j|-2$ such that $Q_j'= \overrightarrow{P_j}$ for a subword $P_j$ of $W$.

The proof of the lemma consists of four steps, in each we conclude that the corresponding property from the following list is generic for the sequence $\{S_r\}_r$:
\begin{enumerate}
    \item[Step 1.] $|U|>0.498 r$
    \item[Step 2.] $(p,q)>0.49r$ 
    \item[Step 3.] $\sum |P_j|_Y\geq 0.24r$
    \item[Step 4.] $U$ contains at least $0.05r$ disjoint squares $x_j^{\pm 2}$.
\end{enumerate}
From Step 4., we then conclude that $U$ is a $0.03$-word.

We now begin the proof.

{\bf{Step 1.}} 
By Lemma \ref{lem:conj to cyclic minimal by a short cl}, every element $g\in K_r$ is determined by the pair $(U',T)$, where $U'$ is a cyclic permutation of the word $U$ from the quadruple $(g,W,V,U)$, and $$\cln(T)\le \frac 12 (|V|-|U|)+9\gamma |V|.$$

By Lemma \ref{lem:1.499 r < |V|_X <1.501r is generic}, a generic quadruple has $|V|_X\le 1.501r$, and so for a fixed $i=|U|\le |V|\le 2r$, the number of such pairs $(U',T)$ does not exceed $$\underbrace{(2m-1)^i}_{\text{options for  }U'}\underbrace{(2m+2)^{\frac 12 (1.501r-i)+9\gamma 1.501r}}_{\text{options for }T}.$$

However, if $i\le 0.498r$, this product is exponentially less than $(4m-5)^r$ (assuming $\gamma <10^{-5}$ and $m$ to be sufficiently large). The same is true for the sum over all $i\le 0.0498r$; it is bounded from above by $r(4m-5-\xi)^r$ for some positive $\xi>0$. Since $|S_r|\ge (4m-5)^r$ by Lemma \ref{lem:1.499 r < |V|_X <1.501r is generic}, it follows that a generic quadruple must have $|U|=i>0.498 r$. This completes the proof of step 1.

{\bf{Step 2.} } 
Let $\Delta'$ be the circular diagram obtained by cutting $\Delta$ along $s$, as in done in Lemma \ref{lem:conj to cyclic minimal by a short cl}.
We may assume that  $p'=p$, by choosing $U$ so that the distance in $\Delta$ between the vertex $p_-$ and the path $q$ is minimal.

The paths $p$ and $sq_1^{-1}q_2^{-1}s^{-1}$
connect vertices $p_-$ and $p_+$, and
$$(p, sq_1^{-1}q_2^{-1}s^{-1})\leq (p,q)+2|s|<(p,q)+2\gamma(|p|+|q|).$$ By Lemma \ref{lem:p+ and p- connected with short cln}, there is a path $z$
connecting $p_-$ and $p_+$ with 
\begin{eqnarray*}
    \cln(z)& \le& (p,sq_1^{-1}q_2^{-1}s^{-1})+\beta(|p|+|sq_1^{-1}q_2^{-1}s^{-1}|) \\
    & \le & (p,q)+ 2\gamma(|p|+|q|) + \beta(|p|+|q|)(1+2\gamma) \\
    & < & (p,q)+2\beta(|U|+|V|) \\
    & \le & (p,q)+4\beta|V|
\end{eqnarray*}
Since by Lemma \ref{lem:1.499 r < |V|_X <1.501r is generic},  for a generic quadruple, $|V|>1.501r$, we have that for a generic quadruple, 
$$\cln(U)=\cln(z)< (p,q)+7\beta r.$$

As in step 1, we recall that $g\in K_r$ is determined by the pair $(U,T)$ (here $U'=U$), and using step 1 (namely, that $|U|=i> 0.498r$), the number of such pairs is at most
$$\underbrace{(2m+2)^{\frac 12 (1.501-0.498)r+9\gamma  1.501 r}}_{\text{options for }T}\underbrace{(2m+2)^{(p,q)+7\beta r}}_{\text{options for }U}.$$ 
Now, if $(p,q)\le 0.49r$, then the total exponent of $(2m+2)$ in this expression is less than $0.999r$. 
Using again that $|S_r|\ge (4m-5)^r$ by Lemma \ref{lem:1.499 r < |V|_X <1.501r is generic}, we get that $(p,q)>0.49 r$ is generic. This completes the proof of step 2.

{\bf{Step 3.}} 
Let $s=1,\dots, k-1$. Then $q_s$ and $q_{s+1}$ are separated in $q$, i.e. there is a subpath $q_sv_sq_{s+1}$ in $q$ and a circular subdiagram $\Delta_s$ of $\Delta'$ with boundary $v_sv'_s$, where $v'_s$ is a subpath of $p$, see Figure \ref{fig:(p,q)}.
\begin{figure}
    \centering
    \includegraphics[width=0.5\linewidth]{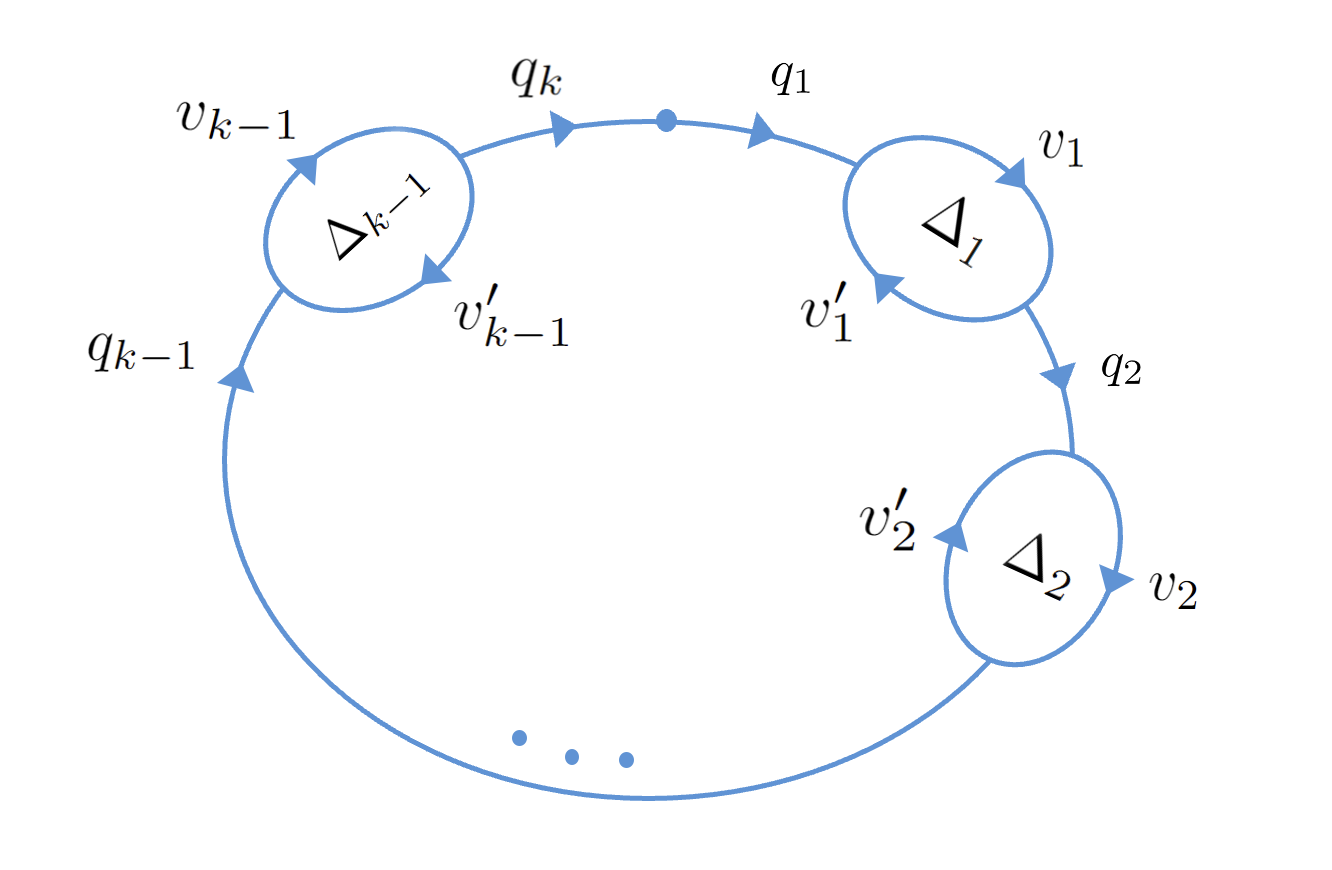}
    \caption{Step 3.}
    \label{fig:(p,q)}
\end{figure}
Since $p$ and $q$ are reduced paths, the diagram $\Delta_s$ has to contain at least one cell, and so the perimeter $|v_s v'_s|$ of $\Delta_s$ is greater than $\bar\beta n$ by Lemma \ref{lem:boundary of a cell shorter than of the diagram}. 
Thus, $$(k-1)\bar \beta n = \sum_{s=1}^{k-1}|v_sv_s'|  \le |p|+|q|\le 2|V|< 3.002 r,$$ and therefore $$k< 3.002\bar\beta^{-1}n^{-1}r+1<4r/n$$ for $r\ge 2$.

By step 2, generically, $\sum |Q_j|=(p,q)>0.49 r$. Together with the bound on $k$, this gives: $$\sum |P_j|_Y\ge \sum |Q'_j|_X/2 \ge
\sum |Q_j|_X/2-2k > 0.49r/2 - 8r/n> 0.24 r.$$ This completes the proof of step 3.

{\bf{Step 4.}} It is enough to bound the number of quadruples in which the total number of disjoint squares $x_j^{\pm 2}$ in the labels $Q_1,\dots, Q_k$ is less than $0.05r$.

Let $(g,W,V,U)$ be a quadruple for which the total number of disjoint squares in $Q_1,\dots, Q_k$ is less than $0.05r$. So the total number of `even' letters in the labels $P_1,\dots,P_k$ is less than $0.05r$. Since $\sum |P_j|_Y\ge 0.24r$ (step 3), this would mean that the word $P=P_1\dots P_k$ is strongly unbalanced. By Lemma \ref{lem:very few unbalanced words}, the number of possible products $P=P_1\dots P_k$ with this property, for a fixed $s=|P|_Y$, does not exceed $c(4m\rho)^s$, for some $\rho<1$.

Now $W$ can be constructed as follows.
We fix the positions of the subwords $P_j$ in the word of $Y$-length $r$ by inserting $k\le 4r/n$ pairs of commas. Let $n_s\le{n+2k\choose 2k}$ be the number of all possible distributions of commas. We fill in the intervals between neighbor commas with odd and even numbers with the parts $P_j$ of $P$. Finally, we put arbitrary letters from $Y^{\pm 1}$ to the remaining positions. The number of possible words $W$ that we can obtain this way, does not exceed $$\underbrace{n_s}_{\substack{\text{comma} \\ \text{displacements}}
}\underbrace{c\rho^s(4m)^s}_{\substack{\text{options} \\ \text{for $P$}}
}\underbrace{(4m)^{r-s}}_{\substack{\text{remaining} \\ \text{positions}}}.$$
Since $s>0.24r$ (step 3) we have a number $\rho'<1$, such that this value is at most $n_sc(\rho'4m)^r$.
Since the choice of $n$ makes the ratio $k/r$  as small as we want, it follows from Lemma \ref{lem:Stirling's bounds}(2) that $\sum_{s\le r} n_s$ can be assumed bounded by a function $f(r)=d \lambda^r$ with $\lambda-1$ as close to $0$ as desired, and it follows that the number of words $W$ under consideration is bounded by $$cd (\rho''4m)^r,$$ where $\rho''<1$.
If one chooses the constant $m$ large enough, this value becomes  exponentially negligible in comparison with $(4m-5)^r$.
This completes the proof of step 4.

Finally, since generically $|U|\le |V|<1.501 r$ (Lemma \ref{lem:1.499 r < |V|_X <1.501r is generic}), the word $U$ is a $0.03$-word.
\end{proof}

\begin{lemma}\label{lem:generically, k=pm 1}
    For every quadruple $(g,W,V,U)\in S_r$ let $A$ be a period, or a simple element, such that $U$ is conjugate to a power $A^k,k\in \mathbb{Z}$. Then generically, $k=\pm 1$.
\end{lemma}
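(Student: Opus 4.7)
The plan is to show that \emph{bad} quadruples, namely those with $|k|\ge 2$, form an exponentially small fraction of $S_r$. Since the case $k=0$ forces $U$ trivial and hence $g=1$ (contributing only one quadruple), establishing the exponential rarity of $|k|\ge 2$ yields the claimed genericity of $k=\pm 1$.

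The first step is normalization. Applying Lemma \ref{lem:smooth section} to an annular diagram with contour labels $U$ and $A^k$ (reducing $k$ modulo $n_A$ in the period case, and using that $A$-periodic paths are $\bar\beta^{-1}$-geodesic for simple $A$), I may assume $|A^k|\le \bar\beta^{-1}|U|$. Combined with $|k|\ge 2$, this yields $|A|\le 0.51\, i$, where $i:=|U|$. By Lemma \ref{lem:1.499 r < |V|_X <1.501r is generic} and Step 1 of Lemma \ref{lem:U 0.03-word is generic}, it suffices to count bad quadruples with $0.498 r<i<1.501 r$.

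The second step is to extract short coordinates via Lemma \ref{lem:conj to cyclic minimal by a short cl}. Applied to the annular diagrams comparing $U$ with $A^k$ and $U$ with $V$ respectively, this produces conjugators $T_A$ and $T_V$ satisfying
\[
\cln(T_A)\le \tfrac12(|A^k|-i)+9\gamma|A^k|\le (\beta+10\gamma)\, i,\qquad \cln(T_V)\le \tfrac12(|V|-i)+9\gamma|V|,
\]
together with cyclic shifts $A'$ of $A$ and $U'$ of $U$ such that $U'=T_A^{-1}(A')^k T_A$ and a cyclic shift of $V$ equals $T_V^{-1}U'T_V$ in $G$. Hence each bad $g\in K_r$ is recovered from the tuple $(A',k,T_A,U',T_V)$, up to cyclic shifting which costs only polynomial factors.

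The third step is the combinatorial count. The number of periods of length $j$ is bounded by $(2m-1)^{0.985j}$ (Lemma \ref{lem:not too many theta-words}), while the number of simple words of length $j$ is at most $(2m-1)^j$; the latter dominates, giving at most $(2m-1)^{0.51\, i}$ choices for $A$ with $|A|\le 0.51\, i$ (up to polynomial factors). Using Claim \ref{cla:cln(W)}(2) to bound the number of words by their code length, the total number of bad $g$'s is at most
\[
P(r)\cdot (2m-1)^{0.51\, i}\cdot (2m+2)^{(\beta+10\gamma)\, i+(1.501r-i)/2+9\gamma r}
\]
for a polynomial $P$. For $m$ large and $\beta,\gamma$ small (via LPP), the exponent in base $\approx 2m$ simplifies to $(0.01+\beta+10\gamma)i+0.7505r+9\gamma r\le 0.77r$ on the admissible range of $i$; hence the count is bounded by $(2m-1)^{0.77 r}$, which is exponentially smaller than $|S_r|\ge (4m-5)^r\approx 2^r(2m-1)^r$. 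The main subtlety is handling simple periods, where the only available count is the crude $(2m-1)^j$; this forces essential use of the slack $|A|\le 0.51\, i$ coming from $|k|\ge 2$, without which the argument would not close.
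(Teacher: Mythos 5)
Your argument is correct and follows essentially the same route as the paper: both hinge on Lemma~\ref{lem:conj to cyclic minimal by a short cl} together with the code-length count of Claim~\ref{cla:cln(W)}(2), with the slack coming from $|k|\ge 2$ closing the count against $|S_r|\ge (4m-5)^r$. The only (cosmetic) difference is that the paper conjugates $V$ directly to $A^k$ by a single conjugator $T$, letting the term $-\bar{\beta}|A^k|\le -2\bar{\beta}|A|$ in the bound on $\cln(T)$ absorb the cost of enumerating $A'$, whereas you factor the conjugation through $U$ with two conjugators and instead spend the slack on the bound $|A|\le 0.51\,|U|$.
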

\begin{proof}
We may assume that $A$ is a simple word and and $|k|\ge 2$. Let $\Delta$ be a reduced diagram for conjugacy
of $V$ and $A^k$. By Lemma \ref{lem:smooth section}(2), we may assume that $A^k$ is $\bar{\beta}^{-1}$-geodesic. 
Therefore we can use Lemma \ref{lem:conj to cyclic minimal by a short cl}, to get that $V$ is equal in $G$ to $T (A')^kT^{-1}$, where $A'$ is a cyclic permutation of $A$ and
$$\cln(T)\le \frac 12
(|V|-\bar{\beta}|A^k|)+9\gamma|V| \le \frac 12
(|V|-2\bar\beta |A|)+9\gamma|V|.$$

There are at most
$(2m+2)^{\frac 12 |V|-2\bar\beta|A|+9\gamma|V|}$ different such $T$-s and at most $(2m)^{|A|}$ choices for $A'$ (at first we fix $|A|=j$).
Therefore, the number of such quadruples is at most $$(2m+2)^{\frac 12|V| +9\gamma|V|+\beta|A|}\le (2m+2)^{(\frac 12 +3\beta)|V|}.$$
Using LPP to bound $\beta$, and since by Lemma \ref{lem:1.499 r < |V|_X <1.501r is generic}, $|V|>1.501r$ generically, this is at most $$ (2m+2)^{0.51\times 1.501r}\le (2m+2)^{0.77r}$$  Then we sum over all possible $k,j$, $2\le |k|<n$, and $1\le j<0.51r$, and yet get an exponentially negligible number in comparison with the cardinality of $S_r$, which is at least $(4m-5)^r$ (see Lemma \ref{lem:1.499 r < |V|_X <1.501r is generic}). This completes the proof.
\end{proof}

Finally, we show
\begin{lemma}\label{lem:asym density torsion wrt Y is 1}
A generic element $g$ of $Y$-length at most $r$ in $G$ has finite order.
\end{lemma}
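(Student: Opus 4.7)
The plan is to pass to the generic subset of $S_r$ on which the conclusions of Lemmas \ref{lem:1.499 r < |V|_X <1.501r is generic}, \ref{lem:U 0.03-word is generic}, and \ref{lem:generically, k=pm 1} hold simultaneously, and to argue that on this subset every quadruple $(g, W, V, U)$ has $g$ of finite order. A naive counting of simple words of length $i \approx |U|$ seems hopeless, so instead I will extract a contradiction from the structural facts already proved. Assume for contradiction that $g$ (equivalently $U$) has infinite order. By Lemmas \ref{lem:every word is conjugate to a power of a period or simple} and \ref{lem:finite order element is conjugate to a power of a period}, $U$ is conjugate in $G$ to $A^k$ for some word $A$ simple in $G$ and $k \neq 0$; by Lemma \ref{lem:generically, k=pm 1}, we may assume $k = \pm 1$. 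Since $U$ is cyclically minimal in $G$ by construction, and $A$ is also cyclically minimal (being simple), and $U \sim A^{\pm 1}$ in $G$, both attain the minimum length in their common $G$-conjugacy class, so $|A| = |U| =: i$.

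The crux will be a dichotomy exploiting the maximality of $L_i$. Generically, by Lemma \ref{lem:U 0.03-word is generic}, $U$ is a $\theta$-word of length $i$, so $U \in C$ and already satisfies condition (L1). Either $U$ also satisfies (L2)---that is, $U$ is not conjugate in $G(i-1)$ to any power of any word shorter than $U$---or it does not. If (L2) holds, then the maximality of $L_i$ forces either $U \in L_i$ or $U$ to be conjugate in $G(i-1)$ to $B^{\pm 1}$ for some $B \in L_i$; in either sub-case $U$ is conjugate in $G$ to a period and hence has finite order, contradicting the standing assumption. If (L2) fails, then $U \sim B^l$ in $G$ for some $|B| < i$ and $l \neq 0$; combining with $U \sim A^{\pm 1}$ yields $A \sim B^l$ in $G$ with $|B| < |A|$, contradicting the simplicity of $A$, which by definition cannot be conjugate to a power of a strictly shorter word.

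I anticipate the main delicate point to be the derivation of $|A| = |U|$, because without the equality of lengths the second case of the dichotomy collapses: one would only know $|A| \leq |U|$, leaving room for the scenario in which $A$ is genuinely shorter and the simplicity contradiction evaporates. The equality is forced precisely by combining the cyclic minimality of $U$ with the minimality intrinsic to the notion of a simple word, and it is exactly here that Lemma \ref{lem:generically, k=pm 1} (ruling out $|k| \geq 2$, which would otherwise permit $|A| < |U|$) plays an essential role. Once $|A| = |U|$ is in hand, the rest of the argument is a formal use of the definitions (L1)--(L3) and of the maximality of $L_i$ from the construction in Section \ref{sec:contrasting uniform}: Lemmas \ref{lem:U 0.03-word is generic} and \ref{lem:generically, k=pm 1} have already absorbed all the probabilistic content, and the deterministic finishing argument no longer needs code-length estimates or explicit enumeration of simple words.
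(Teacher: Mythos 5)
Your argument is correct and is essentially the paper's own proof in expanded form: both rest on Lemmas \ref{lem:U 0.03-word is generic} and \ref{lem:generically, k=pm 1} together with the maximality of the period sets $L_i$, and your dichotomy on condition (L2) (with the observation $|A|=|U|$ and the simplicity of $A$ ruling out the failure case) simply spells out what the paper compresses into ``by the definition of periods.''
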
 
\begin{proof}
Consider a generic element $g\in G$, and its corresponding quadruple, $(g,W,V,U)$. By Lemma \ref{lem:generically, k=pm 1}, we may assume that $U$ is not conjugate to a power $A^k$, $k\ne \pm 1$ of a shorter word $A$.
By the definition of periods and by Lemma \ref{lem:U 0.03-word is generic}, the word $U$ is therefore a conjugate of a period or the inverse of a period. Therefore, the element $g$ satisfy $g^n=1$. 
\end{proof}
Theorem \ref{thm:Theorem B} now follows from Lemma \ref{lem:asym density wrt X is 0} and Lemma \ref{lem:asym density torsion wrt Y is 1}.

\section{Contrasting generating sets: co-prime exponents}\label{sec:co-prime}
This section is dedicated to proving Theorem \ref{thm:two different exponents}.

The construction is similar to that of the previous section, where here finite order and infinite order elements are now replaced by elements of order dividing $n_1$ and elements of order dividing $n_2$, respectively.

We will make use of a few definitions and notations from the previous section. In particular, we will use the generating sets $X=\{x_1,\dots,x_m\}$ and $Y=\{x_1,x_1^{2},x_2,x_2^2,\dots,x_m,x_m^{2}\}$, and call a reduced word $W$ a $\theta$-word if the number of disjoint occurrences of squares $x^{\pm 2}$ ($x\in X$) is at least $\theta|W|$. As in Section \ref{sec:contrasting uniform}, we fix $\theta=0.03$ and $m>2^{100}$.

Let $C$ be the set of all $0.03$-words over $X^{\pm 1}$, and denote by $C^c$ the complement of $C$ (namely, all reduced words over $X^{\pm 1}$ that are not $0.03$-words). We now construct by induction, for each $i\ge 0$, sets $L_i$, $R_i$, $T_i$ and a group $G(i)$.

Let $L_0=R_0=T_0=\emptyset$ and let $G(0)=F(X)$. For $i\in \mathbb{N}$, suppose that $G(i-1)$ has been defined. 
Let $L_i$ be a maximal set of words satisfying (L1)--(L3) with respect to $G(i-1)$.

Let $R_i=\{A^{n_1} \ : \ A\in L_i\cap C\}\cup R_{i-1}$ and $T_i=\{A^{n_2} \ : \ A\in L_i\cap C^c\}\cup T_{i-1}$. 
Let $G(i)=\langle X \ | \ W=1 \ : \ W\in R_i\cup T_i\rangle$.
This completes the inductive construction.

Let now $R=\bigcup_{i=0}^\infty R_i$ and $T=\bigcup_{i=0}^\infty T_i$, and consider the group $G$ given by 
$$G=G(\infty)=\langle X \ | \ W=1 \ : \ W\in R\cup T\rangle.$$
We will refer to words in $L_i\cap C$ as rank $i$ $n_1$-periods, and to words in $L_i\cap C^c$ as rank $i$ $n_2$-periods.

By construction, $G$ is a Burnside-Type group, and so all lemmas from Section \ref{sec:geometry of BT groups} hold for $G$. 

The next lemma follows from \cite[Lemma 25.2]{OL-book}.

\begin{lemma}\label{lem:every word is conj to a period of some type}
    Every word is conjugate in $G$ to either a power of an $n_1$-period or to a power of an $n_2$-period. It has order dividing $n_1$ in $G$ in the former case, and order dividing $n_2$ in $G$ in the latter. In particular, $g^{n_1n_2}=1$ for every $g\in G$.
\end{lemma}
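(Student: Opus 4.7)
The plan is to follow the standard strategy for Burnside-type presentations, essentially verifying that \cite[Lemma 25.2]{OL-book} applies to our two-exponent setting.

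First, I would establish that every word $W$ is conjugate in $G$ to a power of some period (of some rank and some type). Lemma \ref{lem:every word is conjugate to a power of a period or simple}, applied at rank $i=\infty$, tells us that $W$ is conjugate in $G$ to $A^k$ where $A$ is either a period of some rank, or a word simple in all ranks. The main task is to rule out the latter alternative. The argument invokes the maximality of the sets $L_i$: if $A$ were simple in all ranks with $|A|=i$, then at the stage of constructing $L_i$ the word $A$ satisfies (L1) (by length) and (L2) (by simplicity in rank $i-1$). Since $L_i$ was taken maximal among sets satisfying (L1)--(L3), either $A\in L_i$ outright, or adjoining $A$ would violate (L3), meaning $A$ is conjugate in $G(i-1)$ to $B^{\pm 1}$ for some $B\in L_i$. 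In either case $A$ is conjugate in $G$ to a period (or the inverse of a period), contradicting simplicity. Hence $W$ is conjugate in $G$ to $A^k$ for some period $A$.

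Second, by the construction every period $A$ lies in $L_i$ for some $i$, and the decomposition $L_i=(L_i\cap C)\sqcup (L_i\cap C^c)$ partitions periods into $n_1$-periods and $n_2$-periods. If $A$ is an $n_1$-period, then the relation $A^{n_1}=1$ belongs to $R\cup T$ and thus holds in $G$; every conjugate of $A^k$ therefore has order dividing $n_1$. The same argument with $n_2$ in place of $n_1$ handles $n_2$-periods. This yields the first two sentences of the statement.

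Finally, the conclusion $g^{n_1n_2}=1$ for all $g\in G$ is immediate from the dichotomy: if $g^{n_1}=1$, then $g^{n_1n_2}=(g^{n_1})^{n_2}=1$, and if $g^{n_2}=1$, then $g^{n_1n_2}=(g^{n_2})^{n_1}=1$. I do not anticipate a significant obstacle, as the structural arguments mirror those for the standard partial Burnside groups in Section \ref{sec:partial burnside groups} and \cite[Chapters 7--8]{OL-book}; the only new ingredient is the bookkeeping between the two types of periods, which is automatic from the disjoint decomposition of each $L_i$ into its $C$-part and $C^c$-part.
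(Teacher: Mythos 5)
Your proof is correct and matches the intended argument: the paper itself disposes of this lemma with a one-line citation to \cite[Lemma 25.2]{OL-book}, and your write-up simply supplies the standard details behind that citation (Lemma \ref{lem:every word is conjugate to a power of a period or simple} at rank $\infty$, plus maximality of the $L_i$ to exclude words simple in all ranks, exactly as in Lemma \ref{cla:maximal period sets gives asymptotic torsion density 1}). No gaps.
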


In analogy to Lemma \ref{lem:bounding the length r torsion (Lemma 6)}, we now have:
\begin{lemma}\label{lem:bounding the length r type 1 torsion (Lemma 6)}
    The number of elements in $G$ having order dividing $n_1$ and length $r$ with respect to the generating set $X$ is less than $(2m-1)^{0.995r}$ for sufficiently large $r$-s.
\end{lemma}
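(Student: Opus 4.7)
The plan is to reuse the scheme of Lemma \ref{lem:bounding the length r torsion (Lemma 6)} almost verbatim, with the single conceptual adjustment that elements of order dividing $n_1$ are not just arbitrary torsion elements, but specifically those conjugate to powers of $n_1$-periods, i.e.\ periods lying in $C$ (the $0.03$-words). The coprimality of $n_1$ and $n_2$ is what lets us pin down this refinement, and afterwards the estimates go through with $C^c$-periods simply ignored.

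First I would invoke Lemma \ref{lem:every word is conj to a period of some type}: any nontrivial word $V$ with $V^{n_1}=1$ is conjugate in $G$ either to a power $A^k$ of an $n_1$-period or to a power $B^k$ of an $n_2$-period. In the latter case $B^k$ has order dividing $n_2$; combined with its order dividing $n_1$ and $\gcd(n_1,n_2)=1$, this forces $B^k=1$, so $V=1$, a contradiction. Hence every such $V$ is conjugate to $A^k$ for some $n_1$-period $A$ and some $1\le |k|<n_1$. Crucially, by construction every $n_1$-period belongs to $C$, so Lemma \ref{lem:not too many theta-words} applies: the number of $n_1$-periods of a given length $j$ is bounded by $(2m-1)^{(1-\theta/2)j}=(2m-1)^{0.985 j}$.

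Now I would partition the set $\calM_r$ of length-$r$ words $V$ satisfying $V^{n_1}=1$ into subsets $M_i$ ($i\le r$) according to the length $i$ of a cyclically minimal conjugate $U$ of $V$ in $G$. By Lemma \ref{lem:smooth section}(2) we may assume the period $A$ satisfies $|A|\le i$, so using the bound above together with $2n_1$ choices for the exponent $k$ and at most $i$ cyclic shifts, the set $\calZ_i'$ of cyclic shifts of such cyclically minimal words has $|\calZ_i'|\le 2n_1 i(2m-1)^{0.985 i}$. Then Lemma \ref{lem:conj to cyclic minimal by a short cl} supplies a word $T$ with
\[
\cln(T)\le \tfrac{1}{2}(r-i)+9\gamma r
\]
such that $V=T^{-1}U'T$ in $G$ for some $U'\in\calZ_i'$, so by Claim \ref{cla:cln(W)}(2) the number of choices for $T$ is at most $(2m+2)^{\frac{1}{2}(r-i)+9\gamma r+1}$.

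Multiplying these two counts gives exactly the expression (\ref{eqn:Lemma 6}) from the proof of Lemma \ref{lem:bounding the length r torsion (Lemma 6)}, with $n$ replaced by $n_1$. The same arithmetic (choosing $\gamma<10^{-4}$ and $m$ large enough so the factor $\left(\frac{2m+2}{2m-1}\right)^{(9\gamma+1/2)r}$ is at most $(2m-1)^{0.004r}$) yields a bound of the form $2n_1(2m+2)r(2m-1)^{0.994 r}$ on $|M_i|$ for each $i$, and summing the $r+1$ values of $i$ stays below $(2m-1)^{0.995 r}$ for $r\gg 1$. There is no real obstacle here beyond step one—the coprimality argument that rules out $n_2$-periods—after which the estimates are a line-by-line repetition of the earlier proof.
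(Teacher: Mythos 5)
Your proposal is correct and matches the paper's approach: the paper's proof is literally the one-line remark that the argument of Lemma \ref{lem:bounding the length r torsion (Lemma 6)} goes through verbatim with ``periods'' replaced by ``$n_1$-periods'' and $n$ by $n_1$, which is exactly what you carry out. Your explicit coprimality step showing that an element of order dividing $n_1$ must be conjugate to a power of an $n_1$-period (hence a $\theta$-word, so Lemma \ref{lem:not too many theta-words} applies) is the detail the paper leaves implicit, and it is right.
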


\begin{proof}
    The proof is identical to that of Lemma \ref{lem:bounding the length r torsion (Lemma 6)} were here `periods' is replaced by `$n_1$-periods' and `$n$' by `$n_1$'.
\end{proof}

Suppose that $m,n$ are large enough, we therefore get: 
\begin{lemma}\label{lem:asym density of type 1 torsion wrt X is 0}
    The asymptotic density (with respect to the generating set $X$) of elements with order dividing $n_2$ is $1$. Namely:
    $$\lim _{R\to \infty}\Pr_{U_X(R)}(x^{n_2}=1)=1.$$\end{lemma}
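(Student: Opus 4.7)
The strategy is to show that the proportion of elements of order dividing $n_1$ in the ball $B_X(R)$ tends to $0$, and then invoke Lemma~\ref{lem:every word is conj to a period of some type}, which partitions $G$ into elements of order dividing $n_1$ and elements of order dividing $n_2$, to conclude that the complementary proportion --- those of order dividing $n_2$ --- tends to $1$.

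More precisely, the plan is as follows. First, I would recall that by Lemma~\ref{lem:every word is conj to a period of some type}, every element $g \in G$ satisfies either $g^{n_1}=1$ or $g^{n_2}=1$. Consequently, for every $R$,
$$ B_{G,X}(R) \;=\; \{g \in B_{G,X}(R) : g^{n_1}=1\} \,\cup\, \{g \in B_{G,X}(R) : g^{n_2}=1\}, $$
and therefore
$$ \#\{g \in B_{G,X}(R) : g^{n_2}=1\} \;\geq\; \#B_{G,X}(R) \;-\; \#\{g \in B_{G,X}(R) : g^{n_1}=1\}. $$

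Next I would apply Lemma~\ref{lem:bounding the length r type 1 torsion (Lemma 6)}, which gives the bound $\#\{g \in B_{G,X}(R) : g^{n_1}=1\} \leq (2m-1)^{0.995R}$ for large $R$, together with Lemma~\ref{lem:growth of BT groups}, which ensures $\#B_{G,X}(R) \geq (2m-1)^{0.999R}$ for large $R$ (the group $G$ being a Burnside-type group with all imposed exponents at least $\min\{n_1,n_2\}$, taken sufficiently large). Dividing, we obtain
$$ \Pr_{U_X(R)}(x^{n_2}=1) \;\geq\; 1 \;-\; \frac{(2m-1)^{0.995R}}{(2m-1)^{0.999R}} \;=\; 1 \;-\; (2m-1)^{-0.004R}, $$
which tends to $1$ as $R \to \infty$.

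There is no real obstacle here: the heavy lifting has already been done in the counting estimate of Lemma~\ref{lem:bounding the length r type 1 torsion (Lemma 6)} (which itself mirrors the analogous bound in Section~\ref{sec:contrasting uniform}, with the word "period" reinterpreted as "$n_1$-period"). The only content of the present lemma is the observation that, since by Lemma~\ref{lem:every word is conj to a period of some type} there are only two possibilities for the order type of any element, a sub-exponential bound on one of them (relative to the growth of the ball) automatically forces the asymptotic density of the other to equal $1$.
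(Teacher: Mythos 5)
Your proof is correct and follows essentially the same route as the paper: bound the number of elements of order dividing $n_1$ in $B_{G,X}(R)$ via Lemma~\ref{lem:bounding the length r type 1 torsion (Lemma 6)}, compare against the growth lower bound of Lemma~\ref{lem:growth of BT groups}, and pass to the complement using Lemma~\ref{lem:every word is conj to a period of some type}. The only (cosmetic) difference is that you invoke the dichotomy ``every element has order dividing $n_1$ or dividing $n_2$'' directly, whereas the paper phrases the last step via the ``in particular'' part ($g^{n_1n_2}=1$) together with co-primality; your version is, if anything, the more airtight way to state it.
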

\begin{proof}
    We first show that $\lim _{R\to \infty}\Pr_{U_X(R)}(x^{n_1}=1)=0$.
    Indeed, this follows directly from the previous lemma, Lemma \ref{lem:bounding the length r type 1 torsion (Lemma 6)}, keeping in mind that by Lemma \ref{lem:growth of BT groups}, $\# B_{G,X}(R)$ is at least $(2m-1)^{0.999 R}$. 

    It now follows by the `in particular' part of Lemma \ref{lem:every word is conj to a period of some type}, and since $n_1$ and $n_2$ are co-prime, that
    $\lim _{R\to \infty}\Pr_{U_X(R)}(x^{n_2}=1)=1$.
\end{proof}

As in the previous section, we use the substitution $y_{2j-1}\mapsto x_j$, $y_{2j}\mapsto x_j^2$ we can transform every word $W$ over $Y^{\pm 1}$ to a word $\overrightarrow{W}$ over $X^{\pm 1}$, called the $X$-form of $W$. It is reduced, assuming that $W$ was $Y$-minimal. Also here, $K_r$ will be the set of elements $g\in G$ having $Y$-length $r$. To every $g\in K_r$, $W$ is a minimal word in $Y$ representing $g$. For every word $V=\overrightarrow{W}$ in $X$, we choose a cyclically minimal word $U$ in $X$ which is a conjugate of $V$ in $G$. We use $S_r$ to denote the set of quadruples $(g, W, V, U)$ for $g\in K_r$. We note that Lemma \ref{lem:1.499 r < |V|_X <1.501r is generic}, Lemma \ref{lem:U 0.03-word is generic}, and Lemma \ref{lem:generically, k=pm 1}, regarding properties that are generic for the sequence $\{S_r\}$, work here with identical proofs. 

We conclude the section by showing:
\begin{lemma}
    A generic element $g$ of $Y$-length $r$ in $G$ has order dividing $n_1$. In particular: $$\lim _{R\to \infty}\Pr_{U_Y}(x^{n_1}=1)=1.$$
    
\end{lemma}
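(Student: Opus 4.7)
The plan is to follow the strategy of Lemma \ref{lem:asym density torsion wrt Y is 1} essentially verbatim, with one adjustment dictated by the presence of two exponents. As the paragraph preceding the statement points out, Lemmas \ref{lem:1.499 r < |V|_X <1.501r is generic}, \ref{lem:U 0.03-word is generic}, and \ref{lem:generically, k=pm 1} carry over to the group $G$ built in this section with identical proofs. Consequently, for a generic quadruple $(g,W,V,U)\in S_r$ we will have: $U$ is a $\theta$-word of some length $j:=|U|$; $U$ is not conjugate in $G$ to $A^k$ with $|k|\ge 2$ and $|A|<|U|$; and $U$ is cyclically minimal in $G$ by construction. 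Together, these imply that $U$ satisfies conditions (L1) and (L2) with respect to $G(j-1)$.

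The step that requires care beyond Section \ref{sec:contrasting uniform} is to show that the period to which $U$ is conjugate lies in $L_j\cap C$ (making it an $n_1$-period), rather than in $L_j\cap C^c$. I will fix the following convention in the inductive construction of $L_j$, which is consistent with maximality since (L1)-(L3) depend only on the $G(j-1)$-conjugacy class of the word: whenever a $G(j-1)$-conjugacy class of words satisfying (L1)-(L3) contains a $\theta$-word representative, we select a $\theta$-word as its representative in $L_j$. Under this convention, $L_j\cap C$ becomes a maximal set (inside $C$) of $\theta$-words satisfying (L1)-(L3), and the argument of Section \ref{sec:contrasting uniform} applies: the $\theta$-word $U$, satisfying (L1) and (L2), must be conjugate in rank $j-1$ to $B^{\pm 1}$ for some $B\in L_j\cap C$. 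Hence $g$ is conjugate in $G$ to a power of an $n_1$-period, and $g^{n_1}=1$ by Lemma \ref{lem:every word is conj to a period of some type}.

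The ``in particular'' assertion will then follow by a standard sphere-to-ball argument: since $\langle Y\rangle=G$ and $X\subseteq Y$, Lemma \ref{lem:growth of BT groups} provides exponential growth of $G$ in the $Y$-metric, so each ball $B_{G,Y}(R)$ is dominated in cardinality by its outermost shells, transferring the sphere-genericity $\Pr(g^{n_1}=1\mid g\in K_r)\to 1$ to the ball-limit $\Pr_{U_Y(R)}(x^{n_1}=1)\to 1$. The one genuinely new obstacle, and the only point of departure from Section \ref{sec:contrasting uniform}, is justifying the preferred-representative convention and verifying that the earlier genericity lemmas on $\{S_r\}$ are unaffected by it (they are, as they depend only on the structure of $(V,U)$ and not on how $L_j$ partitions into $C$ and $C^c$); this is essentially bookkeeping but must be stated explicitly.
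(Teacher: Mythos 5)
Your proof follows the paper's argument for this lemma essentially verbatim: for a generic quadruple $(g,W,V,U)$, the word $U$ is a $0.03$-word (Lemma \ref{lem:U 0.03-word is generic}) that is not conjugate to a power $A^k$, $k\neq\pm1$, of a shorter word (Lemma \ref{lem:generically, k=pm 1}), hence by maximality of $L_j$ it is conjugate to $B^{\pm1}$ for an $n_1$-period $B$, giving $g^{n_1}=1$ via Lemma \ref{lem:every word is conj to a period of some type}. The one place you go beyond the paper --- imposing the convention that a conjugacy class containing a $\theta$-word gets a $\theta$-word representative in $L_j$ --- is a legitimate and genuinely useful clarification: since in this section $L_j$ is taken maximal among \emph{all} words and only afterwards intersected with $C$ (unlike Section \ref{sec:contrasting uniform}, where $L_j$ is maximal \emph{within} $C$), a $\theta$-word $U$ could a priori be conjugate to some $B\in L_j\cap C^c$, and the paper's proof compresses this point into the phrase ``by construction.''
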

\begin{proof}
    Consider a generic element $g\in G$, and its corresponding quadruple, $(g,W,V,U)$. By Lemma \ref{lem:generically, k=pm 1}, we may assume that $U$ is not conjugate to a power $A^k$, $k\ne \pm 1$ of a shorter word $A$. By Lemma \ref{lem:U 0.03-word is generic}, we may assume that $U$ is a $0.03$-word. By construction, it then follows that $U^{n_1}=1$, and therefore that $g^{n_1}=1$.
\end{proof}

\begin{remark}
    In fact, the assumption that $n_1,n_2$ are co-prime is not necessary for the construction. Indeed, repeating the above construction merely assuming that $n_1, n_2>n$ are odd, the resulting group satisfies the statement of Theorem \ref{thm:two different exponents}, even replacing $x^{n_j}=1$ by $\ord(x)=n_j$ for $j=1,2$. 
\end{remark}

\section{The spectrum of torsion densities}
In this section we prove Theorem \ref{thm:Theorem D}. The proof for the case of random walks (see Corollary \ref{cor:spectrum[0,1]}) is based on the construction given in Section \ref{sec:different generating sets}, and on the fact that the set of limits of the sequence $\{\Pr((X_i)^n=1)\}_{i=1}^{\infty}$ is connected (Lemma \ref{lem:connected}). The case of uniform measures on Cayley balls is treated separately in Subsection \ref{subsec:spectrum uniform measures}.

\subsection{Random walks}
For two probability measures $\mu,\nu$ on a measurable space $(X,\mathcal{F})$ (here $\mathcal{F}$ is a $\sigma$-algebra) we let their total variation distance be $d_{\text{TV}}(\mu,\nu) = \sup_{A\in \mathcal{F}} |\mu(A) - \nu(A)|$.
The following is standard, and we include it here with a proof for the reader's convenience. For two real numbers $a,b$ we write $a \approx_\varepsilon b$ if $|a-b| \leq \varepsilon$. (Notice that in the following lemma, $n$ is a running index independent of the exponent $n$ that we use in the rest of the paper.)

\begin{lemma} \label{lem:TV0}
Let $\mu$ be a probability measure on a group $G$ with $\mu(1)>0$. Then $d_{\text{TV}}(\mu^{*(n+1)},\mu^{*n}) \xrightarrow{n\rightarrow \infty} 0$. 
\end{lemma}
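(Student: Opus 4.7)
The plan is to exploit the laziness hypothesis $p := \mu(1) > 0$ via the decomposition $\mu = p\delta_1 + (1-p)\nu$, where (assuming $p < 1$; the case $p = 1$ being trivial since then $\mu^{*n} = \delta_1$ for all $n$) $\nu := (1-p)^{-1}(\mu - p\delta_1)$ is a probability measure on $G$. Binomially expanding the convolution,
\[
\mu^{*n} \;=\; \sum_{k=0}^{n} \binom{n}{k}(1-p)^{k} p^{n-k}\,\nu^{*k} \;=\; \sum_{k \ge 0} \lambda_n(k)\,\nu^{*k},
\]
where $\lambda_n$ denotes the $\text{Binomial}(n,1-p)$ law on $\{0,1,\dots,n\}$, extended by zero elsewhere. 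Probabilistically, $\mu^{*n}$ is the law of a product of $K_n \sim \lambda_n$ i.i.d.\ $\nu$-distributed letters.

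First, I would reduce the problem to a one-dimensional estimate. Viewing $\mu^{*(n+1)}$ and $\mu^{*n}$ as mixtures of the common family $\{\nu^{*k}\}_{k \ge 0}$, the triangle inequality together with $\sum_g \nu^{*k}(g) = 1$ yields
\[
d_{\text{TV}}(\mu^{*(n+1)},\mu^{*n}) \;\le\; \tfrac{1}{2} \sum_{k \ge 0} \bigl|\lambda_{n+1}(k) - \lambda_n(k)\bigr| \;=\; d_{\text{TV}}(\lambda_{n+1},\lambda_n).
\]

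Next, I would show $d_{\text{TV}}(\lambda_{n+1},\lambda_n) \to 0$. The recursion $\lambda_{n+1} = \lambda_n * \text{Bernoulli}(1-p)$ gives $\lambda_{n+1}(k) - \lambda_n(k) = (1-p)\bigl(\lambda_n(k-1) - \lambda_n(k)\bigr)$, so
\[
d_{\text{TV}}(\lambda_{n+1},\lambda_n) \;=\; \tfrac{1-p}{2} \sum_k \bigl|\lambda_n(k) - \lambda_n(k-1)\bigr|.
\]
Since $\lambda_n$ is unimodal, this telescoping sum equals $2\max_k \lambda_n(k)$, which by Stirling's formula is $O(n^{-1/2})$, and the conclusion follows.

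The argument is essentially formal; the only substantive ingredient is the $O(n^{-1/2})$ decay of the Binomial mode, which follows from Stirling (equivalently, a local CLT) and is the mildest of technical points. Laziness enters only through the decomposition $\mu = p\delta_1 + (1-p)\nu$ with $p > 0$, which lets us express both $\mu^{*n}$ and $\mu^{*(n+1)}$ as mixtures of the common family $\{\nu^{*k}\}_{k \ge 0}$; without it the clean reduction to a one-dimensional Binomial comparison is unavailable, and indeed the conclusion can fail (e.g.\ for $\mu = \delta_g$ with $g$ of infinite order, $\mu^{*n}$ and $\mu^{*(n+1)}$ have disjoint supports).
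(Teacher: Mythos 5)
Your proof is correct. It rests on the same underlying decomposition as the paper's: writing $\mu = p\delta_1+(1-p)\nu$ and expanding $\mu^{*n}$ as a $\mathrm{Binomial}(n,1-p)$-mixture of the convolution powers $\nu^{*k}$ (the paper phrases this as conditioning on which steps of the walk are lazy, with $a'_j=\nu^{*j}(A)$). Where you genuinely diverge is in the one-dimensional estimate that remains. The paper compares the binomial weights of $\mu^{*(n+1)}$ and $\mu^{*n}$ pointwise, using a Chernoff bound to discard the tails $|k-pn|\ge n^{2/3}$ and a ratio estimate $\frac{n+1}{n+1-j}p\approx 1$ in the bulk, arriving at a bound of order $n^{-1/3}$. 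You instead observe the clean inequality $d_{\text{TV}}(\mu^{*(n+1)},\mu^{*n})\le d_{\text{TV}}(\lambda_{n+1},\lambda_n)$ for mixtures over a common family, and then compute $d_{\text{TV}}(\lambda_{n+1},\lambda_n)=\frac{1-p}{2}\sum_k|\lambda_n(k)-\lambda_n(k-1)|=(1-p)\max_k\lambda_n(k)$ exactly, via the recursion $\lambda_{n+1}=\lambda_n*\mathrm{Bernoulli}(1-p)$ and unimodality of the binomial. This is both more elementary (no concentration inequality needed) and sharper (rate $O(n^{-1/2})$ rather than $O(n^{-1/3})$); since the lemma is only used qualitatively in the paper (to prove connectedness of the set of limit points in Lemma \ref{lem:connected}), the improved rate is not needed, but your argument is the cleaner one.
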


\begin{proof}
Let $\mu$ be a measure on $G$ and let $p=\mu(1)>0,\ q=1-p$.
Let $X_1,X_2,\dots$ be independent identically distributed (i.i.d) random variables on $G$ with distribution $X_i\sim \mu$ for every $i\in \mathbb{N}$. Fix a measurable set $A\subseteq G$. Let $a_n = \Pr(X_1\cdots X_n\in A)=\mu^{*n}(A)$. Let $a'_n = \Pr(X_1\cdots X_n\in A|X_1,\dots,X_n\neq 1)$. By the law of total probability:
\[
a_n = \sum_{k=0}^{n} {n \choose k} p^k q^{n-k} a'_{n-k},\ \ \   
a_{n+1} =  \sum_{k=0}^{n+1} {n+1 \choose k} p^k q^{n+1-k} a'_{n+1-k}.
\]
Furthermore, let $X\sim Bin(n,p)$ be a binomial random variable, counting successes (with probability $p$) among $n$ independent trials. Then, by the Chernoff bound (see e.g. \cite[Corollary 4.6]{Chernoff}): \[ \sum_{|k-pn|\geq n^{2/3}} {n \choose k} p^k q^{n-k} a'_{n-k} \leq \sum_{|k-pn|\geq n^{2/3}} {n \choose k} p^k q^{n-k} = \Pr(|X-pn|\geq n^{2/3}) \leq \underbrace{2e^{-cn^{1/3}}}_{=:\varepsilon} \] for some $c>0$ independent of $n$. Therefore:
\[
a_n \approx_\varepsilon \sum_{|k-pn|\leq n^{2/3}} {n \choose k} p^k q^{n-k} a'_{n-k},\ \ \   
a_{n+1} \approx_\varepsilon
\sum_{|k-pn|\leq n^{2/3}} {n+1 \choose k} p^k q^{n+1-k} a'_{n+1-k}
\]
so (with $j=n-k,n+1-k$ in the left and right sums, respectively\footnote{Notice that in the right sum, $|k-pn|=|j-qn+1|$ rather than $|j-qn|$, but this can be ignored by updating $c$ (still independently of $n$).}):
\begin{eqnarray}
a_{n+1} - a_n & \approx_{2\varepsilon} & \sum_{|j-qn|\leq n^{2/3}} a'_j \left( {n+1 \choose j} p^{n+1-j}q^j - {n \choose j} p^{n-j} q^j \right) \nonumber \\ & = & \sum_{|j-qn|\leq n^{2/3}} {n \choose j} p^{n-j}q^j a'_j \left(\frac{n+1}{n+1-j} p - 1\right). \label{eqn:TV1}
\end{eqnarray}
Notice that if $qn-n^{2/3}\leq j\leq qn+n^{2/3}$ then:
\[
\frac{1}{p} - \frac{K}{n^{1/3}} \leq \frac{n}{pn+2n^{2/3}} \leq \frac{n+1}{n+1-j}\leq \frac{n}{pn - n^{2/3}} \leq \frac{1}{p} + \frac{K}{n^{1/3}}
\]
for some constant $K$ (depending on $p,q$ but not on $n$). By (\ref{eqn:TV1}):
\[
a_{n+1} - a_n \approx_{2\varepsilon} \sum_{|j-qn|\leq n^{2/3}} {n \choose j} p^{n-j}q^j a'_j \left(\frac{n+1}{n+1-j} p - 1\right) \approx_\frac{2K}{n^{1/3}} 0,
\]
since $\sum_{j=0}^n {n\choose j}p^{n-j}q^{j}a'_j \leq 1$. It follows that:
\[
|\mu^{*(n+1)}(A) - \mu^{*n}(A)|=|a_{n+1}-a_n|\leq 2\varepsilon + \frac{2K}{n^{1/3}},
\]
a bound which decays to zero as $n\rightarrow \infty$ and which does not depend on $A$. The result follows.
\end{proof}

\begin{lemma} \label{lem:connected}
Let $\{X_i\}_{i=1}^{\infty}$ be a lazy random walk on a group $G$. Let $A\subseteq G$ be an event. Then the set of limit points of $\{\Pr(X_i\in A)\}_{i=1}^{\infty}$ is connected.
\end{lemma}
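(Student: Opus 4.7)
The plan is to reduce the statement to a standard real-analysis fact about bounded sequences with vanishing consecutive differences, using Lemma \ref{lem:TV0} as the bridge.

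First I would fix a measurable (here: any) subset $A \subseteq G$ and set $a_i := \Pr(X_i \in A) = \mu^{*i}(A)$, where $\mu$ is the step distribution of the random walk. Since the walk is lazy, $\mu(1) > 0$, so Lemma \ref{lem:TV0} applies and yields
\[
|a_{i+1} - a_i| = |\mu^{*(i+1)}(A) - \mu^{*i}(A)| \leq d_{\text{TV}}(\mu^{*(i+1)}, \mu^{*i}) \xrightarrow{i\to\infty} 0.
\]
Thus $(a_i)$ is a bounded sequence in $[0,1]$ whose consecutive differences tend to zero.

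The remaining task is the purely real-analytic claim: \emph{the set of accumulation points of a bounded real sequence $(a_i)$ with $a_{i+1} - a_i \to 0$ is a closed interval, in particular connected.} The set of accumulation points is always closed, so it suffices to prove convexity. Suppose $\alpha < \beta$ are both accumulation points and pick $\gamma \in (\alpha,\beta)$; I would show $\gamma$ is also an accumulation point. Given $\varepsilon > 0$ with $\varepsilon < \min(\gamma - \alpha, \beta - \gamma)$, choose $N$ so that $|a_{i+1} - a_i| < \varepsilon$ for all $i \geq N$. Since $\alpha$ and $\beta$ are accumulation points, there exist arbitrarily large indices $i < j$ with $a_i < \gamma - \varepsilon$ and $a_j > \gamma + \varepsilon$ (and both $\geq N$). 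As consecutive jumps are of size less than $\varepsilon$, some index $i \leq k \leq j$ must satisfy $|a_k - \gamma| \leq \varepsilon$. Since $\varepsilon$ was arbitrary and such pairs $(i,j)$ occur with arbitrarily large indices, $\gamma$ is indeed an accumulation point.

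I do not expect any serious obstacle here: the laziness hypothesis has already been used to establish the total-variation estimate in Lemma \ref{lem:TV0}, and the rest is a clean application of a standard elementary lemma. The mild subtlety is simply to notice that Lemma \ref{lem:TV0} gives a bound \emph{uniform in the test set} $A$, so that $|a_{i+1} - a_i|$ decays regardless of which event $A$ has been fixed.
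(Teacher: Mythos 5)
Your proposal is correct and follows essentially the same route as the paper: both invoke Lemma \ref{lem:TV0} (noting that the total-variation bound is uniform in $A$) to get $|a_{i+1}-a_i|\to 0$, and then apply the standard fact that the limit set of a bounded real sequence with vanishing consecutive differences is a closed interval. The elementary intermediate-value argument you sketch is the same one the paper gives.
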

\begin{proof}
Let $\{a_i\}_{i=1}^{\infty}$ be a bounded sequence of real numbers such that $a_{i+1}-a_i\xrightarrow{i\rightarrow \infty} 0$. Let $C$ be the set of limit points of $\{a_i\}_{i=1}^{\infty}$. This is a closed subset of $\mathbb{R}$. We claim that $C$ is connected. Indeed, let $w<x<y<z$ be real numbers such that $w,z\in C$. Let $\delta = \frac{y-x}{2}$. Let $m\gg 1$ be such that $|a_{i+1}-a_i|<\delta$ for all $i\geq m$. Let $n_2 > n_1\geq m$ be such that $|w-a_{n_1}|,|z-a_{n_2}|<\delta$. Then there exists some $n_1 \leq n' \leq n_2$ such that $x<a_{n'}<y$.
The claim now follows using Lemma \ref{lem:TV0}.
\end{proof}

\begin{corollary} \label{cor:spectrum[0,1]}
There exists a finitely generated group $G$, $n\in \mathbb{N}$, and a finitely supported, non-degenerate, symmetric, lazy random walk $\{X_i\}_{i=1}^{\infty}$ on $G$ such that every real number in $[0,1]$ occurs as a partial limit of the sequence $\{\Pr((X_i)^n=1)\}_{i=1}^{\infty}$.
\end{corollary}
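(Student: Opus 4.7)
My approach is to combine the oscillating random walk constructed in Section \ref{sec:different generating sets} with the connectedness principle of Lemma \ref{lem:connected}. Specifically, let $G$ be the group from Section \ref{sec:different generating sets} (built with the sequence $\{r_i\}$ alternating between free and torsion radii), and let $\{X_i\}_{i=1}^{\infty}$ be the simple random walk on $G$ whose step distribution $\nu$ is uniform on $X^{\pm 1} \cup \{1\}$. This random walk is finitely supported, symmetric, non-degenerate, and lazy (since $\nu(1) > 0$), so it meets all the required hypotheses.

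First I would extract both $0$ and $1$ as partial limits of $\{\Pr((X_i)^n=1)\}_{i=1}^{\infty}$ directly from Lemma \ref{lem:torsion density RW S oscillation}. Taking $i$ odd and setting $r = K r_i$, that lemma gives $\Pr((X_{Kr_i})^n=1)\geq 1 - 1/(Kr_i)$, so letting $i \to \infty$ along odd integers yields $1$ as a partial limit. Conversely, choosing any sequence $r_j \to \infty$ with $r_j \in [K r_{i_j}, \tfrac{1}{2} r_{i_j+1}]$ for even $i_j$, the same lemma gives $\Pr((X_{r_j})^n=1)\leq 1/r_j \to 0$, so $0$ is also a partial limit.

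Next I would apply Lemma \ref{lem:connected} with the event $A = \{g\in G : g^n = 1\}$. Since $\{X_i\}$ is lazy, the lemma implies that the set $L$ of limit points of the sequence $\{\Pr(X_i\in A)\}_i = \{\Pr((X_i)^n=1)\}_i$ is connected. Because $L$ is a connected subset of $[0,1]$ containing both $0$ and $1$, it must coincide with the entire interval $[0,1]$, which is the desired conclusion.

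There is no significant obstacle once the preceding machinery is in place: the construction of Section \ref{sec:different generating sets} already manufactures the oscillation between torsion probabilities near $0$ and near $1$, and the connectedness of the limit set (which itself rests on the total-variation estimate $d_{\text{TV}}(\mu^{*(n+1)},\mu^{*n})\to 0$ from Lemma \ref{lem:TV0}) automatically supplies every intermediate value as a partial limit.
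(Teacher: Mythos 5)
Your proposal is correct and follows exactly the paper's argument: the paper also takes the oscillating group of Section \ref{sec:different generating sets}, invokes Lemma \ref{lem:torsion density RW S oscillation} to get $\limsup = 1$ and $\liminf = 0$ for the simple lazy random walk, and then applies Lemma \ref{lem:connected} to conclude that the connected set of limit points must be all of $[0,1]$. Your write-up just makes explicit the verification of the hypotheses (symmetric, lazy, finitely supported, non-degenerate) that the paper leaves implicit.
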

\begin{proof}
By Lemma \ref{lem:torsion density RW S oscillation} there exists a group $G$ as desired with $\limsup_{i\rightarrow \infty} \Pr((X_i)^n=1) = 1$ and $\liminf_{i\rightarrow \infty} \Pr((X_i)^n=1) = 0$. The claim now follows using Lemma \ref{lem:connected}.
\end{proof}

\begin{remark}\label{rem:conn not for cayley}
Lemma \ref{lem:TV0} does not extend to the sequence of uniform measures on balls in Cayley graphs. For instance, consider the free group $F_m$ on $m>1$ generators. Let $\mu_n$ be the uniform measure on the ball of radius $n$ in the Cayley graph of $F_m$ with respect to the standard generating set. Let $A=\bigcup_{i=1}^{\infty} \mathbb{S}(2i)$ be the union of all spheres in the Cayley graph of even radii. Then $\mu_{2n}(A)-\mu_{2n-1}(A)\geq c>0$ for some constant $c$, since the radius-$2n$ sphere occupies a positive (bounded away from zero) fraction of the radius-$2n$ ball. Thus, for uniform measures, we cannot apply the argument from Corollary \ref{cor:spectrum[0,1]}. This case requires a more delicate analysis, which will be done in the next subsection.
\end{remark}

\subsection{Uniform measures}\label{subsec:spectrum uniform measures}
We now turn to construct a Burnside-type group satisfying the conditions of Theorem \ref{thm:Theorem D} for the case of uniform measures on Cayley balls. 
In fact, the group we construct below satisfies a stronger property, as in the following statement:

\begin{theorem}\label{thm:spectrum strong version}
    Let $n\in \mathbb{N}$ be large enough. Then there exists a finitely generated, torsion group $G=\langle S\rangle$, such that for every odd integer $N\ge n$, every real number $\alpha\in [0,1]$ is a partial limit of the sequence $$\{\Pr_{U_S(R)}(x^N=1)\}^{\infty}_{R=1}.$$
\end{theorem}

In other words, all sufficiently large odd integers appear simultaneously as probabilistic exponents for $G$, while for each $N\geq n$, the density of elements of order $N$ in $G$ oscillates, approaching infinitely often any value in $[0,1]$.

The proof of Theorem \ref{thm:spectrum strong version} is brought after explaining the construction.

Fix an alphabet $S=\{x_1,\dots,x_m\}$, and $n \gg 1$ odd. Denote by $\mathbb{N}^{odd}_{\geq n}$ the set of odd integers $\geq n$.
Let $(\alpha_0,N_0),(\alpha_1,N_1),(\alpha_2,N_2),\dots$ be an enumeration of all pairs in $\left(\mathbb{Q}\cap [0,1]\right)\times \mathbb{N}^{odd}_{\geq n}$.

We will now construct a Burnside-type group $G$ by induction, where at the $k$-th inductive step ($k=0,1,\dots$), the group $G(r_k)$ will be defined so that 
\begin{eqnarray}\label{eq:spectrum}
    \Big|\Pr_{U_{G(r_k),S}(r_k)}(g^{N_k}=1)- \alpha_k\Big|\le 2^{-k}.
\end{eqnarray}
The sequence $0=r_0<r_1<\dots $ will be determined throughout the construction. 

Let $r_0=0$ and $G(0)=F(S)$. Since $2^{-0}=1$, (\ref{eq:spectrum}) is satisfied for $k=0$. 
For $k>0$ suppose that $r_{k-1}$ and $G(r_{k-1})$ have already been constructed such that (\ref{eq:spectrum}) holds for $k-1$.

By Lemma \ref{cla:finite rank group has asymptotic torsion density 0}, $G(r_{k-1})$ has asymptotic torsion density approaching to $0$. Therefore, there exists $r>0$ large enough such that the ball of radius $r$ in $G(r_{k-1})$ has torsion density less than $\alpha_k$.

We now turn to construct a Burnside-type group to be called $H$. Set $H(r)=G(r_{k-1})$. 
By induction on $i$, we choose for every $i>r$, a maximal set of periods $L_i$ satisfying conditions (L1)--(L3) with respect to the group $H(i-1)$, and set $n_{A}=N_k$ for all periods $A\in L_i$. By Lemma \ref{cla:maximal period sets gives asymptotic torsion density 1}, we have that:
$$\lim_{i\to \infty}\Pr_{U_{H(i),S}(i)}(g^{N_k}=1)=1.$$
Therefore, there exists $r'>r$ such that $\Pr_{U_{H(r'),S}(r')}(g^{N_k}=1)$ is larger than $\alpha_k$.  
Let $H=H(r')$. So $H$ is a partial-Burnside group of rank $r'$.

We now modify $H$, dropping out several periods starting with periods of maximal length, $r'$, and then, if needed, dropping periods of length $r'-1$ and so on. This is still a Burnside-type group. By Lemma \ref{lem:bound on conjugates of powers of single period}, and since the size of the ball $B_{H}(r')$ is at least $(2m-1)^{0.9r'}$ (by Lemma \ref{lem:growth of BT groups}), dropping a single period $A$, with exponent $n_A=N_k$, decreases the  
density
$\Pr_{U_{H,S}(r')}(g^{N_k}=1)$
by at most: $$\frac{(2m-1)^{0.6r'}}{(2m-1)^{0.9r'}}=(2m-1)^{-0.3r'}$$ 
This is less than $2^{-k}$ if $r'$ is large enough. 
It follows that one can drop periods one by one,  until the torsion density of the ball of radius $R'$ in the resulting group is close to $\alpha_k$ with error up to $2^{-k}$. 

We let $r_k=r'$ and let $G(r_k)$ to be the group obtained from $H$ after dropping out the right number of periods. It is clear that (\ref{eq:spectrum}) is now satisfied for $k$.

This completes the inductive construction of the sequences $\{r_k\}_k$ and $\{G(r_k)\}_k$. We let $G=G(\infty)$.


\begin{proof}[Proof of Theorem \ref{thm:spectrum strong version}]
Consider the group $G$ constructed above. By Corollary \ref{cor:R-th ball is determined in low rank}, for every $k\in \mathbb{N}$, we have $$\Pr_{U_{(G,S)}(r_k)}(g^{N_k}=1)=\Pr_{U_{(G(r_k),S)}(r_k)}(g^{N_k}=1).$$ The right hand side is equal to $\alpha_k$ up to an error of at most $2^{-k}$, by (\ref{eq:spectrum}). This completes the proof, since for every real $\alpha\in [0,1]$ and every integer $r$, there exists $k>r$ such that $|\alpha-\alpha_k|< 2^{-r}$.
\end{proof}

We conclude the section with the proof of Theorem \ref{thm:Theorem D}.
\begin{proof}[Proof of Theorem \ref{thm:Theorem D}]
The case of random walks was completed in Corollary \ref{cor:spectrum[0,1]}. The case of uniform measures on balls in Cayley graphs follows from Theorem \ref{thm:spectrum strong version}, by considering $N=n$. 
\end{proof}


\begin{thebibliography}{99}

\bibitem{Adian amenable}
S. I. Adian, \textit{Random walks on free periodic groups}, Math. USSR Izvestiya, 21, 425--434 (1983)

\bibitem{Ad}
S.~I.~Adian, \textit{The Burnside problems and identities in groups}, Ergeb. Math. Grenzgeb. 95,
Springer, Berlin (1979).

\bibitem{ABGK}
G.~Amir, G.~Blachar, M.~Gerasimova, G.~Kozma, \textit{Probabilistic laws on infinite groups}, Isr. J. Math., to appear (arXiv version appears on \text{https://doi.org/10.48550/arXiv.2304.09144}).

\bibitem{AMV}
Y.~Antol\'{i}n, A.~Martino, E.~Ventura, \textit{Degree of commutativity of infinite groups}. Proc. Amer. Math. Soc., 145(2):479–485, 2017. An extended arXiv version is available here: \text{https://arxiv.org/pdf/1511.07269.pdf}.

\bibitem{AB}
V. S. Atabekyan, A. A. Bayramyan, \emph{Probabilistic identities in $n$-torsion groups}, Journal of Contemporary Mathematical Analysis (Armenian Academy of Sciences) 59 (6), 455--459 (2024).

\bibitem{ABM}
V. S. Atabekyan, A. A. Bayramyan, V. H. Mikaelian, \emph{A continuum of non-isomorphic 3-generator groups with probabilistic law $x^n= 1$}, pre-print \text{arXiv:2504.20591} (2025).

\bibitem{ART}
A. Atkarskaya, E. Rips and K. Tent, \textit{The Burnside problem for odd exponents}, arXiv:2303.15997 [math.GR], \text{https://doi.org/10.48550/arXiv.2303.15997}.

\bibitem{boatman2012partial}
N. Boatman, \textit{Partial-Burnside groups}, PhD dissertation at Vanderbilt University, 2012.


\bibitem{BV2002}
J. Burillo, E. Ventura, \textit{Counting primitive elements in free groups}, Geom. Dedicata 93, 143--162 (2002).




\bibitem{Burnside1902}
W.~Burnside, \textit{An unsettled question in the theory of discontinuous groups}, Quart.~J.~Pure Appl.~Math.~33, 230--238 (1902).




\bibitem{Coulon_exp}
R.~Coulon, \textit{Growth of periodic quotients of hyperbolic groups}, Algebr.~Geom.~Topol.~13 (6): 3111--3133 (2013).

\bibitem{Coulon_Burnside_Quotients}
R.~Coulon, \textit{Partial periodic quotients of groups acting on a hyperbolic space}, Université de Grenoble. Annales de l'Institut Fourier (5) 66, 1773--1857 (2016).

\bibitem{delaHarpe}
P.~de la Harpe, \textit{Topics in geometric group theory}, Chicago Lectures in Math, University of Chicago Press, Chicago 2000.

\bibitem{DJMN}
C.~Delizia, U.~Jezernik, P.~Moravec, C.~Nicotera, \textit{Gaps in probabilities of satisfying some commutator-like identities}, Isr. J. Math., 237(1):115– 140, 2020.

\bibitem{DS} R. Dougall, R. Sharp, \textit{A non-symmetric Kesten criterion and ratio limit theorem for random walks on amenable groups}, International Mathematics Research Notices, Volume 2024, Issue 7, 6209--6223 (2024).

\bibitem{Eber_Shum}
S.~Eberhard, P.~Shumyatsky, \textit{Probabilistically nilpotent groups of class two}, Math. Ann., 388, 1879–-1902, (2024).


\bibitem{Golod}
E. Golod, \textit{On nil algebras and finitely approximable groups} (Russian) Izv. Akad. Nauk SSSR Ser. Mat.
28, 273--276 (1964).


\bibitem{Gus}
W.~H.~Gustafson, \textit{What is the probability that two group elements commute?} The American Mathematical Monthly, 80(9):1031–1034, 1973.

\bibitem{Iv94}
S.~V.~Ivanov, \textit{The free Burnside groups of sufficiently large exponents}, Internat. J. Algebra Comput. 4, 1--308 (1994).


\bibitem{Kesten}
H. Kesten, \textit{Symmetric random walks on groups}, Trans. Amer. Math. Soc. 92, 336--354 (1959).

\bibitem{Laf1}
T.~J.~Laffey, \textit{The number of solutions of $x^3 = 1$ in a $3$-group}, Mathematische Zeitschrift, 149(1):43–45, 1976.

\bibitem{Laf2}
T.~J.~Laffey, \textit{The number of solutions of $x^p = 1$ in a finite group}, In Mathematical Proceedings of the Cambridge Philosophical Society, volume 80, pages 229–231. Cambridge University Press, 1976.

\bibitem{Laf3}
T.~J.~Laffey, \textit{The number of solutions of $x^4 = 1$ in a $3$-group}, Proc. Roy. Irish Acad. Sect. A, 79(4):29–36, 1979.

\bibitem{LarsenShalev}
M. Larsen and A. Shalev, \textit{A probabilistic Tits alternative and probabilistic
identities}, Algebra and Number Theory 10, 1359--1371 (2016).


\bibitem{Mann}
A.~Mann, \textit{Finite groups containing many involutions}, Proc. Amer. Math. Soc., pages 383–385, 1994.

\bibitem{Mann_Mart}
A.~Mann and C.~Martinez, \textit{The exponent of finite groups}, Archiv der Mathe- matik, 67(1):8–10, 1996.

\bibitem{Chernoff}
M. Mitzenmacher, E. Upfal, \textit{Probability and computing: randomized algorithms and probabilistic analysis}. Cambridge University Press. ISBN 978-0-521-83540-4 (2005).

\bibitem{NA}
P.~S.~Novikov, S.~I.~Adian, \textit{On infinite periodic groups I–III}, Math. USSR Izv. 32, 212--244; 251--524; 709--731 (1968).

\bibitem{Obraztsov}
V. N. Obraztsov, \emph{An embedding theorem for groups and its corollaries}, Mat. Sb. 180 (4), 529–-541 (1989).

\bibitem{Ol82}
A.~Yu.~Ol'shanskii, \textit{On the Novikov-Adian theorem}, Math. USSR Sb. , 118 (1982) pp. 203–235 (In Russian).


\bibitem{OL-book}
A.~Yu.~Ol'shanskii, \textit{Geometry of defining relations in groups}, Translated from the 1989 Russian original by Yu. A. Bakhturin. Mathematics and its Applications (Soviet Series), 70. Kluwer Academic Publishers Group, Dordrecht, 1991. xxvi+505 pp.

\bibitem{Ol91}
A. ~Yu. ~Ol’shanskii, \textit{Periodic quotient groups of hyperbolic groups}, Mat. Sb. 182.4, 543-567 (1991).


\bibitem{OOS}
A.~Yu.~Ol'shanskii, D. Osin, M. Sapir, \textit{Lacunary hyperbolic groups}, with an appendix by Michael Kapovich and Bruce Kleiner,
Geom. Topol. 13 (4), 2051–-2140 (2009).

\bibitem{O}
D. Osin, \textit{Uniform non-amenability of free Burnside groups}, Arch. Math. (Basel) 88 (5), 403–-412 (2007).


\bibitem{Shalev}
A.~Shalev, \textit{Probabilistically nilpotent groups}, Proc.~Amer.~Math.~Soc.~146 (4), 1529--1536 (2018).

\bibitem{Tointon}
M. C. H. Tointon, \textit{Commuting probabilities of infinite groups}, J. London Math. Soc. 101 (3), 1280--1297 (2020).

\bibitem{ZhengICM}
T. Zheng, \textit{Asymptotic behaviors of random walks on countable groups}, Proceedings of the ICM 2022, \text{DOI 10.4171/ICM2022/138}.


\end{thebibliography}
\end{document}